\newcommand{\RR}{\mathbb{R}}
\newcommand{\wRR}[1]{\mathop{\widehat{\mathbb{R}}^{#1}}\nolimits}
\newcommand{\Leb}{\mathscr{L}}
\newcommand{\Haus}{\mathscr{H}}
\newcommand{\tH}{\widetilde{H}}
\newcommand{\loc}{\mathrm{loc}}
\newcommand{\betac}{\beta^{\mathrm{ctr}}}
\newcommand{\thetahd}{\theta^{\mathrm{HD}}}
\newcommand{\oB}{B_\circ}
\newcommand{\res}{\hbox{ {\vrule height .22cm}{\leaders\hrule\hskip.2cm} } }
\newcommand{\Xint}[1]{\mathchoice
    {\XXint\displaystyle\textstyle{#1}}%
    {\XXint\textstyle\scriptstyle{#1}}%
    {\XXint\scriptstyle\scriptscriptstyle{#1}}%
    {\XXint\scriptscriptstyle\scriptscriptstyle{#1}}%
    \!\int}
\newcommand{\XXint}[3]{\setbox0=\hbox{$#1{#2#3}{\int}$}
    \vcenter{\hbox{$#2#3$}}\kern-.5\wd0}
\newcommand{\avint}{\Xint-}
\newcommand{\co}{\mathop\mathrm{co}\nolimits}
\newcommand{\diam}{\mathop\mathrm{diam}\nolimits}
\newcommand{\dist}{\mathop\mathrm{dist}\nolimits}
\newcommand{\esssup}{\mathop\mathrm{ess\,sup}}
\newcommand{\HD}{\mathop\mathrm{HD}\nolimits}
\newcommand{\Amin}[2]{\min\{\|A'_{#1}\|,\|A'_{#2}\|\}}
\newcommand{\Amax}[2]{\max\{\|A'_{#1}\|,\|A'_{#2}\|\}}
\newcommand{\Adiff}[2]{\|A'_{#1}-A'_{#2}\|}
\newcommand{\Anorm}[1]{\|A'_{#1}\|}
\newtheorem{theorem}{Theorem}[section]
\newtheorem{corollary}[theorem]{Corollary}
\newtheorem{lemma}[theorem]{Lemma}
\newtheorem{step}{Step}
\theoremstyle{definition}
\newtheorem{definition}[theorem]{Definition}
\newtheorem{hypothesis}[theorem]{Hypothesis}
\theoremstyle{remark}
\newtheorem{remark}[theorem]{Remark}
\newtheorem{case}{Case}
\numberwithin{equation}{section}
\numberwithin{figure}{section}
\begin{document}

\date{March 12, 2014}
\title[Quasiconformal planes and almost affine maps]{Quasiconformal planes with bi-Lipschitz pieces\\ and extensions of almost affine maps}
\thanks{The authors were partially supported: J.~ Azzam by NSF DMS RTG 08-38212, M.~ Badger by an NSF postdoctoral fellowship DMS 12-03497, and T.~ Toro by NSF DMS 08-56687 and a grant from the Simons Foundation \#228118. A portion of this research was completed while the authors visited the Institute for Pure and Applied Mathematics for the long program on Interactions Between Analysis and Geometry in the spring of 2013.}
\author{Jonas Azzam}
\address{Department of Mathematics\\ University of Washington\\ Box 354350\\ Seattle, WA 98195-4350}
\email{jonasazzam@math.washington.edu}
\author{Matthew Badger}
\address{Department of Mathematics\\ Stony Brook University\\ Stony Brook, NY 11794-3651}
\email{badger@math.sunysb.edu}
\author{Tatiana Toro}
\address{Department of Mathematics\\ University of Washington\\ Box 354350\\ Seattle, WA 98195-4350}
\email{toro@math.washington.edu}
\subjclass[2010]{Primary 30C65. Secondary 28A75, 54C20.}
\keywords{quasiconformal maps, quasisymmetric maps, almost affine maps, extension theorems, quasiplanes, rectifiable sets, big pieces of bi-Lipschitz images, Reifenberg flat sets, Jones beta numbers}

\begin{abstract} A quasiplane $f(V)$ is the image of an $n$-dimensional Euclidean subspace $V$ of $\RR^N$ ($1\leq n\leq N-1$) under a quasiconformal map $f:\RR^N\rightarrow\RR^N$ . We give sufficient conditions in terms of the weak quasisymmetry constant of the underlying map for a quasiplane to be a bi-Lipschitz $n$-manifold and for a quasiplane to have big pieces of bi-Lipschitz images of $\RR^n$. One main novelty of these results is that we analyze quasiplanes in arbitrary codimension $N-n$. To establish the big pieces criterion, we prove new extension theorems for ``almost affine'' maps, which are of independent interest. This work is related to investigations by Tukia and V\"ais\"al\"a on extensions of quasisymmetric maps with small distortion.
\end{abstract}

\maketitle

\setcounter{tocdepth}{1}
\tableofcontents


\section{Introduction}
\label{s:intro}

The quasiconformal maps of Euclidean space (whose precise definition is deferred until \S2) are a class of homeomorphisms $f:\RR^N\rightarrow\RR^N$ ($N\geq 2$) with several nice properties: \begin{itemize}
\item $f$ maps balls onto regions with uniformly bounded eccentricity ($f$ is quasisymmetric);
\item $f$ is differentiable at Lebesgue almost every $x\in\RR^N$; and
\item $f$ maps sets of Lebesgue measure zero onto sets of Lebesgue measure zero.
\end{itemize} Nevertheless, quasiconformal maps may distort geometric characteristics of lower dimensional sets in $\RR^N$ such as Hausdorff dimension, Hausdorff measure, and rectifiability. For example, there exist quasiconformal maps of the plane that map the unit circle onto the Koch snowflake. It is natural to ask, therefore, under which circumstances---and to what extent---can one control the distortion of geometry by quasiconformal maps. This question has been studied from a number of viewpoints by several authors,
 see e.g.~
\cite{Astala-area},
\cite{Heinonen-theorem-of-Semmes},
\cite{Semmes-question-of-Heinonen},
\cite{B99-dim},
\cite{DT-snow},
\cite{Rohde-qc},
\cite{quasiplanes1},
\cite{quasiplanes2},
\cite{Prause},
\cite{KO09},
\cite{LSU2010},
\cite{Meyer-snowballs},
\cite{Smirnov},
\cite{PTU12},
\cite{ACTUV},
\cite{BTW13},
\cite{BGRT},
\cite{VW-Jordan},
\cite{A-qdiff},
\cite{BH-freq},
and the references therein.

In this paper, we find conditions that ensure that a quasiplane is rectifiable, or that at least, ensure that a quasiplane contains nontrivial rectifiable subsets. A \emph{quasiplane} is the image $f(V)$ of an $n$-dimensional Euclidean subspace $V\subset\RR^N$ ($1\leq n\leq N-1$) under a quasiconformal map $f:\RR^N\rightarrow\RR^N$. When $n=1$, a quasiplane $f(V)$ is also called a \emph{quasiline}. When $n=N-1$, a quasiplane $f(V)$ is the unbounded variant of a \emph{quasisphere} $g(S^{N-1})$, which is the image of the unit sphere $S^{N-1}$ under a quasiconformal map $g:\RR^N\rightarrow\RR^N$. A set $X\subset\RR^N$ is \emph{$n$-rectifiable} (in the sense of geometric measure theory, e.g.~ see \cite{Mattila}) if there exist countably many Lipschitz maps $f_i:[0,1]^n\rightarrow\RR^N$ whose images cover $\Haus^n$-almost all of $X$, that is, \begin{equation*}\Haus^n\left(X\setminus{\bigcup_i\nolimits}\, f_i([0,1]^n)\right)=0,\end{equation*} where $\Haus^n$ denotes $n$-dimensional Hausdorff measure on $\RR^N$. This notion of rectifiability can be strengthened or weakened in a variety of ways, a few of which will enter the discussion below. In particular, a set $X\subset\RR^N$ is \emph{locally $L$-bi-Lipschitz equivalent to subsets of} $\RR^n$ if for all $x_0\in X$ there exist $r>0$, a map $h:X\cap B^N(x_0,r)\rightarrow\RR^n$, and a constant $c>0$ such that \begin{equation}\label{e:i1} c|x-y| \leq |h(x)-h(y)| \leq Lc |x-y|\quad\text{for all }x,y\in X\cap B^N(x_0,r).\end{equation} We also say that $X$ is \emph{locally bi-Lipschitz equivalent to subsets of $\RR^n$} if the bi-Lipschitz constant $L$ in (\ref{e:i1}) is allowed to depend on $x_0$.

In \cite{BGRT}, the second and third named authors, jointly with James T.~ Gill and Steffen Rohde, gave sufficient conditions for a quasisphere $f(S^{N-1})$ to be locally bi-Lipschitz equivalent to subsets of $\RR^{N-1}$.  The conditions were given in terms of the maximal dilatation of $f$ \cite[Theorem 1.1]{BGRT} and in terms of the weak quasisymmetry constant of $f$ \cite[Theorem 1.2]{BGRT}. The latter condition can be reformulated for quasiplanes, as follows. For all $X\subset\RR^N$ and maps $f:X\rightarrow\RR^N$, the \emph{weak quasisymmetry constant} $H_f(X)\in[1,\infty]$ of $f$ in $X$ is the least constant such that for all $x,y,a\in X$, \begin{equation*} |x-a|\leq |y-a| \Longrightarrow |f(x)-f(a)|\leq H_f(X) |f(y)-f(a)|.\end{equation*} In order to simplify several expressions below, we assign \begin{equation*}\tH_f(X):= H_f(X)-1.\end{equation*} For all $1\leq n\leq N$, we identify the Euclidean space $\RR^n$ with the subspace $\RR^{n}\times\{0\}^{N-n}$ of $\RR^N$. We let $B^n(x,r)$ and $\oB^n(x,r)$ denote, respectively, the \emph{closed} and \emph{open} ball in $\RR^n$ with center $x\in\RR^n$ and radius $r>0$. In addition, we let $\Leb^n$ denote Lebesgue measure on $\RR^n$ and we normalize $n$-dimensional Hausdorff measure $\Haus^n$ on $\RR^N$ so that $\Haus^n(B^n(0,1))=\Leb^n(B^n(0,1))$.

\begin{theorem}(\cite{BGRT}) \label{t:bgrt}
Suppose $1\leq n=N-1$. If $f:\RR^{N}\rightarrow \RR^{N}$ is quasiconformal and
\begin{equation} \label{e:bgrt-dini}
\int_{0}^{1}\sup_{x\in B^n(x_0,1)} \tH_{f}(B^N(x,r))^{2} \frac{dr}{r} <\infty \quad \text{for all $x_0\in\RR^n$},
\end{equation}
then the quasiplane $f(\RR^{n})$ is locally $(1+\delta)$-bi-Lipschitz equivalent to subsets of $\RR^n$ for all $\delta>0$. Thus, $f(\RR^n)$ is $n$-rectifiable and $\Haus^{n}\res f(\RR^n)$ (the restriction of $\Haus^n$ to $f(\RR^n)$) is locally finite.
\end{theorem}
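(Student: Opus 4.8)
The plan is to reduce to a local bi-Lipschitz parametrization statement and then obtain it from a Reifenberg-flatness estimate for the quasiplane. Concretely, it suffices to show that for every $w_0\in\RR^n$ there exist $r>0$, $c>0$, and a map $h\colon f(\RR^n)\cap B^N(f(w_0),r)\to\RR^n$ satisfying $c|z-z'|\le|h(z)-h(z')|\le(1+\delta)c|z-z'|$ for all $z,z'$ in the domain; this is exactly \eqref{e:i1} with $L=1+\delta$, as $x_0=f(w_0)$ ranges over $f(\RR^n)$. Granting this, $h^{-1}$ is a Lipschitz bijection from a bounded subset of $\RR^n$ onto $f(\RR^n)\cap B^N(f(w_0),r)$; covering the separable set $f(\RR^n)$ by countably many such pieces and extending each $h_i^{-1}$ to a Lipschitz map on all of $\RR^n$ (coordinatewise, by McShane's lemma) exhibits $f(\RR^n)$ as a union of countably many Lipschitz images of cubes, hence $n$-rectifiable; and $\Haus^n\bigl(f(\RR^n)\cap B^N(f(w_0),r)\bigr)\le c^{-n}\,\Leb^n\bigl(h(f(\RR^n)\cap B^N(f(w_0),r))\bigr)<\infty$ gives the local finiteness of $\Haus^n\res f(\RR^n)$.

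Fix $w_0\in\RR^n$ and $\delta>0$, and use \eqref{e:bgrt-dini} to choose $\rho_0\in(0,1)$ so small that $\int_0^{\rho_0}\sup_{x\in B^n(w_0,1)}\tH_f(B^N(x,r))^2\,\frac{dr}{r}<\varepsilon$, where $\varepsilon=\varepsilon(n,\delta)>0$ is fixed later. The crux is a \emph{flatness estimate}: for every $x\in\RR^n$ near $w_0$ and every $0<r\le\rho_0$, the quasiplane $f(\RR^n)$ lies, within the ball of radius $\rho(x,r):=\diam f(B^N(x,r))$ about $f(x)$, at distance at most $C\,\tH_f(B^N(x,Cr))\,\rho(x,r)$ from some $n$-plane through $f(x)$. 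This rests on two facts. First, the weak quasisymmetry bound forces $f(B^N(x,r))$ to be squeezed between two concentric balls about $f(x)$ whose radii differ by a factor at most $1+\tH_f(B^N(x,r))$. Second---and here the codimension-one hypothesis $n=N-1$ enters, in contrast to the arbitrary-codimension theorems that are this paper's main point---$\RR^n$ separates $B^N(x,r)$ into two half-balls, so $f(\RR^n\cap B^N(x,r))$ separates the nearly round domain $f(B^N(x,r))$ into two Jordan subdomains of comparable, controlled eccentricity, and comparing these two complementary pieces inside a nearly round set forces their common boundary to hug a hyperplane through $f(x)$. Turning this last, essentially topological, comparison into the quantitative bound above---with the correct linear dependence on $\tH_f$ and with constants uniform over the relevant family of balls---is the main obstacle in the proof.

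Granting the flatness estimate, set $\Sigma:=f(\RR^n)\cap B^N(f(w_0),\rho)$ for a suitable small $\rho$ and transport the estimate along $\Sigma$: writing $z=f(x)$ and using the quasisymmetry of $f$ to compare Euclidean scales at $z$ with the scales $\rho(x,r)$, one obtains that $\Sigma$ is Reifenberg-flat with bilateral approximation numbers $\theta_\Sigma(z,s)\lesssim\sup_{x\in B^n(w_0,1)}\tH_f(B^N(x,Cs))$ for all small $s$. Summing over dyadic scales, $\sup_{z\in\Sigma}\sum_k\theta_\Sigma(z,2^{-k})^2\lesssim\int_0^{\rho_0}\sup_x\tH_f(B^N(x,r))^2\,\frac{dr}{r}<\varepsilon$, so $\Sigma$ has uniformly small and square-summable flatness. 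The bi-Lipschitz Reifenberg parametrization theorem for flat sets with square-Dini control (Toro, together with the refinements of David and Toro), in its local form, then provides a bi-Lipschitz homeomorphism from a ball in $\RR^n$ onto $\Sigma$ whose bi-Lipschitz constant tends to $1$ as the supremal flatness and the flatness sums tend to $0$; its inverse restricts to the desired map $h$, with constant at most $1+\delta$ provided $\varepsilon=\varepsilon(n,\delta)$ was chosen small enough. This completes the proof.
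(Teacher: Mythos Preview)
Your sketch follows the original \cite{BGRT} approach as the paper describes it: a codimension-one bilateral flatness estimate (the paper's Lemma~\ref{l:bgrt}, whose proof is precisely the ``separation into two nearly-round halves'' heuristic you outline) feeds into the Toro bi-Lipschitz Reifenberg theorem with square-summable $\theta$-control. So the strategy is correct and matches what \cite{BGRT} did.

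One genuine slip: your transported estimate $\theta_\Sigma(z,s)\lesssim\sup_{x}\tH_f(B^N(x,Cs))$ assumes a \emph{linear} relationship between image scales $s$ and domain scales $r$. For a general quasiconformal map this is false; the correct relationship is H\"older, $s\sim r^{\alpha}$ with $\alpha=K^{1/(1-N)}$ (Theorem~\ref{t:holder}). Fortunately the change of variables $ds/s=\alpha\,dr/r$ still converts $\int_0^\rho\theta_\Sigma(z,s)^2\,ds/s$ into a constant multiple of $\int_0^{\rho_0}\tH_f^2\,dr/r$, so your square-Dini conclusion survives; this is exactly the computation the paper carries out (for $\beta$ rather than $\theta$) in Corollary~\ref{c:bflat}. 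You should replace the linear claim with this H\"older change of variables.

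It is worth noting that the present paper does \emph{not} reprove Theorem~\ref{t:bgrt} by this route. Its proof of the generalization Theorem~\ref{t:abt} (which subsumes Theorem~\ref{t:bgrt}) is organized differently: it obtains only a \emph{unilateral} $\beta$-estimate in arbitrary codimension (Lemma~\ref{l:bflat}), supplements it with a weak bilateral $\theta$-estimate coming from the new almost-affine machinery (Theorem~\ref{t:qsaa} plus Lemma~\ref{l:rflat}), and then invokes the David--Toro criterion (Theorem~\ref{t:bilip}) rather than the Toro95 theorem. Your approach buys a shorter path in codimension one; the paper's approach buys arbitrary codimension.
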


The conclusion in Theorem \ref{t:bgrt} that $f(\RR^n)$ is locally $(1+\delta)$-bi-Lipschitz equivalent to subsets of $\RR^n$ for all $\delta>0$ is strictly weaker than $f(\RR^n)$ being locally $C^1$. However, if $1\leq n\leq N-1$ and the square Dini condition (\ref{e:bgrt-dini}) is replaced with a linear Dini condition, then the quasiplane $f(\RR^{n})$ is a $C^1$ embedded submanifold of $\RR^N$; see \cite[Chapter 7, \S4]{Reshetnyak}.

The first main result of this paper is to extend Theorem \ref{t:bgrt} to arbitrary codimension.

\begin{theorem}\label{t:abt} Suppose $1\leq n\leq N-1$. If $f:\RR^{N}\rightarrow \RR^{N}$ is quasiconformal and (\ref{e:bgrt-dini}) holds,
then the quasiplane $f(\RR^{n})$ is locally $(1+\delta)$-bi-Lipschitz equivalent to subsets of $\RR^n$ for all $\delta>0$. Thus, $f(\RR^n)$ is $n$-rectifiable and $\Haus^{n}\res f(\RR^n)$ is locally finite.\end{theorem}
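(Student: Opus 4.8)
The plan is to follow the broad strategy that \cite{BGRT} used for Theorem~\ref{t:bgrt}, namely to reduce the conclusion---via a Reifenberg-type bi-Lipschitz parametrization theorem---to two purely \emph{geometric} facts about the set $f(\RR^n)$, and then to verify that both facts continue to hold in arbitrary codimension. To begin, I would fix $x_0\in\RR^n$ and $\delta>0$, put $y_0:=f(x_0)$, and abbreviate $\epsilon(r):=\sup_{x\in B^n(x_0,1)}\tH_f(B^N(x,r))$, so that \eqref{e:bgrt-dini} becomes $\int_0^1\epsilon(r)^2\,\tfrac{dr}{r}<\infty$; in particular $\epsilon(r)\to0$ as $r\to0$ and $\sum_{2^{-k}\le1}\epsilon(2^{-k})^2<\infty$. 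Since $f$ is a global quasiconformal homeomorphism it is quasisymmetric, and the hypothesis says precisely that near $x_0$ its weak quasisymmetry constant tends to $1$ as the scale shrinks, uniformly.

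The geometric heart of the argument is to show that $f(\RR^n)$ is Reifenberg flat near $y_0$ with \emph{vanishing} constant, together with a square-summable (indeed Carleson-type) bound on the tilts of the approximating $n$-planes. The starting observation is that, for $z\in f(\RR^n)$ near $y_0$ and $x:=f^{-1}(z)$, the map $f$ carries the $n$-ball $B^n(x,t)$ onto a topological $n$-cell whose relative boundary $f(\partial B^n(x,t))$ is trapped inside a spherical shell $\{w:\rho\le|w-z|\le(1+\tH_f(B^N(x,t)))\rho\}$ with $\rho=\rho(z,t)\sim t$. Feeding this into a Tukia--V\"ais\"al\"a-type comparison---a weakly $H$-quasisymmetric embedding of a ball of $\RR^n$ with $H$ close to $1$ is quantitatively close to a similarity---I would produce an $n$-plane $P(z,t)\ni z$ such that every point of $f(\RR^n)\cap B^N(z,t)$ lies within $C\,\epsilon(Ct)\,t$ of $P(z,t)$ and conversely; the two-sided (nondegeneracy) bound is available because $f^{-1}$ is itself weakly quasisymmetric with small constant near $y_0$ and carries the set back onto a flat piece of $\RR^n$. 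Comparing these planes at consecutive dyadic scales and at nearby base points then yields $\angle(P(z,2^{-k}),P(z,2^{-k-1}))\lesssim\epsilon(C2^{-k})$, so the tilt numbers $\theta_k(z):=\angle(P(z,2^{-k}),P(z,2^{-k-1}))$ satisfy $\sum_k\theta_k(z)^2\lesssim\int_0^1\epsilon(r)^2\,\tfrac{dr}{r}<\infty$ uniformly in $z$ near $y_0$.

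Granting these facts, I would invoke the bi-Lipschitz version of Reifenberg's topological disk theorem---valid in every codimension $N-n$---to obtain a bi-Lipschitz homeomorphism $\phi\colon B^n(0,1)\to f(\RR^n)\cap B^N(y_0,\rho_0)$. The decisive point is that $\phi$ is constructed as a limit of maps each of which perturbs the ambient metric only to \emph{second} order in the relevant tilt (its first-order part being an isometry that rotates one approximating plane onto the next), so the bi-Lipschitz constant of $\phi$ is bounded by $\exp(C\sum_{2^{-k}\le\rho_0}\theta_k^2)$; this is exactly why the \emph{square}-Dini hypothesis \eqref{e:bgrt-dini}, rather than a linear Dini condition, is the right one. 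Shrinking $\rho_0$ so that $C\sum_{2^{-k}\le\rho_0}\theta_k^2\le\log(1+\delta)$ and $\max_{2^{-k}\le\rho_0}\theta_k$ is small makes that constant at most $1+\delta$, and taking $h$ to be a suitable dilation of $\phi^{-1}$ verifies \eqref{e:i1} with $L=1+\delta$ on $f(\RR^n)\cap B^N(y_0,\rho_0)$. Covering $\RR^n$ by countably many such balls and pushing forward by $f$ gives the local $(1+\delta)$-bi-Lipschitz equivalence for every $\delta>0$; $n$-rectifiability and local finiteness of $\Haus^n\res f(\RR^n)$ follow at once, since a $(1+\delta)$-bi-Lipschitz image of a bounded subset of $\RR^n$ extends to a Lipschitz image of $[0,1]^n$ and carries finite $\Haus^n$-measure.

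I expect the geometric heart---the second paragraph above---to be the main obstacle: one must convert an analytic hypothesis on $\tH_f$, which is a statement about $f$ on balls of $\RR^N$, into quantitative flatness and tilt control for the \emph{image set} $f(\RR^n)$. In codimension one (Theorem~\ref{t:bgrt}) the image $f(\RR^{N-1})$ separates $\RR^N$, which delivers two-sided flatness almost for free; in higher codimension that tool disappears, and in its place one needs a genuinely quantitative ``almost conformal $\Rightarrow$ almost affine'' estimate for embeddings $\RR^n\hookrightarrow\RR^N$ (the circle of ideas around Tukia and V\"ais\"al\"a), extracting in particular the \emph{linear}-in-$\tH_f$ bound on the tilt---anything weaker than linear could not be reconciled with merely the square-Dini condition \eqref{e:bgrt-dini}.
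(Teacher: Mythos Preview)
Your strategy matches the paper's: verify Reifenberg flatness of $f(\RR^n)$ near $y_0$, establish a square-summable bound on the unilateral approximation numbers, and feed both into the David--Toro bi-Lipschitz parametrization theorem (Theorem~\ref{t:bilip}), using that its constant tends to $1$ as the square sum tends to $0$ (Remark~\ref{r:Lsmall}). The paper does exactly this, via Lemma~\ref{l:bflat} (the linear-in-$\tH_f$ beta bound), Corollary~\ref{c:bflat} (converting the Dini condition on $\tH_f$ to one on $\beta$), Theorem~\ref{t:qsaa} (small $\tH_f$ implies almost affine), and Lemma~\ref{l:rflat} (almost affine implies a theta bound).

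There is, however, one overclaim in your second paragraph that the paper explicitly flags as open. You assert a \emph{two-sided} flatness estimate $\theta_{f(\RR^n)}(z,t)\lesssim\epsilon(Ct)$, linear in $\tH_f$, justified by ``$f^{-1}$ is itself weakly quasisymmetric with small constant and carries the set back onto a flat piece of $\RR^n$.'' That reasoning does not work: $f^{-1}$ maps $f(\RR^n)$ back to $\RR^n$, but says nothing about how the approximating plane $P(z,t)$---which is not contained in $f(\RR^n)$---sits relative to the set. In codimension one the separation property rescues you; in higher codimension the paper states plainly (end of \S\ref{s:intro}) that it does \emph{not} know how to verify strong bilateral estimates. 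The paper's workaround is to decouple the two hypotheses of Theorem~\ref{t:bilip}: the \emph{qualitative} threshold bound $\theta_{f(\RR^n)}\le\delta_0$ is obtained by a compactness argument (Lemma~\ref{l:Sxr} and Theorem~\ref{t:qsaa} produce a similarity close to $f$, and Lemma~\ref{l:inradius}/Lemma~\ref{l:rflat} convert that into two-sided flatness with \emph{some} small constant, not one linear in $\tH_f$), while the \emph{quantitative} linear-in-$\tH_f$ estimate is proved only one-sidedly (Lemma~\ref{l:bflat}). Since Theorem~\ref{t:bilip} requires merely a threshold theta bound together with a square-summable $\betac$ bound, this suffices---but your sketch as written rests on a strength of bilateral control that is not available.
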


Secondly, we show how to relax the hypothesis of Theorem \ref{t:abt} and obtain the conclusion that a  quasiplane is locally bi-Lipschitz equivalent to subsets of $\RR^n$.

\begin{theorem}\label{t:abt2} Suppose $1\leq n\leq N-1$. If $f:\RR^{N}\rightarrow \RR^{N}$ is quasiconformal and
\begin{equation} \label{e:abt-dini}
\sup_{z\in B^n(x_0,1)}\int_{0}^{1} \tH_{f}(B^N(x,r))^{2} \frac{dr}{r} <\infty \quad \text{for all $x_0\in\RR^n$},
\end{equation}
then the quasiplane $f(\RR^{n})$ is locally bi-Lipschitz equivalent to subsets of $\RR^n$ near $f(x_0)$ for each $x_0\in\RR^n$ with local bi-Lipschitz constant depending only on $n$, $N$, and the quantity in (\ref{e:abt-dini}). Thus, $f(\RR^n)$ is $n$-rectifiable and $\Haus^{n}\res f(\RR^n)$ is locally finite.\end{theorem}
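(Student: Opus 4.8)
The plan is to reduce the statement, via a normalization, to a uniformly small‑distortion situation at small scales; then to convert the metric smallness of $\tH_f$ into a Reifenberg‑type flatness of the quasiplane carrying a \emph{square}-summable gauge; and finally to invoke a Reifenberg‑type bi‑Lipschitz parametrization theorem. Fix $x_0\in\RR^n$ and let $M$ denote the finite quantity in (\ref{e:abt-dini}). Since $r\mapsto H_f(B^N(z,r))$ is nondecreasing (a larger ball supplies more comparison triples), for every $z\in B^n(x_0,1)$ and every $0<r<1$ one has $\tH_f(B^N(z,r))^2\log(1/r)\le\int_0^1\tH_f(B^N(z,s))^2\,\frac{ds}{s}\le M$, hence $\tH_f(B^N(z,r))\le(M/\log(1/r))^{1/2}$. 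Consequently one may choose $\rho_0=\rho_0(n,N,M)\in(0,1)$ so that $\tH_f(B^N(z,r))\le\epsilon_0$ for all $z\in B^n(x_0,1)$ and $0<r<\rho_0$, where $\epsilon_0=\epsilon_0(n,N)$ is a threshold fixed below. It then suffices to construct, for some $r_0>0$, a map $h$ on $f(\RR^n)\cap B^N(f(x_0),r_0)$ satisfying (\ref{e:i1}) with $L=L(n,N,M)$.

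The central step is to show that $\Sigma:=f(\RR^n)$ is Reifenberg flat near $f(x_0)$ with a gauge governed by $\tH_f$: for each $w\in\Sigma$ near $f(x_0)$ and every small $t>0$ there is an $n$-plane $P_{w,t}\ni w$ with $d_H\bigl(\Sigma\cap B^N(w,t),\,P_{w,t}\cap B^N(w,t)\bigr)\le C\,\theta(w,t)\,t$ (for a dimensional constant $C$), where the (Jones‑type) flatness coefficient satisfies $\theta(w,t)\lesssim\tH_f(B^N(z,s))$ for a point $z\in\RR^n$ and a scale $s$ that correspond, under $f^{-1}$, to $w$ and $t$. By the previous paragraph $\theta$ is then uniformly small near $f(x_0)$, and by (\ref{e:abt-dini}) it obeys a Carleson (square‑Dini) bound $\int_0^{t_0}\theta(w,t)^2\,\frac{dt}{t}\le CM$ uniformly in $w$. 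The mechanism is the principle---of the kind exploited by Tukia and V\"ais\"al\"a for quasisymmetric maps with small distortion---that a quasiconformal map which is $(1+\epsilon)$-weakly quasisymmetric on a ball agrees, after rescaling, with a conformal affine map up to an $O(\epsilon)$ error on a concentric subball; since an affine image of $\RR^n$ is an $n$-plane, each spherical slice of $f(\RR^n)$ lies $O(\tH_f)$-close to an $n$-plane at the corresponding scale. In codimension $N-n>1$ one must in addition bound the tilt between $P_{w,t}$ and $P_{w,2t}$, which is again $\lesssim\theta$; it is the \emph{quadratic} summability of $\theta$, rather than a mere linear Dini bound, that the next step consumes.

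For $\epsilon_0=\epsilon_0(n,N)$ small enough, a set that is $\epsilon_0$-Reifenberg flat near one of its points and whose gauge $\theta$ satisfies the Carleson bound $\int_0^{t_0}\theta(\cdot,t)^2\,\frac{dt}{t}\le CM$ there admits, near that point, an $L$-bi‑Lipschitz parametrization by an open subset of $\RR^n$, with $L=L(n,N,M)$; this is the Reifenberg‑type bi‑Lipschitz parametrization theorem for Carleson flatness coefficients (the Reifenberg topological disk theorem together with the square‑function refinements of Toro and David--Toro). Unlike in the setting of Theorem \ref{t:abt}, the Carleson norm of $\theta$ over arbitrarily small scales need not be small uniformly over base points, which is why one obtains only a definite bi‑Lipschitz constant in place of $1+\delta$ for every $\delta>0$. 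Applying this parametrization to $\Sigma$ near $f(x_0)$ and letting $h$ be its inverse yields (\ref{e:i1}) with $L=L(n,N,M)$. Finally, covering $\RR^n$ by countably many balls $B^n(x_0,r_0(x_0))$ with $x_0$ ranging over a countable dense set, and using that a bi‑Lipschitz image of a bounded subset of $\RR^n$ has finite $\Haus^n$, we conclude that $f(\RR^n)$ is $n$-rectifiable and $\Haus^n\res f(\RR^n)$ is locally finite.

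The main obstacle is the second step: passing from the purely metric smallness of $\tH_f$ on balls to the geometric assertion that $f(\RR^n)$ is Reifenberg flat with a gauge whose \emph{square} is Carleson‑summable, uniformly and in every codimension. Controlling the radial distortion of a nearly conformal map is routine; the delicate point---and the reason (\ref{e:abt-dini}), rather than a linear Dini condition, is the correct hypothesis---is to estimate the angular drift of the approximating planes across consecutive scales by the square‑summable quantity, so that these drifts, composed along a telescoping chain of scales, produce a bounded rather than an unbounded distortion.
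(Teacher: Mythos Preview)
Your overall strategy---reduce to small distortion at small scales, pass to Reifenberg-type flatness of $f(\RR^n)$ with a square-summable gauge, then invoke the David--Toro bi-Lipschitz parametrization---is exactly the paper's, and your first paragraph (using monotonicity to find a scale where $\tH_f$ is small) is essentially how the proof begins.

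The gap is in the central step, and it is precisely the obstacle you flag in your last paragraph without resolving. You assert that the \emph{bilateral} flatness $\theta_{f(\RR^n)}(w,t)\lesssim\tH_f(B^N(z,s))$ at matched scales, so that $\theta$ itself is square-Dini. In codimension one this is Lemma~\ref{l:bgrt}, but in higher codimension the paper explicitly does \emph{not} know how to prove such a strong bilateral estimate (see the discussion after Theorem~\ref{t:main}). What Lemma~\ref{l:bflat} gives in arbitrary codimension is only the \emph{unilateral} bound $\beta_{f(\RR^n)}\lesssim\tH_f$; the reverse inclusion (every point of the approximating plane is close to $f(\RR^n)$) is what fails to follow quantitatively from $\tH_f$ alone once $N-n\ge 2$. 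Your appeal to a Tukia--V\"ais\"al\"a principle is the right idea for the missing direction, but that principle (Lemma~\ref{l:Sxr} and Theorem~\ref{t:qsaa} in the paper) rests on a compactness argument and hence yields no quantitative relation between $\theta$ and $\tH_f$.

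The paper's resolution is to exploit that the David--Toro theorem (Theorem~\ref{t:bilip}) does not need $\theta$ to be square-summable: it only needs (i) $\theta_{f(\RR^n)}\le\delta_0$ uniformly at the relevant locations and scales, and (ii) $\sum_k\betac_{f(\RR^n)}(y,10^{-k}r_0)^2\le M$. For (i), the compactness-based almost-affine machinery (Theorem~\ref{t:qsaa} feeding into Lemma~\ref{l:rflat}) gives a \emph{weak}, non-quantitative theta bound once $\tH_f$ is below some threshold $\tau_*$. For (ii), the quantitative beta estimate of Lemma~\ref{l:bflat} together with the H\"older scale-change argument in Corollary~\ref{c:bflat} converts $\int_0^1\tH_f^2\,dr/r\le M$ into $\int_0^{t_0}\beta_{f(\RR^n)}^2\,ds/s\lesssim M$, and Lemma~\ref{l:check} turns this into the required sum. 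So the decoupling of the two hypotheses of Theorem~\ref{t:bilip}---weak bilateral, strong unilateral---is the idea your sketch is missing; without it, the claim $\theta\lesssim\tH_f$ in codimension $\ge 2$ remains unproved.
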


The exponent 2 appearing in Theorems \ref{t:abt} and \ref{t:abt2} is the best possible; that is, 2 cannot be replaced with $2+\varepsilon$ for any $\varepsilon>0$. For example, the construction in David and Toro \cite{DT-snow} (with the parameters $Z=\RR^{n}$ and $\varepsilon_j=1/j$) can be used to produce a quasiconformal map $f:\RR^{N}\rightarrow\RR^{N}$ ($N=n+1$) such that \begin{equation*}
\int_{0}^{1}\sup_{x\in B^n(x_0,1)} \tH_{f}(B^N(x,r))^{2+\varepsilon} \frac{dr}{r} <\infty \quad \text{for all $x_0\in\RR^n$ and }\varepsilon>0,
\end{equation*} but for which the associated quasiplane $f(\RR^n)$ is not $n$-rectifiable and has locally infinite $\Haus^n$ measure; in fact, $f(\RR^n)$ does not contain any curves with positive and finite $\Haus^1$ measure.

The third main result of the paper is that one can replace the locally uniform condition \eqref{e:abt-dini} with a Carleson measure condition and still detect some rectifiable structure in the image $f(\RR^n)$. To make this precise, we introduce some additional terminology. A set $X\subset\RR^{N}$ contains \emph{big pieces of bi-Lipschitz images of $\RR^n$} if there exist constants $L\geq 1$ and $\alpha>0$ such that for all $x\in X$ and $0<r<\diam X$ there exist $S_{x,r}\subset X\cap B^N(x,r)$ and $h_{x,r}:S_{x,r}\rightarrow \RR^n$ such that $\Haus^n(S_{x,r})\geq \alpha r^n$ and $h_{x,r}$ is $L$-bi-Lipschitz. The constants $L$ and  $\alpha$ are collectively called \emph{BPBI constants} of $X$; to differentiate between them, we call $L$ a \emph{BPBI bi-Lipschitz constant} of $X$ and we call $\alpha$ a \emph{BPBI big pieces constant} of $X$.

\begin{theorem}\label{t:main}  Suppose $2\leq n\leq N-1$. If $f:\RR^N\rightarrow \RR^N$ is quasiconformal and there exists $C_f>0$ such that for all $x_{0}\in \RR^{n}$ and $r_{0}>0$,
\begin{equation} \label{e:carleson}
\int_{B^{n}(x_{0},r_{0})} \int_{0}^{r_{0}}\tH_{f}(B^N(x,r))^{2}\,\frac{dr}{r}\,d\Leb^{n}(x) \leq C_f\, \Leb^{n}(B^{n}(x_{0},r_{0})),
\end{equation}
then  the quasiplane $f(\RR^{n})$ contains big pieces of bi-Lipschitz images of $\RR^{n}$ with BPBI constants depending on at most $n$, $N$, $H_f(\RR^n)$, and $C_f$. Furthermore,  the BPBI bi-Lipschitz constant $L=L(n,N,C_f)\rightarrow 1$ as $C_f\rightarrow 0$ with $n$ and $N$ held fixed.\end{theorem}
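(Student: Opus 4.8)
The plan is to reduce the theorem, by an affine normalization, to the production of a single big bi-Lipschitz piece inside $f(B^n(0,1))$; to extract from the Carleson condition a large subset $G$ of the unit ball on which $f$ is ``almost affine'' with parameters controlled by $C_f$; and then to invoke the extension theorem for almost affine maps to replace $f|_G$ by a globally bi-Lipschitz map that still agrees with $f$ on $G$.

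First I would normalize. Fix $x'=f(z_0)\in f(\RR^n)$ and $0<r'<\diam f(\RR^n)$. Since $\RR^n$ is connected and $f$ has weak quasisymmetry constant $H_f(\RR^n)$ there, the restriction $f|_{\RR^n}$ is $\eta$-quasisymmetric with $\eta=\eta(n,H_f(\RR^n))$, so one can pick $\rho>0$ with $\lambda:=\diam f(B^n(z_0,\rho))\leq r'$ and $\lambda\gtrsim r'$ (implicit constant depending only on $\eta$). Set $\phi(z):=(f(z_0+\rho z)-x')/\lambda$. Then $\phi$ is quasiconformal with the same dilatation, $\phi(0)=0$, $\diam\phi(B^n(0,1))=1$, $H_\phi(\RR^n)=H_f(\RR^n)$, and---because both sides of (\ref{e:carleson}) transform the same way under affine pre- and post-composition while $dr/r$ is scale invariant---condition (\ref{e:carleson}) holds for $\phi$ with the same constant $C_f$. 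Since $\phi(B^n(0,1))$ contains $0$ and has diameter $1$, it lies in $B^N(0,1)$, and it suffices to exhibit $S\subset\phi(B^n(0,1))$ with $\Haus^n(S)\gtrsim 1$ and an $L$-bi-Lipschitz $h:S\to\RR^n$ with $L=L(n,N,C_f)\to1$ as $C_f\to0$: undoing the normalization then yields the big piece $S_{x',r'}:=x'+\lambda S\subset f(\RR^n)\cap B^N(x',r')$ with $\Haus^n(S_{x',r'})=\lambda^n\Haus^n(S)\gtrsim (r')^n$. Relabel $\phi$ as $f$.

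Applying (\ref{e:carleson}) with $x_0=0$, $r_0=1$ and Chebyshev's inequality, the set
\[ G:=\Bigl\{\,x\in B^n(0,1):\ \int_0^1\tH_f(B^N(x,r))^2\,\tfrac{dr}{r}\leq 2C_f\,\Bigr\} \]
satisfies $\Leb^n(G)\geq\tfrac12\Leb^n(B^n(0,1))$; if convenient one may further pass to its Lebesgue density points without losing measure. Arguing as in the proof of Theorem \ref{t:abt2}---where the exponent $2$ on $\tH_f$ is exactly what makes the relevant Dini sum converge---each $x\in G$ is a point of differentiability of $f$ at which $Df(x)$ is a positive scalar multiple $\mu_xU_x$ of an orthogonal map, and both the near-conformality of $Df(x)$ and the deviation of $f$ from its linearization near $x$ are controlled quantitatively by the bound $\int_0^1\tH_f(B^N(x,r))^2\,dr/r\leq 2C_f$. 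Comparing the estimate centered at $x$ with the one centered at $y$ for $x,y\in G$ forces the scalars $\mu_x$, $x\in G$, to be mutually comparable up to a multiplicative error tending to $1$ as $C_f\to0$, and more generally shows that $f|_G$ is an almost affine map, in the precise sense introduced below, whose defining data---including the slow spatial variation of the linear parts $A'_x$---are bounded in terms of $n$, $N$, $C_f$ and tend to the affine regime as $C_f\to0$.

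By the extension theorem for almost affine maps, $f|_G$ then extends to a map $g:\RR^n\to\RR^N$ that is $L$-bi-Lipschitz onto its image with $L=L(n,N,C_f)\to1$ as $C_f\to0$ and $g\equiv f$ on $G$. Taking $S:=f(G)=g(G)\subset f(\RR^n)\cap B^N(0,1)$ and $h:=(g|_G)^{-1}:S\to G\subset\RR^n$ (suitably rescaled when the normalization is undone), $h$ is $L$-bi-Lipschitz and $\Haus^n(S)\geq L^{-n}\Leb^n(G)\geq\tfrac12 L^{-n}\Leb^n(B^n(0,1))\gtrsim1$, which finishes the proof. I expect the main obstacle to be the interlocking pair of facts used in the last two displays: (i) promoting the pointwise square-Dini bound on $G$ to \emph{uniform}, quantitative almost affine data for $f|_G$---in particular controlling how the linear parts $A'_x$ vary with position, measured against $\dist$ to the relevant pieces of $G$---and (ii) proving that such data extend off $G$ to a globally bi-Lipschitz map whose bi-Lipschitz constant degenerates to $1$ as the data approach the affine case, in the spirit of Tukia and V\"ais\"al\"a's extension results for quasisymmetric maps of small distortion. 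Ensuring that all constants depend only on $n$, $N$, $H_f(\RR^n)$ and $C_f$, rather than on the full quasisymmetric data of $f$, also requires care in the normalization step and in the Reshetnyak-type estimates imported from the proof of Theorem \ref{t:abt2}.
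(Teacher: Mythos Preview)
Your outline has a genuine gap at the two most important steps.

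First, the extension you invoke is not bi-Lipschitz. The extension theorems for almost affine maps (Theorems \ref{t:extend1} and \ref{t:extend2} in the paper, and likewise the Azzam--Schul extension you may have in mind) only produce an extension $F:\RR^n\to\RR^N$ that is again \emph{almost affine} and, under the right hypothesis on the eccentricity of the approximating linear parts, \emph{quasisymmetric} with small constant. They do not produce a bi-Lipschitz map: the compatibility condition only forces $\|A'_{x,r}\|$ and $\|A'_{y,s}\|$ to be comparable up to a factor $1+T_\varepsilon(\tau)\varepsilon$ (Lemma \ref{l:main-est}), which blows up as the scale ratio $\tau\to\infty$; hence $F$ is in general only H\"older. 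In the paper, the bi-Lipschitz map does not come from the extension at all---it comes afterwards, from the bi-Lipschitz parameterization theorem (Corollary \ref{c:bilip}) applied to the Reifenberg flat set $F(\RR^n)$, which yields $g:\RR^N\to\RR^N$ bi-Lipschitz with $g(F(\RR^n))=\RR^n$. The composition $h=g\circ F:\RR^n\to\RR^n$ is then only \emph{quasisymmetric}, not bi-Lipschitz.

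This is exactly why your final measure estimate $\Haus^n(S)\geq L^{-n}\Leb^n(G)$ is unjustified and why the hypothesis $n\geq 2$ is needed. A quasisymmetric map of $\RR^n$ can send a set of positive $\Leb^n$-measure to a null set when $n=1$ (the Beurling--Ahlfors example); the paper rescues the measure only by invoking Gehring's higher integrability (Corollary \ref{c:gehring}), which requires $n\geq 2$, to bound $\Leb^n(h(E))/\Leb^n(h(Q))$ from below in terms of $\Leb^n(E)/\Leb^n(Q)$. Your argument never touches this step and therefore cannot explain the restriction $n\geq 2$ in the statement.

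Second, you skip the preliminary localization. The paper does \emph{not} work on the original ball: it first uses the Carleson condition to find a sub-ball $B^n(x_1,27r_1)\subset B^n(x_0,r_0)$ on which $\tH_f$ is uniformly small (at the top scale), and only then applies Theorem \ref{t:qsaa} to conclude that $f$ is $\varepsilon_*$-almost affine over $B^n(x_1,9r_1)$. Your route---asserting that the pointwise square-Dini bound on $G$ implies $f|_G$ is almost affine with uniform data---does not follow from anything proved in the paper and is not clearly true: the Dini bound at a single $x$ controls $\tH_f(B^N(x,r))\to 0$ as $r\to 0$, but gives no control at the top scale $r\sim 1$, which is what is needed to start the compatibility chain and to apply the extension theorems.

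In short: replace ``bi-Lipschitz extension'' by ``quasisymmetric almost affine extension $F$, then bi-Lipschitz parameterize $F(\RR^n)$ via Corollary \ref{c:bilip}, then apply Gehring to $g\circ F$,'' and insert the Carleson pigeonhole to first find a sub-ball where $\tH_f$ is small. That is the paper's proof.
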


In the theory of uniform rectifiability \cite{DS91,DS93}, it is usually assumed that a set $X\subset\RR^N$ with big pieces of bi-Lipschitz images of $\RR^n$ is closed and Ahlfors $n$-regular, in the sense that $c_1r^{n}\leq \Haus^n(X\cap B^N(x,r))\leq c_2r^n$ for all $x\in X$ and $0<r< \diam X$. However, we wish to emphasize that in this paper we do not impose these regularity assumptions in the definition of big pieces of bi-Lipschitz images of $\RR^n$. As a consequence, the quasiplanes in Theorem \ref{t:main} are not necessarily $n$-rectifiable, but at least contain uniformly large rectifiable sets at each location and scale in the image. The restriction to $n\geq 2$ in Theorem \ref{t:main} enters our proof of the theorem when we invoke Gehring's theorem on distortion of Lebesgue measure by quasiconformal maps in $\RR^n$ (see Corollary \ref{c:gehring}). We do not currently know whether or not Theorem \ref{t:main} holds for quasilines. In this context, let us mention that in recent work the first author gave necessary and sufficient conditions in terms of linear approximation properties of $f$ for the image $f(\RR^n)$ of a quasisymmetric map $f:\RR^n\rightarrow\RR^N$ to have big pieces of bi-Lipschitz images of $\RR^n$ when $n\geq 2$, but demonstrated that analogous characterizations fail when $n=1$; see \cite{A-qdiff} for details.

At the core of each of Theorems \ref{t:abt}, \ref{t:abt2}, and \ref{t:main},  is a crucial observation of Prause \cite{Prause} that the image $f(\RR^n)$ of an embedding $f:\RR^N\rightarrow\RR^N$ with small weak quasisymmetry constant $\tH_f(B^N(x,r))$ along $x\in \RR^n$ can be locally approximated by $n$-dimensional planes in $\RR^N$ with correspondingly small error. See \S\ref{s:prelim2} for precise formulations of approximation of a set by planes and related criterion for bi-Lipschitz parameterization by subsets of $\RR^n$. To prove Theorem \ref{t:bgrt}, the authors of \cite{BGRT} gave a refinement of Prause's estimate in the special case $n=N-1$ and used it check the hypothesis of a bi-Lipschitz parameterization theorem from \cite{Toro95}. This approach had two limitations, which we show how to sidestep below. First and foremost the bi-Lipschitz parameterization theorem of \cite{Toro95} requires strong bilateral affine approximation estimates for $f(\RR^n)$, which we (still) do not know how to verify in the case of higher codimension ($1\leq n\leq N-2$). In its place, we now use a more flexible parameterization theorem from \cite{DT12}, which only requires strong \emph{unilateral} affine approximation estimates \emph{and} weak bilateral affine approximation estimates (see Theorem \ref{t:bilip} below). Checking the hypothesis of the new parameterizations theorem for quasiplanes in arbitrary codimension is non-trivial and requires several new estimates, but is within reach. See \S\ref{s:outline} for a detailed outline of our approach.

The second limitation from \cite{BGRT} that we address is how to relax the strong uniformity in condition (\ref{e:bgrt-dini}). In particular, to prove Theorem \ref{t:main}, we develop a tool for extending quasisymmetric mappings that are locally ``almost affine''. This extension result (Theorem \ref{t:extend1}) is of independent interest. For the definition of an almost affine map and the statement of the extension theorem, see \S\ref{ss:almost} and \S\ref{s:extend}, respectively. Roughly speaking, we show that if a map $f:E\rightarrow\RR^N$ defined on a closed set $E\subset\RR^n$ is approximately affine at all scales and locations in a suitable sense, then it extends to a global map $F:\RR^n\rightarrow\RR^N$ that is still almost affine and is smooth away from $E$. Moreover, if the affine approximations to $f$ are uniformly quasisymmetric, then the map $F$ is  quasisymmetric. This is related to investigations by Tukia and V\"ais\"al\"a (see \cite{TV84} and  \cite{Vaisala86}) on sets $E\subset \RR^n$ with the \emph{quasisymmetric extension property}, i.e.~ sets on which every embedding $f:E\rightarrow \RR^N$ with small quasisymmetric distortion can be extended to a quasisymmetric map on $\RR^n$. As shown by the first author (see \cite {A-qdiff}), understanding the approximation properties of a quasisymmetric map by affine maps is critical to decoding the geometry of its image.

The remainder of the paper is organized as follows. To start, we give two preliminary sections, which contain the necessary background on quasisymmetric and quasiconformal maps (\S\ref{s:prelim}) and affine approximation and bi-Lipschitz parameterization of sets (\S\ref{s:prelim2}). Next, we outline the new ingredients appearing in the proofs of the main theorems in \S\S\ref{ss:qsflat}--\ref{ss:almost}; and, we record the proofs of the main theorems in \S\ref{ss:proofs}. In the second half of the paper, \S\S \ref{s:flatness}--\ref{s:extend2}, we verify the new claims in \S\ref{s:outline}. The contents of these latter sections are described in the outline in \S\ref{s:outline}.

Throughout the sequel, we write $a\lesssim b$ (or $b\gtrsim a$) to denote that $a\leq Cb$ for some absolute constant $0<C<\infty$ and write $a\sim b$ if $a\lesssim b$ and $b\lesssim a$. Likewise we write $a\lesssim_{t} b$ (or $b\gtrsim_{t} a$) to denote that $a\leq C b$ for some constant $0<C<\infty$ that may depend on a list of parameters $t$ and write $a\sim_t b$ if $a\lesssim_t b$ and $b\lesssim_t a$.


\section{Preliminaries I: quasisymmetric and quasiconformal maps}
\label{s:prelim}

This section is intended to be a quick overview of the definitions of quasisymmetric, weakly quasisymmetric, and quasiconformal maps; the relationships between them; and a smattering of their essential properties. For additional background,  we refer the reader to V\"ais\"al\"a \cite{Vaisala}, and  Heinonen \cite{Heinonen}. Lemma \ref{l:wqs-crit}, Corollary \ref{c:1qs}, as well as the derivation of Corollary \ref{c:gehring} from Theorem \ref{t:gehring} are standard exercises, whose proofs are included for the convenience  of the reader.

A \emph{topological embedding} $f:X\rightarrow Y$ from a metric space $(X,d_X)$ into a metric space $(Y,d_Y)$ is a map that is a homeomorphism onto its image $f(X)$. A \emph{quasisymmetric map} $f:X\rightarrow Y$ is a topological embedding that ``preserves relative distances'' in the sense that $$d_X(a,x)\leq t\, d_X(b,x)\Longrightarrow d_Y(f(a),f(x))\leq \eta(t)\, d_Y(f(b),f(x))$$ for all $a,b,x\in X$ and $t>0$, for some increasing homeomorphism $\eta:(0,\infty)\rightarrow(0,\infty)$ called a  \emph{control function} for $f$. A map $f:X\rightarrow Y$ is called \emph{$\eta$-quasisymmetric} if $f$ is quasisymmetric and $\eta$ is a control function for $f$.

Quasisymmetric maps behave well under three basic map operations. First, the restriction $f|_A$ of an $\eta$-quasisymmetric map $f:X\rightarrow Y$ to a subset $A\subset X$ is again $\eta$-quasisymmetric. Second, the inverse $f^{-1}:f(X)\rightarrow X$ of $f$ is $\eta'$-quasisymmetric, where $\eta'(t)=1/\eta^{-1}(1/t)$ for all $t>0$. Third, the composition $g\circ f:X\rightarrow Z$ of $f$ with a $\zeta$-quasisymmetric map $g:Y\rightarrow Z$ is $(\zeta\circ \eta)$-quasisymmetric.

Quasisymmetric embeddings map bounded spaces onto bounded spaces, quantitatively.

\begin{lemma}[{\cite[Proposition 10.8]{Heinonen}}] If $f:X\rightarrow Y$ \label{l:QS-comp} is $\eta$-quasisymmetric and $A\subset B\subset X$ are such that $0<\diam A\leq \diam B<\infty$, then
\begin{equation*}
\frac{1}{2\eta\left(\frac{\diam B}{\diam A}\right)} \leq \frac{\diam f(A)}{\diam f(B)}\leq \eta\left(\frac{2\diam A}{\diam B}\right).
\end{equation*}
\end{lemma}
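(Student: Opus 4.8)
The plan is to prove the two inequalities separately, each by a one-line triangle‐inequality argument followed by a single application of the $\eta$-quasisymmetry condition. The only technical nuisance is that the diameters of $A$, $B$, and their images need not be attained by pairs of points, so throughout I would fix a parameter $\varepsilon>0$, select points realizing the relevant diameters up to the factor $1-\varepsilon$, and at the very end let $\varepsilon\to 0$, invoking the fact that the control function $\eta$ is continuous and increasing (being a homeomorphism of $(0,\infty)$). Note also that $0<\diam A\le\diam B<\infty$ guarantees that all the ratios written below are well defined and positive.

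For the upper bound, I would show $\diam f(A)\le\eta\!\left(2\diam A/\diam B\right)\diam f(B)$ as follows. Choose $b_1,b_2\in B$ with $d_X(b_1,b_2)>(1-\varepsilon)\diam B$. Given any $a\in A$, the triangle inequality forces $\max\{d_X(a,b_1),d_X(a,b_2)\}\ge \tfrac12 d_X(b_1,b_2)>\tfrac{1-\varepsilon}{2}\diam B$, so there is a point $b\in\{b_1,b_2\}\subset B$ with $d_X(a,b)\ge\tfrac{1-\varepsilon}{2}\diam B$. Then for every $a'\in A$ one has $d_X(a',a)\le \diam A\le t\,d_X(a,b)$ with $t=2\diam A/((1-\varepsilon)\diam B)$, so $\eta$-quasisymmetry (with $a$ as basepoint) gives $d_Y(f(a'),f(a))\le\eta(t)\,d_Y(f(b),f(a))\le\eta(t)\diam f(B)$, the last step because $a,b\in B$. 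Taking the supremum over $a,a'\in A$ and then letting $\varepsilon\to 0$ yields the claimed bound.

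For the lower bound, I would prove the equivalent statement $\diam f(B)\le 2\,\eta\!\left(\diam B/\diam A\right)\diam f(A)$. Choose $b_1,b_2\in B$ with $d_Y(f(b_1),f(b_2))>(1-\varepsilon)\diam f(B)$ and choose $a,a'\in A$ with $d_X(a,a')>(1-\varepsilon)\diam A$. For $i=1,2$ we have $d_X(b_i,a)\le\diam B\le s\,d_X(a',a)$ with $s=\diam B/((1-\varepsilon)\diam A)$, so quasisymmetry (again with $a$ as basepoint) gives $d_Y(f(b_i),f(a))\le\eta(s)\,d_Y(f(a'),f(a))\le\eta(s)\diam f(A)$. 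Hence $(1-\varepsilon)\diam f(B)<d_Y(f(b_1),f(b_2))\le d_Y(f(b_1),f(a))+d_Y(f(a),f(b_2))\le 2\,\eta(s)\diam f(A)$, and letting $\varepsilon\to 0$ finishes the proof.

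I do not expect any serious obstacle here; the argument is elementary. The two points that require a little care are the $\varepsilon$-approximation of the diameters coupled with the continuity/monotonicity of $\eta$, and keeping straight which point serves as the basepoint in each invocation of the quasisymmetry inequality. The hypothesis $\diam A\le\diam B$ (automatic from $A\subset B$) and the finiteness of $\diam B$ together with $\diam A>0$ are used only to ensure the quantities $t$, $s$, and the displayed ratios make sense.
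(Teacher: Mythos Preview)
Your argument is correct and is essentially the standard proof of this fact (as given in Heinonen's book); the paper itself does not supply a proof but simply quotes the result as \cite[Proposition 10.8]{Heinonen}. There is nothing to compare against, and your handling of the $\varepsilon$-approximation to the diameters together with the continuity of $\eta$ is exactly the right way to deal with possibly unattained suprema.
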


A \emph{weakly quasisymmetric map} $f:X\rightarrow Y$ is a topological embedding such that \begin{equation*}\begin{split}H_f(X):=\inf\{&H\geq 1: d_X(a,x)\leq d_X(b,x)\Longrightarrow\\ &d_Y(f(a),f(x))\leq H\, d_Y(f(b),f(x))\text{ for all } a,b,x\in X\}<\infty.\end{split}\end{equation*} The quantity $H_f(X)$ is called the \emph{weak quasisymmetry constant} of the map $f$ on $X$. A map $f:X\rightarrow Y$ is \emph{weakly $H$-quasisymmetric} if $f$ is weakly quasisymmetric and $H_f(X)\leq H<\infty$.

Every quasisymmetric map is weakly quasisymmetric. To wit, if $f$ is an $\eta$-quasisymmetric map on $X$, then $H_f(X)\leq \eta(1)$. In fact, for every quasisymmetric map $f$ on $X$ there exist (many) control functions $\eta_f$ such that $H_f(X)=\eta_f(1)$. Less obvious, however, is the fact that for certain metric spaces every weakly quasisymmetric map is quasisymmetric. A metric space $X$ is called \emph{doubling} if there is a positive integer $D=D(X)$ so that every set of diameter $d$ in the space can be covered by at most $D$ sets of diameter at most $d/2$.

\begin{theorem}[{\cite[Theorem 10.19]{Heinonen}}] Let $X$ and $Y$ be doubling metric spaces. If $X$ is connected and $f:X\rightarrow Y$ is weakly quasisymmetric, then $f$ is $\eta$-quasisymmetric for some control function $\eta$ depending only on doubling character of $X$ and $Y$, and on $H_f(X)$.\end{theorem}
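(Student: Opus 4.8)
The plan is to upgrade the weak quasisymmetry of $f$ to genuine quasisymmetry by manufacturing a control function of \emph{power type}, $\eta(t)=C\max\{t^{\theta},t^{1/\theta}\}$, with $C\geq H_f(X)$ and $\theta\in(0,1]$ depending only on $H_f(X)$, $D(X)$, $D(Y)$. For $x\in X$ and $r>0$ put $L_f(x,r)=\sup\{d_Y(f(y),f(x)):d_X(y,x)\leq r\}$ and $l_f(x,r)=\inf\{d_Y(f(z),f(x)):d_X(z,x)\geq r\}$, both nondecreasing in $r$. Applying the single-point inequality defining weak quasisymmetry with $d_X(y,x)\leq r\leq d_X(z,x)$ immediately yields $L_f(x,r)\leq H_f(X)\,l_f(x,r)$, i.e.\ $f$ sends balls about $x$ to regions of bounded eccentricity about $f(x)$. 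Granting this, the construction of $\eta$ reduces to controlling how $r\mapsto L_f(x,r)$ grows and decays, that is, to showing that $f$ neither collapses nor explodes a definite range of scales, quantitatively in the data.

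The crux, and the only place connectedness is used, is the scale-non-collapsing estimate: \emph{there is $N_1=N_1(H_f(X),D(Y))$ such that $L_f(x,2^{N_1}r)\geq 2L_f(x,r)$ whenever $2^{N_1}r\leq\tfrac12\diam X$.} Suppose not, so $L_f(x,2^{N}r)<2L_f(x,r)=:2M$ for some large $N$. Connectedness makes the continuous function $w\mapsto d_X(w,x)$ have an interval image reaching $\sup_w d_X(w,x)\geq\tfrac12\diam X$, so we may pick $s_j$ with $d_X(s_j,x)=2^{j}r$ for $j=0,\dots,N$; these behave like a geometric sequence, $d_X(s_i,s_j)\sim 2^{\max(i,j)}r$. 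Using the weak quasisymmetry inequality with base point $s_i$ to compare $x$ against $s_j$ (legitimate since $d_X(x,s_i)\leq d_X(s_j,s_i)$ when $i<j$), and with base point $x$ to compare $s_i$ against a near-maximizer of $L_f(x,r)$, one extracts $d_Y(f(s_i),f(s_j))\gtrsim_{H_f(X)} M$ for all $i\neq j$, while the standing hypothesis forces $d_Y(f(s_i),f(x))<2M$. Thus $f(s_0),\dots,f(s_N)$ are $N+1$ points lying in a ball of radius $2M$ about $f(x)$ that are pairwise $\gtrsim_{H_f(X)} M$ apart, and the doubling of $Y$ caps their number by a constant $N_1(H_f(X),D(Y))$, forcing $N<N_1$, a contradiction. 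Iterating gives $L_f(x,\rho)\gtrsim(\rho/r)^{1/N_1}L_f(x,r)$ for $r\leq\rho\leq\tfrac12\diam X$; combined with $L_f\leq H_f(X)\,l_f$ and monotonicity, this yields, for $d_X(a,x)\leq t\,d_X(b,x)$ with $t\leq 1$, the bound $d_Y(f(a),f(x))\leq C\,t^{1/N_1}d_Y(f(b),f(x))$ with $C=C(H_f(X),D(X),D(Y))$, after disposing by hand of the trivial top-scale regime $d_X(b,x)\gtrsim\diam X$.

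For $t\geq1$ (bounding $d_Y(f(a),f(x))$ when $a$ is \emph{farther} from $x$ than $b$) I would pass to the inverse. The bound $C\,t^{1/N_1}$ just proved forces $f^{-1}\colon f(X)\to X$ to be weakly $H'$-quasisymmetric with $H'=C^{N_1}$: if $d_X(a,x)>H'd_X(b,x)$, applying that bound to the pair $(b,a)$ gives $d_Y(f(b),f(x))<d_Y(f(a),f(x))$, which is the contrapositive of weak quasisymmetry of $f^{-1}$. Since $f(X)$ is connected (a continuous image of $X$) and doubling (a subset of $Y$), the whole argument of the previous paragraph applies to $f^{-1}$, yielding the symmetric bound $d_Y(f(a),f(x))\leq C'\,t^{N_1'}d_Y(f(b),f(x))$ for $t\geq1$, with $C',N_1'$ controlled by $H'$, $D(X)$, $D(Y)$, hence by the original data. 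Taking $\eta(t):=C_0\max\{t^{1/N_1},\,t^{N_1'}\}$ with $C_0:=\max\{C,C',H_f(X)\}$ gives a continuous, strictly increasing homeomorphism of $(0,\infty)$ with $\eta(1)\geq H_f(X)$ that controls $f$, and $\eta$ depends only on $H_f(X)$, $D(X)$, $D(Y)$, as required.

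I expect the main obstacle to be the separation step inside the Claim: the inequality defining weak quasisymmetry compares only distances to a \emph{common} base point, so producing the genuinely two-point lower bounds $d_Y(f(s_i),f(s_j))\gtrsim_{H_f(X)} M$ requires choosing the auxiliary base points ($s_i$ itself, and a near-maximizer of $L_f(x,r)$) judiciously and keeping track of all comparability constants, after which the packing estimate against the doubling of $Y$ with the correct data-dependent number of scales $N_1$ closes the loop. The remaining ingredients --- monotonicity of $L_f$ and $l_f$, the iteration yielding polynomial rates, the inverse-map bootstrap, and the top-scale cases where $d_X(b,x)\sim\diam X$ --- are routine.
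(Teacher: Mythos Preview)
The paper does not prove this statement at all: it is quoted verbatim as \cite[Theorem 10.19]{Heinonen} and used as a black box, so there is no ``paper's own proof'' to compare against. What you have written is, in outline, the standard Tukia--V\"ais\"al\"a argument as presented in Heinonen's book: bounded eccentricity $L_f\le H\,l_f$ from weak quasisymmetry, a scale-doubling claim $L_f(x,2^{N_1}r)\ge 2L_f(x,r)$ proved by a packing/doubling contradiction on a geometric chain $\{s_j\}$ obtained via connectedness, iteration to a power bound for $t\le 1$, and then a symmetric bound for $t\ge 1$.

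Your sketch is essentially correct, with two small points worth tightening. First, in the separation step you should record explicitly that for $i<j$ the reverse triangle inequality gives $d_X(s_j,s_i)\ge 2^j r-2^i r\ge 2^i r=d_X(x,s_i)$, which is exactly the hypothesis needed to apply weak quasisymmetry at base point $s_i$; then a second application at base point $x$ against a near-maximizer of $L_f(x,r)$ yields $d_Y(f(s_i),f(s_j))\ge M/(2H^2)$, and doubling of $Y$ bounds the number of such $M/(2H^2)$-separated points in the ball $B_Y(f(x),2M)$. Second, your bootstrap to $f^{-1}$ is fine but the constant needs a hair of slack: with $H'=C^{N_1}$ you only get $d_Y(f(b),f(x))\le d_Y(f(a),f(x))$ (non-strict), so take $H'=(2C)^{N_1}$ (or any constant strictly larger than $C^{N_1}$) to force the strict inequality in the contrapositive; also note that ``$f(X)$ doubling as a subset of $Y$'' is correct but costs a square on the doubling constant. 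With these cosmetic fixes your argument goes through and reproduces the textbook proof the paper is citing.
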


In particular, weakly quasisymmetric maps between Euclidean spaces are quasisymmetric.

\begin{corollary}[{\cite[Corollary 10.22]{Heinonen}}] \label{c:wqs2qs} Let $X\subset\RR^n$ be a connected set and let $f:X\rightarrow\RR^N$. If $f$ is weakly quasisymmetric, then $f$ is $\eta$-quasisymmetric for some control function depending only on $n$, $N$, and $H_f(X)$. \end{corollary}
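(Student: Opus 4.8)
The plan is to deduce Corollary \ref{c:wqs2qs} directly from the theorem stated immediately above it (\cite[Theorem 10.19]{Heinonen}). That theorem already gives the conclusion once one knows that the source space is connected and doubling and that the target space is doubling, with the control function depending only on $H_f$ and the doubling characters of the two spaces. Connectedness of $X$ is assumed, so the entire task reduces to checking that a subset $X\subset\RR^n$ with the restricted Euclidean metric is doubling with doubling character controlled by $n$ alone, and that the image $f(X)\subset\RR^N$ (which makes sense as a metric space since a weakly quasisymmetric $f$ is a topological embedding) is doubling with doubling character controlled by $N$ alone.

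First I would verify that $\RR^m$ is doubling with, say, $D(\RR^m)\le 16^m$, by a standard packing estimate: a set of diameter $d$ lies in a ball $B^m(x_0,d)$ centered at one of its points; choosing a maximal subset $\{x_i\}$ of $B^m(x_0,d)$ whose points are pairwise more than $d/4$ apart, the balls $B^m(x_i,d/8)$ are pairwise disjoint and contained in $B^m(x_0,2d)$, so comparing their measures $\Leb^m$ bounds their number by $16^m$, while maximality forces $B^m(x_0,d)\subset\bigcup_i B^m(x_i,d/4)$, a cover by at most $16^m$ sets of diameter $\le d/2$. Next I would note that the doubling property passes to subspaces with the same constant: if $A\subset X$ has diameter $d$, then $A$ is a subset of $\RR^m$ of diameter $d$, hence is covered by at most $D(\RR^m)$ sets $U_i\subset\RR^m$ of diameter $\le d/2$, and the sets $U_i\cap X$ cover $A$, lie in $X$, and still have diameter $\le d/2$. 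Applying this with $m=n$ to $X$ and with $m=N$ to $f(X)$ shows that both are doubling with $D(X)\le D(\RR^n)$ and $D(f(X))\le D(\RR^N)$.

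With these observations in hand, I would apply \cite[Theorem 10.19]{Heinonen} to $f$ regarded as a map onto its image, a weakly quasisymmetric map with constant $H_f(X)$ between doubling metric spaces, whose source $X$ is moreover connected. This produces a control function $\eta$ depending only on $D(X)$, $D(f(X))$, and $H_f(X)$, hence---by the dimension-only bounds just established---only on $n$, $N$, and $H_f(X)$; and an $\eta$-quasisymmetric map from $X$ onto $f(X)$ is a fortiori an $\eta$-quasisymmetric map from $X$ into $\RR^N$, since quasisymmetry involves only mutual distances of points of $X$ and of their images. There is no genuine obstacle here; the only point requiring a modicum of care is to make the doubling constants explicitly dimension-dependent rather than dependent on the particular set $X$ or map $f$, which is exactly what the packing estimate above supplies.
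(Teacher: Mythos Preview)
The paper does not supply its own proof of this corollary; it is simply quoted from \cite[Corollary 10.22]{Heinonen} as a black-box fact (the paper only singles out Lemma~\ref{l:wqs-crit}, Corollary~\ref{c:1qs}, and the derivation of Corollary~\ref{c:gehring} as statements it proves for the reader's convenience). Your argument is correct and is exactly the natural derivation from \cite[Theorem 10.19]{Heinonen}: the only content beyond that theorem is that subsets of Euclidean space inherit a dimension-only doubling bound, which your packing estimate establishes cleanly.
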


\begin{theorem}[{\cite[Theorem 10.30]{Heinonen}}] \label{t:wqs-cpt} Let $X\subset\RR^n$ be a connected set containing $x_1\neq x_2$. For all $H\geq 1$, the family of  weakly $H$-quasisymmetric maps $f:X\rightarrow\RR^N$ such that $f(x_i)=x_i$ for $i=1,2$ is sequentially compact in the topology of uniform convergence on compact sets.\end{theorem}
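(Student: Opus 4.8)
\textbf{Proof proposal for Theorem~\ref{t:wqs-cpt}.} The plan is to run a normal--families (Arzel\`a--Ascoli) argument, after first upgrading weak quasisymmetry to genuine quasisymmetry with a control function that is uniform over the whole family. By Corollary~\ref{c:wqs2qs}, every weakly $H$-quasisymmetric map $f\colon X\to\RR^N$ is $\eta$-quasisymmetric for a control function $\eta=\eta_{n,N}$ depending only on $n$, $N$, and (an upper bound for) $H_f(X)$; since that construction is monotone in $H_f(X)$, a single $\eta$ serves the entire family $\mathcal F$ of weakly $H$-quasisymmetric maps $f\colon X\to\RR^N$ with $f(x_i)=x_i$ for $i=1,2$. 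Write $d_0:=|x_1-x_2|>0$; the normalization forces $|f(x_1)-f(x_2)|=d_0$ for every $f\in\mathcal F$, which pins down the scale.

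First I would establish uniform equicontinuity and uniform local boundedness of $\mathcal F$ on compact subsets of $X$, via Lemma~\ref{l:QS-comp}. Fix a compact set $K\subset X$ and choose $R>0$ so large that $B:=X\cap B^N(x_1,R)$ contains $K$, the points $x_1,x_2$, and the unit neighborhood of $K$ in $X$; then $0<d_0\le\diam B<\infty$. Applying Lemma~\ref{l:QS-comp} to $\{x_1,x_2\}\subset B$ gives, for every $f\in\mathcal F$,
\[
  \diam f(B)\ \le\ 2d_0\,\eta\!\left(\tfrac{\diam B}{d_0}\right)=:M,
\]
a bound independent of $f$. Applying it again to $\{x,y\}\subset B$ with $x,y\in K$ gives $|f(x)-f(y)|\le M\,\eta(2|x-y|/\diam B)$, and the right-hand side tends to $0$ as $|x-y|\to0$ because $\eta$ is an increasing homeomorphism of $(0,\infty)$ with $\eta(0^+)=0$; this is uniform equicontinuity on $K$ with a modulus depending only on $n,N,H,K$. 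Likewise $|f(x)|\le|f(x)-f(x_1)|+|x_1|\le M+|x_1|$ for $x\in K$, so $\mathcal F|_K$ is uniformly bounded.

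Next I would extract a limit. Let $D=\{p_1,p_2,\dots\}$ be a countable dense subset of $X$ (possible since $X\subset\RR^n$ is separable). Given $(f_j)\subset\mathcal F$, the points $f_j(p_i)$ lie in a bounded set for each fixed $i$, so a diagonal argument yields a subsequence, still denoted $(f_j)$, with $f_j(p_i)$ convergent for every $i$. For a fixed compact $K\subset X$, combining convergence on $D$, the uniform equicontinuity above, and density of $D$ in $X$ (covering $K$ by finitely many balls of radius $<1$ centered at points of $K$ and approximating each center by a point of $D$, so that all the points involved lie in the set $B$ attached to $K$) shows $(f_j)$ is uniformly Cauchy on $K$. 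Hence $(f_j)$ converges uniformly on compact subsets of $X$ to a map $f\colon X\to\RR^N$, which is continuous.

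Finally I would verify that $f\in\mathcal F$, and this is the one point that is not purely formal. Clearly $f(x_i)=\lim_j f_j(x_i)=x_i$, and the implication $|a-x|\le|b-x|\Rightarrow|f_j(a)-f_j(x)|\le H|f_j(b)-f_j(x)|$ survives passage to the pointwise limit, so $f$ satisfies the weak quasisymmetry inequality with the \emph{same} constant $H$. The remaining issue is that being weakly $H$-quasisymmetric also requires $f$ to be a topological embedding, and an arbitrary local uniform limit of embeddings need not be one. Here, though, $f$ is non-constant because $f(x_1)=x_1\ne x_2=f(x_2)$, and the standard fact that a non-constant local uniform limit of $\eta$-quasisymmetric maps is itself $\eta$-quasisymmetric (see \cite{Vaisala} or \cite{Heinonen}) shows $f$ is an $\eta$-quasisymmetric embedding; combined with the inequality just established, this gives $f\in\mathcal F$. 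The main obstacle is exactly this last step---ruling out degeneration of the limit---which is precisely where the normalization $f(x_i)=x_i$ does its work.
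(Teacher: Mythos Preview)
The paper does not supply its own proof of this statement; it is quoted verbatim from \cite[Theorem~10.30]{Heinonen} as a background result. Your argument is the standard normal-families proof one finds in that reference: upgrade to a uniform control function via Corollary~\ref{c:wqs2qs}, use Lemma~\ref{l:QS-comp} for equicontinuity and local boundedness, run Arzel\`a--Ascoli, and use the normalization to rule out degeneration of the limit. This is correct and there is nothing in the paper to compare it against.
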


Here is a useful criterion for checking that a map from one Euclidean space into another is weakly quasisymmetric.

\begin{lemma}\label{l:wqs-crit} If $f:\RR^n\rightarrow\RR^N$ is continuous, nonconstant, and $H_f(\RR^n)<\infty$, then $f$ is weakly quasisymmetric.\end{lemma}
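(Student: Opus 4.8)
The plan is to check the two parts of the definition of a weakly quasisymmetric map that are not already among the hypotheses, namely that $f$ is injective and that $f^{-1}\colon f(\RR^n)\to\RR^n$ is continuous; finiteness of $H:=H_f(\RR^n)$ is assumed, and continuity of $f$ is given.

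For injectivity I would argue by contradiction. Suppose $f(p)=f(q)$ with $p\neq q$, and put $s=|p-q|>0$. Applying the inequality defining $H_f(\RR^n)$ with base point $x=q$, with $b=p$, and with $a$ arbitrary subject to $|a-q|\leq|p-q|=s$, one gets $|f(a)-f(q)|\leq H|f(p)-f(q)|=0$, so $f$ is constant on the closed ball $B^n(q,s)$. A brief geometric induction then propagates this. If $f$ is constant on $B^n(q,\rho)$, then for every unit vector $u$ the point $q+\rho u$ lies in $B^n(q,\rho)$ and hence has the same image as $q$; taking $q+\rho u$ as the base point and the antipodal point $q-\rho u$ (which lies in $B^n(q,\rho)$ and is at distance $2\rho\geq|a-(q+\rho u)|$ from $q+\rho u$ for any $a\in B^n(q+\rho u,\rho)$) as witness, one finds $f$ constant on $B^n(q+\rho u,\rho)$; letting $u$ vary, $f$ is constant on $B^n(q,2\rho)$. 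Iterating, $f$ is constant on $\RR^n$, contradicting nonconstancy.

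For continuity of $f^{-1}$, the crux is that $f$ is proper, i.e.\ $|f(x_k)|\to\infty$ whenever $|x_k|\to\infty$. If this fails there is a sequence $x_k$ with $|x_k|\to\infty$ but, after passing to a subsequence, $f(x_k)\to y_0$ for some $y_0\in\RR^N$. First, for each \emph{fixed} $p\in\RR^n$ and $y\in\RR^n$, once $k$ is large enough that $|x_k-p|\geq|y-p|$ the defining inequality with base point $x=p$ gives $|f(y)-f(p)|\leq H|f(x_k)-f(p)|$; letting $k\to\infty$ yields $f(\RR^n)\subset B^N\bigl(f(p),\,H|y_0-f(p)|\bigr)$. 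Since this holds for every $p$, I may now take $p=x_k$ and let $k\to\infty$; because $|y_0-f(x_k)|\to 0$, this forces $\diam f(\RR^n)=0$, i.e.\ $f$ is constant, a contradiction. Hence $f$ is proper, and a proper continuous injection of $\RR^n$ into $\RR^N$ is a homeomorphism onto a closed subset, so in particular $f$ is a topological embedding. Together with $H_f(\RR^n)<\infty$ this is exactly the assertion that $f$ is weakly quasisymmetric.

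I expect the continuity of $f^{-1}$ (equivalently, the properness of $f$) to be the only real obstacle: injectivity is a routine propagation argument, and passing from properness to the embedding property is standard point-set topology. The one genuinely clever step is to apply the hypothesis $H_f(\RR^n)<\infty$ with the base point pushed far out along the putative escaping sequence $x_k$, which is what forces the entire image into an arbitrarily small ball and produces the contradiction.
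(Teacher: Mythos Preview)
Your proof is correct. The injectivity argument is essentially the same propagation scheme as in the paper, just organized as a direct doubling of radius around a single center rather than alternating between two centers.

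Your properness argument, however, is genuinely different from the paper's and more elementary. The paper proves $|f(x_i)|\to\infty$ by first observing that each restriction $f|_{B^n(x_1,r_i)}$ is a topological embedding (compact domain, Hausdorff target), then invoking Corollary~\ref{c:wqs2qs} to upgrade these restrictions to $\eta$-quasisymmetric maps with a \emph{uniform} control function $\eta$, and finally using $\eta$ to bound $|f(x_i)-f(x_1)|$ from below. This route imports the nontrivial machinery behind Corollary~\ref{c:wqs2qs} (doubling spaces, Theorem~10.19 in Heinonen). By contrast, your two-step trick---first bounding $f(\RR^n)$ inside $B^N(f(p),H|y_0-f(p)|)$ for every $p$, then specializing $p=x_k$ and letting $k\to\infty$---uses nothing beyond the defining inequality for $H_f$ and gives a fully self-contained proof. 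What the paper's route buys is that it illustrates how Corollary~\ref{c:wqs2qs} is used elsewhere; what yours buys is independence from that corollary and a shorter argument.
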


\begin{proof} Suppose $f:\RR^n\rightarrow\RR^N$ is continuous, nonconstant, and $H_f(\RR^n)<\infty$. To show that $f$ is weakly quasisymmetric we must prove $f$ is injective and $f^{-1}:f(\RR^n)\rightarrow\RR^n$ is continuous.

Assume to reach a contradiction that $f(x_0)=f(z_0)$ for some $x_0\neq z_0$, and let $r>0$ denote the distance between $x_0$ and $z_0$. Then \begin{equation*} |f(x_0)-f(y)|\leq H_f(\RR^n)|f(x_0)-f(z_0)|=0 \quad\text{for all }|x_0-y|\leq r,\end{equation*} since $H_f(\RR^n)<\infty$. That is, $f$ is constant on $B^n(x_0,r)$. Let $x_1\in\partial B^n(x_0,r)$ denote the unique point such that $|x_0-x_1|=r$ and $|z_0-x_1|=2r$. Then $f(x_1)=f(x_0)=f(z_0)$ and \begin{equation*} |f(z_0)-f(y)| \leq H_f(\RR^n)|f(z_0)-f(x_1)|=0\quad\text{for all }|z_0-y|\leq 2r,\end{equation*} since $H_f(\RR^n)<\infty$. That is, $f$ is constant on $B^n(z_0,2r)$. Let $z_1\in \partial B^n(z_0,2r)$ denote the unique point such that $|z_0-z_1|=2r$ and $|x_1-z_1|=4r$. Proceeding inductively, we see that $f$ is constant on a sequence of balls, \begin{equation*}B^n(x_0,r)\subset B^n(z_0,2r)\subset B^n(x_1,4r)\subset B^n(z_1,8r)\subset\cdots,\end{equation*} exhausting $\RR^n$. This contradicts the hypothesis that $f$ is nonconstant. Therefore, $f$ is injective.

Let $\wRR{n}=\RR^n\cup\{\infty\}$ and $\wRR{N}=\RR^N\cup\{\infty\}$ denote the one-point compactifications of $\RR^n$ and $\RR^N$, respectively. Extend $f$ to an injective map $F:\wRR{n}\rightarrow\wRR{N}$ by defining $F(\infty)=\infty$ and $F(x)=f(x)$ for all $x\in\RR^n$. Every injective continuous map from a compact space onto a Hausdorff space is open. Thus, if $F$ is continuous, then $f^{-1}=F^{-1}|_{f(\RR^n)}$ is continuous too. In other words, to check that $f^{-1}$ is continuous, it suffices to prove $F$ is continuous.
Because $F|_{\RR^n}=f$ is continuous, the full map $F$ is continuous if and only if  $f(x_i)\rightarrow\infty$ for every sequence $(x_i)_{i=1}^\infty$ in $\RR^n$ such that $x_i\rightarrow\infty$.

Let $(x_i)_{i=1}^\infty$ be any sequence in $\RR^n$ such that $x_i\rightarrow\infty$. By truncating a finite number of terms, we may assume without loss of generality that $r_i:=|x_i-x_1| \geq |x_2-x_1|>0$ for all $i\geq 2$. Note that $r_i\rightarrow\infty$, since $x_i\rightarrow\infty$.
For all $i\geq 2$, let $f_i$ denote the restriction of $f$ to $B^n(x_1,r_i)$. Then $f_i$ is open, again because every one-to-one continuous map from a compact space onto a Hausdorff space is open. Thus, each $f_i$ is a topological embedding from $B^n(x_1,r_i)$ into $\RR^N$ with $H_{f_i}(B^n(x_1,i))\leq H_f(\RR^n)<\infty$. By Corollary \ref{c:wqs2qs}, the maps $f_i$ are uniformly $\eta$-quasisymmetric for some control function $\eta$ that is independent of $i$. Hence \begin{equation*} |f(x_i)-f(x_1)| \geq \frac{|f(x_2)-f(x_1)|}{\eta(r_2/r_i)}\rightarrow\infty \quad\text{as }i\rightarrow\infty,\end{equation*} since $\lim_{i\rightarrow\infty} \eta(r_2/r_1)=0$. It follows that $f(x_i)\rightarrow\infty$. Therefore, $f^{-1}$ is continuous.\end{proof}

A \emph{quasiconformal map}\footnote{There are three commonly used definitions of quasiconformal maps in Euclidean space, which are equivalent \emph{a posteriori}. The definition given here is called the \emph{analytic definition} of a quasiconformal map. The others are the so-called \emph{geometric} and \emph{metric} definitions; for the full story, see e.g.~ \cite{Juha-QC} or \cite{Vaisala}.}  $f:\Omega\rightarrow\RR^N$ is a topological embedding from a domain $\Omega\subset\RR^N$ ($N\geq 2$) such that $f\in W_{\loc}^{1,N}(\Omega)$ and \begin{equation*} K_f(\Omega):=\esssup_{x\in\Omega} \max\left\{\frac{\lambda_N(f,x)^N}{\lambda_1(f,x)\cdots\lambda_N(f,x)}, \frac{\lambda_1(f,x)\cdots\lambda_N(f,x)}{\lambda_1(f,x)^N}\right\}<\infty. \end{equation*} Here $0\leq \lambda_1(f,x)\leq \dots\leq \lambda_N(f,x)<\infty$ denote the singular values of the total derivative $Df(x)$ of $f$ at $x$, i.e.~ the (positive) square root of the eigenvalues of $(Df(x))^TDf(x)$, which are defined at almost every $x\in \Omega$. The quantity $K_f(\Omega)$ is called the \emph{maximal dilatation} of the map $f$ in $\Omega$. A quasiconformal map $f$ is called \emph{$K$-quasiconformal} if $K_f(\Omega)\leq K<\infty$. If $f:\Omega\rightarrow\RR^N$ is $K$-quasiconformal, then the inverse $g=f^{-1}:f(\Omega)\rightarrow\Omega$ is also $K$-quasiconformal.

Every quasisymmetric map $f:\Omega\rightarrow\RR^N$ on a domain $\Omega\subset\RR^N$ ($N\geq 2$) is quasiconformal with $K_f(\Omega)\leq H_f(\Omega)^{N-1}$. In the other direction, the situation is as follows.

\begin{theorem}[{\cite[Theorem 11.14]{Heinonen}}] \label{t:qc2qs} Every quasiconformal map $f:\RR^N\rightarrow\RR^N$ is $\eta_{N,K}$-quasisymmetric for some control function $\eta_{N,K}$ depending only on $N$ and $K=K_f(\RR^N)$.\end{theorem}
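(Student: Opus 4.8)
The plan is to use Corollary~\ref{c:wqs2qs} to reduce the theorem to a uniform estimate on the ``linear dilatation'' of $f$, and then to establish that estimate via the geometric (modulus) characterization of quasiconformality together with the classical extremal-length inequalities for ring domains. First recall the standard fact that a quasiconformal map $f:\RR^N\to\RR^N$ is a homeomorphism of $\RR^N$ onto $\RR^N$ and extends to a homeomorphism of $\wRR{N}=\RR^N\cup\{\infty\}$ fixing $\infty$ (see \cite{Vaisala}); in particular $f$ is a topological embedding, so once we show $H_f(\RR^N)\le \Lambda$ for some $\Lambda=\Lambda(N,K)<\infty$, the map $f$ is weakly quasisymmetric and Corollary~\ref{c:wqs2qs} (applied with $n=N$ and the connected set $X=\RR^N$) upgrades it to an $\eta$-quasisymmetric map with $\eta$ depending only on $N$ and an upper bound for $H_f(\RR^N)$, hence only on $N$ and $K$. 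To bound $H_f(\RR^N)$, for $a\in\RR^N$ and $\rho>0$ set $L_f(a,\rho)=\max\{|f(z)-f(a)|:z\in\partial B^N(a,\rho)\}$ and $l_f(a,\rho)=\min\{|f(z)-f(a)|:z\in\partial B^N(a,\rho)\}$. Since $f$ is a homeomorphism of $\RR^N$, the farthest point of the compact set $f(B^N(a,\rho))$ from $f(a)$ lies on $f(\partial B^N(a,\rho))$, so $\rho\mapsto L_f(a,\rho)$ is nondecreasing; hence for $|x-a|\le |y-a|$ we have $|f(x)-f(a)|\le L_f(a,|x-a|)\le L_f(a,|y-a|)$ while $|f(y)-f(a)|\ge l_f(a,|y-a|)$, and therefore $H_f(\RR^N)\le \sup_{a,\rho} L_f(a,\rho)/l_f(a,\rho)$. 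It thus suffices to bound this supremum by a constant depending only on $N$ and $K$.

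The key estimate is: there is $\varepsilon=\varepsilon(N,K)>0$ such that for all $a\in\RR^N$, $r>0$, and $u,v\in\partial B^N(a,r)$ with $|u-v|\ge r/2$ one has $|f(u)-f(a)|\le \varepsilon^{-1}|f(v)-f(a)|$ (and, by symmetry, the same with $u$ and $v$ exchanged). To prove it I would take the ring domain $R=\RR^N\setminus(C_0\cup C_1)$, where $C_0=[a,u]$ is the line segment and $C_1=\{a+\lambda(v-a)/r:\lambda\ge r\}$ is the ray from $v$ pointing straight away from $a$; for $N\ge 2$ this is a genuine ring with bounded complementary continuum $C_0$ and unbounded complementary continuum $C_1$. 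An elementary geometric computation shows $\dist(C_0,C_1)\ge c_0 r$ for a universal $c_0>0$ whenever $|u-v|\ge r/2$, so $R$ contains a ``capsule''-shaped ring of fixed proportions around the segment $C_0$, and hence $\mathrm{mod}(R)\ge m_0(N)>0$ by the standard lower bound for moduli of rings whose complementary continua are well separated relative to their diameters. Since $f$ is $K$-quasiconformal, the geometric definition of quasiconformality---equivalent to the analytic one used here; see \cite{Vaisala,Heinonen}---gives $\mathrm{mod}(f(R))\ge m_0(N)/K'$ for a constant $K'=K'(N,K)$. But $f(R)$ is a ring whose bounded complementary continuum $f(C_0)$ contains $f(a)$ and $f(u)$, and whose unbounded complementary continuum $f(C_1)$ contains $f(v)$, so $\diam f(C_0)\ge |f(u)-f(a)|$ and $\dist(f(C_0),f(C_1))\le |f(v)-f(a)|$. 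By the Gr\"otzsch--Teichm\"uller upper bound---the modulus of a ring with complementary continua $E_0$ (bounded) and $E_1$ is at most $\Theta_N\!\left(\dist(E_0,E_1)/\diam E_0\right)$, where $\Theta_N:(0,\infty)\to(0,\infty)$ is an increasing homeomorphism with $\Theta_N(t)\to 0$ as $t\to0^+$ (essentially the modulus of the $N$-dimensional Gr\"otzsch ring, obtained by spherical symmetrization)---and monotonicity of $\Theta_N$, we get $m_0(N)/K'\le \Theta_N\!\left(|f(v)-f(a)|/|f(u)-f(a)|\right)$, which rearranges to $|f(v)-f(a)|/|f(u)-f(a)|\ge \Theta_N^{-1}(m_0(N)/K')=:\varepsilon(N,K)>0$.

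Finally, the linear dilatation bound follows by a short chaining argument. Fix $a,r$ and choose $b,c\in\partial B^N(a,r)$ with $|f(b)-f(a)|=L_f(a,r)$ and $|f(c)-f(a)|=l_f(a,r)$. If $|b-c|\ge r/2$, the key estimate gives $L_f(a,r)\le \varepsilon^{-1}l_f(a,r)$ at once. If $|b-c|<r/2$, then (possible since $N\ge 2$) the sphere $\partial B^N(a,r)$ contains a point $d$ with $|d-b|\ge r$ and $|d-c|\ge r$, and applying the key estimate to the pairs $(b,d)$ and $(d,c)$ gives $L_f(a,r)\le \varepsilon^{-1}|f(d)-f(a)|\le \varepsilon^{-2}l_f(a,r)$. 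Either way $L_f(a,\rho)/l_f(a,\rho)\le \varepsilon(N,K)^{-2}$ for every $a$ and $\rho$, whence $H_f(\RR^N)\le \varepsilon(N,K)^{-2}$ and Corollary~\ref{c:wqs2qs} finishes the proof, with $\eta_{N,K}$ the control function it provides. The main obstacle in executing this plan is the modulus bookkeeping in the middle step: one must fix a convention for $\mathrm{mod}$, verify carefully that $R$ and $f(R)$ are rings with the asserted complementary continua, and invoke the two classical ring inequalities---a uniform lower bound for ``capsule'' rings and the Gr\"otzsch--Teichm\"uller upper bound, both consequences of symmetrization (see \cite{Vaisala,Heinonen})---together with the equivalence of the analytic and geometric definitions of quasiconformality. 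All of this is standard but requires attention to constants; note also that $N\ge 2$ is used repeatedly (ring domains, $W^{1,N}_{\loc}$, the exponent $N-1$).
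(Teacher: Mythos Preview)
The paper does not prove this theorem; it is quoted from \cite[Theorem~11.14]{Heinonen} as background and no argument is given, so there is no ``paper's own proof'' to compare against.

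That said, your sketch is the classical route and is essentially correct. Reducing to a uniform bound on $L_f(a,r)/l_f(a,r)$ and then invoking Corollary~\ref{c:wqs2qs} is exactly how one usually deduces global quasisymmetry from quasiconformality in $\RR^N$. The modulus argument you outline is standard: the ``capsule'' test function gives an upper bound on the capacity of $R$ (hence a lower bound $m_0(N)$ on the ring modulus), quasi-invariance transfers this to $f(R)$, and the Gr\"otzsch--Teichm\"uller symmetrization bound controls $\mathrm{mod}(f(R))$ from above in terms of $\dist(f(C_0),f(C_1))/\diam f(C_0)$. One point that deserves care in a full write-up is fixing a single convention for ``modulus of a ring'' (capacity of the connecting family versus its reciprocal power) and checking that both extremal inequalities are stated in that convention; your phrasing ``well separated relative to their diameters'' for the lower bound is slightly ambiguous, but the capsule estimate you describe is precisely what is needed and yields $m_0(N)>0$ depending only on $N$ once $\dist(C_0,C_1)\ge c_0 r$. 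The chaining step at the end is fine. There is no circularity in appealing to Corollary~\ref{c:wqs2qs}, since that result is purely about weakly quasisymmetric maps between doubling spaces and does not use quasiconformality.
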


Quasiconformal maps exhibit special behavior when $K=1$. Recall that a homeomorphism $f:X\rightarrow X$ in a metric space $(X,d_X)$ is a \emph{similarity} if there exists a constant $0<\lambda<\infty$ such that $d_X(f(x),f(y))=\lambda d_X(x,y)$ for all $x,y\in X$. The group of similarities in Euclidean space is generated by compositions of translations, rotations, reflections, and dilations.

\begin{theorem}[{\cite[Theorem II.2]{Ahlfors-qc}}] \label{t:a1qc} If $N=2$ and $f:\Omega\rightarrow\RR^2$ is a 1-quasiconformal map, then $f$ is a conformal map. \end{theorem}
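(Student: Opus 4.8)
The plan is to convert the pointwise condition $K_f(\Omega)=1$ into the algebraic statement that $Df(x)$ is a conformal matrix at almost every $x$, and then to upgrade this a.e.\ statement to genuine holomorphicity by elliptic regularity (Weyl's lemma for the $\bar\partial$-operator). First I would unwind the definition of $K_f(\Omega)$ with $N=2$: writing $0\le\lambda_1(f,x)\le\lambda_2(f,x)$ for the singular values of $Df(x)$, the bracketed quantity is $\max\{\lambda_2^2/(\lambda_1\lambda_2),\ (\lambda_1\lambda_2)/\lambda_1^2\}=\lambda_2/\lambda_1$, so $K_f(\Omega)=1$ forces $\lambda_1(f,x)=\lambda_2(f,x)$ for a.e.\ $x\in\Omega$; that is, $Df(x)$ is a nonnegative scalar multiple of an orthogonal matrix almost everywhere. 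Since $f$ is a homeomorphism lying in $W^{1,2}_{\loc}(\Omega)$, I would invoke the standard facts that a planar $W^{1,2}_{\loc}$-homeomorphism is either everywhere sense-preserving or everywhere sense-reversing (its Jacobian does not change sign), and that $Df(x)\ne 0$ for a.e.\ $x$ — the latter because, by Theorem \ref{t:qc2qs}, $f$ is quasisymmetric, hence maps $\Leb^2$-null sets to $\Leb^2$-null sets in both directions, forcing $J_f\ne0$ a.e. After postcomposing with the reflection $z\mapsto\overline z$ if necessary, I may assume $f$ is sense-preserving.

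Next I pass to complex notation, identifying $\RR^2$ with $\mathbb C$ and setting $\partial f=\tfrac12(\partial_x-i\partial_y)f$ and $\bar\partial f=\tfrac12(\partial_x+i\partial_y)f$. For a sense-preserving map with nonzero derivative, the condition that $Df(x)$ be conformal is precisely $\bar\partial f(x)=0$ (equivalently, the complex dilatation $\mu_f=\bar\partial f/\partial f$ satisfies $|\mu_f|\equiv 0$), so $\bar\partial f=0$ a.e.\ in $\Omega$. Because $f\in W^{1,2}_{\loc}(\Omega)\subset W^{1,1}_{\loc}(\Omega)$, the pointwise a.e.\ partial derivatives of $f$ coincide with its distributional partial derivatives; hence $\bar\partial f=0$ holds in the sense of distributions on $\Omega$.

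The main analytic step — and the place where real work is needed — is Weyl's lemma for $\bar\partial$: a distributional solution of $\bar\partial f=0$ on a planar domain agrees a.e.\ with a holomorphic function. I would prove this by mollification: if $\varphi_\varepsilon$ is a standard approximate identity, then on any subdomain $\Omega'\Subset\Omega$ with $\varepsilon$ small the convolution $f*\varphi_\varepsilon$ is smooth and satisfies $\bar\partial(f*\varphi_\varepsilon)=(\bar\partial f)*\varphi_\varepsilon=0$, hence is holomorphic on $\Omega'$; since $f*\varphi_\varepsilon\to f$ in $L^1_{\loc}$, the mean value property together with normal families for holomorphic functions identifies the limit with a holomorphic function. (Alternatively, $\Delta f=4\partial\bar\partial f=0$ distributionally, so the classical Weyl lemma makes $f$ harmonic and therefore smooth, after which $\bar\partial f=0$ holds classically.) Finally, a holomorphic homeomorphism $f:\Omega\to\RR^2$ necessarily has nowhere-vanishing derivative — at a critical point the open mapping theorem would make $f$ locally $m$-to-$1$ with $m\ge 2$ — and is therefore conformal, which (tracing back the reduction) gives the theorem. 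I expect the genuine obstacles to be exactly the Weyl-type regularity argument just sketched and the classical but nontrivial Sobolev-homeomorphism input that $J_f$ is a.e.\ nonzero and of constant sign; everything else is bookkeeping.
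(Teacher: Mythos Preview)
The paper does not prove this theorem; it is quoted from \cite{Ahlfors-qc} with a citation and used as a black box (in the proof of Corollary~\ref{c:1qs}). So there is no ``paper's own proof'' to compare against.

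Your argument is the standard one and is essentially correct. One quibble: you invoke Theorem~\ref{t:qc2qs} to conclude that $f$ is quasisymmetric and hence preserves null sets, but that theorem as stated in the paper applies only to global maps $f:\RR^N\to\RR^N$, not to maps on an arbitrary domain $\Omega$. The fact you actually need --- that a planar quasiconformal homeomorphism and its inverse both satisfy Lusin's condition~$N$, so that $J_f\ne 0$ a.e.\ --- is a standard local result (e.g.\ in V\"ais\"al\"a~\cite{Vaisala}) and does not require global quasisymmetry. With that citation replaced, the rest of your outline (constant sign of the Jacobian via topological orientation, $\bar\partial f=0$ distributionally, Weyl's lemma by mollification, and nonvanishing of the derivative for a holomorphic homeomorphism) is correct.
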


\begin{theorem}[{\cite[Theorem 16]{Gehring-rings}}] \label{t:g1qc} If $N\geq 3$ and $f:\Omega\rightarrow\RR^N$ is a 1-quasiconformal map, then $f$ is the restriction of a M\"obius transformation of $\wRR{N}=\RR^N\cup\{\infty\}$ to $\Omega$.\end{theorem}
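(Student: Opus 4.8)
The plan is to reduce the statement to two classical facts: an interior regularity theorem, that a $1$-quasiconformal map of $\RR^{N}$ is automatically smooth (indeed real-analytic), and the Liouville rigidity for smooth conformal maps, which is where the restriction $N\ge 3$ is used. First unpack the hypothesis: $K_{f}(\Omega)=1$ forces all singular values of $Df(x)$ to coincide at a.e.\ $x$, so $Df(x)=\rho(x)\,O(x)$ with $\rho(x)=|Df(x)|>0$ and $O(x)\in O(N)$, equivalently $Df(x)^{T}Df(x)=|Df(x)|^{2}I$ a.e.\ (the \emph{conformality system}). By Theorem~\ref{t:qc2qs}, $f$ is quasisymmetric, hence a homeomorphism of $\Omega$ onto a domain with $f^{-1}$ again $1$-quasiconformal, and $f\in W^{1,N}_{\loc}(\Omega)$.

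\emph{Step 1 (regularity; the main obstacle).} I would invoke the conformal invariance of the $N$-Laplacian $\Delta_{N}u=\operatorname{div}(|\nabla u|^{N-2}\nabla u)$ in $\RR^{N}$: if $g$ is $1$-quasiconformal, then $v$ is $N$-harmonic on the target exactly when $v\circ g$ is $N$-harmonic on the source (the change of variables behind this uses precisely $Dg=\rho\,O$ with $O$ orthogonal; some care is needed in passing to the weak formulation, but this is classical). Applied to the coordinate functions $y^{i}$ of $\RR^{N}$, which are trivially $N$-harmonic, this shows each component $f^{i}=y^{i}\circ f$ is $N$-harmonic on $\Omega$, and likewise the components of $f^{-1}$ are $N$-harmonic on $f(\Omega)$. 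By the interior $C^{1,\alpha}$ regularity theory for the $N$-Laplacian, $f,f^{-1}\in C^{1,\alpha}_{\loc}$; since $f$ and $f^{-1}$ are then $C^{1}$ homeomorphisms, the chain rule makes $Df(x)$ invertible at \emph{every} point, so each $\nabla f^{i}$ is everywhere nonzero. But an $N$-harmonic function is real-analytic wherever its gradient does not vanish (there $\Delta_{N}$ is non-degenerate elliptic with analytic structure); hence $f$ is real-analytic on $\Omega$.

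\emph{Step 2 (Liouville rigidity).} Now $f$ is a smooth conformal diffeomorphism, so the pull-back metric $f^{*}\delta=\rho^{2}\delta_{ij}$ is flat, being the pull-back of the flat metric by a diffeomorphism. A curvature computation --- its conformal Weyl tensor vanishes automatically, so for $N\ge 3$ flatness amounts to the vanishing of the Schouten tensor of $\rho^{2}\delta$ --- shows this is equivalent to the second-order identity $\partial_{i}\partial_{j}u=\tfrac{|\nabla u|^{2}}{2u}\,\delta_{ij}$ for $u:=\rho^{-1}=|Df|^{-1}$. (This is exactly where $N\ge 3$ enters; in dimension $2$ the analogous condition degenerates to $\Delta\log\rho=0$, which every conformal factor satisfies --- consistent with Theorem~\ref{t:a1qc}.) The off-diagonal part gives $\partial_{i}\partial_{j}u=0$ for $i\ne j$, so $\partial_{i}u$ depends only on $x_{i}$; the diagonal part is then a function of $x_{i}$ that is the same for every $i$, hence a constant $2a$, so $u(x)=a|x|^{2}+b\cdot x+c$ on the connected domain $\Omega$; substituting back into $\partial_{i}^{2}u=\tfrac{|\nabla u|^{2}}{2u}$ forces $|b|^{2}=4ac$, i.e.\ $u(x)=a|x-x_{0}|^{2}$ with $x_{0}=-b/(2a)$ (or $u\equiv c$ when $a=0$). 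Since $u>0$ on $\Omega$ we have $a>0$ and $x_{0}\notin\Omega$ whenever $a\ne0$, so $|Df(x)|=(a|x-x_{0}|^{2})^{-1}$ on $\Omega$, which is exactly the conformal factor of the inversion $M\colon x\mapsto x_{0}+a^{-1}(x-x_{0})/|x-x_{0}|^{2}$ (or $|Df|\equiv c^{-1}$, the factor of a similarity $M$).

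\emph{Step 3 (conclusion).} Put $\tilde g:=f\circ M^{-1}$, defined, smooth and conformal on the connected open set $M(\Omega)$, with $|D\tilde g|\equiv 1$ (since $|Df|=|DM|$ on $\Omega$ by the choice of $M$), i.e.\ $D\tilde g(y)\in O(N)$ for every $y$. Differentiating $\partial_{i}\tilde g\cdot\partial_{j}\tilde g=\delta_{ij}$ and combining the cyclic permutations of $\partial_{k}(\partial_{i}\tilde g\cdot\partial_{j}\tilde g)=0$ yields $\partial_{i}\tilde g\cdot\partial_{j}\partial_{k}\tilde g=0$ for all $i,j,k$; since $\{\partial_{i}\tilde g\}$ is an orthonormal basis of $\RR^{N}$, all second derivatives of $\tilde g$ vanish, so $\tilde g$ extends to an affine map with orthogonal linear part, i.e.\ a Euclidean isometry. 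Hence $f=\tilde g\circ M$ on $\Omega$ is the restriction of a composition of Möbius transformations, i.e.\ of a Möbius transformation of $\wRR{N}$, as claimed. The one genuine difficulty is Step~1 --- upgrading the Sobolev hypothesis $f\in W^{1,N}_{\loc}$ to smoothness --- which, unlike the classical Liouville theorem for $C^{3}$ conformal maps, is the real content of Gehring's (and Reshetnyak's) theorem; Steps~2 and~3 are then routine.
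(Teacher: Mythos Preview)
The paper does not prove this theorem; it is quoted from \cite{Gehring-rings} as a classical result and used as a black box in the derivation of Corollary~\ref{c:1qs}. So there is no ``paper's proof'' to compare against --- your proposal is a standalone proof sketch, and it is essentially correct. The outline (bootstrap regularity via $N$-harmonicity of the coordinate functions, then the classical Liouville computation for smooth conformal maps) is one of the standard modern routes to the result, and you correctly identify Step~1 as the genuine content that separates Gehring's theorem from the nineteenth-century $C^3$ version.

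Two minor corrections. First, your appeal to Theorem~\ref{t:qc2qs} is misplaced: that statement is for global maps $\RR^N\to\RR^N$, whereas here $\Omega$ is an arbitrary domain. What you actually need --- that $f^{-1}:f(\Omega)\to\Omega$ is again $1$-quasiconformal --- is stated directly in the paragraph preceding Theorem~\ref{t:qc2qs}, so nothing is lost. Second, the separation-of-variables argument in Step~2 (from $u_{ij}=0$ for $i\neq j$ and $u_{11}=u_{22}=\cdots$ to $u$ quadratic) is written as though $\Omega$ were a coordinate box; in a general domain you should run it locally on balls and then patch by connectedness, using that two polynomials agreeing on an open set agree everywhere. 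With those adjustments the argument goes through.
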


\begin{corollary} \label{c:1qs} If $N\geq 2$ and $f:B^N(x,r)\rightarrow \RR^N$ is weakly $1$-quasisymmetric for some $x\in\RR^N$ and $r>0$, then $f$ is the restriction of a similarity of $\RR^N$ to $B^N(x,r)$. \end{corollary}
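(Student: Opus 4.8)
The plan is to reduce the hypothesis to the rigidity of $1$-quasiconformal maps, and then to use the \emph{equality} $H_f=1$ (not merely finiteness) to eliminate every $1$-quasiconformal map that is not a similarity. First I would normalize: a preliminary translation and dilation of the domain leaves the weak quasisymmetry constant unchanged, so we may assume $B^N(x,r)=B^N(0,1)$; moreover it suffices to treat the open ball $\oB^N(0,1)$, since $f$ and any similarity agreeing with it there must agree on $B^N(0,1)$ by continuity. Because $\oB^N(0,1)$ is connected, Corollary \ref{c:wqs2qs} makes $f|_{\oB^N(0,1)}$ $\eta$-quasisymmetric, hence (by the remark preceding Theorem \ref{t:qc2qs}) quasiconformal with $K_f(\oB^N(0,1))\le H_f(\oB^N(0,1))^{N-1}=1$; that is, $f$ is $1$-quasiconformal on $\oB^N(0,1)$. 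By Theorem \ref{t:a1qc} (if $N=2$) or Theorem \ref{t:g1qc} (if $N\ge 3$), $f$ is then a conformal map, respectively the restriction of a M\"obius transformation $g$ of $\wRR{N}$.

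The core step is to show that among these maps only similarities satisfy $H_f(\oB^N(0,1))=1$. For $N\ge 3$: since $f$ is finite-valued, $p:=g^{-1}(\infty)$ lies outside $\oB^N(0,1)$. If $p=\infty$ then $g$ fixes $\infty$, so $g$ is affine, and an affine conformal map is a similarity. If $p\in\RR^N$, factor $g=\sigma\circ\iota_p$, where $\iota_p$ is the inversion in the unit sphere centred at $p$ and $\sigma:=g\circ\iota_p$ fixes $\infty$ and is therefore a similarity; pre- and post-composition with similarities do not change the weak quasisymmetry constant, so $H_{\iota_p}(\oB^N(0,1))=1$. But the identity $|\iota_p(u)-\iota_p(v)|=|u-v|/(|u-p|\,|v-p|)$ shows this is impossible: fixing any $z\in\oB^N(0,1)$ (so $z\ne p$) and choosing $a,b$ on a small sphere about $z$ inside $\oB^N(0,1)$ with $|a-z|=|b-z|$ but $|a-p|\ne|b-p|$ gives $|\iota_p(a)-\iota_p(z)|\ne|\iota_p(b)-\iota_p(z)|$. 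Hence $p=\infty$ and $g$ is a similarity.

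For $N=2$: composing with a reflection if necessary, take $f$ holomorphic and injective on $\oB^2(0,1)$ with $f'\ne 0$. The condition $H_f=1$ forces $|a-z|=|b-z|\Rightarrow|f(a)-f(z)|=|f(b)-f(z)|$, so $f$ maps each circle $\{|a-z_0|=\rho\}\subset\oB^2(0,1)$ into a circle about $f(z_0)$; since $f$ is a topological embedding this image is a compact subset of a circle homeomorphic to a circle, hence all of it, so $|f(z_0+w)-f(z_0)|$ depends only on $|w|$ for small $w$. Writing $f(z_0+w)-f(z_0)=a_1w+\sum_{k\ge2}a_kw^k$ with $a_1=f'(z_0)\ne 0$ and inspecting the lowest-order angular term of $|f(z_0+w)-f(z_0)|^2$ forces $a_k=0$ for all $k\ge2$: if $m\ge2$ were minimal with $a_m\ne0$, the contribution $2|w|^2\,\mathrm{Re}(\overline{a_1}\,a_m\,w^{\,m-1})$ would make that square non-radial. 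Thus $f$ is affine near $z_0$, and since $z_0$ is arbitrary and $\oB^2(0,1)$ is connected, $f(z)=a_1z+c$ on the whole ball, a similarity. In either case $f$ agrees on $\oB^N(0,1)$, and therefore by continuity on $B^N(x,r)$, with the restriction of a similarity of $\RR^N$.

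I expect the last elimination step to be the crux, and within it the planar case: the $1$-quasiconformal classification supplies only an \emph{arbitrary} conformal map, so one must genuinely use the equality $H_f=1$ --- equivalently, that $f$ carries equidistant triples to equidistant triples, hence maps small spheres \emph{onto} concentric spheres --- and it is in the Taylor-coefficient comparison that this hypothesis does its work. One should also take care that the spheres map onto (not merely into) the image spheres before matching coefficients, and keep straight the elementary fact that composing with a similarity leaves $H_f$ unchanged, which is what permits the reduction of the case $N\ge3$ to the single inversion $\iota_p$.
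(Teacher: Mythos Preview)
Your proof is correct, but the elimination step takes a different route from the paper's. After reaching the point where $f|_{\oB^N(0,1)}$ is $1$-quasiconformal, the paper makes one further normalization ($f(0)=0$) and observes directly from the weak $1$-quasisymmetry that $f$ maps the \emph{closed} ball $B^N(0,1)$ onto a ball centered at $0$. This single geometric fact handles both dimensions at once: for $N=2$ it upgrades ``conformal'' to ``M\"obius'' (a conformal bijection of a disk onto a disk is M\"obius), and for all $N\ge 2$ a M\"obius transformation carrying a ball centered at the origin onto a ball centered at the origin must fix $\infty$, hence is a similarity.

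Your argument instead splits cases. For $N\ge 3$ you factor the M\"obius map as a similarity composed with an inversion $\iota_p$ and rule out the inversion via the explicit distance formula $|\iota_p(u)-\iota_p(v)|=|u-v|/(|u-p|\,|v-p|)$; this is completely elementary and self-contained. For $N=2$ you run a Taylor-coefficient argument to force $f$ to be affine. Both work, but the paper's approach is shorter and more uniform across dimensions, while yours avoids invoking the (admittedly standard) fact that conformal self-maps of disks are M\"obius. One small remark: in your $N=2$ case the ``onto'' step for circles is not actually needed---the implication $|a-z_0|=|b-z_0|\Rightarrow|f(a)-f(z_0)|=|f(b)-f(z_0)|$ already gives that $|f(z_0+w)-f(z_0)|$ depends only on $|w|$, which is all the power-series comparison requires.
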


\begin{proof} Suppose $N\geq 2$. Since the composition of a weakly 1-quasisymmetric map with a similarity in the domain is still weakly 1-quasisymmetric, it suffices to prove the lemma on the unit ball. Suppose that $f:B^N(0,1)\rightarrow\RR^N$ is a weakly 1-quasisymmetric map. Replacing $f(x)$ by $f(x)-f(0)$ for all $x\in\RR^N$, which leaves the quasisymmetry of $f$ untouched, we may also suppose without loss of generality that $f(0)=0$. On one hand, \begin{equation*}|x-a|=|y-a|\Longrightarrow |f(x)-f(a)|=|f(y)-f(a)|\quad\text{for all }x,y,a\in B^N(0,1),\end{equation*} because $f$ is weakly 1-quasisymmetric. Hence $f$ maps $B^N(0,1)$ onto a ball in $\RR^N$ centered at $0$. On the other hand, by Corollary \ref{c:wqs2qs}, $f$ is quasisymmetric. Thus, the restriction $f_\circ=f|_{\oB^N(0,1)}$ of $f$ to the open unit ball is quasiconformal with $K_{f_\circ}(\oB^N(0,1))\leq H_f(B^N(0,1))^{N-1}=1$. That is, $f$ is a 1-quasiconformal map. When $N\geq 3$, we conclude that $f_\circ$ is the restriction of some M\"obius transformation $F$ on $\wRR{N}$ by Theorem \ref{t:g1qc}. When $N=2$, we conclude that $f_\circ$ is the restriction of some M\"obius transformation $F$ on the Riemann sphere $\wRR{2}$, because $f_\circ$ is  conformal by Theorem \ref{t:a1qc} and maps the unit disk \emph{onto} a disk. Finally, since $F$ maps a ball centered at the origin onto a ball centered at the origin, $F$ must fix the point at infinity. Therefore, the map $f_\circ$ is the restriction of a similarity of $\RR^N$. The same conclusion extends to $f$ by continuity.
\end{proof}

Quasiconformal maps are locally H\"older continuous with exponent depending only on the dimension and the maximal dilatation of the map.

\begin{theorem}[{\cite[Theorem 11.14]{Vuorinen}}]\label{t:holder} Given $N\geq 2$ and $1\leq K<\infty$, put $\alpha = K^{1/(1-N)}$. If $f:\oB^N(x,r)\rightarrow \RR^N$ is $K$-quasiconformal, then \begin{equation*} |f(y)-f(z)| \lesssim_{N,K} \left(\sup_{|w-x|<r}|f(w)-f(x)|\right) \left|\frac{y}{r}-\frac{z}{r}\right|^{\alpha}\quad\text{for all }y,z\in B^N(x,r/2).\end{equation*}\end{theorem}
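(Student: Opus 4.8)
The plan is to establish the sharp modulus-of-continuity estimate for quasiconformal maps by the classical conformal-capacity argument. (A soft argument via the quasisymmetry of $f$, Theorem~\ref{t:qc2qs}, would give only a non-explicit H\"older exponent; the exact value $\alpha=K^{1/(1-N)}$ is what forces us through the capacity estimates.) Since the asserted inequality and the constant $K$ are unchanged under the substitution $f\mapsto\bigl(w\mapsto f(x+rw)-f(x)\bigr)$, which is again $K$-quasiconformal on $\oB^N(0,1)$, I would first reduce to $x=0$, $r=1$, $f(0)=0$, and put $M:=\sup_{|w|<1}|f(w)|$, so that $f(\oB^N(0,1))\subseteq\overline{\oB^N(0,M)}$. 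The case $|y-z|\ge 1/2$ of the target inequality $|f(y)-f(z)|\lesssim_{N,K}M\,|y-z|^{\alpha}$ for $y,z\in B^N(0,1/2)$ is immediate from $|f(y)-f(z)|\le 2M\le 2^{1+\alpha}M|y-z|^{\alpha}$, and the remaining range follows once one proves the oscillation bound
\[
L_f(y_0,\rho):=\sup_{|w-y_0|=\rho}|f(w)-f(y_0)|\ \lesssim_{N,K}\ M\,\rho^{\alpha}\qquad\bigl(y_0\in B^N(0,1/2),\ 0<\rho\le 1/2\bigr),
\]
since then $|f(y)-f(z)|\le L_f(y,|y-z|)$.

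To prove the oscillation bound, fix $y_0\in B^N(0,1/2)$, so $\oB^N(y_0,1/2)\subseteq\oB^N(0,1)$, and for $0<\rho<1/2$ consider the concentric spherical ring $A=\{\,w:\rho<|w-y_0|<1/2\,\}$, whose conformal capacity is the explicit quantity $\mathrm{cap}(A)=\omega_{N-1}\bigl(\log\tfrac{1}{2\rho}\bigr)^{1-N}$, where $\omega_{N-1}:=\Haus^{N-1}(\{|w|=1\})$. Since the restriction of $f$ to $\oB^N(0,1)$ is a homeomorphism carrying the concentric round spheres about $y_0$ to topological $(N-1)$-spheres, the Jordan--Brouwer separation theorem shows that $f(A)$ is again a ring domain; its bounded complementary component $C_0=f(B^N(y_0,\rho))$ contains $f(y_0)$ together with a point $p$ at distance $L:=L_f(y_0,\rho)$ from $f(y_0)$, and its unbounded complementary component $C_1$ contains $\infty$ together with a point $q$ at distance $l:=\inf_{|w-y_0|=1/2}|f(w)-f(y_0)|$ from $f(y_0)$; one checks $l\le 2M$.

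The heart of the argument is to play off two capacity inequalities against each other. First, $K$-quasiconformality distorts the conformal capacity of ring domains by at most the factor $K$, so $\mathrm{cap}(f(A))\le K\,\mathrm{cap}(A)$. Second, taking coordinates with $f(y_0)$ at the origin, the component $C_0$ contains $0$ and $p$ while $C_1$ contains $q$ and $\infty$; by the extremal property of the Teichm\"uller ring (Gehring's $n$-dimensional form of Teichm\"uller's theorem), $\mathrm{cap}(f(A))\ge\tau_N(|q|/|p|)=\tau_N(l/L)$, where $\tau_N$ is the Teichm\"uller-ring capacity, tied to the Gr\"otzsch-ring capacity $\gamma_N$ by the classical identity $\tau_N(t)=2^{1-N}\gamma_N(\sqrt{1+t})$, and where $\gamma_N(s)=\omega_{N-1}\bigl(\log\Phi_N(s)\bigr)^{1-N}$ with $s\le\Phi_N(s)\le\lambda_N s$ ($\lambda_N$ the Gr\"otzsch constant). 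Chaining the two inequalities,
\[
\tau_N(l/L)\ \le\ K\,\mathrm{cap}(A)\ =\ K\,\omega_{N-1}\bigl(\log\tfrac{1}{2\rho}\bigr)^{1-N}.
\]
Raising both sides to the power $1/(1-N)<0$ and feeding in the displayed forms of $\tau_N$ and $\Phi_N$, the constant $\omega_{N-1}$ cancels and the surviving powers of $2$ collapse, leaving exactly $l/L\gtrsim_N\rho^{-\alpha}$ with $\alpha=K^{1/(1-N)}$ for all $\rho$ below an $(N,K)$-dependent threshold; hence $L\lesssim_N l\,\rho^{\alpha}\le 2M\rho^{\alpha}$, while for the finitely many larger $\rho$ one uses $L_f(y_0,\rho)\le 2M\lesssim_{N,K}M\rho^{\alpha}$. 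Together with the reductions of the first paragraph this gives the theorem.

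The only substantive work lies in importing, with their sharp constants, the three classical ingredients invoked above: the functorial estimate $\mathrm{cap}(f(A))\le K\,\mathrm{cap}(A)$ (equivalently, $K$-quasi-invariance of the $N$-modulus of curve families under $f$), the extremal characterization of the Teichm\"uller and Gr\"otzsch rings, and the bound $\Phi_N(s)\le\lambda_N s$; all are standard, see \cite{Vaisala} and \cite{Vuorinen}. I expect this to be the main obstacle precisely because the target exponent is sharp: any lossy version of these inputs would produce only some non-explicit H\"older exponent $\beta>0$, and it is the coincidence that the conformal capacity of the spherical annulus and the leading behavior of the Gr\"otzsch-ring capacity carry the \emph{same} constant $\omega_{N-1}$ and exponent $1-N$ that pins the value at $K^{1/(1-N)}$. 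The remaining steps — the rescaling, the trivial large-scale case, and the point-set verification that $f(A)$ is a genuine ring domain — are routine.
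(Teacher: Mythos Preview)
The paper does not prove this theorem at all; it is quoted as a black box from \cite[Theorem 11.14]{Vuorinen}. Your sketch is essentially the classical capacity argument that appears in the cited reference, and it is correct: the reduction to the unit ball, the oscillation bound $L_f(y_0,\rho)\lesssim_{N,K} M\rho^{\alpha}$, the $K$-quasi-invariance of ring capacity, the Teichm\"uller extremal property, and the Gr\"otzsch estimate $\Phi_N(s)\le\lambda_N s$ combine exactly as you describe to produce the sharp exponent $\alpha=K^{1/(1-N)}$.

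One small tidy-up: when you identify $q$ in the unbounded complementary component $C_1$, the point realizing $l=\inf_{|w-y_0|=1/2}|f(w)-f(y_0)|$ lies on $f(\partial B^N(y_0,1/2))=\partial C_1$, not in the open component itself; in the standard formulation of the Teichm\"uller extremal inequality one works with the closed complementary continua, so this is harmless, but worth stating precisely. Otherwise the argument is complete and matches the source the paper cites.
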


For any domain $\Omega\subset\RR^N$ and map $f:\Omega\rightarrow\RR^N$, the \emph{maximal stretching}   $L_f:\Omega\rightarrow[0,\infty]$ of $f$ is defined by \begin{equation*}L_{f}(x)= \limsup_{y\rightarrow x} \frac{|f(x)-f(y)|}{|x-y|}\quad \text{for all }x\in \Omega.\end{equation*}  If $f$ is quasiconformal, then $L_f(x)=\lambda_N(f,x)$  and $Jf(x)\leq L_f(x)^N\leq K_f(\Omega) Jf(x)$ at $\Leb^N$-a.e.~ $x$, where $J f(x)=\lambda_1(f,x)\cdots\lambda_N(f,x)$ denotes the Jacobian determinant of $f$ at $x$. Gehring \cite{Gehring73} proved that if $f$ is quasiconformal, then $L_{f}$ satisfies a reverse H\"older inequality.

\begin{theorem}[{\cite[Lemmas 3,4]{Gehring73}}] \label{t:gehring} If $N\geq 2$ and $f:\Omega\rightarrow \RR^N$ is a quasiconformal map, then there are constants $c>0$ and $p>0$ depending only on $N$ and $K_f(\Omega)$ such that for every closed cube $Q\subset \Omega$ satisfying
$\diam f(Q)<\dist(f(Q),\partial f(\Omega))$, \begin{equation}\label{e:rh1}\left(\avint_{Q}L_{f}^{N+p}\,d\Leb^{N}\right)^{1/(N+p)} \leq c \avint_{Q}L_{f}\,d\Leb^N.\end{equation}
\end{theorem}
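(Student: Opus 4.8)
The plan is to reduce the theorem to a \emph{weak} reverse Hölder inequality over sub-cubes and then invoke Gehring's self-improvement lemma. Write $K=K_f(\Omega)$ and, for a closed cube $P$, let $2P$ be the concentric cube of twice the side length. The target weak inequality is
\[
\Bigl(\avint_{P}L_f^{N}\,d\Leb^N\Bigr)^{1/N}\le C(N,K)\,\avint_{2P}L_f\,d\Leb^N\qquad(\star)
\]
for all closed cubes $P$ with $2P\subset Q$. Two pointwise facts from the text drive the argument: $L_f^{N}\le K\,Jf$ a.e.\ (so the left side of $(\star)$ is controlled by the Jacobian), and $L_f=\lambda_N(f,\cdot)=\|Df\|\le|Df|$, so $L_f\in L^{N}_{\loc}(\Omega)$ and every average below is finite. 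We also use that $f$ is $\eta$-quasisymmetric with $\eta=\eta_{N,K}$ on a neighbourhood of $Q$ at the scale $\diam Q$: for $\Omega=\RR^N$ this is Theorem~\ref{t:qc2qs} directly, and in general it is precisely the separation hypothesis $\diam f(Q)<\dist(f(Q),\partial f(\Omega))$ that makes this reduction possible, via the quantitative local quasisymmetry of quasiconformal maps; this is the only point at which the hypothesis is used. Consequently Lemma~\ref{l:QS-comp} applies with control $\eta_{N,K}$ to pairs of sets living at the scale of $Q$.

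First, the \emph{upper half} of $(\star)$. Since $f$ is a homeomorphism, the area formula gives $\int_{P}Jf\,d\Leb^N\le\Leb^N(f(P))$, and $f(P)$ lies in a ball of radius $\diam f(P)$, so $\Leb^N(f(P))\le\Leb^N(B^N(0,1))\,(\diam f(P))^{N}$. Together with $L_f^{N}\le K\,Jf$ and $\Leb^N(P)\sim_N(\diam P)^N$ this yields $\bigl(\avint_{P}L_f^{N}\,d\Leb^N\bigr)^{1/N}\lesssim_{N,K}\diam f(P)/\diam P$.

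The \emph{lower half} of $(\star)$ is the crux. Fix $P$ with $2P\subset Q$ and pick $a,b\in P$ with $|f(a)-f(b)|=\diam f(P)$. First, quasisymmetry forces $a$ and $b$ apart: Lemma~\ref{l:QS-comp} applied to $[a,b]\subset 2P$ gives $\diam f(P)=\diam f([a,b])\le\eta_{N,K}(|a-b|/\diam P)\,\diam f(2P)$, while $\diam f(2P)\le 2\eta_{N,K}(2)\,\diam f(P)$ (again Lemma~\ref{l:QS-comp}, with $P\subset 2P$); since $\eta_{N,K}$ is a homeomorphism of $(0,\infty)$ vanishing at $0$, this forces $|a-b|\ge c_1\diam P$ with $c_1=c_1(N,K)>0$. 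Next, this separation survives small perpendicular translations: choosing $\rho=c_2\diam P$ with $c_2=c_2(N,K)>0$ small, for every $w\perp(b-a)$ with $|w|<\rho$ we have $a+w,\,b+w\in 2P$ and, applying quasisymmetry twice (centres $a$ and $b$), $|f(a+w)-f(a)|+|f(b+w)-f(b)|\le 2\eta_{N,K}(c_2/c_1)\,\diam f(P)\le\tfrac12\diam f(P)$, hence $|f(a+w)-f(b+w)|\ge\tfrac12\diam f(P)$. On the other hand, $f\in W^{1,N}_{\loc}$ is absolutely continuous on almost every line, so for a.e.\ such $w$ the curve $t\mapsto f\bigl(a+w+t\tfrac{b-a}{|b-a|}\bigr)$, $0\le t\le|b-a|$, is absolutely continuous and, since $|Df(x)e|\le\|Df(x)\|=L_f(x)$ for unit $e$, $\tfrac12\diam f(P)\le|f(b+w)-f(a+w)|\le\int_0^{|b-a|}L_f\bigl(a+w+t\tfrac{b-a}{|b-a|}\bigr)\,dt$. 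Integrating this over $w$ in the $(N-1)$-disk $D=\{w\perp(b-a):|w|<\rho\}$ and using Fubini on the cylinder $Z=\{a+t\tfrac{b-a}{|b-a|}+w:0\le t\le|b-a|,\ w\in D\}\subset 2P$,
\[
\tfrac12\,\diam f(P)\cdot\Haus^{N-1}(D)\le\int_{Z}L_f\,d\Leb^N\le\int_{2P}L_f\,d\Leb^N .
\]
Since $\Haus^{N-1}(D)\sim_{N,K}(\diam P)^{N-1}$ and $\Leb^N(2P)\sim_N(\diam P)^N$, dividing by $\diam P$ gives $\diam f(P)/\diam P\lesssim_{N,K}\avint_{2P}L_f\,d\Leb^N$; combined with the upper half, this is $(\star)$.

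Finally, Gehring's lemma on the open-endedness of reverse Hölder inequalities upgrades $(\star)$ — valid for all sub-cubes $P$ with $2P\subset Q$ — to the stated inequality on $Q$, with exponent $N+p$ for some $p=p(N,K)>0$ and constant $c=c(N,K)$ (passing from sub-cubes to $Q$ itself is the standard minor adjustment of cube sizes and constants in that lemma). I expect the main obstacle to be the lower half of $(\star)$: the role of quasisymmetry there is essential, since it is what lets the maximal oscillation of $f$ on $P$ be witnessed simultaneously along a whole tube of parallel segments, so that the elementary length–integral bound can be averaged against $d\Leb^N$ to produce the $L^1$ norm of $L_f$ on $2P$ — for a general Sobolev homeomorphism the oscillation might be concentrated along a single line and no such $L^1$ gradient bound would follow. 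A secondary, routine point is the normalization reducing a quasiconformal map on a domain to one uniformly quasisymmetric at the scale of $Q$, which is exactly what the separation hypothesis delivers.
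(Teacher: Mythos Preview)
The paper does not supply its own proof of Theorem~\ref{t:gehring}; it is quoted directly from Gehring's 1973 paper (Lemmas~3 and~4 there) and used as a black box to derive Corollary~\ref{c:gehring}. So there is nothing to compare your proposal against in this paper.

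That said, your outline is a faithful reconstruction of Gehring's original two-step strategy: first obtain a weak reverse H\"older inequality at exponent $N$ by combining the Jacobian bound $L_f^N\le K\,Jf$ with an ACL/tube argument that turns the oscillation of $f$ on $P$ into an $L^1$ bound for $L_f$ on $2P$, and then self-improve to exponent $N+p$ via Gehring's lemma. The two halves of $(\star)$ are handled correctly, and you are right that the separation hypothesis $\diam f(Q)<\dist(f(Q),\partial f(\Omega))$ is exactly what is needed to justify applying local quasisymmetry (via the inverse map $f^{-1}$, which is $K$-quasiconformal on $f(\Omega)$ and hence quasisymmetric on a ball containing $f(Q)$). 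One small point worth tightening: when you invoke Lemma~\ref{l:QS-comp} for pairs like $[a,b]\subset 2P$, you are implicitly using quasisymmetry of $f$ on $2P$, not just on $Q$; this requires a slightly larger buffer than the stated hypothesis provides for cubes $P$ near $\partial Q$, but this is the ``standard minor adjustment'' you allude to and is easily absorbed into the constants of Gehring's lemma.
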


\begin{corollary}\label{c:gehring} If $N\geq 2$ and $f:\RR^N\rightarrow\RR^N$ is a quasiconformal map, then there is a constant $q>0$ depending only on $N$ and $K_f(\RR^N)$ such that \begin{equation} \label{e:g1}\frac{\Leb^N(f(A))}{\Leb^N(f(Q))} \geq \frac{1}{2} \exp\left(-q\frac{\Leb^N(Q)}{\Leb^N(A)}\right)\end{equation} for every closed cube $Q\subset\RR^N$ and every Borel set $A\subset Q$.
 \end{corollary}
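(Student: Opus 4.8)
The plan is to translate the assertion into a weighted inequality for the Jacobian and then feed the reverse H\"older inequality \eqref{e:rh1} into the standard self-improvement machinery for such inequalities. Since $f$ is quasiconformal it is a $W^{1,N}_{\loc}$ homeomorphism that is differentiable $\Leb^{N}$-a.e.\ and maps null sets to null sets, so the area formula gives $\Leb^{N}(f(E))=\int_{E}Jf\,d\Leb^{N}$ for every Borel set $E\subset\RR^{N}$; write $\nu(E):=\Leb^{N}(f(E))$. The asserted inequality is trivial when $\Leb^{N}(A)=0$, and $\nu(Q)>0$ because $f$ is a homeomorphism and $Q$ has nonempty interior; so it suffices to prove a power-type lower bound
\begin{equation*}
\frac{\nu(A)}{\nu(Q)}\ \ge\ c_{0}\left(\frac{\Leb^{N}(A)}{\Leb^{N}(Q)}\right)^{\rho}
\qquad\text{for all closed cubes } Q\subset\RR^{N}\text{ and Borel } A\subseteq Q,
\end{equation*}
with $c_{0}>0$ and $\rho<\infty$ depending only on $N$ and $K:=K_f(\RR^{N})$; the passage to the exponential form in \eqref{e:g1} will then be routine.

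First I would observe that when $\Omega=\RR^{N}$ the range $f(\Omega)=\RR^{N}$ has empty boundary, so the side condition $\diam f(Q)<\dist(f(Q),\partial f(\Omega))$ in Theorem \ref{t:gehring} is vacuous and \eqref{e:rh1} holds for \emph{every} closed cube $Q\subset\RR^{N}$ with $c,p$ depending only on $N$ and $K$. Using the pointwise bounds $Jf\le L_f^{N}\le K\,Jf$ recorded just before Theorem \ref{t:gehring} together with Jensen's inequality $\avint_{Q}L_f\le(\avint_{Q}L_f^{N})^{1/N}$, one converts \eqref{e:rh1} into a reverse H\"older inequality for $Jf$ itself, $(\avint_{Q}(Jf)^{1+\varepsilon})^{1/(1+\varepsilon)}\le c'\avint_{Q}Jf$ with $\varepsilon:=p/N$ and $c'=c^{N}K$, again valid on all cubes. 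By the classical self-improvement of reverse H\"older inequalities (Gehring; Coifman--Fefferman), $Jf$ belongs to the Muckenhoupt class $A_{\infty}$, hence to $A_{\rho}$ for some finite $\rho$, with $\rho$ and $[Jf]_{A_{\rho}}$ controlled by $N$ and $K$. A single application of H\"older's inequality to $\Leb^{N}(A)=\int_{A}(Jf)^{1/\rho}(Jf)^{-1/\rho}\,d\Leb^{N}$, followed by the $A_{\rho}$ condition on $Q$, then produces the displayed power bound with $c_{0}=1/[Jf]_{A_{\rho}}$. Finally, writing $t:=\Leb^{N}(A)/\Leb^{N}(Q)\in(0,1]$, the function $t\mapsto t\log(1/(2c_{0}))+\rho\,t\log(1/t)$ is continuous on $(0,1]$ and tends to $0$ as $t\to 0^{+}$, hence is bounded there, and taking $q=q(N,K)>0$ to be its supremum gives $c_{0}\,t^{\rho}\ge\tfrac12 e^{-q/t}$ for all $t\in(0,1]$; combined with the power bound, this is exactly \eqref{e:g1}.

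I do not expect a serious obstacle---the statement is flagged as a standard exercise. The one ingredient that is not a routine computation is the passage from the reverse H\"older inequality on all cubes to membership in some $A_{\rho}$ with quantitative constants, i.e.\ the Coifman--Fefferman/Gehring self-improvement of the $A_{\infty}$ condition, which I would quote rather than reprove. A more self-contained route to the exponential bound is also available, combining the ``one H\"older step'' consequence of \eqref{e:rh1} (namely $\nu(Q\setminus A)\le c'(\Leb^{N}(Q\setminus A)/\Leb^{N}(Q))^{\theta}\nu(Q)$ for some $\theta\in(0,1)$) with the doubling property of $Jf$ and a dyadic chaining argument; but that is longer than the $A_{\rho}$ route.
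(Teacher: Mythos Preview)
Your argument is correct and follows the same core route as the paper: show that $Jf$ satisfies a reverse H\"older inequality on all cubes (via Theorem~\ref{t:gehring}, with the side condition vacuous since $f(\RR^N)=\RR^N$) and hence is an $A_\infty$ weight, then invoke standard weighted-norm machinery. The only difference is in the last step: the paper quotes Hru\v{s}\v{c}ev's inequality for $A_\infty$ weights \cite{Hruscev}, which gives the bound $w(A)/w(Q)\ge 1/(1+\exp(q|Q|/|A|))\ge \tfrac12\exp(-q|Q|/|A|)$ directly in the desired exponential form, whereas you pass through the Coifman--Fefferman characterization $A_\infty=\bigcup_\rho A_\rho$, extract the power-type bound $\nu(A)/\nu(Q)\ge c_0(|A|/|Q|)^\rho$, and then convert to the exponential form by a calculus argument. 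Both are legitimate; the paper's is a touch shorter since Hru\v{s}\v{c}ev's inequality is exactly tailored to the conclusion, while yours is arguably more self-contained since the $A_\rho$ power bound is a one-line H\"older computation once $A_\rho$ membership is quoted.
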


 \begin{proof} Suppose $f:\RR^N\rightarrow\RR^N$ is quasiconformal and let $K:=K_f(\RR^N)$. By Theorem \ref{t:gehring}, $L_f$ satisfies the reverse H\"older inequality (\ref{e:rh1}) for some constants $c>0$ and $p>0$ depending only on $N$ and $K$. Let $Q\subset\RR^N$ be any closed cube. Because $Jf(x)\leq L_f(x)^N\leq K\, Jf(x)$  at $\Leb^N$-a.e.~ $x\in\RR^N$, we see that $Jf$ also satisfies a reverse H\"older inequality:
\begin{equation}\begin{split}
\left(\avint_{Q} Jf^{\frac{N+p}{N}}\,d\Leb^N\right)^{\frac{N}{N+p}}
&\leq \left(\avint_{Q} L_{f}^{N+p}\,d\Leb^N\right)^{\frac{N}{N+p}}
\leq c^N\left(\avint_{Q} L_{f}\,d\Leb^N\right)^N\\
&\leq c^N\avint_{Q} L_{f}^N\,d\Leb^N \leq K c^N \avint_{Q}Jf\,d\Leb^N. \label{e:rh2}
\end{split}\end{equation} In particular, the Jacobian $Jf$ of $f$ is an $A_\infty$ weight with respect to Lebesgue measure $\Leb^N$; e.g., see \cite[Chapter 9]{Grafakos2} or \cite[Chapter V]{Big-Stein}. Therefore, by Hru\v{s}\v{c}ev's inequality for $A_\infty$ weights \cite[(7)]{Hruscev}, there is $q>0$ such that for all cubes $Q\subset\RR^N$ and Borel sets $A\subset Q$, \begin{equation*}\frac{w(A)}{w(Q)} \geq \frac{1}{1+\exp\left(q\frac{\Leb^N(Q)}{\Leb^N(A)}\right)}\geq \frac{1}{2}\exp\left(-q\frac{\Leb^N(Q)}{\Leb^N(A)}\right),\end{equation*} where $w(E)=\int_E Jf\,d\Leb^N=\Leb^N(f(E))$ for all Borel sets $E\subset\RR^N$. The constant $q$ depends only on the constants in (\ref{e:rh2}), and thus, $q$ ultimately depends only on $N$ and $K$. \end{proof}

The conclusion of Corollary \ref{c:gehring} does not hold for quasisymmetric maps in $\RR^N$ when $N=1$; in fact, by an example of Beurling and Ahlfors \cite{BA}, a quasisymmetric map $f:\RR\rightarrow\RR$ can map a set of positive Lebesgue measure onto a set of Lebesgue measure zero.


\section{Preliminaries II: local flatness and bi-Lipschitz parameterizations}
\label{s:prelim2}

In this section and implicitly below, whenever using the quantities defined in Definition \ref{d:flatness}, we assume that $1\leq n\leq N-1$.
Let $\mathcal{G}=\mathcal{G}_{N,n}$ denote the affine Grassmannian of $n$-dimensional planes in $\RR^N$, and let $\mathcal{G}(x)=\mathcal{G}_{N,n}(x)=\{V\in\mathcal{G}_{N,n}:x\in V\}$ denote the subcollection of planes containing $x\in\RR^N$. We write $a\vee b$ to denote the maximum of $a,b\in\RR$.

\begin{definition}[Measurements of local flatness of sets] \label{d:flatness} For all $E\subset\RR^N$, $x\in E$ and $r>0$, define the quantities $0\leq \beta_E(x,r)\leq \betac_E(x,r)\leq \theta_E(x,r)\leq 1$ by
\begin{equation*}\beta_E(x,r) := \inf_{V\in\mathcal{G}}\frac{1}{r}\left(\sup_{y\in E\cap B^N(x,r)} \dist(y,V)\right),\end{equation*}
\begin{equation*}\betac_E(x,r) := \inf_{V\in\mathcal{G}(x)}\frac{1}{r}\left( \sup_{y\in E\cap B^N(x,r)} \dist(y,V)\right), \end{equation*} and
\begin{equation*}\theta_E(x,r) := \inf_{V\in\mathcal{G}(x)} \frac{1}{r}\left(\left(\sup_{y\in E\cap B^N(x,r)}\dist(y,V)\right)\vee \left(\sup_{z\in V\cap B^N(x,r)}\dist(z,E)\right)\right).\end{equation*}
\end{definition}

Each of the measurements of flatness defined in Definition \ref{d:flatness} satisfy a monotonicity property: an estimate of flatness at one scale yields (worse) estimates of flatness on smaller scales. Namely, for all $E\subset\RR^N$, $x\in E$, $r>0$ and $s\in(0,1]$, \begin{equation}\begin{split} \label{e:mono} \beta_E(x,sr)\leq &s^{-1}\beta_E(x,r), \quad \betac_E(x,sr)\leq s^{-1}\beta_E(x,r),\\ &\text{and}\quad \theta_E(x,sr)\leq s^{-1}\theta_E(x,r).\end{split}\end{equation} In addition, if $B^N(y,sr)\subset B^N(x,r)$ for some $x,y\in E$ and $r,s>0$, then \begin{equation} \label{e:monob} \beta_E(y,sr) \leq s^{-1}\beta_E(x,r).\end{equation}

\begin{remark}[Origins and choice of conventions] \label{r:flat} Beta numbers were originally introduced by Jones \cite{Jones-TSP} in order to characterize subsets of rectifiable curves in the plane. For analogues of Jones' Traveling Salesman Theorem in higher dimensions, see \cite{O-TSP} and \cite{Schul-TSP}. Because  \begin{equation} \beta_E(x,r)\leq \betac_E(x,r)\leq 2\beta_E(x,r) \label{e:betas}\end{equation} for all $E\subset\RR^N$, $x\in E$ and $r>0$, the decision to use ``uncentered'' beta numbers $\beta_E(x,r)$ or ``centered'' beta numbers $\betac_E(x,r)$ is largely a matter of taste and may depend on the application. We use the former below, except in a theorem which we quote from \cite{DT12} that chose the latter.

In some instances, see e.g.~ \cite{Toro95}, \cite{BGRT}, the theta numbers $\theta_E(x,r)$ are replaced by the strictly larger numbers \begin{equation*}\thetahd_E(x,r) := \inf_{V\in\mathcal{G}(x)} \frac{1}{r}\HD\left(E\cap B^N(x,r),V\cap B^N(x,r)\right),\end{equation*} where $\HD(Y,Z)=(\sup_{y\in Y}\dist(y,Z))\vee (\sup_{z\in Z}\dist(z,Y))$ denotes the Hausdorff distance between bounded sets $Y,Z\subset\RR^N$. The quantity $\thetahd_E(x,r)$ is more difficult to estimate than $\theta_E(x,r)$ (e.g., $\thetahd_E(x,r)$ does not satisfy (\ref{e:mono})). Thus we choose to use the latter below.

Closed sets that are locally uniformly close to planes at all locations and scales first appeared in Reifenberg's solution of the Plateau problem in arbitrary codimension \cite{Reifenberg}; following \cite{KT97} these sets are now called Reifenberg flat sets. Precisely, in this paper, we say that a closed set $\Sigma\subset\RR^N$ is \emph{$(\delta,R)$-Reifenberg flat} if  $\theta_{\Sigma}(x,r)\leq \delta$ for all $x\in\Sigma$ and $0<r<R$. Mattila and Vuorinen \cite{MV90} (independently of Jones \cite{Jones-TSP}) introduced the following related definition, in the context of obtaining upper Minkowski and Hausdorff dimension bounds for quasispheres. A set $\Sigma\subset\RR^N$ is said to have the \emph{$(\delta,R)$-linear approximation property} if $\betac_{E}(x,r)\leq \delta$ for all $x\in\Sigma$ and $0<r<R$. Trivially every subset of a $(\delta,R)$-Reifenberg flat set has the $(\delta,R)$-linear approximation property. However, there exist sets with the $(\delta,R)$-linear approximation property that do not belong to any $(\delta,R')$-Reifenberg flat sets; e.g., see \cite[Counterexample 12.4]{DT12}.\end{remark}

We now present a version of Reifenberg's topological disk theorem, which gives a sufficient condition  for a closed set $\Sigma\subset\RR^N$ to be locally bi-H\"older equivalent to open subsets of $\RR^n$.

\begin{theorem}[Local version of Reifenberg's topological disk theorem {\cite[Theorem 1.1]{DT12}}] There exists $\delta_0=\delta_0(n,N)>0$ with the following property. If $\Sigma\subset\RR^N$ is closed, $x_0\in \Sigma$, $r_0>0$, $0<\delta\leq \delta_0$, and $\theta_\Sigma(x,r)\leq \delta$ for all $x\in \Sigma\cap B^N(x_0,10r_0)$ and $0<r\leq 10r_0$, \label{t:reif} then there exist a bijective mapping $g:\RR^N\rightarrow\RR^N$ and an $n$-dimensional plane $V$ containing $x_0$ such that \begin{equation*} |g(x)-x|\leq \frac{r_0}{100}\quad\text{for all }x\in\RR^N,\end{equation*} \begin{equation*} \frac{r_0}{4}\left|\frac{x}{r_0}-\frac{y}{r_0}\right|^{1.01}\leq |g(x)-g(y)|\leq 3r_0\left|\frac{x}{r_0}-\frac{y}{r_0}\right|^{0.99}\end{equation*} for all $x,y\in\RR^n$ such that $|x-y|\leq r_0$, and \begin{equation*}\Sigma\cap B^N(x_0,r_0)=g(V)\cap B^N(x_0,r_0).\end{equation*}\end{theorem}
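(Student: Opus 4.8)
The plan is to reproduce, in localized form, the classical Reifenberg construction as sharpened by David and Toro: the bijection $g$ is built as an infinite composition $g=\lim_{K\to\infty}\sigma_K\circ\cdots\circ\sigma_0$ of maps $\sigma_k:\RR^N\to\RR^N$, each a small perturbation of the identity, where $\sigma_k$ adjusts the running image so that it tracks $\Sigma$ at the dyadic scale $2^{-k}$. After rescaling take $r_0=1$. For each $k\geq 0$ fix a maximal $2^{-k}$-separated set $\{x_{k,j}\}_j$ in $\Sigma\cap B^N(x_0,2)$ and, for each such point, a plane $P_{k,j}\in\mathcal{G}(x_{k,j})$ nearly attaining the infimum defining $\theta_\Sigma(x_{k,j},2^{-k+1})$, so that the associated two-sided flatness is $\leq\delta$. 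The first and most delicate ingredient is a \emph{coherence estimate}: whenever balls $B^N(x_{k,j},C2^{-k})$ and $B^N(x_{k',j'},C2^{-k'})$ with $|k-k'|\leq 1$ meet, the planes $P_{k,j}$ and $P_{k',j'}$ lie within $\lesssim\delta 2^{-k}$ of each other in the overlap and make an angle $\lesssim\delta$. This follows from the fact that both planes pass within $\lesssim\delta 2^{-k}$ of a common set of $n+1$ points of $\Sigma$ that span a non-degenerate $n$-simplex of diameter $\sim 2^{-k}$ — such points exist because the \emph{two-sided} bound $\theta_\Sigma\leq\delta$ forces $\Sigma$ to fill a $\delta 2^{-k}$-neighborhood of each relevant plane — together with the elementary fact that two $n$-planes which both $\varepsilon$-approximate such a configuration must be $C\varepsilon$-close.

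Next, take a partition of unity $\{\psi_{k,j}\}_j$ subordinate to $\{B^N(x_{k,j},2\cdot 2^{-k})\}_j$, multiply it by a single fixed cutoff that is $1$ on $B^N(x_0,1)$ and vanishes off $B^N(x_0,3/2)$ — this localizing cutoff is what makes the statement \emph{local} and is where the enlarged hypothesis ball $B^N(x_0,10r_0)$ is used — and set
\[
\sigma_k(y)\;=\;y+\sum_j\psi_{k,j}(y)\,\big(\pi_{k,j}(y)-y\big),
\]
with $\pi_{k,j}$ the orthogonal projection onto $P_{k,j}$. The coherence estimate gives $\|\sigma_k-\mathrm{id}\|_\infty\lesssim\delta 2^{-k}$ and $\|D\sigma_k-I\|\lesssim\delta$, so once $\delta_0$ is small each $\sigma_k$ is a globally $(1+C\delta)$-bi-Lipschitz homeomorphism of $\RR^N$ that is the identity off $B^N(x_0,3/2)$. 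Writing $g_K=\sigma_K\circ\cdots\circ\sigma_0$, the displacement bounds telescope, $\|g_K-\mathrm{id}\|_\infty\lesssim\delta\sum_{k\geq 0}2^{-k}\lesssim\delta$, so $g_K$ converges to a bijection $g$ with $\|g-\mathrm{id}\|_\infty\lesssim\delta\leq 1/100$. For the bi-H\"older bound, fix $x,y$ on an $n$-plane with $|x-y|\sim 2^{-k}$ and write $g=(\sigma_K\circ\cdots\circ\sigma_{k+1})\circ g_k$; the tail moves each of $g_k(x),g_k(y)$ by $\lesssim\delta 2^{-k}$, while $g_k$ is $(1+C\delta)^{k+1}$-Lipschitz and $(1+C\delta)^k\sim|x-y|^{-C\delta/\log 2}$, giving the upper bound $|g(x)-g(y)|\lesssim|x-y|^{1-C\delta/\log 2}$. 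The inverse $g^{-1}=\lim_K\sigma_0^{-1}\circ\cdots\circ\sigma_K^{-1}$ is built by the same scheme, so the same estimate applied to $g^{-1}$ yields the matching lower bound $|g(x)-g(y)|\gtrsim|x-y|^{1+C\delta/\log 2}$. Shrinking $\delta_0$ so that $C\delta_0/\log 2\leq 0.01$ produces the exponents $1.01$ and $0.99$ (and, with room to spare, the constants $1/4$ and $3$); taking the $n$-plane to be $\RR^n$ gives the stated display.

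It remains to identify the image: take $V\in\mathcal{G}(x_0)$ realizing $\theta_\Sigma(x_0,1)\leq\delta$. For $g(V)\cap B^N(x_0,1)\subseteq\Sigma$, note that $\sigma_k$ sends a point within $\delta 2^{-k}$ of $\Sigma$ to a point within $C\delta 2^{-k}$ of $\Sigma$ while moving it by $\lesssim\delta 2^{-k}$, so summability of these errors forces $\dist(g(v),\Sigma)=0$, i.e.\ $g(v)\in\Sigma$, for $v\in V$ near $x_0$. The reverse inclusion $\Sigma\cap B^N(x_0,1)\subseteq g(V)$ is the only genuinely topological step and again uses the \emph{bilateral} bound, which keeps a point of the current approximating plane near every point of $\Sigma$ at every scale: since each $g_K$ equals the identity near the boundary of a fixed slightly larger ball and is uniformly close to it there, a standard degree argument shows $g_K$, and in the limit $g$, carries $V\cap B^N(x_0,1+o(1))$ onto a set containing $\Sigma\cap B^N(x_0,1)$.

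I expect the coherence estimate, and the bookkeeping needed to carry it through the infinite composition with the explicit constants $1.01$, $0.99$, $1/100$, to be the main work; once the planes vary slowly and the $\sigma_k$ are uniformly bi-Lipschitz-close to the identity, convergence, the distortion bounds, and the degree argument are routine. The one remaining subtlety is to ensure the localizing cutoff does not destroy bijectivity of any $\sigma_k$ — handled by keeping the full product cutoff small in $C^1$, which is exactly why flatness is assumed on the enlarged ball $B^N(x_0,10r_0)$.
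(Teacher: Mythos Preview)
The paper does not prove this theorem; it is quoted from David--Toro \cite[Theorem 1.1]{DT12} and used as a black box (it supplies the parameterization that Theorem~\ref{t:bilip} then upgrades to bi-Lipschitz). So there is no proof in the paper to compare your proposal against.

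That said, your outline is a faithful sketch of the construction carried out in \cite{DT12}: one builds $g$ as the limit of compositions $\sigma_K\circ\cdots\circ\sigma_0$, where each $\sigma_k$ averages, via a partition of unity subordinate to a cover at scale $\sim 10^{-k}$ (your $2^{-k}$ is cosmetic), orthogonal projections onto local best-approximating $n$-planes; the key input is the coherence estimate between nearby planes, which follows from the bilateral bound $\theta_\Sigma\leq\delta$ exactly as you say; the bi-H\"older bounds come from $\|D\sigma_k-I\|\lesssim\delta$ and the telescoping displacement estimate; and the identification $\Sigma\cap B^N(x_0,r_0)=g(V)\cap B^N(x_0,r_0)$ uses a degree argument. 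Your sketch is correct in outline, though of course the full proof in \cite{DT12} occupies many pages of careful bookkeeping.
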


In \cite{DT12}, the third named author, together with Guy David, found several conditions that guarantee that the parameterization in Reifenberg's topological disk theorem is bi-Lipschitz.

\begin{theorem}[Local bi-Lipschitz parameterization {\cite[Theorem 1.3]{DT12}}] \label{t:bilip} For every $M<\infty$, there exists $L=L(n,N,M)<\infty$ with the following property. If $\Sigma\subset\RR^N$ is closed, $x_0\in \Sigma$, $r_0>0$, $0<\delta\leq\delta_0$, and $\theta_\Sigma(x,r)\leq \delta$ for all $x\in \Sigma\cap B^N(x_0,10r_0)$ and $0<r\leq 10r_0$, and \begin{equation} \label{e:betasum} \sup_{x\in \Sigma\cap B^N(x_0,10r_0)} \sum_{k=0}^\infty \betac_\Sigma(x,10^{-k}r_0)^2\leq M<\infty,\end{equation} then the mapping $g$ provided by Theorem \ref{t:reif} can be chosen to satisfy \begin{equation} \label{e:bilip} \frac{|x-y|}{L}\leq |g(x)-g(y)|\leq L|x-y|\quad\text{for all }x,y\in\RR^N.\end{equation}\end{theorem}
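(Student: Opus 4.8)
The statement says that the Reifenberg parameterization $g$ furnished by Theorem \ref{t:reif} may be taken bi-Lipschitz as soon as one adds the square-summability hypothesis (\ref{e:betasum}); the plan is therefore to revisit the construction of $g$ and sharpen the Lipschitz estimates that enter it. Recall the shape of that construction: fix the scales $r_k=10^{-k}r_0$ ($k\ge 0$); at scale $k$ pick a maximal $r_k$-separated net $\{x_{k,i}\}$ in a neighborhood of $\Sigma$ and, for each $i$, an $n$-plane $P_{k,i}\in\mathcal{G}(x_{k,i})$ nearly realizing the infimum defining $\betac_\Sigma(x_{k,i},Cr_k)$; with a partition of unity $\{\varphi_{k,i}\}$ subordinate to $\{B^N(x_{k,i},Cr_k)\}$ put $\sigma_k(y)=y+\sum_i\varphi_{k,i}(y)\,(\pi_{k,i}(y)-y)$, where $\pi_{k,i}$ is orthogonal projection onto $P_{k,i}$, and set $g=\lim_k\sigma_k\circ\cdots\circ\sigma_0$. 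The estimates behind Theorem \ref{t:reif} are that $\sigma_k=\mathrm{Id}$ off a $Cr_k$-neighborhood of the current surface $\Sigma_{k-1}:=\sigma_{k-1}\circ\cdots\circ\sigma_0(V)$ (with $\Sigma_{-1}=V$), that $|\sigma_k(y)-y|\lesssim\delta r_k$, and that $\|D\sigma_k-I\|\lesssim\delta$ everywhere; the first two yield convergence ($\sum_k r_k<\infty$) and $|g(y)-y|\le r_0/100$, and together with Reifenberg flatness they give $\Sigma\cap B^N(x_0,r_0)=g(V)\cap B^N(x_0,r_0)$. All this is from the proof of Theorem \ref{t:reif}; what is missing is to replace the wasteful bound $\|D\sigma_k-I\|\lesssim\delta$ by one that is square-summable along $\Sigma_{k-1}$.

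The crucial new ingredient is a \emph{quadratic} gain in the tangential distortion of each $\sigma_k$. Since $\Sigma$ is $(\delta_0,10r_0)$-Reifenberg flat, for $x\in\Sigma$ the set $\Sigma\cap B^N(x,r_k)$ carries a quantitatively non-degenerate $n$-dimensional configuration of points (non-degeneracy depending only on $n,N,\delta_0$); hence any two $n$-planes through a common point that both lie within $\varepsilon r_k$ of $\Sigma\cap B^N(x,r_k)$ make an angle $\lesssim\varepsilon$. Applying this to $P_{k,i}$ and $P_{k+1,j}$ for neighboring net points, and invoking the monotonicity relations (\ref{e:mono})--(\ref{e:monob}) and (\ref{e:betas}) to absorb the bounded scale-shift constants $C,C'$, one gets
\begin{equation*}
\angle(P_{k,i},P_{k+1,j})\ \lesssim\ \betac_\Sigma(x,C'r_k)+\betac_\Sigma(x,C'r_{k+1}).
\end{equation*}
Along the tangent directions of $\Sigma_{k-1}$ the map $\sigma_k$ is, to leading order, a projection through an angle of this size, so its singular values differ from $1$ only by $O(\mathrm{angle}^2)$: this is the same second-order effect by which the length element $\sqrt{1+|\nabla u|^2}$ of a graph of slope $|\nabla u|$ equals $1+O(|\nabla u|^2)$. (In the normal directions one has only $D\sigma_k=I+O(\delta)$, but these enter solely through the finitely many transitional scales below.) Thus $\sigma_k$ restricts to a $(1+C\beta_k^2)$-bi-Lipschitz map from $\Sigma_{k-1}$ onto $\Sigma_k$, where $\beta_k$ denotes the largest of the centered beta numbers $\betac_\Sigma(\cdot,C'r_j)$, $j\in\{k-1,k,k+1\}$, over the $O(1)$ net points relevant to step $k$.

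To conclude, fix $x,y\in V$ with $|x-y|$ small; their orbits under the partial compositions $\sigma_{k-1}\circ\cdots\circ\sigma_0$ converge to points $g(x),g(y)\in\Sigma$ and at stage $k$ lie within $\lesssim r_k$ of their limits, so the factor $\beta_k$ governing the $k$-th step satisfies $\beta_k\lesssim\betac_\Sigma(g(x),C'r_k)$. Telescoping the per-scale bi-Lipschitz factors, and using that the denser family of scales $\{C'r_k\}$ costs only a fixed multiple of the sum appearing in (\ref{e:betasum}),
\begin{equation*}
\frac{|g(x)-g(y)|}{|x-y|}\ \le\ \prod_{k\ge 0}\big(1+C\beta_k^2\big)\ \le\ \exp\!\left(C\sum_{k\ge 0}\beta_k^2\right)\ \le\ e^{CM};
\end{equation*}
the lower bound is symmetric, via the inverses $(\sigma_k|_{\Sigma_{k-1}})^{-1}$. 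For a point $z\in\RR^N$ not on $V$ one argues as in the Reifenberg construction: below a well-defined scale comparable to $\dist(z,V)$ the map $\sigma_k$ leaves $z$ fixed, above that scale the orbit of $z$ shadows the moving surface closely enough for the quadratic estimate to apply, and only $O(1)$ intermediate scales contribute the non-quadratic factor $1+C\delta_0$. Hence $g$ is $L$-bi-Lipschitz on all of $\RR^N$ with $L=L(n,N,M)$.

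The main obstacle is the quadratic estimate of the second paragraph: it requires the angle-comparison lemma for near-optimal planes --- the one place Reifenberg flatness is used, and only to keep $\Sigma$ from degenerating --- together with a careful second-order expansion of the partition-of-unity-glued map $\sigma_k$ along the moving surface $\Sigma_{k-1}$; and then the bookkeeping that the infinite composition, including the handful of coarse and transitional scales on which $\betac_\Sigma$ is controlled only by $\delta_0$ and including the points off $V$, inflates the bi-Lipschitz constant by no more than a factor depending on $n$, $N$, $\delta_0$, and $M$.
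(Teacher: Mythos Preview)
The paper does not prove this statement: Theorem~\ref{t:bilip} is quoted verbatim from \cite[Theorem~1.3]{DT12} and used as a black box, so there is no ``paper's own proof'' to compare against. Your sketch is a faithful outline of the David--Toro argument itself: iterate the Reifenberg maps $\sigma_k$, observe that the tangential distortion at scale $k$ is governed by the \emph{angle} between successive approximating planes (hence by $\betac_\Sigma$), note that projection through a small angle $\alpha$ distorts tangential lengths by $1+O(\alpha^2)$, and telescope the resulting product $\prod_k(1+C\beta_k^2)\le e^{CM}$.

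That said, as a standalone proof your sketch is incomplete in the expected places. The sentence ``its singular values differ from $1$ only by $O(\mathrm{angle}^2)$'' hides the real work: $\sigma_k$ is not a single projection but a partition-of-unity blend $y\mapsto y+\sum_i\varphi_{k,i}(y)(\pi_{k,i}(y)-y)$, and one must check that the cross terms coming from $D\varphi_{k,i}$ (which carry a factor $r_k^{-1}$) against the displacements $\pi_{k,i}(y)-y$ (which are only $O(\beta_k r_k)$, not $O(\beta_k^2 r_k)$) still combine to a second-order error in the tangential directions; this requires a further cancellation using that the $P_{k,i}$ for neighboring $i$ are themselves $O(\beta_k)$-close in angle. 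Likewise, the treatment of points off $V$ and of the normal component of $D\sigma_k$ needs the full machinery of \cite{DT12} (their Chapters~6--8) rather than the one-line appeal to ``$O(1)$ transitional scales.'' None of this is wrong in spirit, but a reader could not reconstruct the proof from your outline alone.
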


\begin{corollary}[Global bi-Lipschitz parameterization] \label{c:bilip} Suppose that $\Sigma\subset\RR^N$ is closed, $x_0\in \Sigma$, $0<\delta<\delta_0$, and $\theta_\Sigma(x,r)\leq \delta$ for all $x\in \Sigma$ and $r>0$. If there exists $M<\infty$ such that (\ref{e:betasum}) holds for all $r_0>0$, then there exists a map $g:\RR^N\rightarrow\RR^N$ satisfying \eqref{e:bilip} such that $\Sigma=g(\RR^n)$.
\end{corollary}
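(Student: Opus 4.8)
The plan is to exhaust $\RR^N$ by balls centered at $x_0$, apply the local bi-Lipschitz parameterization theorem (Theorem \ref{t:bilip}) at each scale, normalize the resulting maps so that they cannot escape to infinity, and then extract a limit by Arzel\`a--Ascoli. Concretely, for each integer $j\geq 1$ I would apply Theorem \ref{t:bilip} with center $x_0$, radius $r_0=10^j$, and the given $\delta<\delta_0$ and $M$. The two hypotheses of that theorem hold: $\theta_\Sigma(x,r)\leq\delta\leq\delta_0$ on $\Sigma\cap B^N(x_0,10^{j+1})$ for $0<r\leq 10^{j+1}$ by the global flatness assumption, and the square-sum bound \eqref{e:betasum} holds for $r_0=10^j$ since it is assumed for \emph{all} $r_0>0$. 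This produces a bijective $L$-bi-Lipschitz map $g_j:\RR^N\to\RR^N$, with $L=L(n,N,M)$ \emph{independent of $j$}, together with an $n$-plane $V_j\ni x_0$ such that $\Sigma\cap B^N(x_0,10^j)=g_j(V_j)\cap B^N(x_0,10^j)$.

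The one genuine subtlety is the normalization, since $g_j$ may displace points by as much as $\sim 10^j$ and $V_j$ is not the reference copy $\RR^n=\RR^n\times\{0\}^{N-n}$, so a naive limit is unavailable. To remedy this I would pick an affine isometry $A_j$ of $\RR^N$ with $A_j(\RR^n)=V_j$; then $g_j\circ A_j$ maps $\RR^n$ onto $g_j(V_j)$ and is still $L$-bi-Lipschitz. Since $x_0\in\Sigma\cap B^N(x_0,10^j)=g_j(V_j)\cap B^N(x_0,10^j)$, there is $v_j\in V_j$ with $g_j(v_j)=x_0$; put $w_j:=A_j^{-1}(v_j)\in\RR^n$ and $\phi_j(x):=g_j(A_j(x+w_j))$. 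Because $w_j\in\RR^n$, translation by $w_j$ preserves $\RR^n$, so $\phi_j(\RR^n)=g_j(V_j)$ and hence $\phi_j(\RR^n)\cap B^N(x_0,10^j)=\Sigma\cap B^N(x_0,10^j)$; moreover $\phi_j(0)=g_j(v_j)=x_0$. Thus $\{\phi_j\}$ and $\{\phi_j^{-1}\}$ are sequences of $L$-bi-Lipschitz maps pinned by $\phi_j(0)=x_0$ and $\phi_j^{-1}(x_0)=0$, hence uniformly bounded and equi-Lipschitz on every ball.

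By Arzel\`a--Ascoli together with a diagonal argument over an exhausting sequence of balls, I can pass to a subsequence along which $\phi_j\to g$ and $\phi_j^{-1}\to\psi$ locally uniformly, with $g$ and $\psi$ both $L$-Lipschitz. A standard three-term estimate then gives $\psi=g^{-1}$, so $g:\RR^N\to\RR^N$ is a bijective $L$-bi-Lipschitz map; in particular it satisfies \eqref{e:bilip}. It remains to check $g(\RR^n)=\Sigma$. For $g(\RR^n)\subseteq\Sigma$: given $x\in\RR^n$, the inequality $|\phi_j(x)-x_0|\leq L|x|$ places $\phi_j(x)$ in $\Sigma\cap B^N(x_0,10^j)\subseteq\Sigma$ for all large $j$, and $\Sigma$ is closed, so $g(x)=\lim_j\phi_j(x)\in\Sigma$. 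For $\Sigma\subseteq g(\RR^n)$: given $p\in\Sigma$, for $j$ large we have $p\in\Sigma\cap B^N(x_0,10^j)=\phi_j(\RR^n)\cap B^N(x_0,10^j)$, so $p=\phi_j(y_j)$ with $y_j=\phi_j^{-1}(p)\to\psi(p)$; since $\RR^n$ is closed, $\psi(p)\in\RR^n$, and $g(\psi(p))=p$. Taking this $g$ finishes the proof. The step I expect to require the most care is the normalization in the second paragraph: one must arrange that composing with the isometry $A_j$ and precomposing with the translation by $w_j$ simultaneously preserves the uniform bi-Lipschitz bound, fixes a point, and keeps intact the identity $\phi_j(\RR^n)\cap B^N(x_0,10^j)=\Sigma\cap B^N(x_0,10^j)$.
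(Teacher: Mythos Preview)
Your proof is correct and follows essentially the same approach as the paper: apply Theorem~\ref{t:bilip} at increasing scales, normalize via an isometry carrying $\RR^n$ onto $V_j$ and pinning $0$ to $x_0$, then extract a limit by Arzel\`a--Ascoli. The only differences are cosmetic (the paper uses $r_0=i$ rather than $10^j$, bundles the isometry and translation into a single $h^i$, and is terser in verifying $g(\RR^n)=\Sigma$).
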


\begin{proof} Let $\delta_0>0$ be the constant from Theorem \ref{t:reif}. Suppose that $\Sigma\subset\RR^N$ is a closed set, $x_0\in \Sigma$, $0<\delta<\delta_0$ and $\theta_\Sigma(x,r)\leq\delta$ for all $x\in \Sigma$ and $r>0$. Furthermore, suppose that for some $M<\infty$ condition \eqref{e:betasum} holds for all $r_0>0$. By Theorem \ref{t:bilip}, applied with $r_0=i\geq 1$, for all $i\geq 1$  there exists a an $n$-dimensional plane $V^i$ containing $x_0$ and a map $g^i:\RR^N\rightarrow\RR^N$ satisfying \eqref{e:bilip} such that $\Sigma\cap B^N(x_0,i)=g^i(V^i)\cap B^N(x_0,i)$. For all $i\geq 1$, choose an isometry $h^i:\RR^N\rightarrow\RR^N$ such that $h^i(\RR^n)=V^i$ and $g^i(h^i(0))=x_0$. The composed maps $f^j:= g^j\circ h^j$ have the property that  $\Sigma\cap B^N(x_0,i)=f^j(\RR^n)\cap B^n(x_0,i)$ for all $1\leq i\leq j$, $f^j(0)=x_0$, and \begin{equation}\label{e:clip1} L^{-1}|x-y|\leq |f^j(x)-f^j(y)|\leq L|x-y|\quad\text{for all }x,y\in\RR^N\text{ and }j\geq 1.\end{equation} In particular, the family $\{f^j:j\geq 1\}$ is equicontinuous, pointwise bounded, and \begin{equation}\label{e:clip2} f^j(B^n(0,Li))\cap B^N(x_0,i)=\Sigma\cap B^N(x_0,i)\quad\text{for all }1\leq i\leq j.\end{equation} By the Arzel\`a-Ascoli theorem, there exists a continuous map $g:\RR^N\rightarrow\RR^N$ and a subsequence of $(f^j)_{j=1}^\infty$ that converges to $g$ uniformly on compact sets. From \eqref{e:clip1} and \eqref{e:clip2}, we conclude that $g$ satisfies \eqref{e:bilip} and $g(\RR^n)=\Sigma$.
\end{proof}

\begin{remark} \label{r:Lsmall} A careful reading of the proof of \cite[Theorem 1.3]{DT12} shows that in Theorem \ref{t:bilip} and Corollary \ref{c:bilip}, when $n$ and $N$ are fixed, the constant $L=L(n,N,M)\rightarrow 1$ as $M\rightarrow 0$.\end{remark}

We end this section with a short computation related to (\ref{e:betasum}).

\begin{lemma}\label{l:check} If $\Sigma\subset\RR^N$ is closed, $x\in \Sigma$, and $r_0>0$, then \begin{equation}\label{e:check} \sum_{k=0}^\infty \betac_{\Sigma}(x,10^{-k}r_0)^2 \leq \frac{400}{\log(10)}\int_0^{10r_0} \beta_{\Sigma}(x,r)^2\frac{dr}{r}.\end{equation} \end{lemma}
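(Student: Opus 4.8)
The plan is to prove the pointwise inequality \eqref{e:check} by comparing the discrete sum over the geometric scales $10^{-k}r_0$ with the continuous integral of $\beta_\Sigma(x,r)^2/r$, using the monotonicity property \eqref{e:mono} and the comparison \eqref{e:betas} between centered and uncentered beta numbers to control each dyadic-type block.

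First I would fix $k\geq 0$ and work on the interval $r\in[10^{-k}r_0,\,10^{-k+1}r_0]$. For such $r$, the monotonicity estimate $\beta_\Sigma(x,sr)\leq s^{-1}\beta_\Sigma(x,r)$ from \eqref{e:mono}, applied with $s=10^{-k}r_0/r\in[1/10,1]$ so that $sr=10^{-k}r_0$, gives $\beta_\Sigma(x,10^{-k}r_0)\leq (r/(10^{-k}r_0))\,\beta_\Sigma(x,r)\leq 10\,\beta_\Sigma(x,r)$. Squaring and integrating $dr/r$ over this interval, whose logarithmic length is $\log 10$, yields
\begin{equation*}
\beta_\Sigma(x,10^{-k}r_0)^2 \leq \frac{100}{\log 10}\int_{10^{-k}r_0}^{10^{-k+1}r_0}\beta_\Sigma(x,r)^2\,\frac{dr}{r}.
\end{equation*}
By \eqref{e:betas} we have $\betac_\Sigma(x,10^{-k}r_0)^2\leq 4\beta_\Sigma(x,10^{-k}r_0)^2$, so the left side is bounded by $\tfrac{400}{\log 10}$ times that integral.

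Next I would sum over $k\geq 0$. The intervals $[10^{-k}r_0,10^{-k+1}r_0]$ for $k=0,1,2,\dots$ are (essentially) disjoint and their union is $(0,10r_0]$, so the sum of the integrals telescopes into $\int_0^{10r_0}\beta_\Sigma(x,r)^2\,\tfrac{dr}{r}$, giving exactly the claimed bound \eqref{e:check}. I do not anticipate any real obstacle here: the only points requiring a word of care are that $\beta_\Sigma(x,r)\leq 1$ guarantees the integral and sum are comparing honestly (in particular everything is nonnegative, so reordering the sum of integrals is harmless), and that the endpoints of the intervals overlap on a set of measure zero and hence do not affect the integral. This is the short computation promised, so I would keep the write-up to a few lines.
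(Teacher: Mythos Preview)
Your proof is correct and is essentially the same as the paper's: both bound $\betac_\Sigma(x,10^{-k}r_0)$ by $20\,\beta_\Sigma(x,r)$ for $r\in[10^{-k}r_0,10^{-k+1}r_0]$ (you apply monotonicity to $\beta$ and then use $\betac\leq 2\beta$, whereas the paper applies monotonicity to $\betac$ first and then uses $\betac\leq 2\beta$), integrate against $dr/r$ over that block, and sum over $k$. The constants and the final estimate match exactly.
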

\begin{proof} Let $\Sigma\subset\RR^N$ closed, $x\in\Sigma$, and $r_0>0$ be given. If $r\in[10^{-k}r_0,10^{-(k-1)}r_0]$, then \begin{equation*} \betac_\Sigma(x,10^{-k}r_0) \leq 10 \betac_\Sigma(x,r)\leq 20\beta_\Sigma(x,r),\end{equation*} where the first inequality holds by (\ref{e:mono}) and the second inequality holds by (\ref{e:betas}). Therefore, \begin{equation*}\begin{split} \int_0^{10r_0} &\beta_\Sigma(x,r)^2\frac{dr}{r} = \sum_{k=0}^\infty \int_{10^{-k}r_0}^{10^{-(k-1)} r_0} \beta_\Sigma(x,r)^2\frac{dr}{r} \\ &\geq \frac{1}{400}\sum_{k=0}^\infty \int_{10^{-k}r_0}^{10^{-(k-1)} r_0} \betac_\Sigma(x,10^{-k}r_0)^2\frac{dr}{r}=\frac{\log(10)}{400}\sum_{k=0}^\infty \betac_\Sigma (x,10^{-k}r_0)^2.\end{split}\end{equation*} Rearranging the inequality yields (\ref{e:check}). \end{proof}


\section{Outline of new ingredients in and proofs of the main theorems}
\label{s:outline}


\subsection{Quasisymmetry and local flatness of quasiplanes} \label{ss:qsflat} The connection between distortion of local flatness and quasiconformal maps was first recognized by Mattila and Vuorinen \cite{MV90} as a tool to establish upper bounds on the Minkowski and Hausdorff dimensions of quasispheres. Prause \cite{Prause} obtained improved estimates on the dimension of quasispheres, by estimating the distortion of beta numbers using the quasisymmetry of a global quasiconformal map in place of the maximal dilatation. The following theta number variant of \cite[Theorem 5.1]{Prause} was a key tool in the proof of Theorem \ref{t:bgrt} stated above.

\begin{lemma}[{\cite[Lemma 2.3]{BGRT}}]\label{l:bgrt} Suppose that $1\leq n=N-1$. Let $V$ be an $n$-dimensional plane in $\RR^N$, let $v\in V$ and let $e\in (V-v)^\perp$ be a unit vector. For any topological embedding $f:B^N(v,r)\rightarrow\RR^N$, \begin{equation*}\thetahd_{f(V)}\left(f(v),\frac{1}{4}|f(v+re)-f(v-re)|\right) \leq 20 \tH_f(B^N(v,r)).\end{equation*} \end{lemma}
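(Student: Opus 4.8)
The plan is to normalize, write down an explicit approximating hyperplane, and then bound the two one‑sided quantities in the Hausdorff distance separately.

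\textbf{Normalization.} Composing with translations I may take $v=0$ and $f(v)=0$, and dilating I may take $r=1$; so $f\colon B^N(0,1)\to\RR^N$ is a topological embedding with $f(0)=0$, $V=e^\perp$, and I write $H:=H_f(B^N(0,1))$, $\tH:=H-1$, $a:=f(e)$, $b:=f(-e)$, $\xi:=a-b$, $\rho:=\tfrac14|\xi|$, $\nu:=\xi/|\xi|$; here $f(V)$ abbreviates $f(V\cap B^N(0,1))$. Since $\thetahd\le1$ always, there is nothing to prove unless $\tH<\tfrac1{20}$, so I assume this. The candidate plane is the hyperplane $W:=\nu^\perp$ through $0$ (note that when $f$ is a similarity one has $f(V)=W$ and $\rho=\tfrac12|f(e)|$, which is consistent with $\tH=0$), and the goal becomes
\begin{equation*}
\HD\bigl(f(V)\cap B^N(0,\rho),\;W\cap B^N(0,\rho)\bigr)\le 20\,\tH\,\rho,
\end{equation*}
which splits into $\sup_{y}\dist(y,W)\le20\tH\rho$ over $y\in f(V)\cap B^N(0,\rho)$ and $\sup_{w}\dist\bigl(w,f(V)\cap B^N(0,\rho)\bigr)\le20\tH\rho$ over $w\in W\cap B^N(0,\rho)$.

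\textbf{Step 1: $f(V)$ hugs $W$.} This is the centered analogue of Prause's one‑sided estimate, and the mechanism is the polarization identity fed by weak quasisymmetry on $B^N(0,1)$. Four instances of weak quasisymmetry give: (i) centre $0$, pair $\{e,-e\}$ (equidistant from $0$): $H^{-1}\le|a|/|b|\le H$; (ii) centre $e$, pair $\{0,-e\}$ (since $|0-e|=1\le2=|-e-e|$): $|a|\le H|\xi|$, and symmetrically $|b|\le H|\xi|$, so $M:=\max(|a|,|b|)\le4H\rho$; (iii) centre $0$, pair $\{x,e\}$ for $x\in V\cap B^N(0,1)$: $|f(x)|\le H\min(|a|,|b|)$; (iv) centre $x\in V$, pair $\{e,-e\}$ (equidistant from $x$ because $x\perp e$): $H^{-1}\le|f(x)-a|/|f(x)-b|\le H$. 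Inserting (iv) into
\begin{equation*}
\langle f(x),\xi\rangle=\tfrac12\bigl[(|a|^2-|b|^2)-(|f(x)-a|^2-|f(x)-b|^2)\bigr]
\end{equation*}
and bounding each difference of squares $|s^2-t^2|$ by $2\tH\max(s,t)^2$ (legitimate by (i) and (iv)), I obtain $|\langle f(x),\xi\rangle|\le\tH\bigl(M^2+(|f(x)|+M)^2\bigr)$, hence $\dist(f(x),W)=|\langle f(x),\xi\rangle|/(4\rho)$. For $f(x)\in B^N(0,\rho)$ this is at most $\tfrac14\tH\rho\bigl(16H^2+(1+4H)^2\bigr)<12\tH\rho$; since the nearest point of $W$ to any point of $B^N(0,\rho)$ again lies in $B^N(0,\rho)$, the first one‑sided bound follows with room to spare. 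The same computation with the cruder input $|f(x)|\le4H^2\rho$ gives $\dist(f(x),W)\le23\tH\rho$ for every $x\in V\cap B^N(0,1)$, which is all Step 2 uses.

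\textbf{Step 2: $W\cap B^N(0,\rho)$ is covered by $f(V)$.} First, weak quasisymmetry with centre $0$ forces $f(\partial B^N(0,1))$ into an annulus about $0$ of eccentricity $H$ whose inner radius $R^*$ satisfies $R^*\ge H^{-1}M\ge2\rho/H>1.9\rho$ (using $M\ge\tfrac12|\xi|=2\rho$). Since $f$ is a topological embedding of the closed ball, $f(\partial B^N(0,1))$ is a topological $(N-1)$‑sphere, the bounded complementary component of which is exactly $f(\oB^N(0,1))$, and that component contains $\oB^N(0,R^*)\supset\overline{B^N(0,\rho)}$; in particular $f\bigl(V\cap\partial B^N(0,1)\bigr)$ lies outside $\overline{B^N(0,1.9\rho)}$, so the relatively open set $\Omega:=\{x\in V\cap B^N(0,1):|f(x)|<\rho\}$ has compact closure inside $V\cap\oB^N(0,1)$, contains $0$, and satisfies $f(\Omega)=f(V)\cap\oB^N(0,\rho)$. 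Consider $G:=\pi_W\circ f\colon\overline\Omega\to W$ with $\pi_W$ the orthogonal projection. On $\partial\Omega$ one has $|f(x)|=\rho$, so by Step 1 $\dist(f(x),W)\le12\tH\rho$ and $|G(x)|=\sqrt{\rho^2-\dist(f(x),W)^2}\ge\rho\sqrt{1-144\tH^2}=:r_0$, while $G(0)=0$. A degree argument — whose only nontrivial input is that $\deg(G,\Omega,0)=\pm1$, because $f$ is quasisymmetric, hence quasiconformal with dilatation $K\le H^{N-1}$ close to $1$ on the open ball, so that $f(V)$ is near $0$ a Lipschitz graph over $W$ — shows $G(\Omega)\supseteq W\cap B^N(0,r_0)$. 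Hence every $w\in W$ with $|w|\le r_0$ equals $\pi_W(y)$ for some $y\in f(V)\cap B^N(0,\rho)$ with $|y-w|=\dist(y,W)\le12\tH\rho$; and for $w\in W\cap B^N(0,\rho)$ with $r_0<|w|\le\rho$ one adds $|w|-r_0\le\rho\bigl(1-\sqrt{1-144\tH^2}\bigr)\le144\tH^2\rho$, giving $\dist\bigl(w,f(V)\cap B^N(0,\rho)\bigr)\le(12+144\tH)\tH\rho<20\tH\rho$. Together with Step 1 this proves the lemma.

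\textbf{Main obstacle.} Step 1 is essentially bookkeeping with the polarization identity, as in Prause. The genuine work is Step 2, i.e.\ the \emph{second} side of the Hausdorff distance: one must show that the perturbed, a priori merely topological hypersurface $f(V\cap B^N(0,1))$ actually \emph{surjects} (after projection) onto the flat disk $W\cap B^N(0,\rho)$ — not merely that it stays near $W$ — and this rests on the Jordan–Brouwer separation of $f(\oB^N(0,1))$ together with a degree/orientation argument using the near‑conformality of $f$, plus the numerical care needed to keep the overall constant at $20$.
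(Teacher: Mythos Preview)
The paper does not prove this lemma; it is quoted verbatim from \cite{BGRT} and used only as motivation. The closest thing the paper proves is the one-sided (beta-number) analogue in arbitrary codimension, Lemma~\ref{l:bflat2}, whose proof in \S\ref{s:flatness} parallels your Step~1 closely: normalize, build an affine approximant from the images of $\pm e_i$, and use weak quasisymmetry plus the polarization identity to bound $\langle f(x),A(e_i)\rangle$. So on Step~1 you and the paper are doing the same thing, and your bookkeeping checks out.

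The gap is in Step~2, specifically your justification that $\deg(G,\Omega,0)=\pm1$. You write that this holds ``because $f$ is quasisymmetric, hence quasiconformal with dilatation $K\le H^{N-1}$ close to $1$ on the open ball, so that $f(V)$ is near $0$ a Lipschitz graph over $W$.'' This inference is false: quasiconformality, even with $K$ arbitrarily close to $1$, does \emph{not} force the image of a hyperplane to be locally a Lipschitz graph over any hyperplane. The paper's own introduction gives the prototype counterexample---a quasiconformal map of the plane sending a line to a von Koch curve---and the discussion after Theorem~\ref{t:abt2} exhibits quasiplanes with $\tH_f$ arbitrarily small on all small balls that are nowhere rectifiable. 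Differentiability a.e.\ of quasiconformal maps gives you nothing at the specific point $0$. So as written the degree computation is unjustified.

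The strategy is salvageable, but the input must be topological, not analytic. One clean route: extend $G$ to $\tilde G=\pi_W\circ f$ on all of $V\cap B^N(0,1)$; your own Step~1 bound $|\langle f(x),\nu\rangle|\le 23\tH\rho$ for every $x\in V\cap B^N(0,1)$, combined with $|f(x)|\ge\rho$ off $\Omega$, shows $\tilde G$ does not vanish outside $\Omega$, so by excision $\deg(G,\Omega,0)=\deg(\tilde G,V\cap\oB^N(0,1),0)$. The latter degree is $\pm1$ because $f$ is a homeomorphism of the closed $N$-ball, so $f(V\cap B^N(0,1))$ is a properly embedded $(N-1)$-disk separating $f(B^N(0,1))$ into the images of the two open half-balls, one containing $a$ and the other $b$; the segment $\{t\nu:|t|<R^*\}\subset f(\oB^N(0,1))$ must therefore meet $f(V)$, and its intersection number with $f(V)$ (equivalently the degree of $\tilde G$ at $0$) is $\pm1$ by this separation. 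That is the argument you need in place of the Lipschitz-graph claim.
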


Below we generalize the previous lemma to arbitrary codimension, at the expense of obtaining a beta number estimate instead of a theta number estimate. Lemma \ref{l:bflat} (which we prove in \S\ref{s:flatness}) is a quantitative local version of \cite[Theorem 5.6]{Prause}.

\begin{lemma}\label{l:bflat} Suppose that $1\leq n\leq N-1$. Let $V$ be an $n$-dimensional plane in $\RR^N$, let $v\in V$, and let $e$ be a unit vector in $\RR^N$. For any topological embedding $f:B^N(v,2r)\rightarrow\RR^N$, \begin{equation*}\beta_{f(V)}\left(f(v),\frac{1}{2}|f(v+re)-f(v)|\right) \leq 72N \tH_f(B^N(v, 2r)).\end{equation*}\end{lemma}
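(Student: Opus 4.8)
The plan is to show that $f(V)$ stays close to a well-chosen $n$-plane through $f(v)$ at the scale $\rho:=\frac{1}{2}|f(v+re)-f(v)|$, by tracking what the weak quasisymmetry inequality says about the images of a net of points in $V\cap B^N(v,r)$. Set $H:=H_f(B^N(v,2r))$ and write $\widetilde H=H-1$; we may assume $\widetilde H$ is small (say $\widetilde H\le 1/(72N)$), since otherwise $\beta_{f(V)}\le 1\le 72N\widetilde H$ trivially. First I would record the basic consequence of weak quasisymmetry: for $x,y,a\in B^N(v,2r)$ with $|x-a|\le|y-a|$ one has $|f(x)-f(a)|\le H|f(y)-f(a)|$; applied with $a=v$ and $|y-v|=|x-v|$ this gives $(1+\widetilde H)^{-1}\le |f(x)-f(v)|/|f(x')-f(v)|\le 1+\widetilde H$ whenever $|x-v|=|x'-v|\le 2r$, so $f$ maps each sphere $\partial B^N(v,t)$ into a thin spherical shell about $f(v)$. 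In particular $|f(x)-f(v)|\sim_{}\ |f(v+re)-f(v)|=2\rho$ for every $x$ with $|x-v|=r$, up to factors $1\pm\widetilde H$. The idea (following Prause) is that a map sending concentric spheres to thin shells must send the plane $V$ near an $n$-plane: the linear structure of $V$ is recovered from the metric relations among points at a fixed radius.

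The technical heart is the following. Fix the plane $V$ and, for a point $w\in V$ with $|w-v|=r$, consider the three points $v$, $w$, and $w':=2v-w$ (the antipode of $w$ through $v$ in $V$), all in $B^N(v,2r)$ since $|w'-v|=r$. Because $|w-w'|=2r=2|w-v|=2|w'-v|$, weak quasisymmetry forces $|f(w)-f(w')|$ to be comparable to both $|f(w)-f(v)|$ and $|f(w')-f(v)|$, which are each $\approx 2\rho$; combining the triangle inequality with these comparisons yields that $f(v)$ lies within $O(N\widetilde H\rho)$ of the segment $[f(w),f(w')]$, i.e.\ the three image points are nearly colinear with $f(v)$ in the middle. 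Doing this for an orthonormal frame $w_1=v+re_1,\dots,w_n=v+re_n$ spanning $V-v$ (and also for combinations like $v+\tfrac{r}{\sqrt2}(e_i\pm e_j)$ to control angles) produces $n$ nearly-orthogonal image directions $u_i:=(f(w_i)-f(v))/|f(w_i)-f(v)|$, and I would let $W$ be the affine $n$-plane through $f(v)$ spanned by $u_1,\dots,u_n$. One then shows that every $f(y)$ with $y\in V\cap B^N(v,r)$ satisfies $\dist(f(y),W)\le 72N\widetilde H\rho$: write $y=v+\sum t_i re_i/|\cdots|$ in $V$, use that weak quasisymmetry controls each coordinate-type comparison $|f(y)-f(w_i)|$ vs.\ $|y-w_i|$, and sum the $n$ coordinate errors, each of size $O(\widetilde H\rho)$, picking up the factor $N$. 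Since $\rho=\tfrac12|f(v+re)-f(v)|$ and $V\cap B^N(f(v),\rho)\subset f(V\cap B^N(v,r))$ modulo the sphere-to-shell estimate (here one checks that $f^{-1}(B^N(f(v),\rho))\subset B^N(v,2r)$ using that points at radius $r$ map to distance $\ge \rho$ from $f(v)$), this gives exactly $\beta_{f(V)}(f(v),\rho)\le 72N\widetilde H$.

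Assembling: (i) from weak quasisymmetry derive the sphere-to-shell bound and deduce that $f(V\cap\partial B^N(v,r))$ lies in the shell $\{\,2\rho/(1+\widetilde H)\le |z-f(v)|\le 2\rho(1+\widetilde H)\,\}$; (ii) via the antipodal-triple argument extract nearly colinear/orthogonal image directions and define the plane $W\in\mathcal G$; (iii) bound $\dist(f(y),W)$ for all $y\in V\cap B^N(v,r)$ by summing $n$ coordinatewise errors, each $\lesssim\widetilde H\rho$, to get the constant $72N$; (iv) verify that $f(V)\cap B^N(f(v),\rho)\subset f(V\cap B^N(v,2r))$ and in fact the relevant part lies in the image of $V\cap B^N(v,r)$, so that the supremum in the definition of $\beta_{f(V)}(f(v),\rho)$ is controlled by step (iii); (v) divide by $\rho$ and conclude. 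The main obstacle I anticipate is step (ii)–(iii): controlling the \emph{angular} distortion, i.e.\ showing the image directions $u_i$ are within $O(\widetilde H)$ of orthonormal and that a generic point of $V$ is genuinely approximated by its ``coordinates'' in the $u_i$ — the linear (not just metric) information has to be squeezed out of the scalar weak-quasisymmetry inequality, and keeping the constant linear in $N$ (rather than, say, exponential) requires choosing the comparison points economically. The $2r$ versus $r$ bookkeeping is what gives us the slack to run the antipodal argument inside the domain $B^N(v,2r)$ where $H_f$ is controlled.
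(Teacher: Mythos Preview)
Your outline has a genuine gap at step~(iii), and it is exactly the obstacle you flag at the end. Every comparison triple you invoke consists of points lying in $V$: the antipodal triples $v,w,w'$, the frame points $w_i=v+re_i$ with $e_i$ tangent to $V$, the diagonal combinations $v+\tfrac{r}{\sqrt2}(e_i\pm e_j)$, and the generic point $y\in V$. Thus what you are effectively using is $\tH_{f}(V\cap B^N(v,2r))$, not $\tH_f(B^N(v,2r))$, and the former does not control $\beta_{f(V)}$ at all. Take $n=1$, $N=2$, $V=\RR\times\{0\}$, and $f(t,0)=(\cos t,\sin t)$ on a short interval: since $|f(s,0)-f(a,0)|=2|\sin\tfrac{s-a}{2}|$ is monotone in $|s-a|$, every weak-quasisymmetry comparison among points of $V$ is satisfied with constant~$1$, yet $f(V)$ is a circular arc with $\beta_{f(V)}>0$. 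No argument that only compares distances among image points $f(y)$ with $y\in V$ can detect bending of $f(V)$ into the normal directions; your ``coordinatewise errors'' can pin down the projection of $f(y)$ onto $W$ but not its distance from $W$.

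The paper's proof fixes this by bringing in all $N$ coordinate directions, including the $N-n$ normal to $V$. After normalizing to $V=\RR^n$, $v=0$, $r=1$, it sets $y_i=\tfrac12(f(e_i)+f(-e_i))$, $z_i=\tfrac12(f(e_i)-f(-e_i))$ for $i=1,\dots,N$, and uses the polarization identity
\[
\langle z_i,\,f(x)-y_i\rangle=\tfrac14\bigl(|f(x)-f(-e_i)|^2-|f(x)-f(e_i)|^2\bigr).
\]
Weak quasisymmetry forces the right side to be $O(\tH)$ whenever $|x-e_i|=|x+e_i|$, i.e.\ whenever $x\in e_i^\perp$. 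For $i>n$ every $x\in\RR^n$ lies in $e_i^\perp$, so $f(x)$ has $O(\tH)$ component along each of $z_{n+1},\dots,z_N$. One checks (again by polarization) that the $z_1,\dots,z_N$ are nearly orthonormal, hence the linear map $A:e_i\mapsto z_i$ is invertible and $A(\RR^n)$ is an honest $n$-plane; the preceding estimate says precisely that $\dist(f(x),A(\RR^n))\lesssim N\tH$. The factor $N$ (not $n$) in the constant comes from this use of the full ambient frame and the associated bookkeeping, not from summing $n$ tangential errors.
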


When combined with the local H\"older continuity of quasiconformal maps, Lemma \ref{l:bflat} yields the following corollary. See \S\ref{s:flatness} for details.

\begin{corollary} \label{c:bflat} Suppose that $1\leq n\leq N-1$ and $H\geq 1$. There is $C=C(N,H)>1$ such that if $z\in\RR^n$, $t>0$, $f:\oB^N(z,2t)\rightarrow\RR^N$ is quasiconformal, and $H_f(B^N(z,t))\leq H$, then
\begin{equation}\label{e:dc1} \int_0^{\diam f(B^N(z,t))/C} \beta_{f(\RR^n)}(f(z),s)^2\frac{ds}{s}\leq C \int_0^{t} \tH_f(B^N(z,s))^2\frac{ds}{s}.\end{equation}\end{corollary}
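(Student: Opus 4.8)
The plan is to change variables in the integral on the left-hand side of \eqref{e:dc1}, converting the ``image-side'' radius $s = \tfrac12|f(v+re)-f(v)|$ back to the ``domain-side'' radius $r$, and then compare $\beta_{f(\RR^n)}(f(z),s)$ to $\tH_f(B^N(z,2r))$ via Lemma \ref{l:bflat}. The main point is to set up a substitution $s = \sigma(r)$ that is comparable to a genuine change of variables, so that $ds/s$ transforms (up to a bounded constant depending on $N$ and $H$) into $dr/r$. First I would fix a convenient radial ray: pick a unit vector $e\in\RR^n$ and for $0<r\leq t$ set $\phi(r) := |f(z+re)-f(z)|$. Since $f$ is a topological embedding, $\phi$ is continuous, strictly increasing (after possibly replacing $e$ by a direction along which the restriction of $f$ is monotone — this follows because $f|_{\RR^n}$ is weakly quasisymmetric, hence $\eta$-quasisymmetric by Corollary \ref{c:wqs2qs}), with $\phi(0^+)=0$. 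Quasisymmetry of $f$ gives the doubling estimate $\phi(2r)\leq \eta_{N,K}(2)\,\phi(r)$, i.e.\ $\phi$ roughly doubles when $r$ doubles; combined with the local H\"older continuity (Theorem \ref{t:holder}) applied to $f^{-1}$ — or equivalently the lower bound $\phi(r)\gtrsim_{N,K} \phi(t)(r/t)^{1/\alpha}$ — one obtains that $\log\phi(r)$ and $\log r$ are comparable as measures on $(0,t]$: there is $A=A(N,H)\geq 1$ with $A^{-1}\,dr/r \leq d(\log\phi(r)) \leq A\,dr/r$ in the sense of Stieltjes measures.

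With this comparison in hand, I would argue as follows. By Lemma \ref{l:bflat} (applied with $v=z$, radius $r$, and the unit vector $e$), for every $0<r\leq t$,
\begin{equation*}
\beta_{f(\RR^n)}\!\left(f(z),\tfrac12\phi(r)\right)\leq 72N\,\tH_f(B^N(z,2r)).
\end{equation*}
Next I would invoke the monotonicity property \eqref{e:mono} of beta numbers to ``fill in'' the radii between consecutive scales $\tfrac12\phi(r)$: since $\phi$ is doubling, any $s\in[\tfrac12\phi(r),\phi(r)]$ satisfies $\beta_{f(\RR^n)}(f(z),s)\leq 2\,\beta_{f(\RR^n)}(f(z),\tfrac12\phi(r))$, and these intervals cover an interval of the form $(0,\diam f(B^N(z,t))/C]$ once $C=C(N,H)$ is chosen large enough (using $\phi(t)\sim_{N,H}\diam f(B^N(z,t))$, which follows from Lemma \ref{l:QS-comp}). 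Therefore
\begin{equation*}
\int_0^{\diam f(B^N(z,t))/C}\beta_{f(\RR^n)}(f(z),s)^2\frac{ds}{s}
\lesssim_{N}\int_0^{t}\beta_{f(\RR^n)}\!\left(f(z),\tfrac12\phi(r)\right)^2\,d(\log\phi(r))
\lesssim_{N,H}\int_0^{t}\tH_f(B^N(z,2r))^2\frac{dr}{r},
\end{equation*}
where the first inequality is the change of variables $s=\tfrac12\phi(r)$ together with the covering just described, and the second uses Lemma \ref{l:bflat} and the Stieltjes comparison $d(\log\phi(r))\leq A\,dr/r$. Finally, the right-hand side is $\lesssim_{N,H}\int_0^t \tH_f(B^N(z,s))^2\,ds/s$ after a trivial rescaling $r\mapsto 2r$ (the extra factor from integrating up to $2t$ rather than $t$ is absorbed, or one simply enlarges $t$ by a factor $2$ at the cost of a constant, using that $\tH_f$ is monotone-ish under the doubling of radii implied by weak quasisymmetry — more carefully, $\tH_f(B^N(z,s))\leq H_f(B^N(z,2t))\leq H$ is bounded, so the tail contributes a controlled amount). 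Collecting constants proves \eqref{e:dc1}.

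The step I expect to be the main obstacle is making rigorous the Stieltjes-measure comparison $A^{-1}\,dr/r\leq d(\log\phi(r))\leq A\,dr/r$ — equivalently, showing that $\phi$ is a quasisymmetric homeomorphism of $(0,t]$ onto its image in the one-dimensional sense, with constants depending only on $N$ and $H$. The upper bound $d(\log\phi)\leq A\,dr/r$ is the essential direction and amounts to a Harnack-type/doubling estimate: over any dyadic annulus $[r,2r]$ the oscillation of $\log\phi$ is at most $\log\eta_{N,K}(2)$, a constant; this is exactly the statement that $\phi$ grows at a controlled polynomial rate, which one reads off from Theorem \ref{t:qc2qs} (quasiconformal implies quasisymmetric) and the definition of a control function. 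A mild technical wrinkle is that $\phi$ need not be strictly increasing a priori; this is handled by first passing to the $\eta$-quasisymmetric structure of $f|_{\RR^n}$ (Corollary \ref{c:wqs2qs}) and using Lemma \ref{l:QS-comp} to control $\phi(r)$ from above and below by $\diam f(B^N(z,r))$, which \emph{is} strictly increasing in $r$; replacing $\phi$ by $r\mapsto\diam f(B^N(z,r)\cap\RR^n)$ throughout the argument causes only a change in the constant $C$.
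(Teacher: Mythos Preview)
Your strategy is sound in outline but carries avoidable baggage, and a couple of the details you flag are genuine soft spots. Quasisymmetry does \emph{not} guarantee that the restriction of $f$ to any ray through $z$ is monotone, so the remark about choosing $e$ to make $\phi$ strictly increasing is wrong; your fallback of replacing $\phi(r)$ by the nondecreasing quantity $\diam f(B^N(z,r))$ is the correct fix. The Stieltjes comparison $d(\log\phi)\lesssim dr/r$ is not literally true pointwise, but the dyadic version $\int_r^{2r}d(\log\phi)\le \log\eta(2)$ combined with the monotonicity of $r\mapsto\tH_f(B^N(z,2r))$ does give the integral inequality after summing over scales; you should say so rather than appeal to an absolute continuity you have not established. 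Finally, the tail from the $r\mapsto 2r$ substitution cannot be absorbed as a bounded additive constant (the right-hand side of \eqref{e:dc1} may be arbitrarily small); it is handled by choosing $C$ large enough that the image-side upper limit corresponds to a domain-side upper limit at most $t/2$.

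The paper sidesteps all of this with one clean move. Writing $u=\tfrac12|f(z+re_1)-f(z)|$, it applies the local H\"older continuity of the \emph{inverse} map $g=f^{-1}$ (Theorem~\ref{t:holder}; $g$ is $K$-quasiconformal with $K\le H^{N-1}$) to obtain an explicit power bound $r\le Qt(u/M)^{\alpha}$, where $M=\sup_{|w-z|=t}|f(w)-f(z)|$ and $\alpha=K^{1/(1-N)}$. Since $s\mapsto\tH_f(B^N(z,s))$ is nondecreasing, one then over-estimates $\tH_f(B^N(z,2r))$ by $\tH_f(B^N(z,Qt(u/M)^{\alpha}))$, after which the substitution $s=Qt(u/M)^{\alpha}$ is a genuine change of variables with $ds/s=\alpha\,du/u$. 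This converts the integral in one line, with no Stieltjes measures, no dyadic bookkeeping, and no monotonicity issues for $\phi$. Your route buys nothing in generality---both arguments use quasiconformality only through its H\"older and doubling consequences---so the paper's trick of passing through the H\"older exponent of the inverse is the one to internalize.
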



\subsection{Almost affine quasisymmetric maps and extension theorems}
\label{ss:almost} Throughout this section and implicitly below, whenever using the concepts defined in Definitions \ref{d:comp} and \ref{d:aa}, we assume that $1\leq n\leq N$. For all affine maps $A:\RR^n\rightarrow\RR^N$, let $A'$ denote the linear part of $A$, let $\|A'\|$ denote the operator norm of $A'$, and let $\lambda_1(A')\leq \dots \leq \lambda_n(A')$ denote the singular values of $A'$. We recall that for all $x\in\RR^n$ and $r>0$, \begin{equation}\label{e:lvar}\lambda_1(A')r=\inf_{|x-y|=r}|A(x)-A(y)|\quad\text{and}\quad \lambda_n(A')r=\sup_{|x-y|=r}|A(x)-A(y)|=\|A'\|r.\end{equation}

\begin{definition}\label{d:comp} A \emph{family of affine maps over $E\subset\RR^n$} is a set \begin{equation*}\mathcal{A}=\{A_{x,r}:x\in E,r>0\}\end{equation*} whose members are (indexed) affine maps $A_{x,r}:\RR^n\rightarrow\RR^N$ for all $x\in E$ and $r>0$. We say that $\mathcal{A}$ is \emph{$\varepsilon$-compatible} for some $\varepsilon>0$ if \begin{equation*}\Adiff{x,r}{y,s}\leq \varepsilon \Amin{x,r}{y,s}\end{equation*} for all $x,y\in E$ and $r,s>0$ such that $|x-y|\leq \max\{r,s\}$ and $1/2\leq r/s\leq 2$.\end{definition}

\begin{definition}\label{d:aa} Let $E\subset X\subset\RR^n$ and let $\varepsilon>0$. A map $f:X\rightarrow\RR^N$ is \emph{$\varepsilon$-almost affine over $E$} if there exists an $\varepsilon$-compatible family $\mathcal{A}$ of affine maps over $E$ such that
\begin{equation*} \sup_{z\in E\cap B^n(x,r)} |f(z)-A_{x,r}(z)|\leq \varepsilon\|A_{x,r}'\| r \quad\text{for all }x\in E, r>0.
\end{equation*} To emphasize a choice of some family $\mathcal{A}$ with this property, we say \emph{$(f,E,\mathcal{A})$ is $\varepsilon$-almost affine}.
\end{definition}

\begin{remark}The definition of an almost affine map is designed so that being almost affine is invariant under translation, rotation, reflection, and dilation in the domain \emph{and} the image of the map. That is, if $\phi:\RR^n\rightarrow\RR^n$ and $\psi:\RR^N\rightarrow\RR^N$ are similarities in $\RR^n$ and $\RR^N$, respectively, then $(f,E,\mathcal{A})$ is $\varepsilon$-almost affine if and only if ($\psi\circ f\circ \phi,\phi^{-1}(E),\psi\circ \mathcal{A}\circ \phi$) is $\varepsilon$-almost affine. For related classes of maps that also admit uniform approximations by affine maps but do not enjoy the same scale-invariance property as almost affine maps, see \cite{DPK09} and \cite{hardsard}.\end{remark}

We record a number of useful estimates for compatible families of affine maps and for almost affine maps in \S\ref{s:estimates}.

The next lemma provides a criterion to check the theta number hypothesis in Theorem \ref{t:bilip} and Corollary \ref{c:bilip} for a set $\Sigma\subset\RR^N$ of the form $\Sigma=f(\RR^n)$.

\begin{lemma}\label{l:rflat} For all $\delta>0$ there exists $\delta_*=\delta_*(\delta)$ with the following property. Suppose that $f:\RR^n\rightarrow\RR^N$ is quasisymmetric and $H_f(\RR^n)\leq H$. If $f$ is $\delta_*$-almost affine over $B^n(x_0,2r_0)$ and $\tH_f(B^n(x_0,2r_0))\leq \delta_*$ for some $x_0\in\RR^n$ and $r_0>0$, then
\begin{equation}\label{e:rfa}\theta_{f(\RR^n)}(f(x),r) \leq H\delta\quad\text{for all }x\in B^n(x_0,r_0)\text{ and } 0<r\leq \frac{1}{54H}\diam f(B^n(x_0,r_0)).\end{equation} Thus, if $f$ is $\delta_*$-almost affine over $\RR^n$ and $\tH_f(\RR^n)\leq \delta_*$, then $f(\RR^n)$ is $(H\delta,\infty)$-Reifenberg flat, i.e.~ $\theta_{f(\RR^n)}(f(x),r)\leq H\delta$ for all $x\in\RR^n$ and $r>0$.
\end{lemma}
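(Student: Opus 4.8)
The plan is as follows. Replacing $\delta$ by $\min\{\delta,1\}$, we may assume $\delta\le 1$; indeed if $H\delta\ge 1$ the conclusion is automatic since $\theta_E\le 1$ always. It then suffices to produce, for each $x\in B^n(x_0,r_0)$ and each $r$ in the stated range, a single affine $n$-plane $V$ with $f(x)\in V$ such that both $\sup\{\dist(y,V):y\in f(\RR^n)\cap B^N(f(x),r)\}$ and $\sup\{\dist(z,f(\RR^n)):z\in V\cap B^N(f(x),r)\}$ are at most $54H\delta_*\,r$: then $\theta_{f(\RR^n)}(f(x),r)\le 54H\delta_*$, so taking $\delta_*=\delta_*(\delta)$ equal to $\delta/54$ (shrunk by an absolute factor if needed so that the ``$\delta_*$ small'' steps below are valid) gives $\theta_{f(\RR^n)}(f(x),r)\le H\delta$. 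The final assertion of the lemma then follows by letting $r_0\to\infty$: since $f^{-1}$ is quasisymmetric, $\diam f(B^n(x_0,r_0))\to\infty$, so the bound holds for all $x\in\RR^n$ and all $r>0$; moreover $f^{-1}$ carries bounded sets to bounded sets (Lemma \ref{l:QS-comp}), whence $f(\RR^n)$ is closed and therefore $(H\delta,\infty)$-Reifenberg flat.

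The technical heart is a scale-matching step. Fix $x$ and an admissible $r$, and let $\mathcal A=\{A_{x,\rho}\}$ be a $\delta_*$-compatible family witnessing that $f$ is $\delta_*$-almost affine over $E:=B^n(x_0,2r_0)$. Since every ball $B^n(y,s)\subset E$ satisfies $\tH_f(B^n(y,s))\le\tH_f(E)\le\delta_*$, the estimates for almost affine maps recorded in \S\ref{s:estimates} give the near-isometry bound $\lambda_1(A'_{x,\rho})\ge(1-C\delta_*)\|A'_{x,\rho}\|$ for $0<\rho\le r_0$ (so each $V_{x,\rho}:=A_{x,\rho}(\RR^n)$ is a genuine affine $n$-plane), together with $\|A'_{x,\rho}\|\rho\le\diam f(B^n(x,\rho))\le(2+2\delta_*)\|A'_{x,\rho}\|\rho$ and, after comparing the affine approximation $A_{x_0,2r_0}$ on $B^n(x,r_0)$ and on $B^n(x_0,r_0)$, $\diam f(B^n(x,r_0))\ge\frac12\diam f(B^n(x_0,r_0))\ge 27Hr$. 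By monotonicity and continuity of $\rho\mapsto\diam f(B^n(x,\rho))$ I may therefore choose $\rho\le r_0$ with $\diam f(B^n(x,\rho))=27Hr$; write $A:=A_{x,\rho}$, so $9Hr\le\|A'\|\rho\le 27Hr$ once $\delta_*\le\frac12$. A short computation using only $H_f(\RR^n)\le H$ then gives the containment $B^N(f(x),r)\cap f(\RR^n)\subset f(B^n(x,\rho))$: choosing a unit vector $u$ with $|A'u|=\|A'\|$ and $w':=x+\rho u\in\partial B^n(x,\rho)$, one has $|f(w')-f(x)|\ge(1-2\delta_*)\|A'\|\rho\ge 8Hr$, so any $w\notin B^n(x,\rho)$ satisfies $|w-x|\ge|w'-x|$ and hence $|f(w)-f(x)|\ge H^{-1}|f(w')-f(x)|>r$.

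It remains to build $V$ and check both inequalities. Let $V$ be the translate of $V_{x,\rho}$ through $f(x)$; since $\dist(f(x),V_{x,\rho})\le|f(x)-A(x)|\le\delta_*\|A'\|\rho$, the planes $V$ and $V_{x,\rho}$ lie within $\delta_*\|A'\|\rho\le 27H\delta_* r$ of each other. If $y\in f(\RR^n)\cap B^N(f(x),r)$, then by the containment $y=f(w)$ with $w\in B^n(x,\rho)$, so $\dist(y,V_{x,\rho})\le|f(w)-A(w)|\le\delta_*\|A'\|\rho$ and hence $\dist(y,V)\le 2\delta_*\|A'\|\rho\le 54H\delta_* r$. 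If instead $z\in V\cap B^N(f(x),r)$, let $z'\in V_{x,\rho}$ be the nearest point of $V_{x,\rho}$, so $|z-z'|\le\delta_*\|A'\|\rho$, and write $z'=A(w)$; then $\lambda_1(A')|w-x|\le|A'(w-x)|=|z'-A(x)|\le|z-f(x)|+2\delta_*\|A'\|\rho\le r+54H\delta_* r$, and since $\lambda_1(A')\ge(1-C\delta_*)\|A'\|\ge 8Hr/\rho$ and $\delta\le1$, $H\ge1$, $\delta_*$ small, this forces $|w-x|<\rho$; therefore $\dist(z,f(\RR^n))\le|z-z'|+|A(w)-f(w)|\le 2\delta_*\|A'\|\rho\le 54H\delta_* r$. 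Combining the two cases yields the required bound, and with $\delta_*=\delta/54$ completes the proof.

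The main obstacle is the middle paragraph: everything rests on the two-sided comparison $\|A'_{x,\rho}\|\rho\sim\diam f(B^n(x,\rho))$ with good constants and on the near-isometry estimate $\lambda_1(A'_{x,\rho})\ge(1-C\delta_*)\|A'_{x,\rho}\|$. This is exactly where the hypothesis $\tH_f(B^n(x_0,2r_0))\le\delta_*$ — far stronger than $H_f(\RR^n)\le H$ — is indispensable: it forces every affine approximation at every sub-scale to be close to a similarity, which is also why $\delta_*$ may be taken to depend on $\delta$ alone and not on $H$. A secondary point requiring care is that $f(\RR^n)\cap B^N(f(x),r)$ could a priori meet $f$ of points far outside $E$, where the almost affine structure says nothing; the containment $B^N(f(x),r)\cap f(\RR^n)\subset f(B^n(x,\rho))$, established using only the global constant $H$, is what excludes this and explains why the admissible range of $r$ must be capped in terms of $H$ and $\diam f(B^n(x_0,r_0))$.
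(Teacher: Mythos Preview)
Your proof is correct and follows essentially the same approach as the paper: derive from $\tH_f\le\delta_*$ that each approximating affine map $A_{x,\rho}$ is nearly a similarity (so $\lambda_1(A')\approx\|A'\|$), use the resulting diameter/inradius estimates to compare $f(B^n(x,\rho))$ with the plane $A_{x,\rho}(\RR^n)$, and invoke the global bound $H_f(\RR^n)\le H$ only for the containment $f(\RR^n)\cap B^N(f(x),r)\subset f(B^n(x,\rho))$. The paper packages the middle step as an application of Lemma~\ref{l:inradius} and parametrizes by the domain radius before converting to image radii, whereas you fix the image radius $r$ first and solve for $\rho$; these are cosmetic differences. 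One small remark: the near-isometry bound $\lambda_1(A'_{x,\rho})\ge(1-C\delta_*)\|A'_{x,\rho}\|$ is not literally recorded in \S\ref{s:estimates} (Lemma~\ref{l:inradius} \emph{assumes} such a bound), so you should write out the two-line derivation from $\tH_f\le\delta_*$ and the almost-affine hypothesis rather than cite \S\ref{s:estimates} for it --- exactly as the paper does at the start of its proof.
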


The following theorem says that quasisymmetric maps with small constant between Euclidean spaces of the \emph{same} dimension are almost affine when restricted to lower dimensional subspaces.

\begin{theorem}\label{t:qsaa} Suppose $N\geq 2$. For all $\tau>0$, there exists $\tau_*=\tau_*(\tau,N)>0$ such that if $B^N(x,3r)\subset Y\subset \RR^N$ for some $x\in\RR^n$ and $r>0$, $f:Y\rightarrow\RR^N$ is quasisymmetric and $\tH_f(B^N(x,3r))\leq \tau_*$, then $f|_{Y\cap \RR^n}$ is $\tau$-almost affine over $B^n(x,r)$.
\end{theorem}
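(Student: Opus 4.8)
The plan is to manufacture, by a compactness/rigidity argument, a $\tau$-compatible family $\mathcal{A}$ of affine maps over $B^n(x,r)$ that approximates $f$ at every location and scale, choosing $\tau_*$ small only at the end. Since the almost-affine property is invariant under similarities in the domain and the image (cf.\ the remark following Definition~\ref{d:aa}) and weak quasisymmetry is invariant under pre- and post-composition with similarities, I may assume $x=0$ and $r=1$, so that $B^N(0,3)\subseteq Y$ and $\tH_f(B^N(0,3))\le\tau_*$. The engine will be the following \emph{almost-similarity lemma}: for every $\eta>0$ there is $\sigma=\sigma(\eta,N)>0$ so that any topological embedding $g:B^N(p,\varrho)\to\RR^N$ with $\tH_g(B^N(p,\varrho))\le\sigma$ satisfies $\sup_{w\in B^N(p,\varrho/2)}|g(w)-S(w)|\le\eta\lambda\varrho$ for some similarity $S$ of $\RR^N$ whose scaling factor $\lambda>0$ we record.

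To prove the lemma, normalize by similarities so that $p=0$, $\varrho=2$, $g(0)=0$, $g(e_1)=e_1$ (translating and post-composing a rotation--dilation; this does not affect $\tH_g$), and suppose the conclusion fails. Then there are topological embeddings $g_j:B^N(0,2)\to\RR^N$ with this normalization and $\tH_{g_j}(B^N(0,2))\le 1/j$, but $\sup_{w\in B^N(0,1)}|g_j(w)-S(w)|>2\eta\lambda(S)$ for every similarity $S$. Each $g_j$ is weakly $2$-quasisymmetric, so by Theorem~\ref{t:wqs-cpt} (applied with $B^N(0,2)$ and the two fixed points $0,e_1$) a subsequence converges uniformly on compacta to a topological embedding $g_\infty$ with $g_\infty(0)=0$, $g_\infty(e_1)=e_1$; passing the weak quasisymmetry inequality of $g_j$ (with constant $1+1/j$) to the limit shows $g_\infty$ is weakly $1$-quasisymmetric on $B^N(0,2)$, hence by Corollary~\ref{c:1qs} the restriction of a similarity $S_\infty$. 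As $S_\infty$ fixes $0$ and $e_1$ it has scaling factor $1$, so $\sup_{w\in B^N(0,1)}|g_j(w)-S_\infty(w)|<2\eta=2\eta\lambda(S_\infty)$ for large $j$ along the subsequence, a contradiction. Undoing the normalizations (which rescale $g$, $S$ and $\lambda$ consistently) yields the lemma.

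\textbf{Construction of $\mathcal{A}$.} Fix $\eta>0$ to be chosen, put $\sigma=\sigma(\eta,N)$, and require $\tau_*\le\sigma$. Applying the lemma to $f$ on $B^N(0,3)$ produces a similarity $S_0$ with scaling factor $\lambda_0$ and $|f(w)-S_0(w)|\le 3\eta\lambda_0$ for $w\in B^N(0,3/2)$. For $z\in B^n(0,1)$ and $0<s\le 1/3$ we have $B^N(z,6s)\subseteq B^N(0,3)$, so $\tH_f(B^N(z,6s))\le\tH_f(B^N(0,3))\le\tau_*\le\sigma$, and the lemma gives a similarity $S_{z,s}$ with scaling factor $\lambda_{z,s}$ and $|f(w)-S_{z,s}(w)|\le 6\eta\lambda_{z,s}s$ for $w\in B^N(z,3s)$. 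Define $A_{z,s}:=S_{z,s}|_{\RR^n}$ when $s\le 1/3$ and $A_{z,s}:=S_0|_{\RR^n}$ when $s>1/3$. The linear part of a similarity is its scaling factor times an orthogonal map, so restriction to $\RR^n$ preserves all singular values; hence $\Anorm{z,s}=\lambda_{z,s}$ for $s\le 1/3$ and $\Anorm{z,s}=\lambda_0$ for $s>1/3$, and each $A'_{z,s}$ is a conformal injection. The approximation requirement of Definition~\ref{d:aa} is then immediate: for $s\le 1/3$, $B^n(0,1)\cap B^n(z,s)\subseteq B^N(z,3s)$ gives error $\le 6\eta\Anorm{z,s}s$; for $s>1/3$, $B^n(0,1)\cap B^n(z,s)\subseteq B^n(0,1)\subseteq B^N(0,3/2)$ gives error $\le 3\eta\lambda_0\le 9\eta\Anorm{z,s}s$.

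\textbf{Compatibility --- the main obstacle.} This step carries the case bookkeeping. I will use the elementary fact that if affine maps $U,V:\RR^N\to\RR^N$ satisfy $|U(w)-V(w)|\le\delta$ on $B^N(c,t)$, then $\|U'-V'\|\le 2\delta/t$. Fix $(z,s),(z',s')$ with $|z-z'|\le\max\{s,s'\}$ and $\tfrac12\le s/s'\le 2$; say $s\le s'\le 2s$. If $s,s'\le 1/3$, then $B^N(z',s)\subseteq B^N(z,3s)\cap B^N(z',3s')$, where both $S_{z,s}$ and $S_{z',s'}$ are within $18\eta\,\Amax{z,s}{z',s'}\,s$ of $f$, so $\Adiff{z,s}{z',s'}\le\|S'_{z,s}-S'_{z',s'}\|\le 36\eta\,\Amax{z,s}{z',s'}$. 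If $s,s'>1/3$, then $\Adiff{z,s}{z',s'}=0$. If $s\le 1/3<s'$ (which forces $1/6<s\le 1/3<s'\le 2/3$), then $B^N(z,1/2)\subseteq B^N(z,3s)\cap B^N(0,3/2)$, where both $S_{z,s}$ and $S_0$ are within $12\eta\,\Amax{z,s}{z',s'}$ of $f$, so $\Adiff{z,s}{z',s'}\le\|S'_{z,s}-S'_0\|\le 48\eta\,\Amax{z,s}{z',s'}$. In every case $\Adiff{z,s}{z',s'}\le 48\eta\,\Amax{z,s}{z',s'}$; since the relevant operator norms equal the scaling factors, the reverse triangle inequality gives $\bigl|\Anorm{z,s}-\Anorm{z',s'}\bigr|\le 48\eta\,\Amax{z,s}{z',s'}$, so if $48\eta\le\tfrac12$ then $\Amax{z,s}{z',s'}\le 2\,\Amin{z,s}{z',s'}$ and hence $\Adiff{z,s}{z',s'}\le 96\eta\,\Amin{z,s}{z',s'}$. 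Taking $\eta:=\min\{\tau/96,\,1/96\}$ makes $\mathcal{A}$ a $\tau$-compatible family with approximation error at most $9\eta\Anorm{z,s}s\le\tau\Anorm{z,s}s$, so $(f|_{Y\cap\RR^n},B^n(0,1),\mathcal{A})$ is $\tau$-almost affine; setting $\tau_*:=\sigma(\eta,N)$ and undoing the reduction $x=0$, $r=1$ completes the proof. The only genuine work is the three-case analysis here together with verifying the nested-ball inclusions near the transitional scale $s\approx 1/3$; everything else is compactness and linear algebra.
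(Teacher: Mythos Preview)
Your proof is correct and follows essentially the same strategy as the paper's: both arguments prove an ``almost-similarity'' lemma by compactness and Corollary~\ref{c:1qs} (this is the paper's Lemma~\ref{l:Sxr}), restrict the resulting similarities to $\RR^n$ to build $\mathcal{A}$, freeze $\mathcal{A}$ at a single similarity above a transition scale, and verify compatibility by comparing nearby approximants on an overlapping ball. The differences are cosmetic---your lemma controls the half-ball rather than the full ball (forcing you to apply it on $B^N(z,6s)$), your transition scale is $1/3$ rather than $\diam B^n(0,1)=2$, and your observation that $\|A'_{z,s}\|$ equals the scaling factor of $S_{z,s}$ is slightly cleaner than the paper's indirect bound $\|S'_{x,r}\|\le 4\|A'_{x,r}\|$; a few of your constants in the compatibility cases are loosely tracked (the phrasing ``both within $18\eta\max s$ of $f$'' versus the bound you actually use), but this is harmless once $\eta$ is chosen small enough.
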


See \S\ref{s:affine} for the proofs of Lemma \ref{l:rflat} and Theorem \ref{t:qsaa}. At this point, we have collected enough tools to prove Theorems \ref{t:abt} and \ref{t:abt2}.

The final ingredient in the proof of Theorem \ref{t:main} is the following extension theorem.

\begin{theorem}\label{t:beta} Suppose $1\leq n\leq N-1$. For all $\varepsilon>0$, there  exists $\varepsilon_*=\varepsilon_*(\varepsilon,n)>0$ with the following property. If for some $x\in\RR^n$ and $r>0$ a map $f:\RR^N\rightarrow\RR^N$ is $\varepsilon_*$-almost affine over $B^n(x,9r)$, $f|_{B^N(x,3r)}$ is a topological embedding and  $\tH_f(B^N(x,3r))\leq \varepsilon_*$, and there exist a closed set $E\subset B^n(x,r)$ and constants $\gamma_E>0$ and $C_E>0$ such that
\begin{equation}
\label{e:beta-d} \diam E \geq \gamma_E \diam B^n(x,r)
\end{equation} and
\begin{equation}
\label{e:beta-h}
\int_{0}^{r} \tH_f(B^N(y,s))^2\, \frac{ds}{s}\leq C_E\quad\text{for all }y\in E,
\end{equation}
then there exists a quasisymmetric map $F:\RR^n\rightarrow\RR^N$ such that $F|_E= f|_E$, $F$ is $\varepsilon$-almost affine over $\RR^n$, $\tH_F(\RR^n)\leq \varepsilon$, $\diam F(B^n(x,r))\sim_{n,N,\gamma_E} \diam f(B^n(x,r))$, and \begin{equation}
\label{e:beta-c}
\int_0^{\infty} \beta_{F(\RR^n)}(F(y),s)^2\, \frac{ds}{s}\lesssim_{n,N} C_E+\varepsilon^2\quad\text{for all }y\in \RR^n.
\end{equation}
\end{theorem}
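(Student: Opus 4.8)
\emph{Proof idea.} The plan is to obtain $F$ as a Whitney-type extension of $f|_E$ off the closed set $E$ --- keeping $f$ itself on $E$ --- and then to read the three analytic conclusions off the construction, the last (and hardest) of them with the help of Corollary~\ref{c:bflat}. First I would normalize to $x=0$, $r=1$ (every hypothesis and conclusion is invariant under translation and dilation, by the remark following Definition~\ref{d:aa} together with \eqref{e:mono}) and dispatch the routine points: writing $\mathcal A=\{A_{z,t}\}$ for the given $\varepsilon_*$-compatible family and $\rho:=\|A'_{0,1}\|$, the map $g:=f|_{B^n(0,3)}$ is a topological embedding with weak quasisymmetry constant $\le 1+\varepsilon_*$, hence $\eta_{n,N}$-quasisymmetric by Corollary~\ref{c:wqs2qs}, so $f|_{\oB^N(0,3)}$ is quasiconformal and Corollary~\ref{c:bflat} applies along every $y\in B^n(0,1)$ at scale $1/2$ with a constant depending only on $N$; the estimates for $\varepsilon_*$-compatible families recorded in \S\ref{s:estimates} give $\|A'_{z,t}\|\sim_n\rho$ and $\lambda_n(A'_{z,t})\le(1+C_{n,N}\varepsilon_*)\lambda_1(A'_{z,t})$ for $z\in B^n(0,2)$ and $t$ bounded; and the diameter comparison follows from $\diam f(B^n(0,1))\ge 2(1-\varepsilon_*)\rho$ (via $A_{0,1}$), $\diam f(B^n(0,1))\ge\diam f(E)=\diam F(E)$, and Lemma~\ref{l:QS-comp} applied to $g$ with $E\subset B^n(0,1)$ and $\diam E/\diam B^n(0,1)\ge\gamma_E$.

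For the construction I would take a Whitney decomposition $\{Q_i\}$ of $\RR^n\setminus E$ with $\ell(Q_i)\sim\dist(Q_i,E)$, choose $y_i\in E$ and $t_i\sim\ell(Q_i)$ with $\dist(y_i,Q_i)\sim\ell(Q_i)$, attach to $Q_i$ the affine map $\tilde A_i(z):=f(y_i)+M_i(z-y_i)$, and set $F:=f$ on $E$ and $F:=\sum_i\varphi_i\tilde A_i$ on $\RR^n\setminus E$ for a smooth Whitney partition of unity $\{\varphi_i\}$ ($\sum_i\varphi_i\equiv1$ off $E$, each $\varphi_i$ supported in $\tfrac{11}{10}Q_i$, $|\grad^k\varphi_i|\lesssim_{n,k}\ell(Q_i)^{-k}$). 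The choice of $M_i$ is the crux: if $\dist(Q_i,E)\le1$ and $\tH_f(B^N(y_i,Ct_i))\le\tau_0$ for a fixed absolute constant $\tau_0$, I would take $M_i$ to be the linear part of the \emph{best} affine approximation to $g$ on $B^n(y_i,t_i)$, which by the quantitative form of Theorem~\ref{t:qsaa} approximates $g$ there with error $\lesssim_{n,N}\tH_f(B^N(y_i,Ct_i))\,\|M_i\|\,t_i$; at every other scale, and for the single far-field map $A^\infty:=A_{0,c^*}$ ($c^*$ absolute) governing the cubes with $\dist(Q_i,E)\ge1$, I would fall back on $M_i=A'_{y_i,t_i}$ (error $\le\varepsilon_*\|A'_{y_i,t_i}\|t_i$). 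Since $\tilde A_i(y_i)=f(y_i)$ exactly, $F$ is continuous across $E$, smooth off $E$, and equals $A^\infty$ outside $B^n(0,2)$. A chaining argument combining the $\varepsilon_*$-compatibility of $\mathcal A$, the mutual $\lesssim_{n,N}\varepsilon_*$-closeness of the two kinds of approximant on overlapping cubes (using $\tH_f(B^N(y_i,Ct_i))\le\varepsilon_*$ whenever $B^N(y_i,Ct_i)\subset B^N(0,3)$, and the trivial bound at the remaining finitely many scales), and the derivative bounds on $\{\varphi_i\}$ --- all from \S\ref{s:estimates} --- produces a $C_{n,N}\varepsilon_*$-compatible family witnessing that $F$ is $C_{n,N}\varepsilon_*$-almost affine over $\RR^n$ with $C_{n,N}\varepsilon_*$-nearly conformal linear parts; testing the weak quasisymmetry inequality against these approximants then gives $\tH_F(\RR^n)\le C'_{n,N}\varepsilon_*$, so $F$ is weakly quasisymmetric, hence (Lemma~\ref{l:wqs-crit}, Corollary~\ref{c:wqs2qs}) quasisymmetric. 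Choosing $\varepsilon_*=\varepsilon_*(\varepsilon,n)$ so small that $\max\{C_{n,N},C'_{n,N}\}\varepsilon_*\le\varepsilon$ secures the first two conclusions and the upper half of the diameter comparison.

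The main work is the estimate \eqref{e:beta-c}. For $y\notin E$, at scales $s\lesssim\rho\,\dist(y,E)$ the set $F(\RR^n)\cap B^N(F(y),s)$ lies in a smooth graph piece whose second fundamental form is $\lesssim_{n,N}\varepsilon_*(\rho\,\dist(y,E))^{-1}$, so these scales contribute $\lesssim_{n,N}\varepsilon_*^2$; at scales $s\gtrsim\rho\,\dist(y,E)$ the ball $B^N(F(y),s)$ is contained in $B^N(F(y'),C's)$ for a nearest $y'\in E$, so the integral there is $\lesssim_{n,N}$ that for $y'$, and it suffices to treat $y\in E$. For $y\in E$ I would split $\int_0^\infty$ at $s\sim\rho$. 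At large scales $s\gtrsim\rho$: the construction forces $F(\RR^n)$ into the $C_{n,N}\varepsilon_*\rho$-neighborhood of $\Pi:=A^\infty(\RR^n)$ --- $f(E)$ lies that close to $\Pi$ by the almost-affine estimate for $A_{0,c^*}$, and each $M_i$ tilts against $\Pi$ by only $\lesssim_{n,N}\varepsilon_*$ --- so $\beta_{F(\RR^n)}(F(y),s)\lesssim_{n,N}\varepsilon_*\rho/s$ and this range contributes $\lesssim_{n,N}\varepsilon_*^2$. At small scales $s\lesssim\rho$: writing $u=s/\rho$ and taking $V$ to nearly realize the infimum defining $\beta_{f(\RR^n)}(f(y),Cs)$, every point of $F(\RR^n)\cap B^N(f(y),s)$ is a convex combination of values $\tilde A_i(z)=f(y_i)+M_i(z-y_i)$ with $t_i,|y_i-y|\lesssim u$; since $f(y_i)\in f(\RR^n)\cap B^N(f(y),Cs)$ lies $\lesssim\beta_{f(\RR^n)}(f(y),Cs)\,s$ from $V$, and $M_i(\RR^n)$ tilts against $V$ by $\lesssim_{n,N}\beta_{f(\RR^n)}(f(y),Cs)+\tH_f(B^N(y,Cu))$ on good scales (by the best-approximation error bound) and by $\le1$ on bad scales, one gets
\[
\beta_{F(\RR^n)}(F(y),s)\ \lesssim_{n,N}\ \beta_{f(\RR^n)}(f(y),Cs)\ +\ \tH_f\!\bigl(B^N(y,Cu)\bigr)\ +\ \mathbf{1}_{\{\tH_f(B^N(y,Cu))>\tau_0\}}.
\]
Squaring and integrating $\frac{ds}{s}$ over $(0,c\rho]$ (so $u\in(0,c]$ with $Cc\le1$): the first term contributes $\lesssim_{n,N}C_E$ by Corollary~\ref{c:bflat} and \eqref{e:beta-h} (using $\diam f(B^N(y,1/2))\gtrsim_{n,N}\rho$ for the change of variables), the second $\lesssim C_E$ directly from \eqref{e:beta-h}, and the third $\le\tau_0^{-2}C_E\lesssim C_E$ since $\tau_0$ is absolute; adding the three ranges of scales yields $\int_0^\infty\beta_{F(\RR^n)}(F(y),s)^2\frac{ds}{s}\lesssim_{n,N}C_E+\varepsilon^2$, which is \eqref{e:beta-c}.

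The hard part is this last step, and especially the requirement that every constant multiplying $C_E$ be independent of $\varepsilon$. With only the given $\varepsilon_*$-compatible family one would obtain $\beta_{F(\RR^n)}(F(y),s)\lesssim\beta_{f(\RR^n)}(f(y),Cs)+\varepsilon_*$, whose error term is not square-Dini-integrable; replacing that $\varepsilon_*$ by $\tH_f(B^N(y,Cu))$ on the bulk of the scales is precisely what forces the switch to best affine approximants, hence the quantitative form of Theorem~\ref{t:qsaa}, while the residual ``bad'' scales must be absorbed by Chebyshev against a \emph{fixed} threshold $\tau_0$ rather than the $\varepsilon$-dependent threshold that Theorem~\ref{t:qsaa} supplies; separately, the large scales are controlled through the global $\varepsilon_*$-flatness of $f(\RR^n)$ near $E$ that the almost-affine hypothesis already provides. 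Reconciling the continuity of $F$ across $E$ (which forces the scales $t_i$ to shrink towards $E$) with the uniform control of the linear parts $M_i$ is the attendant bookkeeping, carried out with the estimates in \S\ref{s:estimates}.
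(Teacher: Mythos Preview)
Your overall architecture---Whitney extension, then split the $\beta$-integral into large scales, small scales at $y\in E$, and a reduction from $y\notin E$ to $y'\in E$ via smoothness---matches the paper's. The gap is in the key small-scale step for $y\in E$, and it is precisely the step you flag as ``the hard part.''

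You assert that the best affine approximant $M_i$ to $f|_{B^n(y_i,t_i)}$ has error $\lesssim_{n,N}\tH_f(B^N(y_i,Ct_i))\,\|M_i\|\,t_i$, calling this ``the quantitative form of Theorem~\ref{t:qsaa}.'' But Theorem~\ref{t:qsaa} in the paper is proved by a compactness--contradiction argument (Lemma~\ref{l:Sxr}, via Theorem~\ref{t:wqs-cpt} and Corollary~\ref{c:1qs}); it yields only that for every $\tau>0$ there is some $\tau_*(\tau,N)>0$ with $\tH_f\le\tau_*\Rightarrow$ $\tau$-almost affine, with no linear (or indeed any explicit) dependence $\tau\lesssim\tH_f$. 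So the inequality you integrate,
\[
\beta_{F(\RR^n)}(F(y),s)\ \lesssim_{n,N}\ \beta_{f(\RR^n)}(f(y),Cs)\ +\ \tH_f(B^N(y,Cu))\ +\ \mathbf{1}_{\{\tH_f(B^N(y,Cu))>\tau_0\}},
\]
is unsupported: the middle term is exactly what would come from the linear bound you do not have. Without it you are back to $\beta_{F(\RR^n)}\lesssim\beta_{f(\RR^n)}+\varepsilon_*$, whose square is not integrable over $ds/s$, as you yourself note.

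The paper avoids this problem not by sharpening Theorem~\ref{t:qsaa} but by changing the \emph{choice} of affine maps $A_Q$ in the Whitney patch. It first replaces the given family over $E$ by one that is \emph{adapted to $f$ at small scales} (Definition~\ref{d:adapt}, Lemma~\ref{l:adapt}), meaning $A_{y,s}(y+se_i)=f(y+se_i)$ for $i=0,\dots,n$, and then applies the general extension machinery (Theorems~\ref{t:extend1}--\ref{t:extend2}). The point is that for each Whitney cube $Q$, the map $A_Q$ now interpolates $f$ at the $n+1$ points $z_Q+(\diam Q)e_i$, which lie in $f(\RR^n)$ and hence are $\beta_s$-close to the approximating plane $L$; Lemma~\ref{l:A-B} then propagates this to $|A_Q(z)-L|\lesssim_n\beta_s\|A'_{y,s}\|s$ for all relevant $z$, giving $\beta_{F(\RR^n)}\lesssim_n\beta_{f(\RR^n)}$ directly (equation~\eqref{e:s2g}), with \emph{no} additive $\tH_f$ term and no appeal to any quantitative affine-approximation bound. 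The $\tH_f$-control enters only afterwards, through Lemma~\ref{l:bflat} and the H\"older change of variables as in Corollary~\ref{c:bflat}. If you want to repair your argument, the fix is to use interpolating affine maps rather than best-fit ones; this is what makes the constants in front of $C_E$ independent of $\varepsilon$.
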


The proof of Theorem \ref{t:beta} is somewhat involved, and so, we break the proof into several steps. In \S\ref{s:extend}, we prove general extension theorems for almost affine maps and for quasisymmetric almost affine maps, which are interesting in their own right; see Theorem \ref{t:extend1} and Theorem \ref{t:extend2}. Then we establish beta number estimates on the extensions and verify Theorem \ref{t:beta} in \S\ref{s:extend2}.

\begin{remark}Theorem \ref{t:qsaa} and Theorem \ref{t:beta} were inspired by Tukia and V\"ais\"al\"a's work on extensions of quasisymmetric maps that are close to similarities; see \cite{TV84} and \cite{Vaisala86}.\end{remark}


\subsection{Proofs of Theorem \ref{t:abt}, Theorem \ref{t:abt2}, and Theorem \ref{t:main}} \label{ss:proofs}
The proofs of Theorem \ref{t:abt} and \ref{t:abt2} are very similar. We shall first prove Theorem \ref{t:abt2} and then indicate how to modify the proof for Theorem \ref{t:abt}. We then end with the proof of Theorem \ref{t:main}.

\begin{proof}[Proof of Theorem \ref{t:abt2}] Assume that $1\leq n\leq N-1$ and $H\geq 1$. We will work with certain parameters, chosen as follows. \begin{enumerate}
 \item Pick any $\delta\in(0,\delta_0/H]$ where $\delta_0=\delta_0(n,N)$ is the constant from Theorem \ref{t:reif}.
 \item Let $\delta_{*}=\delta_{*}(\delta)$ be the constant from Lemma \ref{l:rflat} corresponding to $\delta$.
 \item Let $\tau_*=\tau_*(\tau,N)$ denote the constant from Theorem \ref{t:qsaa} corresponding to $\tau=\delta_*$.
\end{enumerate}

Let $f:\RR^N\rightarrow\RR^N$ be a quasiconformal map such that (\ref{e:abt-dini}) holds and suppose $H_f(\RR^n)=H$. We want to show that the quasiplane $f(\RR^n)$ is locally bi-Lipschitz equivalent to subsets of $\RR^n$. Fix any $x_0\in\RR^n$. Then \begin{equation} \label{e:pabt1} \sup_{x\in B^n(x_0,1)}\int_0^1\tH_f(B^N(x,r))^2\,\frac{dr}{r}=:A<\infty\end{equation} by \eqref{e:abt-dini}. In particular, since $\tH_f(B^N(x,r))$ is increasing as a function of $r$, $\tH_f(B^N(x_0,r))\rightarrow 0$ as $r\rightarrow 0$. Hence we can find $0<r_0\leq 1/6$ such that \begin{equation}\label{e:tH1}\tH_f(B^N(x_0,6r_0))\leq \min\{1,\delta_*,\tau_*\}.\end{equation} First off, $f$ is $\delta_*$-almost affine over $B^n(x_0,2r_0)$ by Theorem \ref{t:qsaa}, since $\tH_f(B^N(x_0,6r_0))\leq\tau_*$. Thus, writing $s_0:=(1/540H)\diam f(B^n(x_0,r_0))$, we see that \begin{equation}\label{e:tc1}\theta_{f(\RR^n)}(y,s)\leq H\delta\leq \delta_0\quad\text{for all }y\in f(\RR^n)\cap B^N(f(x_0),10s_0)\text{ and }0<s\leq 10s_0\end{equation} by Lemma \ref{l:rflat}, since $f$ is $\delta_*$-almost affine over $B^n(x_0,2r_0)$ and $\tH_f(B^n(x_0,2r_0))\leq \delta_*$. Next, by (\ref{e:pabt1}) and Corollary \ref{c:bflat} there is a constant $C=C(N,H')>1$ such that \begin{equation}\label{e:dc4} \int_0^{\diam f(B^N(x,r_0))/C} \beta_{f(\RR^n)}(f(x),s)^2\frac{ds}{s} \leq AC\end{equation} for all $x\in B^N(x_0,r_0)$, where $H'=H_f(B^N(x,r_0))\leq 2$ by (\ref{e:tH1}). Hence $C$ actually depends on at most $N$.  We would like to replace $\diam f(B^N(x,r_0))$ in the upper limit of integration in (\ref{e:dc4}) with $\diam f(B^n(x_0,r_0))$. To that end, we note that $f|_{B^N(x_0,6r_0)}$ is $\eta$-quasisymmetric for some control function $\eta$ that depends only on $n$ and $N$, by Corollary \ref{c:wqs2qs} and (\ref{e:tH1}). Thus, by Lemma \ref{l:QS-comp}, \begin{equation}\label{e:d11} \frac{\diam f(B^n(x_0,r_0))}{\diam f(B^N(x_0,6r_0))} \leq 2\eta(6)\eta(1/3) \frac{\diam f(B^N(x,r_0))}{\diam f(B^N(x_0,6r_0))}.\end{equation} Let $C'=2\eta(6)\eta(1/3)$, which depends on at most $n$ and $N$. Then, by (\ref{e:dc4}) and (\ref{e:d11}), \begin{equation} \label{e:dc5} \int_0^{\diam f(B^n(x_0,r_0))/CC'} \beta_{f(\RR^n)}(f(x),s)^2 \frac{ds}{s} \leq AC\end{equation} for all $x\in B^n(x_0,r_0)$. Let $10t_0=\min\{10s_0,\diam f(B^n(x_0,r_0))/CC'\}$. Then, by (\ref{e:tc1}), \begin{equation}\label{e:tc2}\theta_{f(\RR^n)}(y,t)\leq \delta_0\quad\text{for all }y\in f(\RR^n)\cap B^N(f(x_0),10t_0)\text{ and }0<t\leq 10t_0,\end{equation} and, by Lemma \ref{e:check} and (\ref{e:dc5}), \begin{equation}\label{e:dc6} \sup_{y\in f(\RR^n)\cap B^N(x_0,10t_0)}\sum_{k=0}^\infty \betac_{f(\RR^n)}(y,10^{-k}t_0)^2 \leq \frac{400}{\log(10)}AC. \end{equation} By (\ref{e:tc2}), (\ref{e:dc6}), and Theorem \ref{t:bilip}, there exists an $n$-dimensional plane $V$ and an $L^2$-bi-Lipschitz map $g:\RR^N\rightarrow\RR^N$ for some $L=L(n,N,A)$ (with $L\rightarrow 1$ as $A\rightarrow 0$ by Remark \ref{r:Lsmall}) such that \begin{equation*}f(\RR^n)\cap B^N(f(x_0),t_0) = g(V) \cap B^N(f(x_0),t_0).\end{equation*} Therefore, for every $x_0\in\RR^n$ there exists $t_0>0$ such that $f(\RR^n)\cap B^N(f(x_0),t_0)$ is bi-Lipschitz equivalent to a subset of $\RR^n$; that is, $f(\RR^n)$ is locally bi-Lipschitz equivalent to subsets of $\RR^n$.
\end{proof}

\begin{proof}[Proof of Theorem \ref{t:abt}] Let $f:\RR^N\rightarrow\RR^N$ be a quasiconformal map and assume that (\ref{e:bgrt-dini}) holds. We want to show that the quasiplane $f(\RR^n)$ is locally $(1+\delta)$-bi-Lipschitz equivalent to subsets of $\RR^n$ for all $\delta>0$. Fix any $x_0\in\RR^n$. Then\begin{equation*} \int_{0}^{1}\sup_{x\in B^n(x_0,1)} \tH_{f}(B^N(x,r))^{2} \frac{dr}{r} <\infty.\end{equation*} Thus, given any $A>0$, we can find $\rho\in(0,1)$ such that \begin{equation}\label{e:pabt2} \sup_{x\in B^n(x_0,1)}\int_0^{\rho}\tH_f(B^N(x,r))^2\frac{dr}{r}\leq \int_{0}^{\rho}\sup_{x\in B^n(x_0,1)} \tH_{f}(B^N(x,r))^{2} \frac{dr}{r}\leq A.\end{equation} Notice the similarity between (\ref{e:pabt2}) and (\ref{e:pabt1}). By mimicking the proof of Theorem \ref{t:abt2}, we can find $t_0>0$, an $n$-dimensional plane $V$, and an $L^2$-bi-Lipschitz map $g:\RR^N\rightarrow\RR^N$ for some $L=L(n,N,A)$ (with $L\rightarrow 1$ as $A\rightarrow 0$) such that \begin{equation*}f(\RR^n)\cap B^N(f(x_0),t_0) = g(V) \cap B^N(f(x_0),t_0).\end{equation*} Therefore, because $A>0$ can be chosen arbitrarily small, $f(\RR^n)$ is locally $(1+\delta)$-bi-Lipschitz equivalent to subsets of $\RR^n$ for all $\delta>0$.\end{proof}

\begin{proof}[Proof of Theorem \ref{t:main}]

Assume that $2\leq n\leq N-1$. We will work with certain parameters, chosen as follows. \begin{enumerate}
 \item Pick any $\delta\in(0,\delta_0/2]$, where $\delta_0=\delta_0(n,N)$ is the constant from Theorem \ref{t:reif}.
 \item Let $\delta_{*}=\delta_{*}(\delta)$ be the constant from Lemma \ref{l:rflat} corresponding to $\delta$.
 \item Let $\varepsilon_*=\varepsilon_*(\varepsilon,n)$ be the constant from Theorem \ref{t:beta} corresponding to $\varepsilon=\min\{1,\delta_*,C_f^{1/2}\}$.
 \item Let $\tau_*=\tau_*(\tau,N)$ denote the constant from Theorem \ref{t:qsaa} corresponding to $\tau=\varepsilon_*$.
 \item Choose $\rho\leq \min\{\tau_*,\varepsilon_*\}$ sufficiently small such that $\exp(-C_f 2^n/\rho^2)/2<1/2$.
 \end{enumerate}

Let $f:\RR^N\rightarrow\RR^N$ be a quasiconformal map such that for some $C_f>0$ the Carleson condition (\ref{e:carleson}) holds for all $x_0\in\RR^n$ and $r_0>0$. Our goal is to identify big pieces of bi-Lipschitz images of $\RR^n$ in $f(\RR^n)\cap B^N(\xi,s)$ for all $\xi\in f(\RR^n)$ and $s>0$. We shall do this indirectly, starting with a location and scale in the domain.

Fix $x_0\in\RR^n$ and $r_0>0$. Put $\sigma=\exp(-C_f 2^n/\rho^2)/2<1/2$. There exists $27r_1\in (\sigma r_0,r_0/2)$ and $x_1\in B^n(x_0,r_0/2)$ such that $\tH_f(B^N(x_1,27r_1))\leq \rho$, otherwise \begin{align*}
\int_{B^n(x_0,r_0)}&\int_0^{r_0} \tH_f(B^N(x,r))^2\frac{dr}{r}d\Leb^n(x) \\
   &> \int_{B^n(x_0,r_0/2)} \int_{\sigma r_0}^{r_0/2} \rho^2\frac{dr}{r}d\Leb^n(x)=C_f \Leb^n(B^n(x_0,r_0)),
\end{align*}
which violates (\ref{e:carleson}). Consider the set \begin{equation*}G:=\left\{x\in B^n(x_1,r_1):\int_0^{r_1} \tH_f(B^N(x,r))^2 \frac{dr}{r} \leq 2C_f \right\}.\end{equation*} By Chebyshev's inequality and (\ref{e:carleson}), the complement of $G$ in $B^n(x_1,r_1)$ has \begin{equation*}
\Leb^n(B^n(x_1,r_1)\setminus G)
\leq \frac{1}{2C_f}\int_{B^n(x_1,r_1)}\int_0^{r_1} \tH_f(B^N(x,r))^2 \frac{dr}{r}d\Leb^n(x)\leq  \frac{1}{2}\Leb^n(B^n(x_1,r_1)).\end{equation*} Hence $\Leb^n(G)\geq\frac12\Leb^n(B^n(x_1,r_1))$. Since Lebesgue measure is inner regular, we may select a compact set $E\subset G$ such that $\Leb^n(E)\geq \frac{1}{2}\Leb^n(G)$. For the record, since $(\sigma/27)r_0\leq r_1$,\begin{equation}\label{e:areas}\Leb^n(E)\geq \frac14 \Leb^n(B^n(x_1,r_1))\geq \frac14\left(\frac{\sigma}{27}\right)^n\Leb^n(B^n(x_0,r_0))\gtrsim_{n,C_f}\Leb^n(B^n(x_0,r_0))\end{equation} and \begin{equation}\label{e:diams}\diam E\gtrsim_n \diam B^n(x_1,r_1))\gtrsim_{n,C_f} \diam B^n(x_0,r_0).\end{equation}
Now, on one hand, $\tH_f(B^N(x_1,27r_1))\leq \rho\leq \tau_*$. Hence $f|_{B^n(x_1,9r_1)}$ is $\varepsilon_*$-almost affine over $B^n(x_1,9r_1)$ by Theorem \ref{t:qsaa}. On the other hand, we also have $\tH_f(B^N(x_1,3r_1))\leq \rho\leq \varepsilon_*$. Thus, by Theorem \ref{t:beta}, there exists a quasisymmetric map  $F:\RR^n\rightarrow\RR^N$ such that $F|_E=f|_E$, \begin{equation}\label{e:thstar} \tH_F(\RR^n)\leq \varepsilon\leq \min\{1,\delta_*\},\end{equation} \begin{equation}\label{e:aastar} \text{$F$ is $\delta_*$-almost affine over $\RR^n$},\end{equation} \begin{equation} \label{e:dstar} \diam F(B^n(x_1,r_1)) \sim_{n,N} \diam f(B^n(x_1,r_1)),\text{ and}\end{equation}  \begin{equation}\label{e:apple1} \int_0^\infty \beta_{F(\RR^n)}(F(x),s)^2\frac{ds}{s}\lesssim_{n,N} C_f+\varepsilon^2\lesssim C_f\quad\text{for all }x\in\RR^n.\end{equation} By (\ref{e:thstar}), (\ref{e:aastar}), and Lemma \ref{l:rflat}, we conclude that $F(\RR^n)$ is $((1+\varepsilon)\delta,\infty)$-Reifenberg flat, where $(1+\varepsilon)\delta\leq 2\delta\leq \delta_0$. Also by  (\ref{e:apple1}) and Lemma \ref{l:check}, for all $x\in\RR^n$ and $s>0$, \begin{equation*} \sup_{y\in F(\RR^n)\cap B^N(F(x),10s)} \sum_{k=0}^\infty \betac_{F(\RR^n)}(y,10^{-k}s)^2\lesssim_{n,N}C_f.\end{equation*} Therefore, by Corollary \ref{c:bilip}, there exist $L=L(n,N,C_f)>1$ (with $L\rightarrow 1$ as $C_f\rightarrow 0$ by Remark \ref{r:Lsmall}) and an $L^2$-bi-Lipschitz map $g:\RR^N\rightarrow\RR^N$ such that $g(F(\RR^n))=\RR^n$.

We now estimate the $n$-dimensional Hausdorff measure of $f(E)=F(E)$. It is at this point that the restriction $n\geq 2$ enters the discussion. First note that $F$ is quasisymmetric with a control function depending only on $n$ and $N$, by (\ref{e:thstar}) and Corollary \ref{c:wqs2qs}. Thus, since $g$ has bi-Lipschitz constant depending on at most on $n$, $N$, and $C_f$, the composition $h=g\circ F:\RR^n\rightarrow\RR^n$  is $\eta$-quasisymmetric for some control function $\eta$ depending only on $n$, $N$, and $C_f$ (see Figure \ref{f:1}).
\begin{figure}
\begin{picture}(430,150)(0,0)
\put(0,0){\includegraphics[width=.95\textwidth]{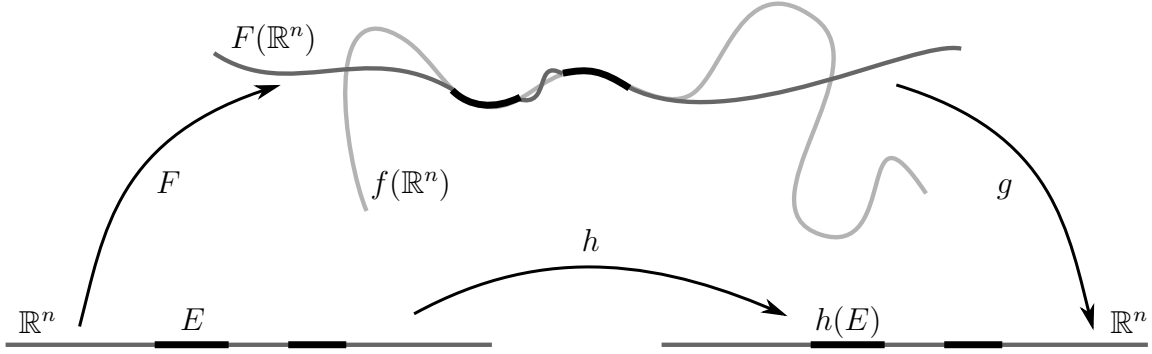}}
\put(66,8){$E$}
\put(57,60){$F$}
\put(138,60){$f(\RR^{n})$}
\put(85,115){$F(\RR^{n})$}
\put(218,38){$h$}
\put(375,60){$g$}
\put(306,8){$h(E)$}
\put(5,7){$\RR^n$}
\put(418,7){$\RR^n$}
\end{picture}
\caption{The light gray set represents the quasiplane $f(\RR^n)$. We extend $f|_{E}$ to an almost affine map $F:\RR^n\rightarrow\RR^{N}$, whose image $F(\RR^n)$ (the dark gray set) is mapped onto $\RR^n$ by a bi-Lipschitz map $g:\RR^N\rightarrow\RR^N$. The black set represents $E$ and its images $F(E)=f(E)$ and $h(E)=g(F(E))=g(f(E))$.}
\label{f:1}
\end{figure}
Hence $h(B^n(x_1,r_1))\subset\RR^n$ has bounded eccentricity (depending only $n$, $N$, and $C_f$) and \begin{equation*}\left(\diam h(B^n(x_1,r_1))\right)^n\sim_{n,N,C_f}\Leb^n(h(B^n(x_1,r_1))).\end{equation*} Pick any closed cube $Q\subset\RR^n$ such that $B^n(x_1,r_1)\subset Q$ and $\Leb^n(B^n(x_1,r_1))\sim_n \Leb^n(Q)$. Since $n\geq 2$ and $h$ is quasiconformal with maximal dilatation $K_h(\RR^n)\leq \eta(1)^{n-1}$ depending only on $n$, $N$ and $C_f$, by Corollary \ref{c:gehring} there exists $q=q(n,N,C_f)>0$ such that \begin{equation*}\begin{split}\frac{\Leb^n(h(E))}{(\diam h(B^n(x_1,r_1)))^n}&\sim_{n,N,C_f}\frac{\Leb^n(h(E))}{\Leb^n(h(B^n(x_1,r_1)))}\\ &\geq \frac{\Leb^n(h(E))}{\Leb^n(h(Q))}\geq\frac{1}{2}\exp\left(-q\frac{\Leb^n(Q)}{\Leb^n(E)}\right) \gtrsim_{n,N,C_f} 1.\end{split}\end{equation*} Since $g$ bi-Lipschitz with constant depending only on $n$, $N$ and $C_f$, it follows that \begin{equation}\label{e:pause1}\frac{\Haus^n(F(E))}{\left(\diam F(B^n(x_1,r_1))\right)^n} \gtrsim_{n,N,C_f} 1.\end{equation} Thus, by (\ref{e:dstar}) and (\ref{e:pause1}), we obtain \begin{equation}\label{e:apple2}\Haus^n(f(E))=\Haus^n(F(E))\gtrsim_{n,N,C_f}\left(\diam f(B^n(x_1,r_1))\right)^n.\end{equation} We would like to replace $\diam f(B^n(x_1,r_1))$ in (\ref{e:apple2}) by $\diam f(B^n(x_0,r_0))$. To that end, note that the restriction $f|_{\RR^n}$ is quasisymmetric with a control function depending only on $n$, $N$, and $H:=H_f(\RR^n)$ by Corollary \ref{c:wqs2qs}. Thus, by (\ref{e:diams}) and Lemma \ref{l:QS-comp}, \begin{equation}\label{e:diams2} \diam f(B^n(x_1,r_1))\gtrsim_{n,N,C_f,H} \diam f(B^n(x_0,r_0)).\end{equation} Therefore, \begin{equation}\label{e:apple3} \Haus^n(f(E))\gtrsim_{n,N,C_f,H}\left(\diam f(B^n(x_0,r_0))\right)^n.\end{equation}
We have argued that for all $x_0\in \RR^n$ and $r_0>0$ there exist a closed set $E\subset B^n(x_0,r_0)$ and a $L(n,N,C_f)$-bi-Lipschitz map $g:f(E)\rightarrow\RR^n$ (with $L\rightarrow 1$ as $C_f\rightarrow 0$) such that (\ref{e:apple3}) hold.

To finish the proof of the theorem, we now show that $f(\RR^n)$ has big pieces of bi-Lipschitz images of $\RR^n$. Let $\xi\in f(\RR^n)$ and $s>0$ be given. Put $x=f^{-1}(\xi)\in\RR^n$ and set \begin{equation*}r=\max\left\{t:f(B^n(x,t))\subset B^N(\xi,s)\right\}.\end{equation*} Since $r$ is maximal, there exists $y\in B^n(x,r)$ such that $|f(y)-f(x)|=s$. As we argued above, there exists $E\subset B^n(x,r)$ such that $f(E)\subset f(\RR^n)\cap B^N(\xi,s)$ is $L(n,N,C_f)$-bi-Lipschitz equivalent to a subset of $\RR^n$ and \begin{equation*}  \Haus^n(f(E)) \gtrsim_{n,N,C_f,H} \left(\diam f(B^n(x,r))\right)^n \geq |f(y)-f(x)|^n\geq s^n.\end{equation*} Therefore, since $\xi\in f(\RR^n)$ and $s>0$ were arbitrary, $f(\RR^n)$ has big pieces of bi-Lipschitz images of $\RR^n$ with BPBI constants depending on at most $n$, $N$, $C_f$, and $H_F(\RR^n)$. \end{proof}


\section{Distortion of beta numbers by quasisymmetric maps}
\label{s:flatness}

In this section, we examine the distortion of beta numbers by weakly quasisymmetric maps. Our primary goal is to prove Lemma \ref{l:bflat}, which for convenience we now restate.

\begin{lemma}\label{l:bflat2} Suppose that $1\leq n\leq N-1$. Let $V$ be an $n$-dimensional plane in $\RR^N$, let $v\in V$ and let $e$ be a unit vector in $\RR^N$. For any topological embedding $f:B^N(v,2r)\rightarrow\RR^N$, \begin{equation*}\beta_{f(V)}\left(f(v),\frac{1}{2}|f(v+re)-f(v)|\right) \leq 72N \tH_f(B^N(v, 2r)).\end{equation*}\end{lemma}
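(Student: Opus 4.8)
The plan is to produce a single $n$-dimensional plane $W$ through $f(v)$ that does a good job of approximating $f(V)$ inside the ball $B^N(f(v),R)$, where $R:=\tfrac12|f(v+re)-f(v)|$. Since $\beta$ is an infimum over all affine $n$-planes (not necessarily through the center), it suffices to exhibit \emph{one} such plane and bound $\tfrac1R\sup_{y\in f(V)\cap B^N(f(v),R)}\dist(y,W)$ by $72N\,\tH_f(B^N(v,2r))$. Write $H:=H_f(B^N(v,2r))$ and $\tH:=H-1$; we may assume $\tH$ is small, since otherwise $72N\tH\ge 1\ge\beta$ and there is nothing to prove.

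First I would build a ``frame'' for $W$. Fix an orthonormal basis $u_1,\dots,u_n$ of the linear space $V-v$. For each $i$, consider the point $v+\rho u_i$ for a suitable $\rho\le r$, and look at $f(v+\rho u_i)-f(v)$; the weak quasisymmetry inequality applied with center $a=v$ comparing $|v+\rho u_i - v|=\rho$ against $|v+\rho e-v|=\rho$ (after rescaling $e$ appropriately, or rather comparing $v+\rho u_i$ against $v+re$ and vice versa) controls $|f(v+\rho u_i)-f(v)|$ from above and below by $H^{\pm1}$ times a common reference length, which we can take to be comparable to $R$. Define $w_i$ to be the (unit, or near-unit) vectors in these directions and let $W:=f(v)+\operatorname{span}(w_1,\dots,w_n)$, truncated/orthonormalized as needed. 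The point of the $H$-comparability is twofold: (a) each $f(v+\rho u_i)$ lies within distance $\sim\tH\cdot R$ of the ``ideal'' position it would occupy if $f$ restricted to $V$ near $v$ were a similarity onto $W$; (b) the $n$ directions $w_i$ are quantitatively non-degenerate, so $W$ is genuinely $n$-dimensional and the orthogonal projection onto $W$ has good control. This is essentially Prause's argument (\cite[Theorem 5.6]{Prause}) made quantitative and local.

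Next, for an arbitrary $y=f(z)\in f(V)\cap B^N(f(v),R)$ with $z\in V$, I would estimate $\dist(y,W)$. Writing $z=v+\sum_i t_i u_i$, the weak quasisymmetry constant forces $|f(z)-f(v)|$ and each $|f(z)-f(v\pm\rho u_i)|$ to be comparable (up to factors $H^{\pm1}$) to the Euclidean quantities one gets from the corresponding points in $V$; comparing these $2n+1$ distances pins down the position of $f(z)$ relative to the frame $\{f(v), f(v\pm\rho u_i)\}$ up to an error that is $O(N\tH)$ times $R$ — the factor $N$ (really $\sim n\le N$) coming from summing coordinate-by-coordinate errors over the $n$ basis directions. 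Since $f(v)$ and the $f(v\pm\rho u_i)$ all lie within $O(\tH R)$ of $W$ by step~1, it follows that $f(z)$ lies within $O(N\tH R)$ of $W$ as well. Tracking the absolute constants through the inequalities (each use of $H=1+\tH$ contributes a bounded multiple, and there are a bounded number of them per coordinate) yields the clean bound $72N\tH$.

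The main obstacle I anticipate is the bookkeeping in step~3: one must choose the radius $\rho$ and the reference length carefully so that \emph{every} point $y\in f(V)\cap B^N(f(v),R)$ — including points $f(z)$ with $z$ far from $v$ in $V$ but whose image happens to land back near $f(v)$ — is still covered, and so that the comparison distances are all taken between points lying inside the domain ball $B^N(v,2r)$ where the hypothesis $H_f(B^N(v,2r))\le H$ is available (this is why the lemma is stated on $B^N(v,2r)$ while the conclusion involves only $f(v+re)$ and $f(v)$). A clean way to handle this is to first observe, via weak quasisymmetry with center $v$, that $f(V)\cap B^N(f(v),R)\subset f(V\cap B^N(v,\lambda r))$ for an explicit $\lambda<2$, so that all relevant preimages lie safely inside $B^N(v,2r)$; then the coordinate estimates above apply uniformly. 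Keeping the constant at $72N$ rather than something larger will require being slightly economical — e.g.\ using that $\beta$ allows \emph{uncentered} planes and choosing $\rho=r$ so that $v+re$ itself is one of the frame points — but no new idea beyond careful estimation is needed.
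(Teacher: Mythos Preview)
Your approach has a genuine gap: you only use frame points $v\pm\rho u_i$ lying \emph{in} the plane $V$, and this is not enough to control the perpendicular distance from $f(z)$ to your candidate plane $W$.

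The claim in your step~3 that the $2n+1$ distances from $f(z)$ to the frame ``are comparable (up to factors $H^{\pm1}$) to the Euclidean quantities'' is not justified by weak quasisymmetry. The weak quasisymmetry hypothesis only gives you useful information when two \emph{domain} distances are equal (or ordered): $|a-c|=|b-c|$ implies $|f(a)-f(c)|/|f(b)-f(c)|\in[H^{-1},H]$. For a generic $z\in V$ and frame points $v\pm\rho u_i\in V$, the domain distances $|z-(v+\rho u_i)|$ and $|z-(v-\rho u_i)|$ are \emph{not} equal, so you get only a one-sided ratio bound, not the approximate equality your trilateration would need. Even if you could extract the $d_i$'s up to additive error $O(\tH R)$, the formula $h^2=d_0^2-|\pi(q)|^2$ for the perpendicular distance $h$ would then only yield $h\lesssim\sqrt{\tH}\,R$, i.e.\ the wrong power of $\tH$.

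The paper's proof resolves this by using the directions \emph{perpendicular} to $V$. After normalizing to $V=\RR^n$, $v=0$, $r=1$, it builds an affine map $A:\RR^N\to\RR^N$ from the images $f(\pm e_i)$ of \emph{all} $N$ coordinate directions, with $A(e_i)=\tfrac12(f(e_i)-f(-e_i))$. The key observation is that for $j>n$, every $x\in\RR^n\cap B^N(0,1)$ satisfies $|x-e_j|=|x-(-e_j)|$ exactly, so weak quasisymmetry gives $|f(x)-f(e_j)|\approx|f(x)-f(-e_j)|$, and the polarization identity converts this into $|\langle A(e_j),f(x)\rangle|\lesssim\tH$. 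Since $\{A(e_1),\dots,A(e_N)\}$ is shown to be nearly orthonormal, the vectors $A(e_{n+1}),\dots,A(e_N)$ approximately span $A(\RR^n)^\perp$, and the smallness of these inner products is precisely what gives $\dist(f(x),A(\RR^n))\lesssim N\tH$. The perpendicular frame points $\pm e_j$, $j>n$, are what make the linear-in-$\tH$ bound work; there is no substitute for them using only points of $V$.
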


\begin{proof}[Proof of Lemma \ref{l:bflat} / Lemma \ref{l:bflat2}] Without loss of generality, by applying a translation, rotation, and dilation in the domain, and a dilation in the image, we may assume that $1\leq n\leq N-1$, $V=\RR^n$, $v=0$ and $r=1$, and $f:B^N(0,2)\rightarrow\RR^N$ is an embedding such that $|f(e)-f(0)|=1$ for some unit vector $e$. Also, by applying a translation in the image, we may assume that \begin{equation*} \sum_{i=1}^N f(e_i)+f(-e_i)=0.\end{equation*} Fix $0<\delta\leq 1/4$ to be specified later, ultimately depending only on $N$. If $\tH_f(B^N(0,2)) > \delta$, then $\beta_{f(\RR^n)}(f(0),1/2) \leq 1 \leq (1/\delta)\tH_f(B^N(0,2))$ trivially. Thus, to continue, we assume that \begin{equation*}\tH_f(B^N(0,2))=:\varepsilon\leq \delta.\end{equation*} Because $|f(e)-f(0)|=1$, $f$ is a topological embedding, and $\tH_f(B^N(0,1))\leq \varepsilon\leq 1/4$, it follows that \begin{equation}\label{e:bf2a} \frac{1}{1+\varepsilon}\leq |f(e')-f(0)| \leq 1+\varepsilon\quad\text{for every unit vector }e',\end{equation} \begin{equation}\label{e:bf2b} B^N\left(f(0),\frac{1}{1+\varepsilon}\right) \subset f(B^N(0,1))\subset B^N(f(0),1+\varepsilon),\end{equation}
and \begin{equation} \label{e:bf2c} \frac{2}{1+\varepsilon}\leq \diam f(B^N(0,1)) \leq 2(1+\varepsilon)\leq 5/2.\end{equation}
For all $1\leq i\leq N$, put \begin{equation*} y_i:= \frac{f(e_i)+f(-e_i)}{2}\quad\text{and}\quad z_i:=\frac{f(e_i)-f(-e_i)}{2}.\end{equation*} We note that $y_i\pm z_i = f(\pm e_i)$.
Let $A:\RR^{N}\rightarrow \RR^{N}$ be the unique affine map such that
\begin{equation}
A(0)=\frac{1}{N}\sum_{i=1}^{N}y_i=0,\quad A(e_{i})=z_i\quad \text{for all }1\leq i\leq N.
\label{e:A}
\end{equation} We will show that $A(\RR^n)$ is an $n$-dimensional plane and use $A(\RR^n)$ to estimate $\beta_{f(\RR^n)}(f(0),1/2)$.

To start, we show that the vectors $A(e_i)$ and $A(e_j)$ are almost orthogonal for all $1\leq i,j\leq N$, $i\neq j$.
Let $x\in e_i^\perp\cap B^N(0,1)$. Since $\tH_f(B^N(0,1))\leq \varepsilon$ and $|x-e_i|=|x-(-e_i)|$, we have
\begin{equation*} \frac{1}{1+\varepsilon}\leq \frac{|f(x)-f(e_{i})|}{|f(x)-f(-e_{i})|}\leq 1+\varepsilon.\end{equation*}
Hence, by the polarization identity,
\begin{equation*}\begin{split}
|\langle z_i,f(x)-y_i\rangle|&=\frac{1}{4}\left||f(x)-f(-e_i)|^{2}-|f(x)-f(e_i)|^{2}\right|\\&\leq \frac{1}{4}((1+\varepsilon)^{2}-1)|f(x)-f(e_i)|\leq \frac{45}{32}\varepsilon\leq 1.5\varepsilon,
\end{split}\end{equation*} where in the last line we used the estimates $\varepsilon\leq 1/4$ and $\diam f(B^N(0,1))\leq 5/2$. In particular, for all $1\leq j\leq N$, $j\neq i$, we have
\begin{equation*}|\langle z_i,f(\pm e_{j})-y_i\rangle|\leq 1.5\varepsilon.\end{equation*} Hence $|\langle z_i, y_j-y_i \rangle|\leq 1.5\varepsilon$, as well. Averaging over all $1\leq j\leq N$, we obtain \begin{equation*}|\langle z_i, A(0)-y_i\rangle|\leq 1.5\varepsilon.\end{equation*}
Thus, for all $x\in e_{i}^{\perp}\cap B^N(0,1)$,
\begin{equation*}
|\langle z_i,f(x)-A(0)\rangle|\\
\leq |\langle z_i,f(x)-y_i \rangle| + |\langle z_i, A(0)-y_i \rangle|\leq 3\varepsilon.
\end{equation*} Recall that $A(0)=0$, by assumption. Therefore,
\begin{equation}
 |\langle A(e_{i}),f(x) \rangle|= |\langle z_i,f(x) \rangle|\leq 3\varepsilon\quad \text{for all }x\in e_{i}^{\perp}\cap B^N(0,1),
 \label{e:Aeifx}
 \end{equation}
and
\begin{equation}
|\langle A(e_{i}),A(e_{j})\rangle|\leq \frac{|\langle A(e_{i}),f(e_{j})\rangle|+|\langle A(e_{i}),f(-e_{j})\rangle|}{2}\leq 3\varepsilon\quad\text{for all }i\neq j.
\label{e:AeiAej}
\end{equation} That is, the vectors $A(e_i)$ and $A(e_j)$ are almost orthogonal for all $1\leq i\leq j\leq N$, $i\neq j$.

Next, we claim that \begin{equation}\label{e:Aei>1-e^2} (1-\varepsilon)^2\leq  |A(e_{i})|\leq 1+\varepsilon \quad\text{for all }1\leq i\leq N. \end{equation} To see this, fix $1\leq i\leq N$. For the upper bound, recall that $\diam f(B^N(0,1))\leq 1+\varepsilon$. Hence \begin{equation*}|A(e_{i})|=\frac{|f(e_{i})-f(-e_{i})|}{2}\leq 1+\varepsilon.\end{equation*}
For the lower bound, write $r_{\pm}=|f(0)-f(\pm e_{i})|$. Since $\tH_f(B^N(0,2))\leq \varepsilon$, we know that
\begin{equation*} |f(y)-f(\pm e_{i})|\geq (1+\varepsilon)^{-1}r_{\pm} \quad\text{for all }y\in \partial B^N(\pm e_{i},1).\end{equation*}
Hence $f(B^N(\pm e_{i},1))\supseteq B^N(f(\pm e_{i}),(1+\varepsilon)^{-1}r_{\pm})=:B^N_\pm$, because $f$ is a homeomorphism onto its image.
Moreover, \begin{equation*}f(B^N(e_{i},1))\cap f(B^N(-e_{i},1))=\{f(0)\},\end{equation*} so the balls $B^N_+$ and $B^N_-$ intersect in exactly one point. It follows that
\begin{equation*}|f(e_{i})-f(-e_{i})|\geq (1+\varepsilon)^{-1}(r_{+}+r_{-}).\end{equation*}
Recalling that $r_{\pm}\geq  (1+\varepsilon)^{-1}$ by (\ref{e:bf2a}), we conclude that
\[ |A(e_{i})| =\frac{|f(e_{i})-f(-e_{i})|}{2} \geq \frac{ (1+\varepsilon)^{-1}(r_{+}+r_{-})}{2}\geq  (1+\varepsilon)^{-2}\geq (1-\varepsilon)^2,\]
where the last inequality holds, since $1\geq (1-\varepsilon^{2})^{2}=(1-\varepsilon)^{2}(1+\varepsilon)^{2}$. Thus,  \eqref{e:Aei>1-e^2} holds.

We now examine how $A$ distorts the length of arbitrary vectors. Let $v\in\RR^N$, and expand $v=\sum_{i=1}^N v_ie_i$. If $|v|=1$, then \begin{align*}
\left||A(v)|^{2}-1\right|
 &=\left| \sum_{i\neq j}\langle A(e_{i}),A(e_{j}) \rangle v_{i}v_{j}+\sum_{i=1}^{N}(|A(e_{i})|^{2}-1)v_{i}^{2}\right|\\
 &\leq
 3\varepsilon N+ (1-(1-\varepsilon)^4)\leq 3N\varepsilon+4\varepsilon+4\varepsilon^3 \leq (3N+4.25)\varepsilon\leq 6N\delta
\end{align*}  by  \eqref{e:AeiAej} and \eqref{e:Aei>1-e^2}, and the bounds $\varepsilon\leq \delta \leq 1/4$ and $2\leq N$. By homogeneity, it follows that \begin{equation*}
\sqrt{1-6N\delta}  \leq \frac{|A(v)|}{|v|} \leq \sqrt{1+6N\delta} \quad\text{ for all }v\in\RR^N.
\end{equation*} In particular, stipulating that $6N\delta=3/4$ (that is, $\delta=1/8N$),
\begin{equation} \label{e:Av/v}
\frac{1}{2} \leq \frac{|A(v)|}{|v|} \leq \frac{\sqrt{7}}{2}\quad\text{for all }v\in\RR^N.\end{equation} Therefore, $A:\RR^N\rightarrow\RR^N$ is invertible and $A(\RR^n)$ is an $n$-dimensional plane in $\RR^N$.

Let $\xi\in f(\RR^n)\cap B^N(f(0),1/2)$. Then $\xi=f(x)$ for some $x\in B^n(0,1)$ by \eqref{e:bf2b}. Since $A$ is invertible, we can find a unique $y\in\RR^N$ such that $A(y)=f(x)$. Then, by \eqref{e:A} and \eqref{e:Av/v},
\begin{equation*}\begin{split}
 |y| &\leq 2|A(y)|
=2|f(x)-A(0)|
 = \frac{2}{N}\sum_{i=1}^{N}\left| f(x)-\frac{f(e_{i})+f(-e_{i})}{2} \right|\\
 &\leq \frac{1}{N}\sum_{i=1}^{N}(|f(x)-f(e_{i})|+|f(x)-f(-e_{i})|)
 \leq 2\diam f(B^N(0,1))
\leq 5.
\end{split}\end{equation*} Write $y=u+v$ where $u\in \RR^n$ and $v\in (\RR^n)^\perp$, and expand $u=\sum_{i=1}^n u_ie_i$ and $v=\sum_{j=n+1}^N v_je_j$. Then
\begin{align*}
|f(x)-A(u)|^{2}
& =\langle f(x)-A(u),f(x)-A(u)\rangle
=\langle f(x)-A(u),A(v)\rangle\\
& =\sum_{j=n+1}^{N} \langle f(x),A(e_{j})\rangle v_{j}-\sum_{i=1}^{n}\sum_{j=n+1}^{N}\langle A(e_{i}),A(e_{j})\rangle u_{i}v_{j}.
\end{align*}
Thus, by \eqref{e:Aeifx} and \eqref{e:AeiAej},
\begin{equation*}
|f(x)-A(u)|^2\leq \sum_{i=n+1}^{N}3\varepsilon|v_{i}|+\sum_{i=1}^{n}\sum_{j=n+1}^{N}3\varepsilon |u_{i}||v_{j}|\leq 3\varepsilon(N-n)^{1/2}|v|\left(1+n^{1/2}|u|\right).
\end{equation*}
Note that $|v|\leq 2|A(v)|=2|f(x)-A(u)|$, $|u|\leq |y|\leq 5$, and $1\leq n^{1/2}$. Hence
\begin{equation*}\dist(\xi,A(\RR^n))\leq |f(x)-A(u)|
\leq 36N\varepsilon\quad\text{for all }\xi\in f(\RR^n)\cap B^N(f(0),1/2). \end{equation*} Therefore, $\beta_{f(\RR^n)}(f(0),1/2)\leq 72 N\varepsilon = 72 N\tH_f(B^N(0,2))$.
\end{proof}

Our next task is to derive Corollary \ref{c:bflat}, which for convenience we now restate.

\begin{corollary} \label{c:bflat2} Suppose that $1\leq n\leq N-1$ and $H\geq 1$. There is $C=C(N,H)>1$ such that if $z\in\RR^n$, $t>0$, $f:\oB^N(z,2t)\rightarrow\RR^N$ is quasiconformal, and $H_f(B^N(z,t))\leq H$, then
\begin{equation}\label{e:dc2} \int_0^{\diam f(B^N(z,t))/C} \beta_{f(\RR^n)}(f(z),s)^2\frac{ds}{s}\leq C \int_0^{t} \tH_f(B^N(z,s))^2\frac{ds}{s}.\end{equation}\end{corollary}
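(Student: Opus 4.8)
\emph{Proof proposal.} The plan is to run Lemma \ref{l:bflat2} along a geometric sequence of radii $r_k\downarrow 0$ centered at $z$, and then to pass from the resulting discrete family of beta-number bounds to the integral on the left of (\ref{e:dc2}) by interpolating between consecutive scales with the monotonicity inequality (\ref{e:mono}).

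First I would record that $f$ restricted to $B^N(z,t)$ is a topological embedding with $H_f(B^N(z,t))\le H$, hence weakly $H$-quasisymmetric and therefore $\eta$-quasisymmetric for a control function $\eta=\eta_{N,H}$ depending only on $N,H$ (Corollary \ref{c:wqs2qs}); equivalently, this is the source of the local H\"older bounds of Theorem \ref{t:holder} on $B^N(z,t)$. Set $M(r):=\sup_{w\in B^N(z,r)}|f(w)-f(z)|$ for $0<r\le t$; then $M$ is continuous, nondecreasing, $M(0)=0$, $M(r)\le\diam f(B^N(z,r))\le 2M(r)$, and quasisymmetry gives $M(r)\le\eta(2)M(r/2)$ together with $M(r)/H\le|f(z+re)-f(z)|\le M(r)$ for every unit vector $e$ (the lower bound because every $w\in B^N(z,r)$ is no farther from $z$ than $z+re$, so weak quasisymmetry gives $|f(w)-f(z)|\le H|f(z+re)-f(z)|$). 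Fix one unit vector $e$, and for $k\ge 0$ put $r_k:=2^{-k-1}t$ and $\rho_k:=\tfrac12|f(z+r_ke)-f(z)|$. Then $\rho_k\sim_{N,H}M(r_k)$, so $\rho_k\to 0$, $\rho_k\sim_{N,H}\rho_{k+1}$ for all $k$, and $\rho_0,\rho_1\sim_{N,H}M(t/2)\sim_{N,H}\diam f(B^N(z,t))$.

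Next I would apply Lemma \ref{l:bflat2} with $v=z$, $r=r_k$, and this $e$, which is legitimate since $B^N(z,2r_k)\subset\oB^N(z,2t)$ and $f$ is an embedding there, obtaining $\beta_{f(\RR^n)}(f(z),\rho_k)\le 72N\,\tH_f(B^N(z,2r_k))$ for every $k\ge 0$. Choose $C=C(N,H)>1$ large enough that $S_0:=\diam f(B^N(z,t))/C\le\min\{\rho_0,\rho_1\}$. Given $0<s\le S_0$, the least index $m$ with $\rho_m<s$ satisfies $m\ge 2$, so $k:=m-1\ge 1$ has $s\le\rho_k\le C_1 s$ for a constant $C_1=C_1(N,H)$ (using minimality of $m$ and $\rho_{m-1}\sim_{N,H}\rho_m$); hence (\ref{e:mono}) gives $\beta_{f(\RR^n)}(f(z),s)\le(\rho_k/s)\,\beta_{f(\RR^n)}(f(z),\rho_k)\le 72NC_1\,\tH_f(B^N(z,2r_k))$. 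For each fixed $k\ge 1$, the set of $s\in(0,S_0]$ to which this $k$ is assigned lies in $(\rho_k/C_1,\rho_k]$, a set of logarithmic measure at most $\log C_1$, so integrating yields
\begin{equation*}
\int_0^{S_0}\beta_{f(\RR^n)}(f(z),s)^2\,\frac{ds}{s}\ \le\ (72NC_1)^2\log C_1\sum_{k\ge 1}\tH_f(B^N(z,2r_k))^2 .
\end{equation*}
Finally, since $s\mapsto\tH_f(B^N(z,s))$ is nondecreasing and $2r_k=2^{-k}t$, each summand is at most $(\log 2)^{-1}\int_{2^{-k}t}^{2^{-k+1}t}\tH_f(B^N(z,s))^2\,ds/s$, and summing these disjoint integrals bounds the series by $(\log 2)^{-1}\int_0^t\tH_f(B^N(z,s))^2\,ds/s$. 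Taking $C$ also large enough to absorb the remaining constants gives (\ref{e:dc2}).

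The step I expect to be most delicate is the passage between the domain radii $r_k$ and the image radii $\rho_k$: establishing $\rho_k\sim_{N,H}M(r_k)$ and the consecutive comparability of the $\rho_k$ from quasisymmetry alone, and then fixing the constant $C$ so that the index $k=0$ — whose term $\tH_f(B^N(z,t))^2$ is bounded only by $(H-1)^2$ and need not be controlled by the right side of (\ref{e:dc2}) — is never selected. After that, the argument is a routine comparison of the resulting sum over scales with a logarithmic integral.
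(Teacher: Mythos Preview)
Your proof is correct. Both your argument and the paper's feed Lemma~\ref{l:bflat2} into a scale-by-scale bound of $\beta_{f(\RR^n)}(f(z),\cdot)$ by $\tH_f(B^N(z,\cdot))$ at a comparable domain radius, and then pass to the integral inequality; they differ only in how the correspondence between image scales and domain scales is realized. The paper invokes Theorem~\ref{t:holder} for both $f$ and $f^{-1}$ to obtain the explicit power-law relation $r\le At(2Hu/M)^{\alpha}$ with $\alpha=K^{1/(1-N)}$, and then performs the continuous change of variables $s=Qt(u/M)^{\alpha}$, $ds/s=\alpha\,du/u$. You instead work dyadically, using only the quasisymmetric doubling bound $M(r)\le\eta(2)M(r/2)$ to get consecutive comparability $\rho_k\sim_{N,H}\rho_{k+1}$, and then convert the resulting sum over $k$ into the integral on the right via monotonicity of $\tH_f(B^N(z,\cdot))$. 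Your route is slightly more elementary in that it never names the H\"older exponent; the paper's is a shade cleaner once those bounds are in hand. Your care in arranging $k(s)\ge1$---so that the uncontrolled top-scale term $\tH_f(B^N(z,t))^2$ never enters the sum---corresponds exactly to the paper's final step of choosing $C$ large enough that $Q(2/C)^{\alpha}\le1$.
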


\begin{proof}[Proof of Corollary \ref{c:bflat} / Corollary \ref{c:bflat2}] Let $1\leq n\leq N-1$ and  $H\geq 1$ be given.  Assume that $f:\oB^N(z,2t)\rightarrow\RR^N$ is quasiconformal and  $H_f(B^N(z,t))\leq H$ for some $z\in\RR^n$ and $t>0$. Then $K:=K_f(\oB^N(z,t))\leq H^{N-1}$ and the inverse $g=f|_{\oB^N(z,t)}^{-1}:f(\oB^N(z,t))\rightarrow \oB^N(z,t)$ is also a $K$-quasiconformal map. Set $\alpha:= K^{1/(1-N)}\leq 1/H$ and \begin{equation*} M:=\sup_{|w-z|=t} |f(w)-f(z)|.\end{equation*} We remark that $M\leq \diam f(B^N(z,t))\leq 2M$, since $\partial f(B^N(z,t))=f(\partial B^N(z,t))$.  Because $f$ is weakly $H$-quasisymmetric,  $f(B^N(z,t))\supseteq B^N(f(z),M/H)$. By Theorem \ref{t:holder}, there exists a constant $A=A(N,K)=A(N,H)\geq 1$ such that \begin{equation*} |f(x)-f(y)| \leq AM\left| \frac{x}{t}-\frac{y}{t}\right|^\alpha \quad\text{for all }x,y\in B^N(z,t/2).\end{equation*} In particular, \begin{equation*} |f(x)-f(z)| \leq AM\left(\frac{r}{t}\right)^\alpha\leq \frac{M}{2H}\quad\text{ for all }x\in B^N(z,r),\end{equation*} for all $r>0$ such that \begin{equation}\label{e:r-up} r\leq \frac{t}{(2AH)^{1/\alpha}}\leq \frac{t}{2}.\end{equation}

Let $r>0$ satisfy (\ref{e:r-up}). By Lemma \ref{l:bflat2}, \begin{equation*} \beta_{f(\RR^n)}\left(f(z),\frac{1}{2}|f(z+re_1)-f(z)|\right) \lesssim_N \tH_f(B^N(z,2r)).\end{equation*} Let us bound $r$ from above by a power of $u:=|f(z+re_1)-f(z)|/2$. Write $\xi=f(z+re_1)$ and $\zeta=f(z)$. Then $\xi \in B^N(\zeta,M/2H)$, since $r$ satisfies (\ref{e:r-up}). By Theorem \ref{t:holder}, \begin{equation*} r=|g(\xi)-g(\zeta)| \leq At\left|\frac{\xi}{M/H}-\frac{\zeta}{M/H}\right|^\alpha
=At\left(\frac{2Hu}{M}\right)^\alpha. \end{equation*} Thus, since $\tH_f(B^N(z,s))$ is increasing in $s$, we have \begin{equation}\label{e:betaQ}  \beta_{f(\RR^n)}\left(f(z),u\right) \lesssim_N \tH_f\left(B^N\left(z,2At\left(\frac{2Hu}{M}\right)^\alpha \right)\right)=\tH_f\left(B^N\left(z,Qt(u/M)^\alpha\right)\right), \end{equation} where $Q:=2A(2H)^{\alpha}$ depends only on $N$ and $H$. Note that (\ref{e:betaQ}) holds for all $u>0$ such that \begin{equation}\label{e:u-up} At\left(\frac{2Hu}{M}\right)^\alpha \leq \frac{t}{(2AH)^{1/\alpha}},\end{equation} because (\ref{e:u-up}) ensures that $u$ comes from some $r$ satisfying (\ref{e:r-up}).

Hence, for all $a>0$ sufficiently small, \begin{align*}
\int_0^a \beta_{f(\RR^n)}(f(z),u)^2\,\frac{du}{u}
  &\lesssim_N \int_0^a \tH_f(B^N(z,Qt(u/M)^\alpha))^2 \frac{du}{u} \\
  &= \frac{1}{\alpha} \int_0^{Qt(a/M)^{\alpha}} \tH_f(B^N(z,s))^2\frac{ds}{s},
\end{align*} where the equality follows from the change of variables $s=Qt(u/M)^{\alpha}$, $ds / s = \alpha\, du/u$. Taking $a$ to be of the form $a=\diam f(B^N(z,t))/C$ with $C$ large, we obtain \begin{equation*} \int_0^{\diam f(B^N(z,t))/C} \beta_{f(\RR^n)}(f(z),s)^2\,\frac{ds}{s} \lesssim_{N,K} \int_{0}^{Q(2/C)^\alpha t} \tH_f(B^N(z,s))^2\,\frac{ds}{s}.\end{equation*} Therefore, (\ref{e:dc2}) holds for $C>1$ sufficiently large depending only on $N$ and $H$.\end{proof}


\section{Estimates for compatible affine maps and almost affine maps}
\label{s:estimates}

To start the section, we record useful estimates for compatible affine maps (Lemma \ref{l:main-est}) and for almost affine maps (Lemma \ref{l:post-est}). Next we show that almost affine maps with small constant are H\"older continuous (Lemma \ref{l:Holder}). In Lemma \ref{l:inradius}, we make estimates on the diameter, inradius and local flatness of the images of balls under almost affine maps. To end the section, we give a pair of lemmas (Lemmas \ref{l:stable} and \ref{l:adapt}), which enable us to replace an arbitrary family compatible affine maps approximating an almost affine map with a family of compatible affine maps that satisfy additional nice properties.

For all $\varepsilon>0$, define $T_\varepsilon:[1,\infty)\rightarrow[1,\infty)$ by \begin{equation}\label{e:T} T_\varepsilon(t)=(2\log_2(t)+1)t^{2\log_2(1+\varepsilon)}\quad\text{for all }t\geq 1.\end{equation} Observe that $T_{\varepsilon}(t)$ is increasing in $\varepsilon$ and $t$; that is,  $T_{\varepsilon_1}(t_1)\leq T_{\varepsilon_2}(t_2)$ for all $0<\varepsilon_1\leq \varepsilon_2$ and $1\leq t_1\leq t_2$. For all $x,y\in \RR^n$ and $r,s>0$, define

\begin{equation}\label{e:tau} \tau(x,r,y,s):= \frac{\max\{r,s,2|x-y|\}}{\min\{r,s\}}. \end{equation}

\begin{lemma}[Estimates for compatible families of affine maps] \label{l:main-est} Let $E\subset\RR^n$ and $\varepsilon>0$. If $\mathcal{A}$ is an $\varepsilon$-compatible family of affine maps over $E$ (see \S\ref{ss:almost}), then for all $x,y\in E$ and $r,s>0$,
\begin{equation}\label{e:pre-a} \Adiff{x,r}{y,s} \leq T_\varepsilon(\tau)\,\varepsilon \Amin{x,r}{y,s}\end{equation} and \begin{equation}\label{e:pre-b} \Amax{x,r}{y,s} \leq  (1+T_\varepsilon(\tau)\varepsilon)\Amin{x,r}{y,s}\end{equation}
where $\tau=\tau(x,r,y,s)$.  In particular, if $\varepsilon\leq a$ and $\tau\leq a$ for some $a\geq 1$, then \begin{equation} \label{e:pre-c} \Adiff{x,r}{y,s} \lesssim_a \varepsilon \Amin{x,r}{y,s}\end{equation} and \begin{equation} \label{e:pre-d} \Amax{x,r}{y,s}\lesssim_a \Amin{x,r}{y,s}.\end{equation}
\end{lemma}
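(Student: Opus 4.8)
The plan is to join $A_{x,r}$ to $A_{y,s}$ by a finite chain $A_{x,r}=B_0,B_1,\dots,B_k=A_{y,s}$ of members of $\mathcal A$ in which every consecutive pair $(B_i,B_{i+1})$ falls directly under the hypothesis of Definition \ref{d:comp}, and then to telescope the resulting bounds. By the symmetry of \eqref{e:pre-a}--\eqref{e:pre-d} in the two index pairs we may assume $r\le s$; write $d:=|x-y|$, so $\min\{r,s\}=r$ and $\tau:=\tau(x,r,y,s)=\max\{s,2d\}/r\ge 1$. We may also assume $A_{x,r}\ne A_{y,s}$, since otherwise \eqref{e:pre-a}--\eqref{e:pre-d} are trivial.

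\emph{Step 1: constructing the chain with $k\le 2\log_2\tau+1$.} If $d\le s$ and $s\le 2r$, then $A_{x,r}$ and $A_{y,s}$ themselves form an admissible pair (indeed $d\le\max\{r,s\}$ and $1/2\le r/s\le 2$), and we take $k=1$. Otherwise --- so $d>s$ or $s>2r$ --- set $\sigma:=\max\{s,d\}$, which satisfies $\sigma\ge r$, $\sigma\ge s$, $\sigma\ge d$, and form the chain
\[
A_{x,r}=A_{x,t_0},\dots,A_{x,t_m}=A_{x,\sigma},\qquad A_{x,\sigma}\to A_{y,\sigma},\qquad A_{y,\sigma}=A_{y,s_0},\dots,A_{y,s_\ell}=A_{y,s},
\]
where $t_{i+1}/t_i,\ s_j/s_{j+1}\in(1,2]$, $m=\lceil\log_2(\sigma/r)\rceil$, $\ell=\lceil\log_2(\sigma/s)\rceil$, and the central step is admissible because $\sigma\ge d$; here consecutive maps always share a center or a scale, so Definition \ref{d:comp} applies. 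If $d>s$ then $\sigma=d$, $\tau=2d/r$, and, since $\sigma/s\le\sigma/r$ gives $\ell\le m$,
\[
k=m+\ell+1\le 2\lceil\log_2(\sigma/r)\rceil+1\le 2\log_2(\sigma/r)+3=2\log_2\tau+1;
\]
if instead $d\le s$ (hence $s>2r$), then $\sigma=s$, $\ell=0$, and, since $\log_2(s/r)>1$,
\[
k=m+1\le\log_2(s/r)+2\le 2\log_2(s/r)+1\le 2\log_2\tau+1.
\]

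\emph{Step 2: telescoping and identifying $T_\varepsilon(\tau)$.} For each $i$, $\varepsilon$-compatibility gives $\|B'_i-B'_{i+1}\|\le\varepsilon\min\{\|B'_i\|,\|B'_{i+1}\|\}$, hence $\|B'_{i+1}\|\le(1+\varepsilon)\|B'_i\|$. Reading the chain in whichever direction makes $\|B'_0\|=\Amin{x,r}{y,s}=:\mu$ (this alters neither $k$ nor $\tau$), we get $\|B'_i\|\le(1+\varepsilon)^i\mu$ for all $i$, so by the triangle inequality
\[
\Adiff{x,r}{y,s}\le\sum_{i=0}^{k-1}\|B'_i-B'_{i+1}\|\le\varepsilon\sum_{i=0}^{k-1}\|B'_i\|\le\varepsilon\mu\sum_{i=0}^{k-1}(1+\varepsilon)^i=\bigl((1+\varepsilon)^k-1\bigr)\mu.
\]
Since $k\ge 1$, $(1+\varepsilon)^k-1=\int_0^\varepsilon k(1+u)^{k-1}\,du\le k(1+\varepsilon)^{k-1}\varepsilon$, and $t\mapsto t(1+\varepsilon)^{t-1}$ is increasing on $[1,\infty)$; combining this with $k\le 2\log_2\tau+1$ yields
\[
(1+\varepsilon)^k-1\le k(1+\varepsilon)^{k-1}\varepsilon\le(2\log_2\tau+1)(1+\varepsilon)^{2\log_2\tau}\varepsilon=(2\log_2\tau+1)\tau^{2\log_2(1+\varepsilon)}\varepsilon=T_\varepsilon(\tau)\,\varepsilon,
\]
which together with the previous display is exactly \eqref{e:pre-a}. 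Then \eqref{e:pre-b} follows from $\Amax{x,r}{y,s}-\Amin{x,r}{y,s}\le\bigl|\,\|A'_{x,r}\|-\|A'_{y,s}\|\,\bigr|\le\Adiff{x,r}{y,s}$; and if in addition $\varepsilon\le a$ and $\tau\le a$ for some $a\ge1$, the monotonicity of $T$ gives $T_\varepsilon(\tau)\le T_a(a)<\infty$, whence $T_\varepsilon(\tau)\varepsilon\lesssim_a\varepsilon$ and $1+T_\varepsilon(\tau)\varepsilon\lesssim_a 1$, i.e.\ \eqref{e:pre-c} and \eqref{e:pre-d}.

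\emph{Expected main obstacle.} The one genuinely delicate point is Step 1: the chain must be short, $k\le 2\log_2\tau+1$. The two ceilings coming from the dyadic subchains can each add a spurious $+1$, and the only way I see to absorb the resulting $+2$ uniformly is to switch centers at scale $\max\{s,d\}$ rather than at $\max\{s,2d\}$ --- the factor $2$ in the definition of $\tau$ is precisely what pays for the two ceilings. The small case split ($d\le s\le 2r$ versus the rest) is needed because when $\tau$ is near $1$ even a chain of length two is too long, and one must instead use the direct admissible pair.
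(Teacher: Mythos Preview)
Your proof is correct and follows essentially the same strategy as the paper's: chain from $A_{x,r}$ up to a scale $\gtrsim\max\{s,|x-y|\}$, cross over to center $y$, descend to $A_{y,s}$, and telescope while tracking that the norms grow by at most $(1+\varepsilon)$ per step. The paper organizes this slightly differently: it fixes integers $k,l$ with $2^kr\le s<2^{k+1}r$ and $|x-y|<2^ls$, proves the single-center dyadic estimate $\|A'_{x,r}-A'_{x,2^jr}\|\le((1+\varepsilon)^j-1)\min$ as a preliminary lemma, and then combines the three pieces $A_{x,r}\to A_{x,2^{k+l}r}\to A_{y,2^ls}\to A_{y,s}$; because it works with exact dyadic scales, $k+l\le\log_2\tau$ falls out without any ceiling arithmetic or case split, and the factor $T_\varepsilon(\tau)$ appears after one application of the mean value theorem. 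Your version trades that for a direct chain of length $k\le 2\log_2\tau+1$, paying with the small case analysis you flagged (the $d\le s\le 2r$ case and the absorption of the two ceilings via the factor $2$ in $\tau$). Both routes are equally valid; the paper's dyadic indexing is a touch cleaner precisely because it sidesteps the rounding issue you identified as the main obstacle.
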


\begin{proof} Suppose that $\mathcal{A}$ is an $\varepsilon$-compatible family of affine maps over $E\subset\RR^n$. We shall first establish an auxiliary estimate: \begin{equation}\label{e:pre1} \Adiff{x,r}{x,2^kr}\leq \left((1+\varepsilon)^k-1\right)\Amin{x,r}{x,2^kr}\quad\text{for all }x\in E\text{ and }k\geq 0.\end{equation} Fix $x\in E$. For all $k\geq 0$,
$\Adiff{x,2^kr}{x,2^{k+1}r}\leq\varepsilon \Amin{2^kr}{2^{k+1}r}$, because $\mathcal{A}$ is $\varepsilon$-compatible. Hence, by the triangle inequality, \begin{equation*} \Amax{2^kr}{2^{k+1}r}\leq (1+\varepsilon)\Amin{x,2^kr}{x,2^{k+1}r}.\end{equation*} By induction, it follows that $(1+\varepsilon)^{-k}\Anorm{x,r}\leq \Anorm{x,2^kr}\leq (1+\varepsilon)^k \Anorm{x,r}$ for all integers $k\geq 0$. We now estimate $\Adiff{x,r}{x,2^kr}$. Since this expression vanishes trivially when $k=0$, we may assume that $k\geq 1$. Expanding the difference as a telescoping sum yields \begin{equation}\begin{split} \label{e:pre2}\Adiff{x,r}{x,2^kr}
 \leq \sum_{j=0}^{k-1} \Adiff{x,2^jr}{x,2^{j+1}r}
 \leq \sum_{j=0}^{k-1} \varepsilon(1+\varepsilon)^j\Anorm{x,r}\\
 = \varepsilon\left(\frac{(1+\varepsilon)^k-1}{(1+\varepsilon)-1}\right)\Anorm{x,r}
 = \left((1+\varepsilon)^k-1\right)\Anorm{x,r}.
\end{split}\end{equation} Similarly, telescoping in the other direction, \begin{equation}\begin{split} \label{e:pre3} \Adiff{x,2^kr}{x,r}
 \leq \sum_{l=0}^{k-1} \Adiff{x,2^{k-l}r}{x,2^{k-l-1}r}
 \leq \sum_{l=0}^{k-1} \varepsilon(1+\varepsilon)^l\Anorm{x,2^kr}\\
 = \varepsilon\left(\frac{(1+\varepsilon)^k-1}{(1+\varepsilon)-1}\right)\Anorm{x,2^kr}
 =\left((1+\varepsilon)^k-1\right)\Anorm{x,2^kr}.
\end{split}\end{equation} Therefore, \eqref{e:pre1} holds by \eqref{e:pre2} or \eqref{e:pre3}, according to whether $\Anorm{x,r}$ or $\Anorm{x,2^kr}$ is smaller, respectively.

We now aim to prove \eqref{e:pre-a}. Fix $x,y\in E$ and $r,s>0$. Without loss of generality, we assume that $r\leq s$. Define $k\geq 0$ to be the unique integer such that $2^k r\leq s<2^{k+1}r$, and let $l\geq 0$ be the smallest nonnegative integer such that $|x-y|< 2^ls$. By two applications of \eqref{e:pre1}:
\begin{equation*} \Adiff{x,r}{x,2^{k+l}r}\leq \left((1+\varepsilon)^{k+l}-1\right) \Amin{x,r}{x,2^{k+l}r}\end{equation*} and
\begin{equation*} \Adiff{y,s}{y,2^ls} \leq \left((1+\varepsilon)^{l}-1\right) \Amin{y,s}{y,2^ls}.\end{equation*} Also, since $\mathcal{A}$ is $\varepsilon$-compatible, $|x-y|<2^l s=\max\{2^{k+l}r,2^ls\}$ and $\frac12\leq (2^{k+l}r)/(2^ls)<1$, \begin{equation*} \Adiff{x,2^{k+l}r}{y,2^ls}\leq \varepsilon\min\{\|A'_{x,2^{k+l}r}\|, \|A'_{y,2^ls}\|\}.\end{equation*} By the triangle inequality, it follows that \begin{equation*}\Amax{x,2^{k+l}r}{y,2^ls}\leq (1+\varepsilon)\Amin{x,2^{k+l}r}{y,2^ls}.\end{equation*}
Combining the previous four displayed equations yields \begin{equation*} \Adiff{x,r}{y,s} \leq \left[(2+\varepsilon) \left((1+\varepsilon)^{k+l}-1\right) +\varepsilon\right] \Amin{x,2^{k+l}r}{y,2^ls}.\end{equation*} Next, by (\ref{e:pre1}) and the triangle inequality, we have $\Anorm{x,2^{k+l}r}\leq (1+\varepsilon)^{k+l}\Anorm{x,r}$ and $\Anorm{y,2^ls}\leq (1+\varepsilon)^{l}\Anorm{y,s}\leq (1+\varepsilon)^{k+l}\Anorm{y,s}$. Hence
\begin{equation*} \Adiff{x,r}{y,s}\leq(1+\varepsilon)^{k+l}\left[(2+\varepsilon)\left((1+\varepsilon)^{k+l}-1\right)+\varepsilon\right]\min\{\|A_{x,r}'\|,\|A_{y,s}'\|\}.
\end{equation*} Thus, invoking the mean value theorem (for the function $t\mapsto t^{k+l}$ between $t=1$ and $t=1+\varepsilon$) and noting that $(2+\varepsilon)/(1+\varepsilon)\leq 2$ for all $\varepsilon>0$, we conclude that
\begin{align*}\Adiff{x,r}{y,s}
&\leq (1+\varepsilon)^{k+l} \left[ (2+\varepsilon)\varepsilon(k+l)(1+\varepsilon)^{k+l-1} + \varepsilon \right] \Amin{x,r}{y,s}\\
&\leq (1+\varepsilon)^{2(k+l)} \left[2(k+l)+1\right] \varepsilon\Amin{x,r}{y,s} .\end{align*} Examining the definitions of $k$ and $l$, we see that $k\leq \log_2 (s/r)$, $l=0$ if $|x-y|<s$, and $l\leq \log_2 (2|x-y|/s)$ if $|x-y|\geq s$. Either way, $k+l \leq \log_2(\max\{s,2|x-y|\}/r)=:\log_2(\tau)$ and $(1+\varepsilon)^{2(k+l)} \leq (1+\varepsilon)^{2\log_2(\tau)} = \tau^{2\log_2(1+\varepsilon)}$. This establishes \eqref{e:pre-a} and \eqref{e:pre-b} follows from the triangle inequality

To finish, suppose that $\varepsilon\leq a$ and $\tau\leq a$ for some $a\geq 1$. Then, by \eqref{e:pre-a}, \begin{equation*} \Adiff{x,r}{y,s}\leq \left(2\log_2(a)+1\right)a^{2\log_2(1+a)}\varepsilon\Amin{x,r}{y,s}.\end{equation*} This establishes \eqref{e:pre-c} and \eqref{e:pre-d} follows from the triangle inequality.\end{proof}

\begin{lemma}[Estimate for affine maps approximating an almost affine map] Let $(f,E,\mathcal{A})$ be $\varepsilon$-almost affine for some $E\subset\RR^n$ and $\varepsilon>0$ (see \S\ref{ss:almost}). Let $x,y\in E$, let $r,s>0$ and let $a\geq 1$.  If $\varepsilon\leq a$, $|x-y|\leq a \max\{r,s\}$ and $\dist(z,\{x,y\})\leq a \max\{r,s\}$ for some $z\in\RR^n$,  then  \label{l:post-est} \begin{equation}\label{e:post-a}|A_{x,r}(z)-A_{y,s}(z)| \lesssim_a T_\varepsilon(\tau)\,\varepsilon \Amin{x,r}{y,s}\max\{r,s\},\end{equation} where $\tau=\tau(x,r,y,s)$. In particular, if in addition $\tau\leq a$, then \begin{equation}\label{e:post-b}|A_{x,r}(z)-A_{y,s}(z)| \lesssim_a \varepsilon \Amin{x,r}{y,s}\max\{r,s\},\end{equation}
\end{lemma}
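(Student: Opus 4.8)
The plan is to fix the common reference point $x\in E$ (it is the center of $A_{x,r}$, hence lies in $E$), write
\[
A_{x,r}(z)-A_{y,s}(z)=\bigl(A_{x,r}(x)-A_{y,s}(x)\bigr)+\bigl(A'_{x,r}-A'_{y,s}\bigr)(z-x),
\]
and bound the two summands separately. Assume without loss of generality that $r\le s$, so that $\max\{r,s\}=s$; since $\dist(z,\{x,y\})\le as$ and $|x-y|\le as$, the triangle inequality gives $|z-x|\le 2as$. The ``linear'' summand is then immediate from Lemma~\ref{l:main-est}: by \eqref{e:pre-a}, $\Adiff{x,r}{y,s}\le T_\varepsilon(\tau)\,\varepsilon\,\Amin{x,r}{y,s}$, whence $\bigl|(A'_{x,r}-A'_{y,s})(z-x)\bigr|\le 2a\,T_\varepsilon(\tau)\,\varepsilon\,\Amin{x,r}{y,s}\,s$.

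The ``translation'' summand $|A_{x,r}(x)-A_{y,s}(x)|$ is the main obstacle. The $\varepsilon$-almost affine estimate for $A_{y,s}$ only controls $|f(w)-A_{y,s}(w)|$ for $w\in E\cap B^n(y,s)$, and when $a$ is large the point $x$ need not lie in $B^n(y,s)$, so $A_{y,s}(x)$ cannot be compared with $f(x)$ directly. To get around this I will interpose $A_{y,2^{\ell}s}$, where $\ell\ge 0$ is the smallest integer with $|x-y|<2^{\ell}s$; minimality together with $|x-y|\le as$ forces $2^{\ell}\le 2a$, so $\ell$ is controlled by $a$ and $x\in E\cap B^n(y,2^{\ell}s)$. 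Then
\[
|A_{x,r}(x)-A_{y,s}(x)|\le |A_{x,r}(x)-A_{y,2^{\ell}s}(x)|+|A_{y,2^{\ell}s}(x)-A_{y,s}(x)|.
\]
The first term is $\le \varepsilon\|A'_{x,r}\|r+\varepsilon\|A'_{y,2^{\ell}s}\|\,2^{\ell}s$, by the $\varepsilon$-almost affine estimates for $A_{x,r}$ and $A_{y,2^{\ell}s}$ evaluated at $x\in E$. For the second term I split off the center $y$ and write it as $\bigl(A_{y,2^{\ell}s}(y)-A_{y,s}(y)\bigr)+\bigl(A'_{y,2^{\ell}s}-A'_{y,s}\bigr)(x-y)$; the first piece is $\le \varepsilon\|A'_{y,2^{\ell}s}\|\,2^{\ell}s+\varepsilon\|A'_{y,s}\|s$ (almost affine estimates at $y$), and the second is bounded using $|x-y|<2^{\ell}s\le 2as$ and \eqref{e:pre-a} for the pair $(y,s)$, $(y,2^{\ell}s)$. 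Because $\tau(y,s,y,2^{\ell}s)=2^{\ell}\le 2a$, both \eqref{e:pre-a} and \eqref{e:pre-b} for this pair are usable and give $\|A'_{y,2^{\ell}s}-A'_{y,s}\|\lesssim_a\varepsilon\|A'_{y,s}\|$ and $\|A'_{y,2^{\ell}s}\|\lesssim_a\|A'_{y,s}\|$. Substituting these in, and using $r\le s$, every term is $\lesssim_a\varepsilon\,s\,\Amax{x,r}{y,s}$, so $|A_{x,r}(x)-A_{y,s}(x)|\lesssim_a\varepsilon\,s\,\Amax{x,r}{y,s}$.

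It remains to convert $\Amax{x,r}{y,s}$ back to $\Amin{x,r}{y,s}$: by \eqref{e:pre-b}, $\Amax{x,r}{y,s}\le(1+T_\varepsilon(\tau)\varepsilon)\Amin{x,r}{y,s}$, and since $T_\varepsilon(\tau)\ge 1$ and $\varepsilon\le a$ one has $\varepsilon(1+T_\varepsilon(\tau)\varepsilon)\le(1+a)\,\varepsilon\,T_\varepsilon(\tau)$; hence $|A_{x,r}(x)-A_{y,s}(x)|\lesssim_a T_\varepsilon(\tau)\,\varepsilon\,\Amin{x,r}{y,s}\,s$. Adding the linear summand from the first step yields \eqref{e:post-a}. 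Finally, if in addition $\tau\le a$, then by monotonicity of $T_\varepsilon(t)$ in both variables $T_\varepsilon(\tau)\le T_a(a)$, a constant depending only on $a$, so the factor $T_\varepsilon(\tau)$ is absorbed into $\lesssim_a$ and \eqref{e:post-b} follows. The only delicate point is the insertion of $A_{y,2^{\ell}s}$ and the bookkeeping that the dilation factor $2^{\ell}$ — equivalently, the number $\ell$ of telescoping steps needed to enlarge $B^n(y,s)$ until it swallows $x$ — stays bounded in terms of $a$; the remainder is repeated application of Lemma~\ref{l:main-est} and the triangle inequality.
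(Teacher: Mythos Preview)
Your proof is correct and follows essentially the same route as the paper's. Both arguments hinge on interposing an auxiliary affine map $A_{y,\cdot}$ at a scale comparable to $as$ so that its ball contains $x$ and the $\varepsilon$-almost affine estimate applies there; the paper uses $A_{y,as}$ while you use $A_{y,2^{\ell}s}$ with $2^{\ell}\le 2a$, and the paper splits through $A_{y,as}(z)$ at the point $z$ rather than separating linear and translation parts at $x$ as you do, but these are cosmetic differences and the remaining bookkeeping with Lemma~\ref{l:main-est} is identical.
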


\begin{proof} Let $E\subset\RR^n$, let $\varepsilon>0$, and let $(f,E,\mathcal{A})$ be $\varepsilon$-almost affine. Let $x,y\in E$ and $r,s>0$. Without loss of generality, assume that $r\leq s$. Let $a\geq 1$ and $z\in \RR^n$ be given, and assume that $\varepsilon\leq a$, $|x-y|\leq as$, and $\dist(z,\{x,y\})\leq as$. By the triangle inequality, \begin{equation*}|A_{x,r}(z)-A_{y,s}(z)|\leq |A_{x,r}(z)-A_{y,as}(z)| + |A_{y,as}(z)-A_{y,s}(z)|.\end{equation*} We estimate the two terms separately. First, expanding $A_{y,as}(z)=A'_{y,as}(z-y)+A_{y,as}(y)$ and $A_{y,s}(z)=A'_{y,s}(z-y)+A_{y,s}(y)$, we obtain \begin{align*} |A_{y,as}(z) - A_{y,s}(z)| &\leq |A'_{y,as}(z-y)-A'_{y,s}(z-y)|+|A_{y,as}(y)-f(y)|+|f(y)-A_{y,s}(y)| \\
&\leq \Adiff{y,as}{y,s} |z-y| + \varepsilon\Anorm{y,as} as + \varepsilon\Anorm{y,s} s\\
&\lesssim_a \left(\Adiff{y,as}{y,s}+\varepsilon\Anorm{y,as}+\varepsilon\Anorm{y,s}\right)s,\end{align*} since $y\in B^n(y,as)\cap B^n(y,s)$ and $(f,E,\mathcal{A})$ is $\varepsilon$-almost affine. But $\Anorm{y,as}\sim_a \Anorm{y,s}$  and $\Adiff{y,as}{y,s}\lesssim_a \varepsilon\Anorm{y,s}$  by \eqref{e:pre-c} and \eqref{e:pre-d}, since $\varepsilon\leq a$ and $\tau(y,as,y,s)=a$. Hence \begin{equation}\label{e:post1}|A_{y,as}(z)-A_{y,s}(z)| \lesssim_a  \varepsilon \Anorm{y,s}s. \end{equation}
Similarly, expanding $A_{x,r}(z)=A'_{x,r}(z-x)+A_{x,r}(x)$ and $A_{y,as}(z)=A'_{y,as}(z-x)+A_{y,as}(x)$, \begin{align*}
|A_{x,r}(z)-A_{y,as}(z)|&\leq |A'_{x,r}(z-x)-A'_{y,as}(z-x)| + |A_{x,r}(x)-f(x)|+|f(x)-A_{y,as}(x)|\\
&\leq \Adiff{x,r}{y,as}|z-x| + \varepsilon\Anorm{x,r} r + \varepsilon\Anorm{y,as}as\\
&\lesssim_a \left(\Adiff{x,r}{y,as}+\varepsilon\Anorm{x,r}+\varepsilon\Anorm{y,as}\right)s,
\end{align*} because $x\in B^n(x,r)\cap B^n(y,as)$ and $(f,E,\mathcal{A})$ is $\varepsilon$-almost affine. By the triangle inequality and the estimates for $\Adiff{y,as}{y,s}$, $\Anorm{y,as}$ and $\Anorm{y,s}$ from above, it follows that \begin{equation} \label{e:post2} |A_{x,r}(z)-A_{y,as}(z)| \lesssim_a \left(\Adiff{x,r}{y,s}+\varepsilon\Anorm{x,r}+\varepsilon\Anorm{y,s}\right)s.\end{equation}
Now, by Lemma \ref{l:main-est} (\ref{e:pre-a}) and (\ref{e:pre-b}), writing $T:=T_\varepsilon(\tau)$, $\tau=\tau(x,r,y,s)$ we have \begin{equation}\label{e:post3}\|A'_{x,r}-A'_{y,s}\|\leq T\varepsilon\Amin{x,r}{y,s}\end{equation} and  \begin{equation} \label{e:post4} \Amax{x,r}{y,s}\leq (1+T\varepsilon) \ \Amin{x,r}{y,s}\lesssim_a T\Amin{x,r}{y,s},\end{equation} because $\varepsilon\leq a$ and $1\leq T$. Thus, put together, (\ref{e:post2}), (\ref{e:post3}), and (\ref{e:post4}) give \begin{equation}\label{e:post5}
|A_{x,r}(z)-A_{y,as}(z)| \lesssim_a T\varepsilon\Amin{x,r}{y,s}s.\end{equation} Therefore, combining (\ref{e:post1}), (\ref{e:post4}) and (\ref{e:post5}), we obtain \eqref{e:post-a}. If it also happens that $\tau\leq a$, then $T\lesssim_a 1$ and \eqref{e:post-b}  follows immediately from \eqref{e:post-a}.\end{proof}

\begin{lemma}[H\"older continuity] \label{l:Holder} There exists an absolute constant $\hat\varepsilon>0$ such that if $(f,E,\mathcal{A})$ is $\varepsilon$-almost affine for some $\varepsilon<\hat\varepsilon$, then $f|_E$ is locally $\alpha$-H\"older continuous for $\alpha=\alpha(\varepsilon)<1$ such that $\alpha\uparrow 1$ as $\varepsilon\downarrow 0$. More precisely, if $\theta=1-2\log_2(1+\varepsilon)\in(0,1)$, then \begin{equation} \label{e:hexp} |f(x)-f(y)| \leq \frac{4}{\theta\log(2)} \left(\frac{|x-y|}{r_0}\right)^{(1-\varepsilon)\theta}\Anorm{x_0,r_0}r_0\quad\text{for all }x,y\in E\cap B^n(x_0,r_0/2)\end{equation} for all $x_0\in E$ and $r_0>0$.\end{lemma}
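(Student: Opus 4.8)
The plan is to prove \eqref{e:hexp} by a telescoping-dyadic argument, comparing $f$ to the single affine map $A_{x_0,r_0}$ along a chain of balls shrinking toward a point. The starting observation is that for $x \in E \cap B^n(x_0, r_0/2)$, Lemma \ref{l:post-est} controls $|A_{x,r}(x) - A_{x_0,r_0}(x)|$ — more precisely $|f(x) - A_{x_0,r_0}(x)|$ — but only in terms of $\varepsilon\|A'_{x_0,r_0}\| r_0$, which is fine for a crude bound but not sharp enough to see the H\"older decay. To get the exponent, I would instead estimate $|f(x) - f(y)|$ for $x, y \in E$ directly. Write $t := |x-y|$, choose $k \ge 0$ with $2^{-(k+1)}r_0 < 2t \le 2^{-k}r_0$ (so that $x, y$ both lie in $B^n(x, 2^{-k}r_0)$ and this is roughly the smallest such dyadic ball), and telescope: compare $A_{x_0,r_0}$ at scale $r_0$ down to $A_{x, 2^{-k}r_0}$ at scale $2^{-k}r_0$ through the chain $A_{x,2^{-j}r_0}$, $j = 0, 1, \dots, k$. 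Each consecutive comparison $|A_{x, 2^{-j}r_0}(z) - A_{x, 2^{-(j+1)}r_0}(z)|$ for $z \in \{x,y\}$ is handled by $\varepsilon$-compatibility together with the near-constancy of the operator norms: from the proof of Lemma \ref{l:main-est} we have $\|A'_{x, 2^{-j}r_0}\| \le (1+\varepsilon)^j \|A'_{x_0, r_0}\|$ (after first passing from $x_0$ to $x$ at the top scale, which costs a factor bounded by \eqref{e:pre-b}), and the $\varepsilon$-almost affine condition bounds the displacement at $z$ by $\lesssim \varepsilon \|A'_{x,2^{-j}r_0}\| \cdot 2^{-j}r_0$.

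The key computation is then the geometric sum
\begin{equation*}
|f(x) - f(y)| \lesssim \sum_{j=0}^{k} \varepsilon (1+\varepsilon)^j 2^{-j} r_0 \, \|A'_{x_0,r_0}\| + (\text{two endpoint terms at scale } 2^{-k}r_0),
\end{equation*}
where the endpoint terms — the displacement of $f$ from $A_{x, 2^{-k}r_0}$ at $x$ and at $y$ — are each $\le \varepsilon \|A'_{x, 2^{-k}r_0}\| 2^{-k}r_0 \lesssim \varepsilon (1+\varepsilon)^k 2^{-k} r_0 \|A'_{x_0,r_0}\|$, plus a term $\|A'_{x,2^{-k}r_0}\| \cdot |x-y| \lesssim (1+\varepsilon)^k 2^{-k}r_0 \|A'_{x_0,r_0}\|$ coming from $|A'_{x,2^{-k}r_0}(x-y)|$. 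Setting $q := (1+\varepsilon)/2 < 1$ (valid once $\varepsilon < \hat\varepsilon$, say $\hat\varepsilon$ small enough that $1+\hat\varepsilon < 2$), the sum is $\lesssim \varepsilon r_0 \|A'_{x_0,r_0}\| \sum_{j\ge0} q^j = \varepsilon r_0 \|A'_{x_0,r_0}\|/(1-q)$, which is a fixed multiple of $r_0 \|A'_{x_0,r_0}\|$ but does not yet display the decay in $|x-y|$. To extract the decay I would not sum to infinity but keep the dependence on $k$: $(1+\varepsilon)^k 2^{-k} = 2^{-k(1 - \log_2(1+\varepsilon))} = 2^{-k\theta}$ with $\theta = 1 - 2\log_2(1+\varepsilon)$ (the factor $2$ in the exponent in the statement comes from tracking $\|A'\|$ and the geometric weight together rather than using the crisp cancellation — I expect the bookkeeping here to match the stated $(1-\varepsilon)\theta$ after using $(1+\varepsilon)^{2\log_2 t} = t^{2\log_2(1+\varepsilon)}$-type estimates as in $T_\varepsilon$). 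Since $2^{-k} \sim |x-y|/r_0$, one gets $(1+\varepsilon)^k 2^{-k} \sim (|x-y|/r_0)^\theta$, up to the harmless correction producing the exponent $(1-\varepsilon)\theta$; the prefactor $\frac{4}{\theta\log 2}$ arises from $\sum_j q^j = 1/(1-q)$ and the identity $1-q = 1 - (1+\varepsilon)/2$ expressed via $\log 2$ and $\theta$.

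The main obstacle is the precise constant- and exponent-tracking: one must show that the telescoping tail, weighted by $2^{-j\theta}$ rather than $2^{-j}$, still sums to the clean constant $\frac{4}{\theta \log 2}$ and yields precisely the exponent $(1-\varepsilon)\theta$ rather than merely $\theta - C\varepsilon$. This is where the function $T_\varepsilon$ from Lemma \ref{l:main-est} and the mean-value-theorem estimate $(1+\varepsilon)^m - 1 \le m\varepsilon(1+\varepsilon)^{m-1}$ must be invoked carefully, and where choosing the stopping index $k$ so that $2t \le 2^{-k}r_0 < 4t$ (rather than some other dyadic convention) matters for getting the factor $4$ in the numerator. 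Once the dyadic scheme and the choice $\hat\varepsilon$ ensuring $q < 1$ and $\theta \in (0,1)$ are fixed, the remaining estimates are routine applications of $\varepsilon$-compatibility, the $\varepsilon$-almost affine inequality, and the triangle inequality, exactly as in the proofs of Lemmas \ref{l:main-est} and \ref{l:post-est}. Finally, local H\"older continuity with $\alpha = (1-\varepsilon)\theta \uparrow 1$ as $\varepsilon \downarrow 0$ is immediate from \eqref{e:hexp} since $\theta \uparrow 1$ as $\varepsilon \downarrow 0$.
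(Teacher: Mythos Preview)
Your approach is correct and, at its core, the same as the paper's --- both hinge on bounding $|f(x)-f(y)|$ by $\|A'_{x,r}\|\,r$ at the scale $r=|x-y|$ and then comparing $\|A'_{x,r}\|$ to $\|A'_{x_0,r_0}\|$ via the $\varepsilon$-compatibility telescoping. The difference is that you unpack the telescoping by hand, whereas the paper simply writes
\[
|f(x)-f(y)| \le |A_{x,r}(x)-A_{x,r}(y)| + |f(x)-A_{x,r}(x)| + |f(y)-A_{x,r}(y)| \le (1+2\varepsilon)\|A'_{x,r}\|r
\]
and then invokes \eqref{e:pre-b} from Lemma~\ref{l:main-est} (whose proof already contains your dyadic telescoping) to bound $\|A'_{x,r}\|$. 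This makes the paper's argument a three-line application of prior work rather than a fresh geometric sum.

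The one place where you are genuinely vague --- and where the paper has a concrete idea you are missing --- is the conversion of the $T_\varepsilon$-type factor $(2\log_2(r_0/r)+1)(r_0/r)^{2\log_2(1+\varepsilon)}$ into the clean power $(r_0/r)^{1-(1-\varepsilon)\theta}$. The paper does this in one stroke with the elementary inequality
\[
\log_2(t) \le \frac{t^{\delta}-1}{\delta\log 2}\qquad (t\ge 1,\ \delta>0),
\]
applied with $\delta=\varepsilon\theta$; this is what simultaneously absorbs the logarithm, produces the exponent $(1-\varepsilon)\theta$, and yields the constant $4/(\theta\log 2)$. Your proposal correctly anticipates that some such bookkeeping is needed but does not identify this step; once you supply it, your argument and the paper's coincide.
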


\begin{proof} Set $\hat\varepsilon=\sqrt{2}-1$. Let $\varepsilon<\hat\varepsilon$, so that $1-2\log_2(1+\varepsilon)=:\theta>0$. Suppose that $(f,E,\mathcal{A})$ is $\varepsilon$-almost affine. Let $x_0\in E$ and $r_0>0$ be given. Fix $x,y\in E\cap B^n(x_0,r_0/2)$ so that $|x-y|=r\leq r_0$. On one hand, since $(f,E,\mathcal{A})$ is $\varepsilon$-almost affine,   \begin{equation*}\begin{split} |f(x)-f(y)| &\leq |A_{x,r}(x)-A_{x,r}(y)| +|f(x)-A_{x,r}(x)|+|f(y)-A_{x,r}(y)|\\
 &\leq (1+2\varepsilon)\Anorm{x,r}r \leq 2\Anorm{x,r}r.\end{split}\end{equation*} On the other hand, since $\tau(x,r,x_0,r_0) \leq r_0/r$, by Lemma \ref{l:main-est} (\ref{e:pre-b}), \begin{align*} \Anorm{x,r} &\leq \Anorm{x_0,r_0}+\left(2\log_2\left(\frac{r_0}{r}\right)+1\right) \left(\frac{r_0}{r}\right)^{2\log_2(1+\varepsilon)}\varepsilon\Anorm{x_0,r_0}\\ &\leq \Anorm{x_0,r_0}+ \left(\frac{2}{\varepsilon\theta\log(2)}\left(\left(\frac{r_0}{r}\right)^{\varepsilon\theta}-1\right)+1\right)\left(\frac{r_0}{r}\right)^{2\log_2(1+\varepsilon)}\varepsilon\Anorm{x_0,r_0}\\
 &\leq \frac{2}{\theta\log(2)}\left(\frac{r_0}{r}\right)^{2\log_2(1+\varepsilon)+\varepsilon\theta} \Anorm{x_0,r_0}= \frac{2}{\theta\log(2)}\left(\frac{r}{r_0}\right)^{(1-\varepsilon)\theta} \Anorm{x_0,r_0}\left(\frac{r_0}{r}\right), \end{align*}
where to pass between the first and second lines we used the inequality \begin{equation*} \log_2(t)\leq \frac{t^\delta-1}{\delta \log(2)}\quad \text{for all }t\geq 1\text{ and }\delta> 0.\end{equation*} (That is, $\log(t)\leq t-1$ for all $t\geq 1$.) Combining the displayed equations immediately gives (\ref{e:hexp}). Therefore, the map $f|_E$ is locally $\alpha$-H\"older continuous, where $\alpha = (1-\varepsilon)\theta<1$ only depends $\varepsilon$. Lastly, note that $(1-\varepsilon)\theta\uparrow 1$ as $\varepsilon\downarrow 0$. \end{proof}

\begin{lemma}\label{l:inradius} Let $(f,B^n(x,r),\mathcal{A})$ be $\varepsilon$-almost affine for some $x\in\RR^n$ and $r>0$. If $\lambda_n(A'_{x,r})\leq H\lambda_1(A'_{x,r})$ and $H(t+2\varepsilon)\leq 1$, then \begin{equation} \label{e:ird}\Anorm{x,r}{r}\leq \diam f(B^n(x,r)) \leq 3\Anorm{x,r}r,\end{equation}
 \begin{equation} \label{e:irir}t\Anorm{x,r}r\leq |f(x)-f(y)|\leq2\Anorm{x,r}r \quad\text{for all } y\in\partial B^n(x,r),\end{equation} and
\begin{equation}\label{e:irt}\theta_{f(B^n(x,r))}\left(f(x),\frac{1}{3H}\diam f(B(x,r))\right)\leq 6\varepsilon H.\end{equation}
\end{lemma}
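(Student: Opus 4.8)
\medskip

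\noindent\textbf{Proof proposal.}
The plan is to abbreviate $A:=A_{x,r}$ and $\lambda_i:=\lambda_i(A'_{x,r})$, and to extract from the hypotheses the two facts used throughout: since $\lambda_n(A'_{x,r})=\Anorm{x,r}$ and $\lambda_n\leq H\lambda_1$, we have $\lambda_1\geq \Anorm{x,r}/H$; and since $H(t+2\varepsilon)\leq 1$ with $H\geq 1$ (and $t\geq 0$, otherwise the lower bound in \eqref{e:irir} is vacuous), we have $\varepsilon\leq 1/(2H)\leq 1/2$. We may also assume $\Anorm{x,r}>0$, since otherwise $f$ coincides on $B^n(x,r)$ with the constant map $A$ and all three conclusions are trivial. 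The only input about $f$ that enters is the defining inequality $|f(z)-A(z)|\leq \varepsilon\Anorm{x,r}r$ for all $z\in B^n(x,r)$, together with the identity $A(z)-A(w)=A'_{x,r}(z-w)$, which gives $\lambda_1|z-w|\leq |A(z)-A(w)|\leq \Anorm{x,r}|z-w|$.

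For \eqref{e:ird}, the upper bound follows from $|f(z)-f(w)|\leq |f(z)-A(z)|+\Anorm{x,r}|z-w|+|A(w)-f(w)|\leq(2\varepsilon+2)\Anorm{x,r}r\leq 3\Anorm{x,r}r$ for $z,w\in B^n(x,r)$; for the lower bound I would fix a unit vector $e\in\RR^n$ with $|A'_{x,r}e|=\Anorm{x,r}$ and compare $f(x\pm re)$ with $A(x\pm re)$, obtaining $\diam f(B^n(x,r))\geq 2(1-\varepsilon)\Anorm{x,r}r\geq \Anorm{x,r}r$. Estimate \eqref{e:irir} is the same triangle inequality applied to the pair $f(x),f(y)$ with $|y-x|=r$: the upper bound reads $|f(x)-f(y)|\leq(1+2\varepsilon)\Anorm{x,r}r\leq 2\Anorm{x,r}r$, while the lower bound reads $|f(x)-f(y)|\geq \lambda_1 r-2\varepsilon\Anorm{x,r}r\geq (1/H-2\varepsilon)\Anorm{x,r}r\geq t\Anorm{x,r}r$, the last step being exactly the hypothesis $H(t+2\varepsilon)\leq 1$.

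For \eqref{e:irt}, set $\rho:=\tfrac1{3H}\diam f(B^n(x,r))$ and use as competitor the $n$-plane $W:=f(x)+A'_{x,r}(\RR^n)\in\mathcal{G}(f(x))$, i.e.~the plane through $f(x)$ parallel to $A(\RR^n)$. Since $f(x)+A'_{x,r}(z-x)=f(x)+A(z)-A(x)\in W$ for every $z\in B^n(x,r)$, one has $\dist(f(z),W)\leq |f(z)-A(z)|+|A(x)-f(x)|\leq 2\varepsilon\Anorm{x,r}r$, which controls the ``one-sided'' supremum in the definition of $\theta$. For the reverse supremum, take $w\in W\cap B^N(f(x),\rho)$ and write $w=f(x)+A'_{x,r}(u)$; then $\lambda_1|u|\leq |A'_{x,r}(u)|\leq \rho$, and combining $\lambda_1\geq \Anorm{x,r}/H$ with $\rho\leq \tfrac1{3H}\cdot 3\Anorm{x,r}r=\Anorm{x,r}r/H$ (from \eqref{e:ird}) yields $|u|\leq r$. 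Hence $x+u\in B^n(x,r)$ and $|f(x+u)-w|\leq |f(x+u)-A(x+u)|+|A(x)-f(x)|\leq 2\varepsilon\Anorm{x,r}r$, so $\dist(w,f(B^n(x,r)))\leq 2\varepsilon\Anorm{x,r}r$. Therefore $\theta_{f(B^n(x,r))}(f(x),\rho)\leq \rho^{-1}\cdot 2\varepsilon\Anorm{x,r}r$, and a final appeal to the lower bound $\rho\geq \tfrac1{3H}\Anorm{x,r}r$ from \eqref{e:ird} gives $\Anorm{x,r}r/\rho\leq 3H$, whence the asserted bound $6\varepsilon H$.

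There is no genuine obstacle; the one point requiring care is the containment $x+u\in B^n(x,r)$ in the proof of \eqref{e:irt}. This is exactly why \eqref{e:ird} must be proved first, and why the normalizing radius is taken proportional to $\diam f(B^n(x,r))/H$ rather than to $\Anorm{x,r}r$: this choice is what makes the ``flat side'' estimate and the preimage estimate close with a common constant.
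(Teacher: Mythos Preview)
Your proof is correct and follows essentially the same approach as the paper's: the same triangle-inequality comparisons between $f$ and $A_{x,r}$ for \eqref{e:ird} and \eqref{e:irir}, and the same competitor plane $W=f(x)+A'_{x,r}(\RR^n)$ for \eqref{e:irt}. The only cosmetic difference is that the paper first establishes the $\theta$ bound at the intermediate scale $s=H^{-1}\Anorm{x,r}r$ (obtaining $2\varepsilon H$) and then shrinks via \eqref{e:mono} and \eqref{e:ird} to the scale $\tfrac{1}{3H}\diam f(B^n(x,r))$, whereas you work directly at the target scale; both routes use the same estimate $|u|\leq r$ and arrive at the same constant $6\varepsilon H$.
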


\begin{proof}Fix a parameter $0<t<1$. Suppose $(f, B^n(x,r),\mathcal{A})$ is $\varepsilon$-almost affine for some $x\in\RR^n$, $r>0$ and $\varepsilon>0$. Furthermore, suppose $\lambda_n(A_{x,r}')\leq H \lambda_1(A_{x,r}')$ for some $1\leq H<\infty$ such that $H(t+2\varepsilon)\leq 1$. We will compare $f(B^n(x,r))\subset \RR^N$ with $A_{x,r}(B^n(x,r))\subset\RR^N$.

To start, observe that \begin{equation*} \diam A_{x,r}(B^n(x,r)) = 2\Anorm{x,r}r\end{equation*} and \begin{equation}\label{e:ir1} \inf_{|y-x|=r} |A_{x,r}(y)-A_{x,r}(x)| =\lambda_1(A'_{x,r})r\geq H^{-1} \Anorm{x,r}r,\end{equation} since $A_{x,r}$ is affine. Because $(f,B^n(x,r),\mathcal{A})$ is $\varepsilon$-almost affine, it follows that \begin{equation*}\begin{split} |f(y)-f(z)| &\leq |A_{x,r}(y)-A_{x,r}(z)|+|f(y)-A_{x,r}(y)|+|f(z)-A_{x,r}(z)|\\
&\leq (2+2\varepsilon)\Anorm{x,r}r \end{split}\end{equation*} for all $y,z\in B^n(x,r)$. Hence $\diam f(B^n(x,r))\leq (2+2\varepsilon)\Anorm{x,r}r$. Similarly, choosing $y_0,z_0\in B^n(x,r)$ such that $|A_{x,r}(y_0)-A_{x,r}(z_0)|=\diam A_{x,r}(B^n(x,r))$, we see \begin{equation*}\begin{split} |f(y_0)-f(z_0)| &\geq |A_{x,r}(y_0)-A_{x,r}(z_0)|-|f(y_0)-A_{x,r}(y_0)|-|f(z_0)-A_{x,r}(z_0)|\\
&\geq (2-2\varepsilon)\Anorm{x,r}r. \end{split}\end{equation*} Hence $\diam f(B^n(x,r))\geq (2-2\varepsilon)\Anorm{x,r}r$. A parallel argument gives, for any $|y-x|=r$, \begin{equation*}\begin{split} |f(y)-f(x)| &\leq |A_{x,r}(y)-A_{x,r}(x)|+|f(y)-A_{x,r}(y)|+|f(x)-A_{x,r}(x)|\\
&\leq (1+2\varepsilon)\Anorm{x,r}r \end{split}\end{equation*} and \begin{equation*}\begin{split} |f(y)-f(x)| &\geq |A_{x,r}(y)-A_{x,r}(x)|-|f(y)-A_{x,r}(y)|-|f(x)-A_{x,r}(x)|\\ &\geq (H^{-1}-2\varepsilon)\Anorm{x,r}r.\end{split}\end{equation*} The inequalities (\ref{e:ird}) and (\ref{e:irir}) now follow, since $2\varepsilon\leq 1$ and $H^{-1}-2\varepsilon\geq t$ by our assumption  that $H(2\varepsilon+t)\leq 1$.

To continue, we estimate the local flatness $\theta_{f(B^n(x,r))}(f(x),s)$ at scale $s=H^{-1}\Anorm{x,r}r$. Let $V$ be the $n$-dimensional hyperplane containing $f(x)$ given by \begin{equation*}V=f(x)-A_{x,r}(x)+A_{x,r}(\RR^n).\end{equation*} On one hand, if $w\in f(B^n(x,r))\cap B^N(f(x),s)$, say $w=f(z)$ for some $z\in B^n(x,r)$, then \begin{equation*}\begin{split} \dist(w,V) &\leq |f(x)-A_{x,r}(x)+A_{x,r}(z)-w|\\ &\leq |f(x)-A_{x,r}(x)|+|A_{x,r}(z)-f(z)|\leq 2\varepsilon \Anorm{x,r}r= 2\varepsilon Hs.\end{split}\end{equation*} On the other hand, suppose that $v\in V\cap B^N(f(x),s)$, say $v=f(x)-A_{x,r}(x)+A_{x,r}(z)$ for some $z\in \RR^n$. Since $|A_{x,r}(z)-A_{x,r}(x)|=|f(x)-v|\leq s= H^{-1}\Anorm{x,r}r$, we know that $z\in B^n(x,r)$ by \eqref{e:ir1}. Thus \begin{equation*}\begin{split} \dist(v,f(B^n(x,r))) \leq |v-f(z)| &\leq |f(x)-A_{x,r}(x)|+|A_{x,r}(z)-f(z)| \\ &\leq 2\varepsilon \Anorm{x,r}r=2\varepsilon Hs.\end{split}\end{equation*} We conclude that $\theta_{f(B^n(x,r))}(f(x),s)\leq 2\varepsilon H$.
Finally, shrinking scales using (\ref{e:mono}) and (\ref{e:ird}) yields (\ref{e:irt}).
\end{proof}

\begin{definition}\label{d:stable} Let $E\subset\RR^n$ be bounded. A family $\mathcal{A}$ of affine maps over $E$ is \emph{stable on large scales} if there exists $x_*\in E$ such that $A_{x,r}=A_{x_*,\diam E}$ for all $x\in E$ and for all $r>\diam E$.\end{definition}

\begin{lemma} \label{l:stable} Let $E\subset X\subset \RR^n$ with $E$ bounded. If $f:X\rightarrow\RR^n$ is $\varepsilon$-almost affine over $E$ for some $\varepsilon>0$, then $(f,E,\mathcal{A})$ is $\varepsilon$-almost affine for some $\mathcal{A}$ that is stable at large scales. In fact, given any $\mathcal{B}$ such that $(f,E,\mathcal{B})$ is $\varepsilon$-almost affine and any $x_*\in E$, $(f,E,\mathcal{A})$ is $\varepsilon$-almost affine for the family $\mathcal{A}$ defined by \begin{equation} \label{e:stable1} A_{x,r}=\left\{\begin{array}{cl} B_{x,r} & \text{if\ \ } 0<r\leq \diam E,\\ B_{x_*, \diam E} & \text{if\ \  } r>\diam E,\end{array}\right.\quad\text{for all }x\in E\text{ and } r>0.\end{equation}
\end{lemma}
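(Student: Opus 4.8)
The plan is to verify directly that the explicit family $\mathcal{A}$ defined in \eqref{e:stable1} is $\varepsilon$-compatible and that $f$ is $\varepsilon$-almost affine over $E$ with respect to $\mathcal{A}$; stability on large scales will then be immediate from the defining formula. We may assume $d:=\diam E>0$, since if $E$ contains at most one point there is nothing to prove. Note first that, because $x_*\in E$ and $\diam E=d$, every point of $E$ lies in the closed ball $B^n(x_*,d)$, so $E\cap B^n(x_*,d)=E$.

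\emph{Approximation.} Fix $x\in E$ and $r>0$. If $0<r\le d$, then $A_{x,r}=B_{x,r}$, and the bound $\sup_{z\in E\cap B^n(x,r)}|f(z)-A_{x,r}(z)|\le\varepsilon\|A_{x,r}'\|r$ is precisely the hypothesis that $(f,E,\mathcal{B})$ is $\varepsilon$-almost affine at the index $(x,r)$. If $r>d$, then $A_{x,r}=B_{x_*,d}$, and since $E\cap B^n(x,r)\subseteq E=E\cap B^n(x_*,d)$ and $d<r$,
\[\sup_{z\in E\cap B^n(x,r)}|f(z)-A_{x,r}(z)|\le\sup_{z\in E\cap B^n(x_*,d)}|f(z)-B_{x_*,d}(z)|\le\varepsilon\|B_{x_*,d}'\|d\le\varepsilon\|A_{x,r}'\|r,\]
where the middle inequality uses the $\varepsilon$-almost affineness of $(f,E,\mathcal{B})$ at the index $(x_*,d)$. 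Hence the approximation property holds.

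\emph{Compatibility.} Fix $x,y\in E$ and $r,s>0$ with $|x-y|\le\max\{r,s\}$ and $1/2\le r/s\le 2$. If $r,s\le d$, then $A_{x,r}=B_{x,r}$ and $A_{y,s}=B_{y,s}$, so $\|A_{x,r}'-A_{y,s}'\|\le\varepsilon\min\{\|A_{x,r}'\|,\|A_{y,s}'\|\}$ is the $\varepsilon$-compatibility of $\mathcal{B}$ at these indices. If $r,s>d$, then $A_{x,r}=A_{y,s}=B_{x_*,d}$ and the left-hand side vanishes. In the remaining case we may assume, after swapping $(x,r)$ and $(y,s)$ if necessary (the hypotheses and conclusion being symmetric in the two indices), that $r\le d<s$; then $A_{x,r}=B_{x,r}$, $A_{y,s}=B_{x_*,d}$, and it suffices to show $\|B_{x,r}'-B_{x_*,d}'\|\le\varepsilon\min\{\|B_{x,r}'\|,\|B_{x_*,d}'\|\}$. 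This is the $\varepsilon$-compatibility of $\mathcal{B}$ applied to the indices $(x,r)$ and $(x_*,d)$, which is legitimate because $|x-x_*|\le d=\max\{r,d\}$ (as $x,x_*\in E$) and because $1/2\le r/d\le 2$: indeed $r\le d$ gives $r/d\le 1$, while $s\le 2r$ together with $s>d$ gives $r\ge s/2>d/2$.

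\emph{Conclusion.} The two verifications above show that $(f,E,\mathcal{A})$ is $\varepsilon$-almost affine. Moreover, for every $x\in E$ and $r>d$ we have $A_{x,r}=B_{x_*,d}=A_{x_*,d}$ (the last equality because $0<d\le d$), so $\mathcal{A}$ is stable on large scales, with witness $x_*$. Both assertions of the lemma follow. The only step requiring any care is the mixed case of the compatibility check, where the comparability $1/2\le r/d\le 2$ of the smaller radius with the threshold $d$ must be extracted from the comparability of $r$ with $s$ together with $s>d$; every other case reduces verbatim to the corresponding property of $\mathcal{B}$.
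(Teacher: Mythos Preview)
Your proof is correct and follows essentially the same approach as the paper: both verify the approximation and compatibility properties case by case, with the only nontrivial step being the mixed compatibility case, where one radius is at most $\diam E$ and the other exceeds it, and both you and the paper reduce this to the $\varepsilon$-compatibility of $\mathcal{B}$ at the pair $(x_*,\diam E)$ and the small-scale index by checking the ratio and distance constraints. The arguments differ only in which index is labeled as the larger one.
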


\begin{proof} Let $E\subset\RR^n$ be bounded, suppose that $(f,E,\mathcal{B})$ is $\varepsilon$-almost affine, and let $x_*\in E$ be given. Define $\mathcal{A}$ by (\ref{e:stable1}).  Then $\mathcal{A}$ is stable on large scales. We will show that $\mathcal{A}$ is $\varepsilon$-compatible and $(f,E,\mathcal{A})$ is $\varepsilon$-almost affine.

To show that $\mathcal{A}$ is $\varepsilon$-compatible, suppose that $x,y\in E$ and $r,s>0$ satisfy  $|x-y|\leq \max\{r,s\}$ and $1/2\leq r/s\leq 2$. We start with two easy cases. On one hand, if $r,s\leq \diam E$, then \begin{equation}\label{e:comp} \|A'_{x,r}-A'_{y,s}\|\leq\varepsilon \min\{\|A'_{x,r}\|,\|A'_{y,s}\|\},\end{equation} since $\mathcal{B}$ is $\varepsilon$-compatible and $A_{x,r}=B_{x,r}$ and $A_{y,s}=B_{y,s}$. On the other hand, if $r,s>\diam E$, then (\ref{e:comp}) holds since $A_{x,r}=B_{x_*,\diam E}=A_{y,r}$. Next we look at the case of mixed scales. Assume without loss of generality that $r>\diam E$ and $s\leq \diam E$ so that $A_{x,r}=B_{x_*,\diam E}$ and $A_{y,s}=B_{y,s}$. Note that $\diam E\geq s\geq \frac12 r>\frac12 \diam E$. Hence, $\frac12\leq (\diam E)/s\leq 2$ and $|x_*-y|\leq \diam E=\max(\diam E,s)$. Thus, in this case (\ref{e:comp}) holds, since $\mathcal{B}$ is $\varepsilon$-compatible. Therefore, $\mathcal{A}$ is $\varepsilon$-compatible.

To check that $(f,E,\mathcal{A})$ is $\varepsilon$-almost affine, let $x\in E$ and $r>0$. If $r\leq \diam E$, then $A_{x,r}=B_{x,r}$. Hence \begin{equation}\label{e:close} \sup_{z\in E\cap B(x,r)}\|f(z)-A_{x,r}(z)\| \leq\varepsilon \|A_{x,r}'\|r,\end{equation} since $(f,E,\mathcal{B})$ is $\varepsilon$-almost affine. Similarly, if $r>\diam E$, then $A_{x,r}=B_{x_*,\diam E}$ and (\ref{e:close}) holds, since $E\cap B(x,r)=E=E\cap B(x_*,\diam E)$ and $(f,E,\mathcal{B})$ is $\varepsilon$-almost affine. Therefore, $(f,E,\mathcal{A})$ is $\varepsilon$-almost affine. \end{proof}

\begin{definition} \label{d:adapt} Let $f:\RR^n\rightarrow\RR^N$ and let $E\subset\RR^n$ be bounded. A family $\mathcal{A}$ of affine maps over $E$ is \emph{adapted to $f$ on small scales} if, for all $x\in E$ and $r\leq \diam E$, \begin{equation}A_{x,r}(x+re_i)=f(x+re_i)\quad\text{for all }i=0,1,\dots,n,\end{equation} where $e_0=0$ and $e_1,\dots,e_n$ is the standard basis for $\RR^n$.\end{definition}

\begin{lemma} \label{l:adapt} For all $n\geq 1$, there exists $P=P(n)>1$ such that if $P\varepsilon\leq 1$ and $f:\RR^n\rightarrow\RR^N$ is $\varepsilon$-almost affine over $B^n(x_0,3r_0)$ for some $x_0\in\RR^n$ and $r_0>0$, then $(f, B^n(x_0,r_0),\mathcal{A})$ is $P\varepsilon$-almost affine for some $\mathcal{A}$ that is adapted to $f$ at small scales. In fact, given any $\mathcal{B}$ such that $(f,B^n(x_0,3r_0),\mathcal{B})$ is $\varepsilon$-almost affine, there exists a family $\mathcal{A}$ of affine maps over $B^n(x_0,r_0)$ that is adapted to $f$ at small scales such that $(f,B^n(x_0,r_0),\mathcal{A})$ is $P\varepsilon$-almost affine and such that $A_{x,r}=B_{x,r}$ for all $x\in B^n(x_0,r_0)$ and for all $r>2r_0$.\end{lemma}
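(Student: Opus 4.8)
The plan is to build $\mathcal{A}$ by keeping the given family $\mathcal{B}$ unchanged on scales $r>2r_0$ and replacing it, on scales $0<r\le 2r_0=\diam B^n(x_0,r_0)$, by the affine map $A_{x,r}\colon\RR^n\to\RR^N$ that interpolates $f$ at the $n+1$ points $x+re_0,x+re_1,\dots,x+re_n$ (with $e_0=0$). This $(n+1)$-tuple is affinely independent, so the interpolant is well defined; and for $x\in B^n(x_0,r_0)$ and $r\le 2r_0$ each sample point $x+re_i$ lies in $B^n(x_0,3r_0)\cap B^n(x,r)$ (since $|x-x_0|\le r_0$ and $|re_i|\le r\le 2r_0$), which is exactly the region on which $\mathcal{B}$ is guaranteed to approximate $f$. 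By construction the resulting family is adapted to $f$ on small scales and equals $\mathcal{B}$ for $r>2r_0$, so it remains to choose $P=P(n)$ so that, whenever $P\varepsilon\le 1$, the family $\mathcal{A}$ is $P\varepsilon$-compatible and $f$ is $P\varepsilon$-close to $\mathcal{A}$ over $B^n(x_0,r_0)$.

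The engine of the argument is the elementary linear-algebra fact that an affine map $\RR^n\to\RR^N$ is stably recovered from its values at $n+1$ affinely independent points: if affine maps $A,B$ satisfy $|A(x+re_i)-B(x+re_i)|\le\delta$ for $i=0,\dots,n$, then $|A(x)-B(x)|\le\delta$ (the case $i=0$), and, subtracting the $i=0$ identity from the rest and using that $e_1,\dots,e_n$ is orthonormal, $\|A'-B'\|\le 2n\delta/r$. I would apply this with $A=A_{x,r}$, $B=B_{x,r}$, and $\delta=\varepsilon\|B'_{x,r}\|r$ (the almost-affine bound for $\mathcal{B}$ at the sample points) to get, for all $x\in B^n(x_0,r_0)$ and $0<r\le 2r_0$, the comparisons $\|A'_{x,r}-B'_{x,r}\|\le 2n\varepsilon\|B'_{x,r}\|$ and $|A_{x,r}(x)-B_{x,r}(x)|\le\varepsilon\|B'_{x,r}\|r$; once $P$ is taken large enough that $P\varepsilon\le 1$ forces $2n\varepsilon\le\frac12$, these also give $\|A'_{x,r}\|\sim\|B'_{x,r}\|$. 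From here the almost-affine estimate for $\mathcal{A}$ on scales $r\le 2r_0$ follows by writing $f(z)-A_{x,r}(z)=(f(z)-B_{x,r}(z))+(B_{x,r}-A_{x,r})(z)$ and bounding the affine difference by $|(B_{x,r}-A_{x,r})(x)|+\|B'_{x,r}-A'_{x,r}\|\,|z-x|$; on scales $r>2r_0$ it is immediate from $A_{x,r}=B_{x,r}$ and the containment $B^n(x_0,r_0)\cap B^n(x,r)\subset B^n(x_0,3r_0)\cap B^n(x,r)$.

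For $P\varepsilon$-compatibility I would split into three cases according to whether $r,s$ are $\le 2r_0$ or $>2r_0$. When both exceed $2r_0$ the family equals $\mathcal{B}$, so there is nothing to prove. When both are $\le 2r_0$ the bound follows by the triangle inequality from the $\varepsilon$-compatibility of $\mathcal{B}$ together with the two comparisons $\|A'_{x,r}-B'_{x,r}\|\le 2n\varepsilon\|B'_{x,r}\|$ and $\|A'_{y,s}-B'_{y,s}\|\le 2n\varepsilon\|B'_{y,s}\|$ from the previous paragraph, plus the comparability of the four operator norms involved (here $|x-y|\le\max\{r,s\}$ and $\frac12\le r/s\le 2$ are precisely the conditions under which the $\varepsilon$-compatibility of $\mathcal{B}$ is available, and they also make $\|B'_{x,r}\|$ and $\|B'_{y,s}\|$ agree up to a factor $1+\varepsilon\le 2$). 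The mixed case, say $r>2r_0\ge s$, is similar: $A_{x,r}=B_{x,r}$, hence $\|A'_{x,r}-A'_{y,s}\|\le\|B'_{x,r}-B'_{y,s}\|+\|B'_{y,s}-A'_{y,s}\|$, and both terms are $\lesssim_n\varepsilon\min\{\|A'_{x,r}\|,\|A'_{y,s}\|\}$ after passing through the norm comparisons. Collecting the dimensional constants from the three cases and from the small-scale closeness estimate yields an admissible $P=P(n)$. I do not expect a genuine obstacle here; the only point needing care is that the interpolation step controls $A'_{x,r}$ in terms of $B'_{x,r}$ only when the $n+1$ sample points actually lie in the region $B^n(x_0,3r_0)\cap B^n(x,r)$ on which $\mathcal{B}$ approximates $f$ — which is exactly why the hypothesis is stated over the enlarged ball $B^n(x_0,3r_0)$ — everything else being routine bookkeeping of constants depending only on $n$.
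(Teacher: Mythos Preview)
Your proposal is correct and follows essentially the same route as the paper: define $A_{x,r}$ on scales $r\le 2r_0$ by interpolating $f$ at $x,x+re_1,\dots,x+re_n$, keep $A_{x,r}=B_{x,r}$ for $r>2r_0$, and verify closeness and compatibility via a three-case split. The one minor methodological difference is that where you compare $A_{x,r}$ and $B_{x,r}$ by the direct orthonormal-basis computation $\|A'-B'\|\le 2n\delta/r$, the paper invokes its auxiliary Lemma~\ref{l:A-B}, which bounds $|A(z)-B(z)|$ for \emph{arbitrary} affinely independent interpolation nodes $V$ in terms of the shape constant $\Psi(V)=(\diam V)^n/\Leb^n(\co V)$, and then specializes to $V=\{x,x+re_1,\dots,x+re_n\}$ (obtaining the larger constant $\theta=1+2(2n)^{(n+1)/2}$). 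Your direct argument is cleaner here and gives a better constant; the paper's detour through Lemma~\ref{l:A-B} is justified only because that lemma is reused later (in the proof of Theorem~\ref{t:beta}) with non-standard interpolation nodes, so it was worth isolating.
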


We first prove an auxiliary lemma. For all bounded $V\subset\RR^n$ with positive diameter, define \begin{equation} \label{e:Psi} \Psi(V):=(\diam V)^n/\Leb^n(\co V)\in(0,\infty],\end{equation} where $\co V$ denotes the closed convex hull of $V$ and by convention $\Psi(V)=\infty$ whenever $\Leb^n(\co V)=0$. We remark that the \emph{isodiametric inequality} asserts that $\Psi(V)\geq \Psi(B^n(0,1))$; see e.g.~ \cite[Chapter 2]{EG}.

\begin{lemma}\label{l:A-B} Suppose $V=\{v_0,\dots, v_n\}\subset \RR^n$. If $A,B:\RR^n\rightarrow\RR^N$ are affine maps such that $|A(v)-B(v)|\leq \varepsilon \diam V$ for all $v\in V$, then
\begin{equation}\label{e:l:A-B}|A(z)-B(z)| \leq  \varepsilon \left(\diam V + \frac{4 n^{(n+1)/2}}{n!}\Psi(V)\,\dist(z,V)\right)\quad\text{for all }z\in\RR^n.\end{equation}
\end{lemma}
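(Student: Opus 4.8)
The plan is to reduce the statement to a linear-algebra estimate for the affine map $D:=A-B:\RR^n\to\RR^N$, which by hypothesis satisfies $|D(v)|\leq\varepsilon\diam V$ for every $v\in V$, so that it suffices to bound $|D(z)|$ in terms of $\diam V$ and $\dist(z,V)$. First I would dispose of the trivial configurations. If $z\in\co V$, writing $z=\sum_{i=0}^n\mu_iv_i$ with $\mu_i\geq 0$ and $\sum_i\mu_i=1$ and using that $D$ is affine gives $D(z)=\sum_i\mu_iD(v_i)$, hence $|D(z)|\leq\varepsilon\diam V$, which is already dominated by the right-hand side of \eqref{e:l:A-B}. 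If $z\notin\co V$ but $\Leb^n(\co V)=0$, then $\Psi(V)=\infty$ while $\dist(z,V)>0$, so the right-hand side of \eqref{e:l:A-B} is infinite and there is nothing to prove. Thus I may assume that $z\notin\co V$ and that $\co V$ is a nondegenerate $n$-simplex.

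For such $z$, let $z'$ be the point of $\co V$ nearest to $z$, so that $|z-z'|=\dist(z,\co V)\leq\dist(z,V)$; by the first case $|D(z')|\leq\varepsilon\diam V$, and therefore $|D(z)|\leq\varepsilon\diam V+\|M\|\,\dist(z,V)$, where $M$ denotes the linear part of $D$. The heart of the argument is to bound $\|M\|$. Set $w_i:=v_i-v_0$ for $1\leq i\leq n$; these vectors form a basis of $\RR^n$, each has length at most $\diam V$, and $|Mw_i|=|D(v_i)-D(v_0)|\leq 2\varepsilon\diam V$. Expanding an arbitrary unit vector as $u=\sum_{i=1}^n c_iw_i$, Cramer's rule together with Hadamard's inequality gives $|c_i|\leq(\diam V)^{n-1}/|\det[w_1,\dots,w_n]|$, while $|\det[w_1,\dots,w_n]|=n!\,\Leb^n(\co V)$; hence $|Mu|\leq\sum_i|c_i|\,|Mw_i|\leq 2n\varepsilon(\diam V)^n/(n!\,\Leb^n(\co V))=(2n/n!)\,\varepsilon\,\Psi(V)$. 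Taking the supremum over $u$ and combining with the earlier bound yields $|D(z)|\leq\varepsilon\bigl(\diam V+(2n/n!)\,\Psi(V)\,\dist(z,V)\bigr)$, which implies \eqref{e:l:A-B} since $2n\leq 4n^{(n+1)/2}$ for all $n\geq 1$.

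I do not anticipate a serious obstacle: the only points needing care are the bookkeeping of the degenerate configurations (where $\Psi(V)=\infty$, so that the asserted bound is vacuous) and the choice of $v_0$ as base point for the barycentric coordinates so that $\{v_i-v_0\}_{i=1}^n$ is genuinely a basis. In fact the constant one obtains this way is smaller than the one asserted, so no further optimization is required.
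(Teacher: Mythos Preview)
Your proof is correct and follows essentially the same strategy as the paper's: both reduce to bounding the linear part $M$ of $D=A-B$ via the edge vectors $w_i=v_i-v_0$, using that $|Mw_i|\leq 2\varepsilon\diam V$ and that $|\det[w_1,\dots,w_n]|=n!\,\Leb^n(\co V)$. The differences are in packaging. The paper translates so that the vertex $v_0$ nearest to $z$ becomes the origin, expands $z=\sum a_iw_i$ directly, and bounds $\sum|a_i|$ by $\sqrt{n}\,\|T^{-1}\|\,|z|=\sqrt{n}\,\|T^{-1}\|\,\dist(z,V)$, where $T$ has columns $w_i$; the norm $\|T^{-1}\|$ is then controlled by $\Psi(V)$ through the singular-value inequality $|\det T|\leq\|T\|^{n-1}/\|T^{-1}\|$. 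Your route---first project $z$ to $\co V$ to isolate the constant term, then bound $\|M\|$ directly via Cramer's rule and Hadamard's inequality---is slightly more elementary and yields the sharper constant $2n$ in place of $4n^{(n+1)/2}$.
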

\begin{proof} If $\Psi(V)=\infty$, then there is nothing to prove. Thus, assume that $\Psi(V)<\infty$, which ensures that $v_0,\dots v_n$ are affinely independent.
 Let $z\in\RR^n$. By relabeling the elements of $V$, we may assume without loss of generality that $|z-v_0|=\dist(z,V)$ and $|v_1-v_0|=\max_i|v_i-v_0|$. Then $\frac12\diam V\leq |v_1-v_0|\leq\diam V$. Furthermore, after a harmless translation, we may assume without loss of generality that $v_0=0$. Let $\{e_1,\dots,e_n\}$ be the standard basis for $\RR^n$ and let $T:\RR^n\rightarrow\RR^n$ be the invertible linear transformation such that  $T(e_{i})=v_{i}$ for all $i=1,\dots, n$. Then, letting $E=\{0,e_1,\dots,e_n\}$,
\begin{equation*} \Leb^n(\co V) =\Leb^n(T(\co E)) =\Leb^n(\co E)|\det T| =\frac{|\det T|}{n!} .\end{equation*}
Hence $\Psi(V)= n!(\diam V)^n/|\det T|$. Next, note that
\begin{equation*}
\frac12\diam V\leq |T(e_1)|\leq \|T\| \leq (|v_1|^2+\dots+|v_n|^2)^{1/2}\leq \sqrt{n} \max_i|v_i|=\sqrt{n}\diam V, \end{equation*}
where the third inequality follows from the Cauchy-Schwarz inequality. Thus, we see that
\begin{equation*} \Psi(V)
 \geq \frac{n!\|T\|^{n}}{n^{n/2} |\det T|}
 \geq \frac{n!}{n^{n/2}}\|T\|\|T^{-1}\|
 \geq \frac{n!}{n^{n/2}}\frac{\diam V}{2}\|T^{-1}\|.
 \end{equation*}
 Let $A'= A-A(0)$ and $B'=B-B(0)$ denote the linear parts of $A$ and $B$, respectively. By the hypothesis, $|A'(v)-B'(v)|\leq |A(v)-B(v)|+|A(0)-B(0)|\leq 2\varepsilon\diam V$ for all $v\in V$. Expanding $z=a_1v_1+\dots+a_nv_n$, it follows that
 \begin{align*}
|A(z)-B(z)|
&\leq |A(0)-B(0)| + \sum_{i=1}^n |a_i|\,|A'(v_i)- B'(v_i)|\\
&\leq \left(1+2(|a_1|+\dots+|a_n|)\right)\, \varepsilon\diam V.
\end{align*}
To continue, observe that
\begin{equation*}\sum_{i=1}^{n}|a_i| \leq\sqrt{n}\left|\sum_{i=1}^{n}a_ie_i\right|
=\sqrt{n}\,|T^{-1}(z)|\\
\leq \sqrt{n}\,\|T^{-1}\| |z|
=\sqrt{n}\,\|T^{-1}\|\dist(z,V).\end{equation*}
Combining the previous three displayed equations yields \eqref{e:l:A-B}.
\end{proof}

\begin{proof}[Proof of Lemma \ref{l:adapt}] Fix $P>1$ to be specified later.  Suppose that $f$ is $\varepsilon$-almost affine over $B^n(x_0,3r_0)$ for some $x_0\in\RR^n$ and $r_0>0$, for some $\varepsilon>0$ such that $P\varepsilon\leq 1$. Choose any $\mathcal{B}$ such that $(f,B^n(x_0,3r_0),\mathcal{B})$ is $\varepsilon$-almost affine. Define a family $\mathcal{A}$ of affine maps over $B^n(x_0,r_0)$ as follows.  For all $x\in B^n(x_0,r_0)$ and $0<r\leq 2r_0$, define $A_{x,r}$ to be the unique affine map such that $A_{x,r}(x)=f(x)$ and $A_{x,r}(x+re_i)=f(x+re_i)$ for all $1\leq i\leq n$. And, for all $x\in B^n(x_0,r_0)$ and $r>2r_0$, set $A_{x,r}:=B_{x,r}$. Then $\mathcal{A}$ is adapted to $f$ at small scales. It remains to show that $(f,B^n(x_0,r_0),\mathcal{A})$ is $P\varepsilon$-almost affine.

Fix $x\in B(x_0,r_0)$ and $r\leq 2r_0$. Note that $B(x,r)\subset B(x_0,3r_0)$. Let $Y=\{y_0,\dots,y_n\}\subset\RR^n$, where $y_0=x$ and $y_i=x+re_i$ for all $i=1,\dots,n$. Since $\diam Y= r\sqrt{2}$ and $\Leb^n(\co Y)=r^n/n!$, we have $\Psi(Y)=2^{n/2}n!\,$.  Furthermore, for all $y\in Y$, \begin{equation*} |A_{x,r}(y)-B_{x,r}(y)| =|f(y)-B_{x,r}(y)|\leq \varepsilon \|B'_{x,r}\| r = \frac{\varepsilon}{\sqrt{2}} \|B'_{x,r}\| \diam Y,\end{equation*}
because $\mathcal{A}$ is adapted to $f$ at small scales, $(f,B^n(x_0,3r_0),\mathcal{B})$ is $\varepsilon$-almost affine, and $Y\subset B^n(x,r)\cap B^n(x_0,3r_0)$. Thus, for all $z\in B^n(x,r)$,
\begin{equation}\begin{split}
|{A_{x,r}}(z)-B_{x,r}(z)|
&\leq  \frac{\varepsilon}{\sqrt{2}}\|B_{x,r}'\|\diam Y +   \frac{4n^{(n+1)/2}}{n!}2^{n/2}n!\frac{\varepsilon}{\sqrt{2}}\|B'_{x,r}\|\dist(z,Y) \\
 & \leq \left(1+2(2n)^{(n+1)/2}\right)\varepsilon\|B_{x,r}'\|r \label{e:adapt1}
\end{split}\end{equation} by Lemma \ref{l:A-B}.
 Hence, writing $\theta:=1+2(2n)^{(n+1)/2}\geq 5$, we see that
\begin{align*}
(1-2\theta\varepsilon)\|B_{x,r}'\|r
& =\sup_{|z-x|=r}|B_{x,r}(z)-B_{x,r}(x)|-2\theta\varepsilon\|B_{x,r}'\|r  \notag \\
& \leq \sup_{|z-x|= r}| A_{x,r}(z)- A_{x,r}(x)|
 = \| A_{x,r}'\|r.
\end{align*} We now specify $P\geq 4\theta$ so that $\varepsilon\leq 1/P\leq 1/4\theta$ and \begin{equation}\label{e:adapt2} \|B_{x,r}'\|\leq 2\Anorm{x,r}.\end{equation} It follows that
\begin{equation}\begin{split}
 |f(z)- A_{x,r}(z)| &\leq |f(z)-B_{x,r}(z)|+| A_{x,r}(z)-B_{x,r}(z)| \\&\leq (1+\theta)\varepsilon\|B_{x,r}'\|r \leq (2+2\theta)\varepsilon\|A_{x,r}'\|r \leq 3\theta \varepsilon\Anorm{x,r}r \label{e:adapt3} \end{split}\end{equation} for all $z\in B^n(x,r)$, for all $x\in B^n(x_0,r_0)$ and for all $0<r\leq 2r_0$.

Next, observe that \begin{equation} \label{e:adapt4} |f(z)-A_{x,r}(z)|\leq \varepsilon \Anorm{x,r}{r}\end{equation} for all $z\in B^n(x,r)\cap B^n(x_0,r_0)$, for all $x\in B^n(x_0,r_0)$, and for all $r>2r_0$, because $A_{x,r}=B_{x,r}$ and $(f,B^n(x_0,3r_0),\mathcal{B})$ is $\varepsilon$-almost affine.

We now verify that $\mathcal{A}$ is $C(n)\varepsilon$-compatible. Fix $x,y\in B^n(x_0,r_0)$ and $r,s>0$ such that $s/2\leq r\leq s$ and $|x-y|\leq s$. We proceed by cases.

\setcounter{case}{0}

\begin{case} Suppose that $2r_0<r$ and $2r_0<s$. Then $\Adiff{x,r}{y,s}\leq \varepsilon\Amin{x,r}{y,s}$, since $A_{x,r}=B_{x,r}$, $A_{y,s}=B_{y,s}$ and $\mathcal{B}$ is $\varepsilon$-compatible.\end{case}

\begin{case} Suppose that $r\leq 2r_0<s$. On one hand, by (\ref{e:adapt1}) (twice), \begin{equation}\begin{split} \|A'_{x,r}-B'_{x,r}\| &= \sup_{|z-x|=r}\frac{1}{r}|A_{x,r}(z)-B_{x,r}(z) - (A_{x,r}(x)-B_{x,r}(x))| \\
 &\leq 2\theta\varepsilon \|B'_{x,r}\|.\label{e:adapt5} \end{split}\end{equation} On the other hand, since $\mathcal{B}$ is $\varepsilon$-compatible, \begin{equation}\label{e:adapt6} \|B'_{x,r}-B'_{y,s}\| \leq \varepsilon \min\{ \|B_{x,r}'\|, \|B_{y,s}'\| \}.\end{equation} It follows that $\|B_{x,r}'\| \leq 2\|B_{y,s}'\|$ and \begin{equation*} \|A'_{x,r}-B'_{y,s}\| \leq \|A'_{x,r}-B'_{x,r}\| + \|B'_{x,r}-B'_{y,s}\| \leq (4\theta+1)\varepsilon \|B_{y,s}'\|\leq 5\theta\varepsilon\|B_{y,s}'\|.\end{equation*} Hence $\|B_{y,s}'\| \leq 5\theta \varepsilon\|B_{y,s}'\|+\Anorm{x,r}$ by the triangle inequality. We now require that $P\geq 10\theta$ so that $\varepsilon\leq 1/P\leq 1/10\theta$ and $\|B_{y,s}'\|\leq 2\Anorm{x,r}$. Therefore, since $A_{y,s}=B_{y,s}$,  \begin{equation*} \Adiff{x,r}{y,s} = \|A'_{x,r}-B'_{y,s}\| \leq 10\varepsilon \min\{\Anorm{x,r},\|B'_{y,s}\|\}=\Amin{x,r}{y,s}.\end{equation*}
 \end{case}

\begin{case} Suppose that $r\leq 2r_0$ and $s\leq 2r_0$. Then \begin{align*} \Adiff{x,r}{y,s} &\leq \|A'_{x,r}-B'_{x,r}\| + \|B'_{x,r}-B'_{y,s}\| + \|B'_{y,s}-A'_{y,s}\|\\
&\leq 2\theta\varepsilon\|B'_{x,r}\| + \varepsilon \min\{\|B'_{x,r}\|, \|B'_{y,s}\|\} + 2\theta\varepsilon \|B'_{y,s}\|\end{align*} by (\ref{e:adapt5}) (twice) and (\ref{e:adapt6}). Hence, by (\ref{e:adapt2}), \begin{equation*} \Adiff{x,r}{y,s}\leq (8\theta+2)\varepsilon\Amax{x,r}{y,s}\leq 9\theta\varepsilon\Amax{x,r}{y,s}.\end{equation*} Using the triangle inequality (twice), it follows that \begin{equation*} \Anorm{x,r} \leq 9\theta\varepsilon \Anorm{x,r}+\Anorm{y,s}\quad\text{and}\quad \Anorm{y,s}\leq 9\theta\varepsilon\Anorm{y,s}+\Anorm{x,r}.\end{equation*} We now require that $P=18\theta$ so that $\varepsilon\leq 1/P\leq 1/18\theta$. Therefore, $\Amax{x,r}{y,s}\leq 2\Amin{x,r}{y,s}$ and \begin{equation*} \Adiff{x,r}{y,s} \leq 18\theta\varepsilon \Amin{x,r}{y,s}.\end{equation*}
\end{case}

We have verified that $\mathcal{A}$ is $P\varepsilon$-compatible, where $P=18\theta=18\left(1+2(2n)^{(n+1)/2}\right)$. Therefore, $(f,B^n(x_0,r_0),\mathcal{A})$ is $P\varepsilon$-almost affine by (\ref{e:adapt3}) and (\ref{e:adapt4}).
\end{proof}


\section{Almost affine quasisymmetric maps with small constants}
\label{s:affine}

To open this section, we supply a proof of Lemma \ref{l:rflat}, which for convenience we now restate.

\begin{lemma}\label{l:rflat2} For all $\delta>0$ there exists $\delta_*=\delta_*(\delta)$ with the following property. Suppose that $f:\RR^n\rightarrow\RR^N$ is quasisymmetric and $H_f(\RR^n)\leq H$. If $f$ is $\delta_*$-almost affine over $B^n(x_0,2r_0)$ and $\tH_f(B^n(x_0,2r_0))\leq \delta_*$ for some $x_0\in\RR^n$ and $r_0>0$, then \begin{equation}\label{e:rf}\theta_{f(\RR^n)}(f(x),r) \leq H\delta\quad\text{for all }x\in B^n(x_0,r_0)\text{ and } 0<r\leq \frac{1}{54H}\diam f(B^n(x_0,r_0)).\end{equation} Thus, if $f$ is $\delta_*$-almost affine over $\RR^n$ and $\tH_f(\RR^n)\leq \delta_*$, then $f(\RR^n)$ is $(H\delta,\infty)$-Reifenberg flat, i.e.~ $\theta_{f(\RR^n)}(f(x),r)\leq H\delta$ for all $x\in\RR^n$ and $r>0$.
\end{lemma}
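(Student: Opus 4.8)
The plan is to reduce the statement to an application of Lemma \ref{l:inradius}, using the adapted family provided by Lemma \ref{l:adapt} together with the almost-affineness and small-constant hypotheses. First I would fix $\delta>0$ and choose $\delta_*$ small at the end of the argument (depending only on $\delta$, the absolute constant $\hat\varepsilon$ from Lemma \ref{l:Holder}, and the dimensional constant $P=P(n)$ from Lemma \ref{l:adapt}); the key inequalities that $\delta_*$ must satisfy will emerge below. Given $f$ quasisymmetric with $H_f(\RR^n)\le H$, $\delta_*$-almost affine over $B^n(x_0,2r_0)$, and $\tH_f(B^n(x_0,2r_0))\le\delta_*$, I would fix $x\in B^n(x_0,r_0)$ and $0<r\le \tfrac{1}{54H}\diam f(B^n(x_0,r_0))$, and locate a scale $\rho>0$ in the domain so that $B^n(x,\rho)\subset B^n(x_0,2r_0)$ and $\tfrac13\lambda_n(A_{x,\rho}')\rho \approx \diam f(B^n(x,\rho)) \gtrsim_H r$; such a $\rho$ exists because $f$ is a homeomorphism (hence $\diam f(B^n(x,\rho))\to 0$ as $\rho\to 0$ and grows as $\rho$ grows), and because the weak quasisymmetry estimate $\tH_f\le\delta_*\le 1$ controls how $\diam f(B^n(x_0,r_0))$ compares with $\diam f(B^n(x,\rho))$ via Lemma \ref{l:QS-comp} (after invoking Corollary \ref{c:wqs2qs} to upgrade weak quasisymmetry to $\eta$-quasisymmetry with $\eta$ depending only on $n,N,H$).

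Next, the heart of the matter: I would apply Lemma \ref{l:adapt} to get a family $\mathcal{A}$ adapted to $f$ on small scales with $(f,B^n(x,\rho'),\mathcal{A})$ being $P\delta_*$-almost affine for a suitable sub-ball, and then apply Lemma \ref{l:inradius} to the ball $B^n(x,\rho)$ (shrinking $\rho$ if necessary so it lies in the domain of the adapted family). For Lemma \ref{l:inradius} to apply I need $\lambda_n(A_{x,\rho}')\le \tilde H\lambda_1(A_{x,\rho}')$ for some $\tilde H$; this is exactly where the weak quasisymmetry hypothesis $\tH_f(B^N(x_0,2r_0))\le\delta_*$ enters — a map with small weak quasisymmetry constant sends the unit sphere to a near-sphere, so its best affine approximation is a near-similarity, forcing $\lambda_n(A_{x,\rho}')/\lambda_1(A_{x,\rho}')\le 1+c\delta_*$; I expect to extract this either directly from Definition \ref{d:aa} combined with the weak-quasisymmetry inequality, or by borrowing the near-orthogonality estimates already carried out in the proof of Lemma \ref{l:bflat}. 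With $\tilde H$ close to $1$ and $\tilde H(t+2P\delta_*)\le 1$ for an appropriate small $t$, Lemma \ref{l:inradius} gives
\begin{equation*}
\theta_{f(B^n(x,\rho))}\Bigl(f(x),\tfrac{1}{3\tilde H}\diam f(B^n(x,\rho))\Bigr)\le 6P\delta_*\tilde H.
\end{equation*}
Since $r\le \tfrac{1}{3\tilde H}\diam f(B^n(x,\rho))$ by the choice of $\rho$, the monotonicity property (\ref{e:mono}) shrinks the scale down to $r$ at the cost of a factor $\tfrac{1}{3\tilde H}\diam f(B^n(x,\rho))/r$, which is bounded by an absolute constant times $H$ by our choice of $\rho$ (here I want the ratio to be $\sim H$, not worse, so $\rho$ should be chosen as the largest scale with $\diam f(B^n(x,\rho))\lesssim r$). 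Finally, $\theta_{f(\RR^n)}(f(x),r)\le \theta_{f(B^n(x,\rho))}(f(x),r)$ since $f(B^n(x,\rho))\subset f(\RR^n)$ and the extra points of $f(\RR^n)$ only lie outside $B^N(f(x),r)$ provided $r$ is small enough relative to $\diam f(B^n(x,\rho))$, which the constant $54$ is engineered to guarantee. Collecting the constants, I choose $\delta_*=\delta_*(\delta)$ so small that $6P\delta_*\tilde H\cdot(\text{scale factor})\le \delta$, giving $\theta_{f(\RR^n)}(f(x),r)\le H\delta$. The second assertion follows by taking $x_0$ arbitrary and letting $r_0\to\infty$, since then every $x\in\RR^n$ and $r>0$ is covered.

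The main obstacle I anticipate is the bookkeeping around the scale $\rho$ and the constant $54$: one must verify that the largest $\rho$ with $\diam f(B^n(x,\rho))$ comparable to $r$ still lies well inside $B^n(x_0,2r_0)$ (so that the almost-affine and weak-quasisymmetry hypotheses are available there) — this requires knowing $r$ is not too large compared to $\diam f(B^n(x_0,r_0))$, which is precisely the content of the hypothesis $r\le \tfrac{1}{54H}\diam f(B^n(x_0,r_0))$, and then tracking how the quasisymmetry estimates of Lemma \ref{l:QS-comp} relate $\diam f(B^n(x,\rho))$, $\diam f(B^n(x,r_0))$ and $\diam f(B^n(x_0,r_0))$. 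The second, subtler point is making the passage from $f(B^n(x,\rho))$ to $f(\RR^n)$ in the $\theta$ estimate airtight: one direction of the Hausdorff-type distance in $\theta$ (points of $f(\RR^n)$ near the approximating plane) is automatic from $f(B^n(x,\rho))$, but one must ensure that points of $f(\RR^n)\setminus f(B^n(x,\rho))$ do not intrude into $B^N(f(x),r)$, which again is a consequence of the lower bound $\inf_{|y-x|=\rho}|f(y)-f(x)|\gtrsim \diam f(B^n(x,\rho))\gtrsim H r$ coming from (\ref{e:irir}) in Lemma \ref{l:inradius}.
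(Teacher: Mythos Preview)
Your approach is correct and follows essentially the same strategy as the paper: derive a near-similarity bound $\lambda_n(A'_{x,r})\lesssim\lambda_1(A'_{x,r})$ from almost-affineness plus small weak quasisymmetry, feed this into Lemma \ref{l:inradius}, then use the global bound $H_f(\RR^n)\le H$ to rule out intrusion of $f(\RR^n)\setminus f(B^n(x,\rho))$ into the small ball.

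The paper's execution is more streamlined than your plan in three places, and you may want to simplify accordingly. First, Lemma \ref{l:adapt} is unnecessary: the given $\delta_*$-compatible family $\mathcal{A}$ already suffices for Lemma \ref{l:inradius}, since nothing in that lemma requires the family to be adapted to $f$. Second, rather than fixing the target image scale $r$ and searching for a matching domain scale $\rho$, the paper simply lets the domain scale $r\in(0,r_0]$ vary, obtains $\theta_{f(\RR^n)}(f(x),\tfrac{1}{9H}\diam f(B^n(x,r)))\le H\delta$ directly, and observes that these image scales cover $(0,\tfrac{1}{54H}\diam f(B^n(x_0,r_0))]$; this avoids all the bookkeeping you flagged as an obstacle. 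Third, the near-similarity bound falls out in three lines from the almost-affine estimate and the inequality $\sup_{|y-x|=r}|f(y)-f(x)|\le(1+\delta_*)\inf_{|y-x|=r}|f(y)-f(x)|$ (note the hypothesis is $\tH_f(B^n(\cdot))$, not $\tH_f(B^N(\cdot))$), so there is no need to borrow from Lemma \ref{l:bflat}. Finally, the diameter comparison at the end uses only the compatibility of $\mathcal{A}$ (giving $\|A'_{x,r_0}\|\sim\|A'_{x_0,r_0}\|$) together with \eqref{e:ird}, rather than Lemma \ref{l:QS-comp}.
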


\begin{proof}[Proof of Lemma \ref{l:rflat} / Lemma \ref{l:rflat2}] Given $0<\delta<1$, put $\varepsilon=\delta/18$ and $t=\frac12-\frac19\delta$. Observe that this choice of parameters satisfies $2(t+2\varepsilon)=1$. Let $f:\RR^n\rightarrow\RR^N$ be quasisymmetric with $H_f(\RR^n)\leq H$, and let $x_0\in\RR^n$ and $r_0>0$. Assume that  $(f,B^n(x_0,2r_0),\mathcal{A})$ is $\delta_*$-almost affine and $\tH_f(B^n(x_0,2r_0))\leq \delta_*$ for some $\delta_*\leq \varepsilon$ to be specified below. Let $x\in B^n(x_0,r_0)$ and $0<r\leq r_0$. Then, recalling (\ref{e:lvar}), we find that \begin{equation*}\begin{split} \lambda_n(A'_{x,r})r&\leq 2\delta_* \Anorm{x,r}r+\sup_{|y-x|=r}|f(y)-f(x)|\\
&\leq 2\delta_*\Anorm{x,r}r + (1+\delta_*)\inf_{|y-x|=r}|f(y)-f(x)| \\ &\leq 2\delta_*(2+\delta_*)\Anorm{x,r}r +(1+\delta_*)\lambda_1(A'_{x,r})r.\end{split}\end{equation*}
In particular, $\lambda_n(A_{x,r}')\leq 2\lambda_1(A_{x,r}')$ provided that we choose $\delta_*$ to be smaller than some absolute constant. By Lemma \ref{l:inradius}, it follows that \begin{equation}\label{e:rf0} \Anorm{x,r}r \leq \diam f(B^n(x,r))\leq 3\Anorm{x,r}r,\end{equation} \begin{equation}\label{e:rf1} \inf_{|y-x|=r} |f(y)-f(x)| \geq \left(\frac{1}{6}-\frac{\delta}{27}\right)\diam f(B^n(x,r))\geq \frac{1}{9}\diam f(B^n(x,r)),\end{equation} and \begin{equation}\label{e:rf2} \theta_{f(B^n(x,r))}\left(f(x),\frac{1}{6} \diam f(B^n(x,r))\right)\leq \frac{2}{3}\delta.\end{equation} On one hand, using (\ref{e:mono}) to shrink scales in (\ref{e:rf2}), we obtain \begin{equation}\label{e:rf4} \theta_{f(B^n(x,r))}\left(f(x),\frac{1}{9H}\diam f(B^n(x,r))\right) \leq H\delta.\end{equation}
On the other hand, (\ref{e:rf1}) and the hypothesis $H_f(\RR^n)\leq H$ yield \begin{equation*} \inf_{|z-x|\geq r}|f(z)-f(x)| \geq \frac{1}{H}\sup_{|y-x|=r}|f(y)-f(x)| \geq \frac{1}{9H}\diam f(B^n(x,r)),\end{equation*} which implies that \begin{equation}\label{e:rf3} f(\RR^n)\cap B^N\left(f(x),\frac{1}{9H}\diam f(B^n(x,r))\right)\subset f(B^n(x,r)).\end{equation} Together (\ref{e:rf4}) and (\ref{e:rf3}) yield \begin{equation*} \theta_{f(\RR^n)}\left(f(x),\frac{1}{9H}\diam f(B^n(x,r))\right) \leq H\delta\quad\text{for all }x\in B^n(x_0,r_0)\text{ and }0<r\leq r_0.\end{equation*} Finally, since $\mathcal{A}$ is $1$-compatible and $|x-x_0|\leq r_0$, we have \begin{equation*} \frac{1}{9H}\diam f(B^n(x,r_0)) \geq \frac{1}{9H}\Anorm{x,r_0}r_0 \geq \frac{1}{18H} \Anorm{x_0,r_0}r_0 \geq \frac{1}{54H} \diam f(B^n(x_0,r_0))\end{equation*} by (\ref{e:rf0}). Combining the previous two displayed equations yields (\ref{e:rf}). \end{proof}

Next up, we aim to prove Theorem \ref{t:qsaa}, but first we prove an intermediate statement.

\begin{lemma} \label{l:Sxr}
 For all $N\geq 2$ and for all $\varepsilon>0$, there exists $\delta>0$ such that if $f:B^N(x,r)\rightarrow \RR^N$ is quasisymmetric and $\tH_f(B^N(x,r))\leq \delta$, then there is a similarity $S_{x,r}:\RR^N\rightarrow\RR^N$ such that $|f(y)-S_{x,r}(y)|\leq \varepsilon\|S'_{x,r}\|r$ for all $y\in B^N(x,r)$.
\end{lemma}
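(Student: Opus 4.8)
The plan is to run a normal-families / compactness contradiction, in the same spirit as the deduction of Corollary \ref{c:1qs} (weakly $1$-quasisymmetric maps on a ball are restrictions of similarities). First I would reduce to the normalized situation $x=0$, $r=1$ by precomposing $f$ with the similarity $z\mapsto x+rz$ of $\RR^N$, which maps $B^N(0,1)$ onto $B^N(x,r)$: if $S$ is a similarity with $\sup_{|z|\le1}|f(x+rz)-S(z)|\le\varepsilon\|S'\|$, then $\tilde S(w):=S((w-x)/r)$ has $\|\tilde S'\|=\|S'\|/r$ and satisfies $\sup_{y\in B^N(x,r)}|f(y)-\tilde S(y)|\le\varepsilon\|\tilde S'\|r$. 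Then, assuming the lemma fails, I get $\varepsilon_0>0$ and quasisymmetric maps $f_j:B^N(0,1)\to\RR^N$ with $\tH_{f_j}(B^N(0,1))\le1/j$ for which \emph{no} similarity $S$ satisfies $\sup_{B^N(0,1)}|f_j-S|\le\varepsilon_0\|S'\|$.

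Next I would normalize the $f_j$. Fixing a unit vector $e\in\RR^N$, I post-compose $f_j$ with the similarity $\psi_j$ taking the ordered pair $(f_j(0),f_j(e))$ — whose two entries are distinct because $f_j$ is injective — to $(0,e)$, and set $g_j:=\psi_j\circ f_j$. Since $\psi_j$ is a similarity, $\tH_{g_j}(B^N(0,1))=\tH_{f_j}(B^N(0,1))\le1/j$, each $g_j$ is still a topological embedding, and the ``not $\varepsilon_0$-approximable by any similarity'' property transfers verbatim from $f_j$ to $g_j$: replacing a candidate similarity $S$ by $\psi_j^{-1}\circ S$ multiplies both $\sup|g_j-S|$ and $\|S'\|$ by the same factor $\|\psi_j'\|$. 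In particular each $g_j$ is weakly $2$-quasisymmetric and $g_j(0)=0$, $g_j(e)=e$.

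Now I invoke the compactness theorem. The set $B^N(0,1)$ is connected and compact and contains $0\ne e$, so by Theorem \ref{t:wqs-cpt} some subsequence $(g_{j_k})$ converges uniformly on $B^N(0,1)$ to a weakly $2$-quasisymmetric map $g$ — in particular a topological embedding — with $g(0)=0$, $g(e)=e$. Passing to the limit in $|g_{j_k}(p)-g_{j_k}(a)|\le(1+1/j_k)|g_{j_k}(q)-g_{j_k}(a)|$, valid whenever $|p-a|\le|q-a|$, shows $H_g(B^N(0,1))\le1$, so $g$ is weakly $1$-quasisymmetric. By Corollary \ref{c:1qs}, $g$ is the restriction to $B^N(0,1)$ of a similarity $S_0$ of $\RR^N$, and from $S_0(0)=0$, $S_0(e)=e$ we read off $\|S_0'\|=|S_0(e)-S_0(0)|=1$. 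Finally, uniform convergence gives $\sup_{B^N(0,1)}|g_{j_k}-S_0|<\varepsilon_0=\varepsilon_0\|S_0'\|$ for all large $k$, contradicting the defining property of $g_{j_k}$, and the lemma follows.

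The only genuinely delicate point is guaranteeing that the subsequential limit $g$ is a bona fide topological embedding, so that Corollary \ref{c:1qs} applies — a careless uniform limit of weakly quasisymmetric maps could degenerate (e.g. collapse to a constant). This is exactly what Theorem \ref{t:wqs-cpt} prevents, since it provides sequential compactness \emph{within} the class of weakly $H$-quasisymmetric embeddings fixing two prescribed points; the normalization by $\psi_j$ is precisely what puts us in a position to use it. The remaining ingredients — the rescaling to $x=0$, $r=1$, the invariance of the approximation property under post-composition with similarities, and the lower semicontinuity of the weak quasisymmetry constant under uniform convergence — are routine.
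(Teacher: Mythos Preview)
Your proof is correct and follows essentially the same approach as the paper: a compactness contradiction via Theorem~\ref{t:wqs-cpt}, normalization by pre- and post-composition with similarities, and identification of the limit as a similarity via Corollary~\ref{c:1qs}. The only cosmetic difference is in the endgame---the paper extracts witness points $z_T^i$ where the approximation fails and passes to a further subsequence, whereas you use uniform convergence on the compact ball $B^N(0,1)$ directly to derive the contradiction; your version is arguably cleaner, and your explicit verification that the limit is weakly $1$-quasisymmetric (rather than merely weakly $2$-quasisymmetric) fills in a step the paper leaves implicit.
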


\begin{proof} Recall that similarities are the compositions of translations, rotations, reflections, and dilations in $\RR^N$.
Let $N\geq 2$ be given. Suppose for contradiction that there exists $\varepsilon>0$ and a sequence of quasisymmetric maps $f^i:B^N(x^i,r^i)\rightarrow\RR^N$ such that $x^i\in\RR^N$,  $r^i>0$, and \begin{equation*}\tH_{f^i}(B^N(x^i,r^i))\leq 1/i,\end{equation*} but for every similarity $S:\RR^N\rightarrow\RR^N$ there exists $y_S^i\in B^N(x^i,r^i)$ such that \begin{equation*} |f^i(y_S^i)-S(y_S^i)|>\varepsilon \|S'\| r.\end{equation*} For each $i\geq 1$, let $\phi^i:\RR^N\rightarrow\RR^N$ be any similarity such that $\phi^i(B^N(0,1))=B^N(x^i,r^i)$, let $\psi^i:\RR^N\rightarrow\RR^N$ be any similarity such that $\psi^i(f(x^i))=0$ and $\psi^i(f(\phi^i(e_1)))=e_1$, and set \begin{equation*}g^i=\psi^i\circ f^i\circ \phi^i:B^N(0,1)\rightarrow \RR^N.\end{equation*} Then each $g^i$ is a quasisymmetric map such that $g(0)=0$, $g(e_1)=e_1$, and $\tH_{g^i}(B^N(0,1))\leq 1/i$,  but for every similarity $S:\RR^N\rightarrow\RR^N$ there exists $z_S^i\in B^N(0,1)$ such that \begin{equation*} |g^i(z_S^i)-S(z_S^i)|>\varepsilon \|S'\|.\end{equation*} The family $\{g_i:i\geq 1\}$ is sequentially compact by Theorem \ref{t:wqs-cpt}. Thus, we can find a weakly 1-quasisymmetric map $g:B^N(0,1)\rightarrow\RR^N$ and a sequence $i_k\rightarrow\infty$ such that $g^{i_k}\rightarrow g$ uniformly as $k\rightarrow\infty$. By Corollary \ref{c:1qs}, there exists an similarity $T:\RR^N\rightarrow\RR^N$ such that $g=T|_{B^N(0,1)}$. Passing to a further subsequence, we can assume that $g^i\rightarrow g$ uniformly and $z_T^i\rightarrow z_T$ for some $z_T\in B^N(0,1)$. This leads to a contradiction:  \begin{equation*} 0=|g(z_T)-T(z_T)|=\liminf_{i\rightarrow\infty} |g^i(z^i_T)-T(z^i_T)|\geq \varepsilon \|T'\|>0. \end{equation*} Therefore, for all $\varepsilon>0$, there is $\delta>0$ such that if $f:B^N(x,r)\rightarrow \RR^N$ is quasisymmetric and $\tH_f(B^N(x,r))\leq \delta$, then there is a similarity $S:\RR^N\rightarrow\RR^N$ such that $|f(y)-S(y)|\leq \varepsilon\|S'\|r$ for all $y\in B^N(x,r)$.
\end{proof}

We now give a proof of Theorem \ref{t:qsaa}, which for convenience we now restate.

\begin{theorem}\label{t:qsaa2} Suppose $N\geq 2$. For all $\tau>0$, there exists $\tau_*=\tau_*(\tau,N)>0$ such that if $B^N(x,3r)\subset Y\subset \RR^N$ for some $x\in\RR^n$ and $r>0$, $f:Y\rightarrow\RR^N$ is quasisymmetric and $\tH_f(B^N(x,3r))\leq \tau_*$, then $f|_{Y\cap \RR^n}$ is $\tau$-almost affine over $B^n(x,r)$.
\end{theorem}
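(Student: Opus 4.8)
The plan is to build an explicit $\tau$-compatible family $\mathcal{A}=\{A_{x',s}:x'\in B^n(x,r),\ s>0\}$ of affine maps $\RR^n\to\RR^N$ out of the similarities supplied by Lemma~\ref{l:Sxr}. Fix a small auxiliary constant $\varepsilon=\varepsilon(\tau,N)>0$, to be pinned down at the end, and set $\tau_*:=\delta(\varepsilon,N)$, where $\delta$ is the constant from Lemma~\ref{l:Sxr} corresponding to $\varepsilon$. Since $\tH_f(B^N(y',\rho))\le \tH_f(B^N(x,3r))\le\tau_*$ whenever $B^N(y',\rho)\subseteq B^N(x,3r)$ (monotonicity of $H_f$ in the set), Lemma~\ref{l:Sxr} applies to $f$ on any such ball. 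Fix once and for all a similarity $S_0:\RR^N\to\RR^N$ with $|f(w)-S_0(w)|\le \varepsilon\|S_0'\|\,3r$ for all $w\in B^N(x,3r)$; and for each $x'\in B^n(x,r)$ and $0<s\le r$ fix a similarity $S_{x',s}:\RR^N\to\RR^N$ with $|f(w)-S_{x',s}(w)|\le \varepsilon\|S'_{x',s}\|\,2s$ for all $w\in B^N(x',2s)$, noting $B^N(x',2s)\subseteq B^N(x,3r)$ because $|x'-x|+2s\le r+2r=3r$. Define
\[
A_{x',s}:=\begin{cases} S_{x',s}\,|_{\RR^n} & 0<s\le r,\\ S_0\,|_{\RR^n} & s>r,\end{cases}
\]
regarded as an affine map $\RR^n\to\RR^N$ (restrict the domain of the similarity to $\RR^n\subseteq\RR^N$). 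A key elementary point used throughout is that restricting the orthogonal part of a similarity of $\RR^N$ to the subspace $\RR^n$ preserves its operator norm, so $\|A'_{x',s}\|$ equals $\|S'_{x',s}\|$ (resp.\ $\|S_0'\|$), and consequently every $\|A'\|$-normalized estimate for the similarities on $\RR^N$ transfers to $\mathcal{A}$, while the operator norm of a difference can only drop under restriction.

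The approximation bound of Definition~\ref{d:aa} is then immediate. If $0<s\le r$ and $z\in B^n(x,r)\cap B^n(x',s)\subseteq B^N(x',2s)$, then $|f(z)-A_{x',s}(z)|\le 2\varepsilon\|A'_{x',s}\|\,s$. If $s>r$ and $z\in B^n(x,r)\cap B^n(x',s)\subseteq B^n(x,r)\subseteq B^N(x,3r)$, then $|f(z)-A_{x',s}(z)|=|f(z)-S_0(z)|\le 3\varepsilon\|A'_{x',s}\|\,r\le 3\varepsilon\|A'_{x',s}\|\,s$. Both are $\le\tau\|A'_{x',s}\|\,s$ as soon as $3\varepsilon\le\tau$. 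Since $f|_{Y\cap\RR^n}$ agrees with $f$ on $E=B^n(x,r)$, this gives exactly the estimate required for $(f|_{Y\cap\RR^n},B^n(x,r),\mathcal{A})$.

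The substantive step, and the one I expect to be the main obstacle, is $\varepsilon$-compatibility of $\mathcal{A}$. Fix $x',y'\in B^n(x,r)$ and $s,t>0$ with $|x'-y'|\le\max\{s,t\}$ and $1/2\le s/t\le2$; assume without loss of generality $s\le t\le 2s$. If $s,t>r$ then $A_{x',s}=A_{y',t}=S_0|_{\RR^n}$ and there is nothing to prove. Otherwise the common sub-ball $B:=B^N(x',s/2)$ is contained in the ball on which $A_{x',s}$ is defined (namely $B^N(x',2s)$, or $B^N(x,3r)$ if $s>r$) and in the ball on which $A_{y',t}$ is defined (namely $B^N(y',2t)$, or $B^N(x,3r)$ if $t>r$); checking this uses only $|x'-y'|\le t$, $s\le t\le2s$, $|x'-x|\le r$, and — in the mixed case $s\le r<t$ — the fact that then $s>r/2$, so the radius $3r$ of the global ball is comparable to $s$. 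On $B$, both similarities lie within $O(\varepsilon)\min\{\|\cdot\|\}\,s$ of $f$, hence of each other. Evaluating at the center of $B$ and at center-plus-$\tfrac{s}{2}$-times-unit-vectors (all inside the relevant balls), and using that a similarity maps segments of length $\ell$ to segments of length $\|\cdot\|\,\ell$, first yields $\|S'_{x',s}\|\sim\|S'_{y',t}\|$ (for $\varepsilon$ small), and then, after dividing the pointwise bound on the difference by $s/2$, yields a bound on the difference of linear parts by $C(N)\,\varepsilon\,\min\{\|S'_{x',s}\|,\|S'_{y',t}\|\}$. Restricting to $\RR^n$ (which only decreases the norm of the difference and preserves the scaling factors) gives $\|A'_{x',s}-A'_{y',t}\|\le C(N)\,\varepsilon\,\min\{\|A'_{x',s}\|,\|A'_{y',t}\|\}$. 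Finally, choosing $\varepsilon=\varepsilon(\tau,N)$ small enough that $C(N)\varepsilon\le\tau$, that $3\varepsilon\le\tau$, and that the norm comparabilities above are valid, makes $\mathcal{A}$ a $\tau$-compatible family, completing the proof. The only genuinely delicate bookkeeping is the overlap geometry near scales $s\approx r$, which is precisely why it is convenient to use the single global similarity $S_0$ on $B^N(x,3r)$ for all scales $s>r$ rather than scale-localized similarities there.
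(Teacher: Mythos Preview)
Your proof is correct and follows essentially the same route as the paper: apply Lemma~\ref{l:Sxr} on nested balls to produce similarities, restrict them to $\RR^n$, stabilize at large scales with a single fixed map, and verify compatibility by comparing two nearby similarities on a common sub-ball. One minor point where your write-up is actually cleaner than the paper's: you observe directly that restricting a similarity of $\RR^N$ to $\RR^n$ preserves the operator norm of its linear part (since $|S'v|=\|S'\|\,|v|$ for every $v$), whereas the paper routes the comparison $\|S'_{x,r}\|\lesssim\|A'_{x,r}\|$ through the weak quasisymmetry of $f$ and the relation between $\lambda_1$ and $\lambda_n$; your observation makes that detour unnecessary.
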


\begin{proof}[Proof of Theorem \ref{t:qsaa} / Theorem \ref{t:qsaa2}] It suffices to establish the theorem with $Y=B^N(0,3)$, $x=0$ and $r=3$. Let $N\geq 2$ and $\tau>0$ be given, and fix $0<\tau_*\leq 1$ to be specified below. Without loss of generality, we shall assume that $\tau\leq 1$. Assume that $f:B^N(0,3)\rightarrow\RR^N$ is quasisymmetric and $\tH_f(B^N(0,3))\leq \tau_*$. Note that $B^N(x,r)\subset B^N(0,3)$ for all $x\in B^n(0,1)$ and $r\leq 2=\diam B^n(0,1)$. Let $\delta>0$ be the constant from Lemma \ref{l:Sxr} corresponding to $\varepsilon = \min\{1/12,\tau/128\}$. Assume $\tau_*\leq \delta$. By Lemma \ref{l:Sxr}, for all $x\in B^n(0,1)$ and $0<r\leq 2$ we can find similarities $S_{x,r}:\RR^N\rightarrow\RR^N$ such that \begin{equation*}\sup_{y\in B^N(x,r)}|f(y)-S_{x,r}(y)|\leq \varepsilon \|S'_{x,r}\| r.\end{equation*} For each $x\in B^n(0,1)$ and $0<r\leq 2$, let $A_{x,r}:\RR^n\rightarrow\RR^N$ to be the restriction of $S_{x,r}$ to $\RR^n$. Then \begin{equation*}\begin{split} \|S'_{x,r}\|r &\leq 2\varepsilon \|S'_{x,r}\|r + \sup_{|y-x|=r}|f(y)-f(x)| \\
&\leq 2\varepsilon\|S'_{x,r}\|r+(1+\tau_*)\inf_{|y-x|=r}|f(y)-f(x)|\\
&\leq 2\varepsilon(2+\tau_*)\|S'_{x,r}\|+(1+\tau_*)\lambda_1(S_{x,r}').
\end{split}\end{equation*} Since $\tau_*\leq 1$ and $\varepsilon\leq 1/12$, we have $2\varepsilon(2+\tau_*)\leq 1/2$ and \begin{equation*} \|S'_{x,r}\| \leq 2(1+\tau_*) \lambda_1(S'_{x,r})\leq 4\lambda_1(S'_{x,r})\leq 4\|A'_{x,r}\|.\end{equation*} Hence, for all $x\in B^n(0,1)$ and $0<r\leq 2$, \begin{equation} \label{e:qsaa1} \sup_{y\in B^n(x,r)}|f(y)-A_{x,r}(y)| \leq 4\varepsilon \Anorm{x,r}\leq  \frac{\tau}{32}\Anorm{x,r}r.\end{equation} For all $x\in B^n(0,1)$ and $r>2=\diam B^n(0,1)$, assign $A_{x,r}=A_{0,2}$. Then \begin{equation*} \mathcal{A}=\{A_{x,r}:x\in B^n(0,1),r>0\}\end{equation*} is a family of affine maps over $B^n(0,1)$ that is stable at large scales. Using (\ref{e:qsaa1}), it readily follows (cf. the proof of Lemma \ref{l:stable} above) that for all $x\in B^n(0,1)$ and $r>0$, \begin{equation*} |f(y)-A_{x,r}(y)| \leq \frac{\tau}{32} \|A'_{x,r}\| r \quad\text{for all }y\in B^n(x,r)\cap B^n(0,1).\end{equation*} Thus, to show that $(f,B^n(0,1),\mathcal{A})$ is $\tau$-almost affine, it is enough to check $\mathcal{A}$ is $\tau$-compatible.

To that end, suppose that $x,z\in B^n(0,1)$ and $0<s\leq r\leq 2s$. We must show that \begin{equation}\label{e:qsaa2}\Adiff{x,r}{z,s}\leq \tau \Amin{x,r}{z,s}.\end{equation}
In view of (the proof of) Lemma \ref{l:stable} above, we may assume without loss of generality that $r\leq 2=\diam B^n(0,1)$.
Let $w=\frac{1}{4}x+\frac{3}{4}z$ so that $B^n(w,r/4)\subset B^n(x,r)\cap B^n(z,s)$. By (\ref{e:qsaa1}) (four times), \begin{align*} \Adiff{x,r}{z,s}\frac{r}{4} &= \sup_{|v-w|=r/4}\left| \left(A_{x,r}(v)-A_{z,s}(v)\right) - \left(A_{x,r}(w)-A_{z,s}(w)\right)\right|  \\ &\leq 2\,\frac{\tau}{32}\Anorm{x,r}r + 2\,\frac{\tau}{32} \Anorm{z,s}s \leq \frac{\tau}{8} \Amax{x,r}{z,s}r.\end{align*} That is, $\Adiff{x,r}{z,s}\leq (\tau/2)\Amax{x,r}{z,s}$.  Thus, since $\tau/2\leq 1/2$, \begin{equation*} \Amax{x,r}{z,s} \leq 2\Amin{x,r}{z,s}\end{equation*} and  (\ref{e:qsaa2}) holds. Therefore, the family $\mathcal{A}$ is $\tau$-compatible and, by the discussion above, the map $f$ is $\tau$-almost affine over $B^n(0,1)$.\end{proof}


\section{Extensions of almost affine maps I}
\label{s:extend}

The goal of this section is to prove the following extension theorems for almost affine maps with small constant; cf.~ bi-Lipschitz extensions of ``Reifenberg flat functions'' constructed by the first named author and Raanan Schul \cite[Theorem III]{hardsard}. Throughout this section, we freeze dimensions $1\leq n\leq N$. See \S\ref{ss:almost} above to recall the definition of an almost affine map.

\begin{theorem} \label{t:extend1} There exist constants $\varepsilon_{0}=\varepsilon_{0}(n)>0$ and $C_0=C_0(n)>1$ with the following property. If $1\leq n\leq N$, $E\subsetneq\RR^{n}$ is closed, and $(f,E,\mathcal{A})$ is $\varepsilon$-almost affine for some $\varepsilon\leq \varepsilon_0$, then the map $f:E\rightarrow\RR^N$ can be extended to a $C_0\varepsilon$-almost affine map $F:\RR^{n}\rightarrow \RR^{N}$ such that $(F,\RR^n,\mathcal{A}^+)$ is $C_0\varepsilon$-almost affine for some $C_0\varepsilon$-compatible family $\mathcal{A}^+$ of affine maps over $\RR^n$ extending $\mathcal{A}$, i.e. $F|_E=f$ and $A^+_{x,r}=A_{x,r}$ for all $x\in E$ and $r>0$.
\end{theorem}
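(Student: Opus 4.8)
The plan is to construct the extension $F$ by a Whitney-type patching of the affine maps $A_{x,r}$ over a Whitney decomposition of the open set $\RR^n\setminus E$. First I would fix a Whitney cube decomposition $\{Q_j\}$ of $U:=\RR^n\setminus E$, so that $\diam Q_j\sim\dist(Q_j,E)$, together with a smooth partition of unity $\{\varphi_j\}$ subordinate to a mild dilation of the $Q_j$, with the usual bounds $\|\grad\varphi_j\|\lesssim_n(\diam Q_j)^{-1}$ and bounded overlap. To each $Q_j$ I would associate a ``reference'' pair $(x_j,r_j)$ where $x_j\in E$ is a nearest point to $Q_j$ and $r_j\sim\diam Q_j$; the relevant affine map is then $A_{x_j,r_j}$. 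The extension is defined by
\begin{equation*}
F(z)=\sum_j\varphi_j(z)\,A_{x_j,r_j}(z)\quad\text{for }z\in U,\qquad F(z)=f(z)\text{ for }z\in E.
\end{equation*}
Because $\sum_j\varphi_j\equiv1$ on $U$, this is a finite convex-combination-of-affine-maps on each cube, hence smooth away from $E$; continuity up to $E$ and the matching $F|_E=f$ follow from the $\varepsilon$-almost affine estimate $\sup_{E\cap B(x,r)}|f-A_{x,r}|\le\varepsilon\|A'_{x,r}\|r$ together with H\"older continuity of $f|_E$ (Lemma~\ref{l:Holder}), once $\varepsilon_0<\hat\varepsilon$.

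Next I would define the extended family $\mathcal{A}^+$: set $A^+_{x,r}=A_{x,r}$ for $x\in E$ (as required), and for $x\in U$ choose a Whitney cube $Q_{j(x)}\ni x$ and declare $A^+_{x,r}:=A_{x_{j(x)},s}$ where $s\sim\max\{r,\diam Q_{j(x)}\}$ — i.e. when $r$ is small compared to $\dist(x,E)$ one uses the reference affine map of the local Whitney cube, and when $r\gtrsim\dist(x,E)$ one promotes to an affine map anchored at a point of $E$ at the comparable scale. The bulk of the work is then to verify the two defining inequalities for $(F,\RR^n,\mathcal{A}^+)$ with constant $C_0\varepsilon$. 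The approximation inequality $\sup_{B^n(x,r)}|F-A^+_{x,r}|\lesssim_n\varepsilon\|(A^+_{x,r})'\|r$ is checked cubewise: on each Whitney cube $Q_j$ meeting $B^n(x,r)$, write $F-A^+_{x,r}=\sum_{k}\varphi_k(A_{x_k,r_k}-A^+_{x,r})$ (sum over the $O(1)$ neighbors $Q_k$ of $Q_j$), and estimate each $|A_{x_k,r_k}(z)-A^+_{x,r}(z)|$ using the compatibility estimates of Lemma~\ref{l:main-est} and the affine-comparison estimate of Lemma~\ref{l:post-est}; the key geometric input is that all the relevant pairs $(x_k,r_k)$, $(x_{j},r_j)$ and the anchor of $A^+_{x,r}$ have comparable scales and centers within a bounded multiple of that scale, so the function $\tau$ appearing in those lemmas is $O(1)$ and one may invoke the $\lesssim_a$ forms \eqref{e:pre-c}, \eqref{e:pre-d}, \eqref{e:post-b}. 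For $z\in E$ one instead uses the almost-affine bound on $f$ directly. One must also check $\|(A^+_{x,r})'\|\sim_n\|A'_{x_j,r_j}\|$ for the local cube so the normalization in the target inequality is consistent, which again follows from Lemma~\ref{l:main-est}\eqref{e:pre-d}.

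After that I would verify $\varepsilon$-compatibility of $\mathcal{A}^+$ (up to the constant $C_0$): given $x,y$ and $r\sim s$ with $|x-y|\le\max\{r,s\}$, one distinguishes the cases (i) both $x,y\in E$ — this is the hypothesis on $\mathcal{A}$; (ii) $x\in E$, $y\in U$ — then either $r\gtrsim\dist(y,E)$, in which case $A^+_{y,s}$ is anchored at a point of $E$ at scale $\sim s$ and one uses compatibility of $\mathcal{A}$ on $E$, or $r\lesssim\dist(y,E)$, which cannot happen since $|x-y|\le\max\{r,s\}$ forces $\dist(y,E)\lesssim r$; (iii) $x,y\in U$ — then the relevant Whitney cubes are neighbors or the scales force a comparable anchor, and one chains through a bounded number of compatibility estimates from Lemma~\ref{l:main-est}. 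Finally, assembling these yields that $F$ is $C_0\varepsilon$-almost affine with the stated family, and smoothness of $F$ on $U$ is automatic from the smoothness of the $\varphi_j$. \textbf{The main obstacle} I anticipate is the bookkeeping in case (iii) and in the transition region $r\sim\dist(x,E)$: one must set up the assignment $x\mapsto(x_{j(x)},r)$ carefully enough that at \emph{every} scale $r$ the anchor point and scale vary in a controlled (Lipschitz-in-$\log r$) way, so that all invocations of Lemmas~\ref{l:main-est} and~\ref{l:post-est} have $\tau$ bounded by an absolute constant; getting a clean, case-free definition of $\mathcal{A}^+$ that makes this transparent is the delicate part, and it is likely cleanest to define $A^+_{x,r}$ using a single dyadic-scale Whitney-type rule rather than ad hoc cube choices.
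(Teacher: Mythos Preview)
Your Whitney patching construction of $F$ is exactly right and coincides with the paper's Definition~\ref{d:extend}. The genuine gap is in your definition of $\mathcal{A}^+$ at \emph{small} scales, i.e.\ for $x\in U=\RR^n\setminus E$ and $r\ll d(x):=\dist(x,E)$. You propose to set $A^+_{x,r}=A_{x_{j(x)},s}$ with $s\sim\diam Q_{j(x)}\sim d(x)$, a fixed member of $\mathcal{A}$. But then already at the center $z=x$ one has
\[
F(x)-A^+_{x,r}(x)=\sum_{k}\phi_k(x)\bigl(A_{Q_k}(x)-A_{Q_{j(x)}}(x)\bigr),
\]
and by Lemma~\ref{l:AQest} each summand is of size $\sim\varepsilon\|A'_{Q_{j(x)}}\|\diam Q_{j(x)}\sim\varepsilon\|(A^+_{x,r})'\|\,d(x)$, with no mechanism for cancellation. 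The almost-affine condition demands $|F(x)-A^+_{x,r}(x)|\le C_0\varepsilon\|(A^+_{x,r})'\|\,r$, which fails once $r\ll d(x)$. The obstacle you flagged (bookkeeping at $r\sim d(x)$) is not the real one; the construction simply does not approximate $F$ to the correct order at fine scales.

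The paper's fix is to exploit that $F$ is $C^\infty$ on $U$: for $x\in U$ and $0<r<d(x)/2$ one takes $A^+_{x,r}(y)=F(x)+DF(x)(y-x)$, the first-order Taylor polynomial. A second-derivative estimate (Lemma~\ref{l:Omega}, obtained by differentiating the Whitney formula twice and using $\sum_Q D\phi_Q=\sum_Q D^2\phi_Q=0$) gives $\|D^2F\|\lesssim_n\varepsilon\|DF\|/d(x)$ locally, hence
\[
|F(z)-A^+_{x,r}(z)|\lesssim_n \varepsilon\,\frac{r}{d(x)}\,\|DF(x)\|\,r\le \varepsilon\,\|(A^+_{x,r})'\|\,r,
\]
which is exactly what is needed. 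Compatibility at small scales then reduces to $\|DF(x)-DF(y)\|\lesssim_n\varepsilon(r/d(x))\|DF(x)\|$, again from the $D^2F$ bound. For $r\ge d(x)/2$ one sets $A^+_{x,r}=A_{x',r}$ (essentially your ``promote to an anchor in $E$'' idea), and the bridge between the two regimes is Lemma~\ref{l:far1}, which compares $DF(y)$ to $A'_{x',r}$ via \eqref{e:Cr1}. Your outline becomes correct once you replace the small-scale assignment by this Taylor-polynomial choice.
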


\begin{theorem}\label{t:extend2} For all $1\leq H<\infty$ and $1<\rho\leq 2$, there exist $\varepsilon_1=\varepsilon_1(n,H,\rho)>0$ and $C_1=C_1(n)>0$ with the following property. If $1\leq n\leq N$, $E\subsetneq \RR^n$ is closed, $f:E\rightarrow\RR^N$ is nonconstant, and $(f,E,\mathcal{A})$ is $\varepsilon$-almost affine for some $\varepsilon\leq \varepsilon_1$ and some $\mathcal{A}$ such that \begin{equation}\lambda_n(A_{x,r}')\leq H\lambda_1(A_{x,r}')\quad\text{for all }x\in E\text{ and }r>0,\end{equation} then $f$ can be extended to a $C_1\varepsilon$-almost affine map $F:\RR^n\rightarrow\RR^N$ such that $(F,\RR^n,\mathcal{A}^+)$ is $C_1\varepsilon$-almost affine for some $C_1\varepsilon$-compatible family $\mathcal{A}^+$ of affine maps over $\RR^n$ extending $\mathcal{A}$, i.e. $F|_E=f$ and $A_{x,r}^+=A_{x,r}$ for all $x\in E$ and $r>0$. Moreover,  the extension $F$ is weakly $\rho H$-quasisymmetric and the extension $\mathcal{A}^+=\{A_{x,r}:x\in\RR^n,r>0\}\supset \mathcal{A}$ satisfies \begin{equation}\lambda_n(A_{x,r}')\leq \frac{\rho+1}{2}H\lambda_1(A_{x,r}')\quad\text{for all }x\in \RR^n\text{ and }r>0.\end{equation}\end{theorem}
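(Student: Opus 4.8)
\emph{Overall strategy.} The plan is to derive Theorem~\ref{t:extend2} from Theorem~\ref{t:extend1} by adding two verifications. First, after shrinking $\varepsilon_1$ so that $\varepsilon_1\leq\varepsilon_0(n)$, apply Theorem~\ref{t:extend1} to $(f,E,\mathcal{A})$: this produces an extension $F:\RR^n\rightarrow\RR^N$ that is $C_0\varepsilon$-almost affine, together with a $C_0\varepsilon$-compatible family $\mathcal{A}^+$ of affine maps over $\RR^n$ with $A_{x,r}^+=A_{x,r}$ for all $x\in E$ and $F|_E=f$. Take $C_1=C_1(n)$ to be a fixed absolute multiple of $C_0(n)$ (large enough for the estimates below), so that all $H$- and $\rho$-dependence is pushed into $\varepsilon_1$. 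It then remains to verify, for $\varepsilon\leq\varepsilon_1(n,H,\rho)$, that (i) $\lambda_n((A^+_{x,r})')\leq\frac{\rho+1}{2}H\,\lambda_1((A^+_{x,r})')$ for every $x\in\RR^n$ and $r>0$, and (ii) $F$ is weakly $\rho H$-quasisymmetric. Note first that $\mathcal{A}$, hence $\mathcal{A}^+$, is everywhere nondegenerate: since $f$ is nonconstant the approximation bound forces $\|A'_{x,r}\|>0$ for some $(x,r)$, and if some $A'_{y,s}=0$ then by the eigenvalue hypothesis $A_{y,s}$ is constant, and $\varepsilon$-compatibility (chained through Lemma~\ref{l:main-est}) would spread this over all of $\mathcal{A}$, contradicting nonconstancy of $f$.

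\emph{Step (i): the eigenvalue bound on $\mathcal{A}^+$.} Fix $x\in\RR^n$ and $r>0$. If $x\in E$ the hypothesis gives $\lambda_n((A^+_{x,r})')\leq H\lambda_1((A^+_{x,r})')\leq\frac{\rho+1}{2}H\lambda_1((A^+_{x,r})')$ since $\rho>1$. If $x\notin E$ and $\dist(x,E)\leq r$, choose $\xi\in E$ with $|x-\xi|=\dist(x,E)\leq r$; since $|x-\xi|\leq r$ and $r/r=1\in[1/2,2]$, $C_0\varepsilon$-compatibility of $\mathcal{A}^+$ gives $\|(A^+_{x,r})'-A'_{\xi,r}\|\leq C_0\varepsilon\|A'_{\xi,r}\|$, so $\|(A^+_{x,r})'\|\leq(1+C_0\varepsilon)\|A'_{\xi,r}\|$ and $\lambda_1((A^+_{x,r})')\geq(H^{-1}-C_0\varepsilon)\|A'_{\xi,r}\|$, whence $\lambda_n((A^+_{x,r})')/\lambda_1((A^+_{x,r})')\leq(1+C_0\varepsilon)H/(1-C_0\varepsilon H)$, which is $\leq\frac{\rho+1}{2}H$ once $\varepsilon_1$ is small enough in terms of $n,H,\rho$. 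The remaining case $x\notin E$, $0<r<\dist(x,E)$, is the one where Theorem~\ref{t:extend1} cannot be used purely as a black box, and is the chief technical obstacle: one must inspect the construction in \S\ref{s:extend} and record that below the Whitney scale $\dist(x,E)$ the affine map $A^+_{x,r}$ is, up to a $(1+O(\varepsilon))$-perturbation of its linear part, independent of $r$ and built as a convex combination of maps $A_{y,\dist(x,E)}$ with $y\in E$ and $|x-y|\lesssim\dist(x,E)$. Granting this, $A^+_{x,\dist(x,E)}$ is covered by the previous case, and a convex combination (or $C\varepsilon$-perturbation) of affine maps that are pairwise $C\varepsilon$-close and each satisfy $\lambda_n\leq\frac{(1+C\varepsilon)H}{1-C\varepsilon H}\lambda_1$ again satisfies $\lambda_n\leq\frac{(1+C\varepsilon)H}{1-C\varepsilon H}\lambda_1\leq\frac{\rho+1}{2}H\lambda_1$ for $\varepsilon$ small. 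This proves (i) and also yields the final displayed eigenvalue conclusion of the theorem.

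\emph{Step (ii): weak quasisymmetry.} Let $x,a,y\in\RR^n$ with $\tau:=|x-a|\leq\sigma:=|y-a|$ (both positive, else the inequality is trivial). Comparing $F$ with $A^+_{a,\tau}$ on $B^n(a,\tau)$ gives $|F(x)-F(a)|\leq(1+2C_1\varepsilon)\|(A^+_{a,\tau})'\|\tau$. Comparing $F$ with $A^+_{a,\sigma}$ on $B^n(a,\sigma)$, using $|A^+_{a,\sigma}(y)-A^+_{a,\sigma}(a)|\geq\lambda_1((A^+_{a,\sigma})')\sigma$ together with the bound from (i), gives $|F(y)-F(a)|\geq\big(\tfrac{2}{(\rho+1)H}-2C_1\varepsilon\big)\|(A^+_{a,\sigma})'\|\sigma$. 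Finally, since $\tau\leq\sigma$, Lemma~\ref{l:main-est}~\eqref{e:pre-b} applied with $x=y=a$, $r=\tau$, $s=\sigma$ (so $\tau(a,\tau,a,\sigma)=\sigma/\tau$) gives $\|(A^+_{a,\tau})'\|\tau\leq\big(1+T_{C_1\varepsilon}(\sigma/\tau)C_1\varepsilon\big)\tfrac{\tau}{\sigma}\,\|(A^+_{a,\sigma})'\|\sigma\leq(1+C\varepsilon)\|(A^+_{a,\sigma})'\|\sigma$, where one checks that $\big(1+T_{C_1\varepsilon}(u)C_1\varepsilon\big)u^{-1}=u^{-1}+C_1\varepsilon(2\log_2u+1)u^{-(1-2\log_2(1+C_1\varepsilon))}$ is bounded by $1+C\varepsilon$ uniformly over $u\geq1$ once $2\log_2(1+C_1\varepsilon)<\tfrac12$. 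Dividing the three estimates,
\begin{equation*}
\frac{|F(x)-F(a)|}{|F(y)-F(a)|}\leq\frac{(1+2C_1\varepsilon)(1+C\varepsilon)}{\tfrac{2}{(\rho+1)H}-2C_1\varepsilon}\xrightarrow[\varepsilon\to0]{}\frac{(\rho+1)H}{2}<\rho H,
\end{equation*}
the strict inequality holding since $\rho>1$; hence for $\varepsilon_1=\varepsilon_1(n,H,\rho)$ small enough, $H_F(\RR^n)\leq\rho H$.

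\emph{Conclusion.} Since $F$ is $C_1\varepsilon$-almost affine with $C_1\varepsilon<\hat\varepsilon$ it is continuous (indeed locally H\"older) by Lemma~\ref{l:Holder}, and it is nonconstant because $F|_E=f$; by Step (ii), $H_F(\RR^n)<\infty$, so Lemma~\ref{l:wqs-crit} shows $F$ is a topological embedding, hence weakly $\rho H$-quasisymmetric. Combined with the $C_1\varepsilon$-almost affine structure and the extending $C_1\varepsilon$-compatible family $\mathcal{A}^+\supset\mathcal{A}$ furnished by Theorem~\ref{t:extend1}, and with the eigenvalue bound from Step (i), this gives every assertion of the theorem. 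The main work, as indicated, is the bookkeeping of the below-Whitney-scale behaviour of $\mathcal{A}^+$ needed in Step (i); Steps (ii) and the assembly are routine given that bound and the slack afforded by $\rho>1$.
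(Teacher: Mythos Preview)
Your proposal is correct and follows the same overall architecture as the paper: apply Theorem~\ref{t:extend1}, then verify the singular-value bound on $\mathcal{A}^+$, then the bound $H_F(\RR^n)\leq\rho H$, and conclude via Lemmas~\ref{l:Holder} and~\ref{l:wqs-crit}.

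Two comments on the details. In Step~(i), the paper's case split is at $r=d(x)/2$ rather than $r=d(x)$: for $r\geq d(x)/2$ one has $A^+_{x,r}=A_{x',r}\in\mathcal{A}$ by Definition~\ref{d:extend}(4), so the hypothesis applies directly; for $r<d(x)/2$ one has $(A^+_{x,r})'=DF(x)$, and the paper invokes Lemma~\ref{l:far1}~\eqref{e:badDFbound} to get $\|DF(x)-A'_{x',d(x)/2}\|\lesssim_n\varepsilon\|DF(x)\|$, from which the bound follows by the same perturbation arithmetic you sketch. This is exactly the ``bookkeeping of the below-Whitney-scale behaviour'' you flagged; your convex-combination-plus-$O(\varepsilon)$ description is what that lemma packages. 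In Step~(ii), the paper avoids the two-scale comparison: it uses the \emph{single} map $A_{a,\sigma}$ at the larger radius $\sigma=|y-a|$ for both $|F(x)-F(a)|$ and $|F(y)-F(a)|$, exploiting that $x\in B^n(a,\sigma)$ as well. This sidesteps any appeal to $T_\varepsilon$ and gives the ratio $\leq\frac{\rho+1}{2}H+O_H(\varepsilon)$ directly. Your two-scale route also works; the uniform bound $(1+T_{C_1\varepsilon}(u)C_1\varepsilon)u^{-1}\leq 1+C\varepsilon$ for $u\geq1$ is valid once $2\log_2(1+C_1\varepsilon)<1$, since $(2\log_2 u+1)u^{-\alpha}$ is bounded on $[1,\infty)$ for any fixed $\alpha>0$.
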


We split the proofs of Theorems \ref{t:extend1} and \ref{t:extend2} into several steps. First, to each map $f:E\rightarrow\RR^N$ defined on a closed set $E\subsetneq\RR^n$ and each family $\mathcal{A}$ of affine maps over $E$, we use a Whitney decomposition of $\RR^n\setminus E$ together with the maps in $\mathcal{A}$ to extend $f$ to a map $F:\RR^n\rightarrow\RR^N$ and extend $\mathcal{A}$ to a family $\mathcal{A}^+$ of affine maps over $\RR^n$  (see Definition \ref{d:extend}).
Second, we make a series of estimates on $F$ and $\mathcal{A}^+$ under the assumption that $(f,E,\mathcal{A})$ is $\varepsilon$-almost affine and $\varepsilon$ is small (see Hypothesis \ref{h:setup} and Lemmas \ref{l:AQest} -- \ref{l:Omega}). Third, we combine these estimates and prove Theorem \ref{t:extend1}. Finally, at the end of the section, we derive Theorem \ref{t:extend2} form Theorem \ref{t:extend1}.

\begin{definition}[Extensions of $f$ and $\mathcal{A}$] \label{d:extend}
Let $1\leq n\leq N$, $E\subsetneq\RR^n$ closed, $f:E\rightarrow\RR^N$ and $\mathcal{A}$ a family of affine maps over $E$ be given. For all $x\in \RR^{n}$, pick $x'\in E$ such that
\begin{equation*} d(x):=\dist(x,E)=|x-x'|.\end{equation*}
(1) (Whitney cubes)
Let $\mathcal{W}$ be a Whitney decomposition of $\RR^n\setminus E$, constructed by taking $\mathcal{W}$ to be the collection of all maximal almost disjoint closed dyadic cubes $Q\subset \RR^{n}$ such that $3Q\cap E=\emptyset$, where $\lambda Q$ denotes the concentric cube about $Q$ that is obtained by dilating $Q$ by a factor of $\lambda>0$.  The collection $\mathcal{W}$ of cubes satisfies the following properties:
 \begin{enumerate}
\item[(a)] $\bigcup_{Q\in \mathcal{W}}Q=\bigcup_{Q\in \mathcal{W}}2Q=\RR^{n}\setminus E$;
\item[(b)] $(1/\sqrt{n})\diam Q \leq d(x)\leq 4\diam Q$ for all $Q\in\mathcal{W}$ and for all $x\in Q$;
\item[(c)] $(1/2\sqrt{n})\diam Q\leq d(y)\leq (9/2)\diam Q$ for all $Q\in\mathcal{W}$ and for all $y\in 2Q$;
\item[(d)] if $Q,R\in\mathcal{W}$ and $2Q\cap 2R\neq\emptyset$, then $\diam R\leq 9\sqrt{n}\diam Q$;
\item[(e)] $\sum_{Q\in W} \chi_{2Q}(x)\lesssim_n 1$ for all $x\in\RR^n$.
\end{enumerate}
(2) (partition of unity) Let $\Phi=\{\phi_{Q}:Q\in\mathcal{W}\}$ be a smooth partition of unity subordinate to $2\mathcal{W}=\{2Q:Q\in\mathcal{W}\}$, i.e. a collection of $C^\infty$ functions $\phi_Q:\RR^n\rightarrow[0,1]$ such that
\begin{enumerate}
\item[(a)] $0\leq \phi_{Q}\leq \chi_{2Q}$ and $|\partial^\alpha\phi_{Q}|\lesssim_{n,|\alpha|} (\diam Q)^{-|\alpha|}\chi_{2Q}$ for all $Q\in\mathcal{W}$ and each multi-index $\alpha$ of order $|\alpha|\geq 1$; and,
\item[(b)] $\sum_{Q\in\mathcal{W}} \phi_Q\equiv \chi_{\RR^n\setminus E}$ and $\sum_{Q\in\mathcal{W}} \partial^\alpha\phi_Q\equiv 0$ for each multi-index $\alpha$ of order $|\alpha|\geq 1$.
\end{enumerate}
(3) (extension of $f$) For all $Q\in\mathcal{W}$, choose some $w_Q\in Q$ such that $|w_Q-w'_Q|=\inf_{x\in Q}d(x)$. Set $z_Q:=w'_Q$, $r_Q:=\diam Q$ and $A_Q:=A_{z_Q,r_Q}$.  Define $F:\RR^n\rightarrow\RR^N$ by the rule
\begin{equation*}
F(x)=\left\{\begin{array}{cl} f(x) &  \text{if }x\in E, \\ \sum_{Q\in\mathcal{W}} \phi_Q(x)A_Q(x) & \text{if } x\in\RR^n\setminus E.\end{array}\right.
\end{equation*}
(4) (extension of $\mathcal{A}$) Construct $\mathcal{A}^+=\{A_{x,r}:x\in\RR^n,r>0\}\supset \mathcal{A}$ by defining the maps $A_{x,r}:\RR^n\rightarrow\RR^N$ at each $x\in\RR^n\setminus E$ as follows. For all $0<r<d(x)/2$, define $A_{x,r}$ to be the first-order Taylor approximation of $F$ at $x$, i.e.~ the affine map given by the rule \begin{equation*} A_{x,r}(y)=F(x)+DF(x)(y-x)\quad\text{for all }y\in\RR^n.\end{equation*} For all $r\geq  d(x)/2$, define $A_{x,r}=A_{x',r}$.
\end{definition}

\begin{hypothesis}\label{h:setup} Let $E\subsetneq\RR^n$ be closed, let $f:E\rightarrow\RR^N$, and let $\mathcal{A}$ be a family of affine maps over $E$. Define $\{x'\}_{x\in\RR^n}$, $\mathcal{W}$, $\Phi$, $\{w_Q\}_{Q\in\mathcal{W}}$, $\{z_Q\}_{Q\in\mathcal{W}}$, $\{r_Q\}_{Q\in\mathcal{W}}$, $\{A_Q\}_{Q\in\mathcal{W}}$, $F:\RR^n\rightarrow\RR^N$ and $\mathcal{A}^+\supset\mathcal{A}$ by Definition \ref{d:extend}. Assume that $(f,E,\mathcal{A})$ is $\varepsilon$-almost affine for some $0<\varepsilon\leq\varepsilon_0<\sqrt{2}-1$.\end{hypothesis}

\begin{lemma} \label{l:AQest} Assume Hypothesis \ref{h:setup}. If $Q,R\in\mathcal{W}$ and $2Q\cap 2R\neq\emptyset$, then
$\|A'_Q\|\sim_n \|A'_R\|$, $\|A_{Q}'-A_R'\|\lesssim_n \varepsilon \|A'_Q\|$ and $|A_Q(x)-A_R(x)| \lesssim_n \varepsilon \|A_Q'\|\diam Q$ for all $x\in 2Q\cup 2R$.
\end{lemma}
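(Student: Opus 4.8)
The plan is to reduce both norm comparisons and the pointwise bound to the estimates already available for $\varepsilon$-compatible families of affine maps (Lemma~\ref{l:main-est}) and for the affine maps approximating an almost affine map (Lemma~\ref{l:post-est}). The only genuine content is to check that the data $(z_Q,r_Q)$ and $(z_R,r_R)$ attached to two Whitney cubes whose doubles intersect are comparable in the sense those two lemmas require; once this is done, the conclusions follow by a single application of each lemma with a dimensional choice of the auxiliary parameter $a$.

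First I would record the Whitney geometry. Since $2Q\cap 2R\neq\emptyset$, property~(d) of $\mathcal{W}$ applied in both directions gives $\diam Q\sim_n\diam R$, hence $r_Q\sim_n r_R$. Moreover $w_Q\in Q\subset 2Q$ and $w_R\in R\subset 2R$, and $2Q\cap 2R\neq\emptyset$ forces $|w_Q-w_R|\leq\diam(2Q)+\diam(2R)\lesssim_n\diam Q$. By the choice of $w_Q$ and property~(b), $|w_Q-z_Q|=\inf_{x\in Q}d(x)\leq 4\diam Q$, and likewise $|w_R-z_R|\leq 4\diam R\lesssim_n\diam Q$; adding up, $|z_Q-z_R|\lesssim_n\diam Q=r_Q$. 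Finally, for $x\in 2Q$ one has $|x-z_Q|\leq\diam(2Q)+|w_Q-z_Q|\leq 6\diam Q$, and for $x\in 2R$ similarly $|x-z_R|\leq 6\diam R\lesssim_n r_Q$, so every $x\in 2Q\cup 2R$ lies within $\lesssim_n\max\{r_Q,r_R\}$ of $z_Q$ or of $z_R$.

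With these bounds in hand, set $\tau:=\tau(z_Q,r_Q,z_R,r_R)$ as in \eqref{e:tau}; the estimates $r_Q\sim_n r_R$ and $|z_Q-z_R|\lesssim_n r_Q$ give $\tau\leq a$ for some $a=a(n)\geq 1$, while $\varepsilon\leq\varepsilon_0<\sqrt2-1<1\leq a$. Since $z_Q,z_R\in E$ and $\mathcal{A}$ is $\varepsilon$-compatible, Lemma~\ref{l:main-est}~\eqref{e:pre-c} and \eqref{e:pre-d} yield $\|A_Q'-A_R'\|=\|A_{z_Q,r_Q}'-A_{z_R,r_R}'\|\lesssim_n\varepsilon\min\{\|A_Q'\|,\|A_R'\|\}\leq\varepsilon\|A_Q'\|$ and $\max\{\|A_Q'\|,\|A_R'\|\}\lesssim_n\min\{\|A_Q'\|,\|A_R'\|\}$, i.e.\ $\|A_Q'\|\sim_n\|A_R'\|$. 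For the pointwise estimate, fix $x\in 2Q\cup 2R$; by the last geometric observation $\dist(x,\{z_Q,z_R\})\leq a\max\{r_Q,r_R\}$ after enlarging $a$ if necessary, and also $|z_Q-z_R|\leq a\max\{r_Q,r_R\}$ and $\tau\leq a$, so Lemma~\ref{l:post-est}~\eqref{e:post-b} applies with $z=x$ and gives $|A_Q(x)-A_R(x)|\lesssim_n\varepsilon\min\{\|A_Q'\|,\|A_R'\|\}\max\{r_Q,r_R\}\lesssim_n\varepsilon\|A_Q'\|\diam Q$.

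I do not expect a real obstacle here: the work is entirely the bookkeeping of the first step, translating the Whitney constants into $r_Q\sim_n r_R$, $|z_Q-z_R|\lesssim_n r_Q$, and the containment of $2Q\cup 2R$ in a controlled ball about $z_Q$ (or $z_R$). After that, the three assertions are immediate consequences of Lemmas~\ref{l:main-est} and~\ref{l:post-est} with a dimensional choice of $a$.
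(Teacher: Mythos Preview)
Your proof is correct and follows essentially the same approach as the paper: both establish the Whitney bookkeeping $r_Q\sim_n r_R$, $|z_Q-z_R|\lesssim_n\diam Q$, and $\dist(x,\{z_Q,z_R\})\lesssim_n\diam Q$ for $x\in 2Q\cup 2R$, and then apply Lemma~\ref{l:main-est}~\eqref{e:pre-c},\eqref{e:pre-d} and Lemma~\ref{l:post-est}~\eqref{e:post-b} with a dimensional choice of $a$. The only cosmetic difference is that the paper routes the estimate for $|z_Q-z_R|$ through a point $y\in 2Q\cap 2R$, whereas you go directly through $w_Q$ and $w_R$.
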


\begin{proof} Let $Q,R\in\mathcal{W}$ and assume that there exists $y\in 2Q\cap 2R$. Recall that $\diam Q\sim_n \diam R$ by Definition \ref{d:extend}.1(d). It follows that \begin{equation*}\begin{split}
|z_{Q}-z_{R}| &\leq |z_Q-w_Q|+|w_Q-y|+|y-w_R|+|w_R-z_R|\\
& \leq 4\diam Q + \diam 2Q+\diam 2R+4\diam R \lesssim_n \diam Q.
\end{split}\end{equation*} Therefore, since $\varepsilon\lesssim 1$ and
\begin{equation*}
\tau(z_Q,r_Q,z_R,r_R)
=\frac{\max\{\diam Q,\diam R,2|z_Q-z_R|\}}{\min\{\diam Q,\diam R\}} \lesssim_n 1, \end{equation*}  we have $\Adiff{Q}{R}\lesssim_n \varepsilon \Amin{Q}{R}$ and $\Anorm{Q}\sim_n\Anorm{R}$  by Lemma \ref{l:main-est} (\ref{e:pre-c}) and (\ref{e:pre-d}). To continue, observe that for all $x\in 2R$, \begin{equation*} |x-z_Q| \leq |x-y|+|y-w_Q|+|w_Q-z_Q| \leq \diam 2R +\diam 2Q + 4\diam Q \lesssim_n \diam Q. \end{equation*} Similarly, $|x-z_R|\lesssim_n \diam R\sim_n \diam Q$ for all $x\in 2Q$. Hence $\dist(x,\{z_Q,z_R\})\lesssim_n \diam Q$ for all $x\in 2Q\cup 2R$. We conclude that $|A_Q(x)-A_R(x)| \lesssim_n \varepsilon \Anorm{Q}\diam Q$ for all $x\in 2Q\cup 2R$ by Lemma \ref{l:post-est} (\ref{e:post-b}).
\end{proof}

\begin{lemma} \label{l:far1} Assume Hypothesis \ref{h:setup}.
Let $x\in \RR^{n}$ and $r>0$. If $r\geq  d(x)/2$, then
\begin{equation} \label{e:far-a}
\|DF(y)-A_{x',r}'\|\lesssim_n T_\varepsilon\left(C(n)\frac{r}{ d(y)}\right) \varepsilon\Anorm{x',r} \quad\text{for all }y\in B^n(x,2r)\setminus E,\end{equation}
where $C(n)>0$ denotes some constant depending on at most $n$, and $T_\varepsilon:[1,\infty)\rightarrow[1,\infty)$ was defined above Lemma \ref{l:main-est}. For all $a\geq 1$, there exists $\tilde\varepsilon=\tilde\varepsilon(n,a)>0$ such that if, in addition, some $y_0\in B^n(x,2r)\setminus E$ satisfies $r\leq a d(y_{0})$ and $\varepsilon\leq\tilde\varepsilon$, then \begin{equation} \label{e:far-b} \max\{\|DF(y_0)\|,\Anorm{x',r}\} \leq 2 \min\{\|DF(y_0)\|,\Anorm{x',r}\}\end{equation} and
\begin{equation}
\|DF(y_0)-A_{x',r}'\|\lesssim_{n,a} \varepsilon\min\{\|DF(y_0)\|,\Anorm{x',r}\}.
\label{e:badDFbound}
\end{equation}\end{lemma}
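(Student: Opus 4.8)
The plan is to establish \eqref{e:far-a} first by chaining together the estimates on $DF$ at Whitney scales, then derive \eqref{e:far-b} and \eqref{e:badDFbound} as consequences by choosing $\varepsilon$ small.

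\textbf{Step 1: Pointwise control of $DF$ by a single affine map $A_R$.} Fix $y\in B^n(x,2r)\setminus E$ and let $R\in\mathcal W$ be a Whitney cube containing $y$. On $\RR^n\setminus E$ we have $F=\sum_{Q}\phi_Q A_Q$, so using $\sum_Q\partial^\alpha\phi_Q\equiv 0$ for $|\alpha|\geq 1$ (Definition \ref{d:extend}.2(b)) I would write, for any fixed reference cube $R\ni y$,
\begin{equation*}
DF(y)=\sum_{Q}\bigl(\phi_Q(y)A_Q'+(A_Q(y)-A_R(y))\otimes\grad\phi_Q(y)\bigr)+\Bigl(\sum_Q\phi_Q(y)\Bigr)A_R' - A_R' \cdot(\text{correction}),
\end{equation*}
but more cleanly: $DF(y)-A_R' = \sum_Q \phi_Q(y)(A_Q'-A_R') + \sum_Q \grad\phi_Q(y)\,(A_Q(y)-A_R(y))^{T}$, since $\sum\phi_Q\equiv 1$ near $y$ and $\sum\grad\phi_Q\equiv 0$. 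Only cubes $Q$ with $y\in 2Q$ contribute, and for those, $2Q\cap 2R\neq\emptyset$, so Lemma \ref{l:AQest} gives $\|A_Q'-A_R'\|\lesssim_n\varepsilon\|A_R'\|$ and $|A_Q(y)-A_R(y)|\lesssim_n\varepsilon\|A_R'\|\diam Q$; combined with $|\grad\phi_Q(y)|\lesssim_n(\diam Q)^{-1}$, the bounded-overlap property Definition \ref{d:extend}.1(e), and $\diam R\sim_n\diam Q\sim_n d(y)$, this yields
\begin{equation*}
\|DF(y)-A_R'\|\lesssim_n\varepsilon\|A_R'\|,\qquad\text{hence also}\qquad \|DF(y)\|\sim_n\|A_R'\|.
\end{equation*}

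\textbf{Step 2: Compare $A_R'$ with $A_{x',r}'$ via Lemma \ref{l:main-est}.} Here $A_R=A_{z_R,r_R}$ with $z_R\in E$, $r_R=\diam R$, and $z_R$ is within $\lesssim_n d(y)\lesssim_n r$ of $x'$ (since $|z_R-y|\lesssim_n d(y)$, $|y-x|\leq 2r$, and $|x-x'|=d(x)\leq 2r$). Thus $\tau(z_R,r_R,x',r)\lesssim_n r/d(y)$, and since $\mathcal A$ is $\varepsilon$-compatible, Lemma \ref{l:main-est}\eqref{e:pre-a}--\eqref{e:pre-b} gives $\|A_R'-A_{x',r}'\|\leq T_\varepsilon(C(n)r/d(y))\,\varepsilon\,\min\{\|A_R'\|,\|A_{x',r}'\|\}$ and $\|A_R'\|\leq(1+T_\varepsilon(C(n)r/d(y))\varepsilon)\|A_{x',r}'\|$. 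Combining with Step 1 via the triangle inequality, and absorbing the $\lesssim_n\varepsilon\|A_R'\|\lesssim_n\varepsilon T_\varepsilon(\cdots)\|A_{x',r}'\|$ term (using $T_\varepsilon\geq 1$), I get \eqref{e:far-a} with the constant $C(n)$ adjusted.

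\textbf{Step 3: Deduce \eqref{e:far-b} and \eqref{e:badDFbound} for $y_0$ with $r\leq a\,d(y_0)$.} Under this extra hypothesis, $r/d(y_0)\leq a$, so $T_\varepsilon(C(n)r/d(y_0))\leq T_\varepsilon(C(n)a)$, which is bounded by a constant depending only on $n$ and $a$ once $\varepsilon\leq 1$ (recall $T_\varepsilon(t)=(2\log_2 t+1)t^{2\log_2(1+\varepsilon)}$ is increasing in both arguments and $t^{2\log_2(1+\varepsilon)}\leq t$ for $\varepsilon\leq 1$). Thus \eqref{e:far-a} at $y_0$ becomes $\|DF(y_0)-A_{x',r}'\|\lesssim_{n,a}\varepsilon\|A_{x',r}'\|$. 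Choosing $\tilde\varepsilon=\tilde\varepsilon(n,a)>0$ small enough that this constant times $\tilde\varepsilon$ is $\leq 1/2$ gives $\|DF(y_0)-A_{x',r}'\|\leq\frac12\|A_{x',r}'\|$, which by the triangle inequality yields $\frac12\|A_{x',r}'\|\leq\|DF(y_0)\|\leq\frac32\|A_{x',r}'\|$, hence \eqref{e:far-b}; and then replacing $\|A_{x',r}'\|$ by $\min\{\|DF(y_0)\|,\|A_{x',r}'\|\}$ (at the cost of a factor $2$) in the displayed bound gives \eqref{e:badDFbound}.

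\textbf{Main obstacle.} The one delicate point is bookkeeping in Step 1: one must be careful that the ``reference cube'' $R$ used to cancel the $\sum\grad\phi_Q$ terms actually contains $y$ (so that $\phi_Q(y)\neq 0$ forces $2Q\cap 2R\neq\emptyset$ and Lemma \ref{l:AQest} applies with the right comparability $\diam Q\sim_n d(y)$), and that all the implied constants genuinely depend only on $n$ — in particular that the number of overlapping cubes and the ratios $\diam Q/d(y)$ are controlled purely dimensionally. The rest is routine application of Lemmas \ref{l:main-est}, \ref{l:post-est}, and \ref{l:AQest} together with the standard Whitney/partition-of-unity properties recorded in Definition \ref{d:extend}.
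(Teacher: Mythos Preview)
Your proof is correct and follows essentially the same route as the paper. The only cosmetic difference is that you organize the estimate as a two-step triangle inequality $\|DF(y)-A_{x',r}'\|\leq\|DF(y)-A_R'\|+\|A_R'-A_{x',r}'\|$, whereas the paper writes $DF(y)-A_{x',r}'=\sum_{y\in 2Q}(A_Q(y)-A_R(y))\otimes D\phi_Q(y)+\phi_Q(y)(A_Q'-A_{x',r}')$ and bounds the two sums directly; both arguments use the same cancellation $\sum D\phi_Q=0$, the same reference cube $R\ni y$, Lemma~\ref{l:AQest} for the neighboring-cube terms, and Lemma~\ref{l:main-est} for the cross-scale comparison that produces the $T_\varepsilon(C(n)r/d(y))$ factor.
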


\begin{proof} Fix $x\in\RR^n$ and $r>0$ such that $r\geq d(x)/2$. To start we first establish an auxiliary inequality for the affine maps $A_Q$ such that $2Q$ intersects $B^n(x,2r)$.

Suppose that $Q\in\mathcal{W}$ satisfies $2Q\cap B^n(x,2r)\neq\emptyset$. Let $p\in2Q\cap B^n(x,2r)$. For all $z\in B^n(z_Q,\diam Q)$,
\begin{equation*}\begin{split} |z-x'| &\leq |z-z_Q|+|z_Q-w_Q|+|w_Q-p|+|p-x|+|x-x'| \\ &\leq \diam Q+ 4\diam Q+\diam 2Q+2r+ d(x)\leq 7\diam Q+4r.\end{split}\end{equation*}
Since $p\in 2Q\cap B^n(x,2r)$, we have \begin{equation*}(1/2\sqrt{n})\diam Q\leq d(p)\leq |p-x|+|x-x'|\leq 4r.\end{equation*} Hence $\diam Q\lesssim_n r$. We conclude that $|z-x'|\lesssim_n r$ for all $z\in B^n(z_Q,\diam Q)$. Thus,
\begin{equation*} \tau(z_Q,r_Q,x',r) = \frac{\max\{\diam Q,r,2|z_Q-x'|\}}{\min\{\diam Q, r\}} \lesssim_n \frac{r}{\diam Q}.\end{equation*}
Therefore, for all $Q$ such that $2Q\cap B(x,2r)\neq\emptyset$,
\begin{equation}
\|A_{Q}'-A'_{x',r}\|\leq  T_\varepsilon\left(C(n)\frac{r}{\diam Q}\right)\varepsilon \Amin{Q}{x',r}
\label{e:Cr1}
\end{equation}
by Lemma \ref{l:main-est} (\ref{e:pre-a}),
where $C(n)>0$ is a constant depending on at most $n$ such that $C(n)r\geq \diam Q$.

Fix $y\in B(x,2r)\setminus E$ and choose $R\in \mathcal{W}$ such that $y\in R$.
For vectors $u\in \RR^{N}$ and $v\in\RR^{n}$, let $u\otimes v:\RR^n\rightarrow\RR^N$ be the linear transformation given by $(u\otimes v)(w)=\langle v, w\rangle u$ for all $w\in\RR^n$. By the product rule,
\begin{equation*}DF(y)-A_{x',r}'
 =\sum_{Q\in\mathcal{W}}A_{Q}(y)\otimes D\phi_{Q}(y)+\phi_Q(y)(A_{Q}'-A_{x', r}').
\end{equation*} Recall that the partition of unity was defined so that $\phi_Q(y)=0$ and $D\phi_Q(y)=0$ unless $y\in 2Q$, $\sum_{Q\in\mathcal{W}} \phi_Q\equiv \chi_{\RR^n\setminus E}$, and $\sum_{Q\in\mathcal{W}} D\phi_Q\equiv 0$. Thus, recalling the definition of the cube $R$ above,  \begin{equation*}DF(y)-A_{x',r}'=\sum_{\{Q\in\mathcal{W}:y\in 2Q\}}(A_{Q}(y)-A_{R}(y))\otimes D\phi_{Q}(y)+\phi_Q(y)(A_{Q}'-A_{x', r}').\end{equation*}
Therefore, by Lemma \ref{l:AQest}, Definition \ref{d:extend}.2(a), \eqref{e:Cr1}, the bound $1\leq T_\delta(t)$ for all $\delta>0$ and $t\geq 1$, the assumption $\varepsilon\lesssim 1$, and Definition \ref{d:extend}.1(e),
\begin{align*}
\|DF(y)  -A_{x', r}'\|
  &\lesssim_n \sum_{\{Q\in\mathcal{W}:y\in 2Q\}}\varepsilon \Anorm{Q}(\diam Q)(\diam Q)^{-1}
  + T_\varepsilon\left(C(n)\frac{r}{\diam Q}\right)\varepsilon\Anorm{x',r}\\
  &\lesssim_n \max_{\{Q\in\mathcal{W}:y\in 2Q\}} T_\varepsilon\left(C(n)\frac{r}{\diam Q}\right)\varepsilon \Anorm{x',r}.
\end{align*} Because $\diam Q\sim_n d(y)$ for $Q\in\mathcal{W}$ such that $y\in 2Q$, and $T_\varepsilon(t)$ is increasing in $t$, we obtain by increasing the value $C(n)>0$ as necessary that \begin{equation*}
\|DF(y)  -A_{x', r}'\| \lesssim_n T_\varepsilon\left(C(n)\frac{r}{d(y)}\right) \varepsilon \Anorm{x',r}.\end{equation*} This establishes \eqref{e:far-a}.

To conclude suppose that $y_0\in B(x,2r)\setminus E$ satisfies $r\leq ad(y_{0})$. Then, by \eqref{e:far-a}, \begin{equation*} \|DF(y_0)-A'_{x',r}\| \lesssim_n T_\varepsilon\left(C(n)a\right)\varepsilon \Anorm{x',r} \lesssim_{n,a} \varepsilon \Anorm{x',r}.\end{equation*} That is, $\|DF(y_0)-A'_{x',r}\|\leq C(n,a) \varepsilon \Anorm{x',r}$ for some constant $C(n,a)>0$ depending only on $n$ and $a$. Hence, by the triangle inequality (twice), \begin{equation*} \|DF(y_0)\| \leq C(n,a)\varepsilon\Anorm{x',r}+\Anorm{x',r} \quad \text{and} \quad
\Anorm{x',r}\leq C(n,a)\varepsilon\Anorm{x',r}+\|DF(y_0)\|.\end{equation*}
Therefore, \eqref{e:far-b} and \eqref{e:badDFbound} hold provided that $\varepsilon\leq 1/2C(n,a)=:\tilde\varepsilon(n,a)$.
\end{proof}

\begin{lemma} \label{l:far2} Assume Hypothesis \ref{h:setup}. Let $x\in\RR^n$ and $r>0$. If $r\geq d(x)/2$, then
\begin{equation}
|F(y)-A_{x',r}(y)|\lesssim_{n,\varepsilon_0} \varepsilon \Anorm{x',r} r\quad\text{for all }y\in B^n(x,r).
\label{e:F-Ax'}
\end{equation}
\end{lemma}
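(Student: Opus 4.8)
The plan is to treat separately the cases $y\in E$ and $y\notin E$; in the second case I would integrate the derivative estimate of Lemma~\ref{l:far1} along the segment joining $y$ to its nearest point of $E$. Fix $x\in\RR^n$ and $r>0$ with $r\ge d(x)/2$, so that $d(x)\le 2r$ and, since $x'\in E$, every $y\in B^n(x,r)$ satisfies $d(y)\le|y-x'|\le|y-x|+d(x)\le 3r$. If $y\in E\cap B^n(x,r)$, then $y\in E\cap B^n(x',6r)$, so the $\varepsilon$-almost affinity of $(f,E,\mathcal{A})$ gives $|f(y)-A_{x',6r}(y)|\le 6\varepsilon\Anorm{x',6r}r$; since $\tau(x',6r,x',r)=6$, Lemma~\ref{l:main-est}~(\ref{e:pre-d}) yields $\Anorm{x',6r}\sim_n\Anorm{x',r}$ and Lemma~\ref{l:post-est}~(\ref{e:post-b}), with evaluation point $y$ (legitimate since $|y-x'|\le 3r$), yields $|A_{x',6r}(y)-A_{x',r}(y)|\lesssim_n\varepsilon\Anorm{x',r}r$. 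Adding these proves (\ref{e:F-Ax'}) when $y\in E$; the same reasoning, run at a point $y'\in E$ with $|y'-x'|\le 6r$ in place of $y$, gives $|f(y')-A_{x',r}(y')|\lesssim_n\varepsilon\Anorm{x',r}r$, a fact I will use below.

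Now suppose $y\in B^n(x,r)\setminus E$, let $y'\in E$ realise $d(y)=|y-y'|$, and put $p_t=y'+t(y-y')$ for $t\in[0,1]$. Then $p_t\in B^n(x,7r)$, and $d(p_t)\ge d(y)-|y-p_t|=t\,d(y)$ while $d(p_t)\le|p_t-y'|=t\,d(y)$, so $d(p_t)=t\,d(y)$; in particular $p_t\notin E$ for $t>0$, and $F$ is $C^\infty$ near $\{p_t:0<t\le1\}$. Applying Lemma~\ref{l:far1}~(\ref{e:far-a}) with centre $x$ and radius $\tilde r=6r$ (which satisfies $\tilde r\ge d(x)/2$, so its distinguished point is our $x'$), and then comparing scale $6r$ to scale $r$ at $x'$ via Lemma~\ref{l:main-est}, I obtain a constant $C'(n)\ge 3$ (so $T_\varepsilon$ is always evaluated on $[1,\infty)$ below) with
\begin{equation*}
\|DF(p_t)-A'_{x',r}\| \lesssim_n T_\varepsilon(C'(n)r/(t\,d(y)))\,\varepsilon\,\Anorm{x',r}\qquad(0<t\le1).
\end{equation*}
Writing $G(t)=F(p_t)-A_{x',r}(p_t)$, which is $C^1$ on $(0,1]$ with $G'(t)=(DF(p_t)-A'_{x',r})(y-y')$, and substituting $u=t\,d(y)/r$, I get for $0<\delta<1$
\begin{equation*}
|G(1)-G(\delta)| \le \int_\delta^1\|DF(p_t)-A'_{x',r}\|\,d(y)\,dt \lesssim_n \varepsilon\,\Anorm{x',r}\,r\int_0^{3}T_\varepsilon(C'(n)/u)\,du.
\end{equation*}
The last integral is finite with a bound depending only on $n$ and $\varepsilon_0$: since $\varepsilon_0<\sqrt2-1$ forces $2\log_2(1+\varepsilon)<1$, the integrand $T_\varepsilon(C'(n)/u)$ behaves like $(1+\log(1/u))\,u^{-2\log_2(1+\varepsilon)}$, which is integrable at $u=0$. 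Hence $\int_\delta^1|G'|\lesssim_{n,\varepsilon_0}\varepsilon\,\Anorm{x',r}\,r$, uniformly in $\delta$.

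It remains to pass to the limit $\delta\to0^+$, which requires $F(p_\delta)\to f(y')$. I would verify this directly: if $p\to y'$ with $p\notin E$, then every $Q\in\mathcal{W}$ with $p\in 2Q$ has $r_Q=\diam Q\sim_n d(p)\to0$, $z_Q\to y'$, and $|A_Q(p)-f(z_Q)|\le\Anorm{z_Q,r_Q}|p-z_Q|+\varepsilon\Anorm{z_Q,r_Q}r_Q\lesssim_n\Anorm{z_Q,r_Q}r_Q$, which tends to $0$ because Lemma~\ref{l:main-est}~(\ref{e:pre-b}) bounds $\Anorm{z_Q,r_Q}r_Q$ by $\Anorm{x',r}r_Q+\varepsilon\Anorm{x',r}T_\varepsilon(Cr/r_Q)r_Q$ and $T_\varepsilon(Cr/r_Q)r_Q\to0$ (again as $2\log_2(1+\varepsilon)<1$); as $f$ is continuous by Lemma~\ref{l:Holder} (applicable since $\varepsilon<\sqrt2-1$) and $\sum_{Q\in\mathcal{W}}\phi_Q(p)=1$, it follows that $F(p)\to f(y')$. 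Thus $G(\delta)\to f(y')-A_{x',r}(y')$, which is $\lesssim_n\varepsilon\Anorm{x',r}r$ by the first paragraph; combining, $|F(y)-A_{x',r}(y)|=|G(1)|\lesssim_{n,\varepsilon_0}\varepsilon\,\Anorm{x',r}\,r$, which is (\ref{e:F-Ax'}).

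The main obstacle is precisely this last step: Lemma~\ref{l:far1} controls $DF$ only on $\RR^n\setminus E$, so to convert it into a bound on $F$ itself one needs $F$ to extend continuously across $E$; if a continuity statement for $F$ is available earlier in the section it may simply be cited, and otherwise the short argument above supplies it. An alternative that bypasses continuity is to bound $|A_Q(y)-A_{x',r}(y)|$ for each Whitney cube $Q\ni y$ and average with the partition of unity: compare $A_Q=A_{z_Q,r_Q}$ with $A_{y',d(y)}$ at comparable scales via Lemma~\ref{l:post-est}~(\ref{e:post-b}), and decompose $A_{y',d(y)}(y)-A_{x',r}(y)$ into the linear term $(A'_{y',d(y)}-A'_{x',r})(y-y')$, controlled by Lemma~\ref{l:main-est}~(\ref{e:pre-a}) together with the bound $T_\varepsilon(Cr/d(y))\,d(y)\lesssim_{n,\varepsilon_0}r$, plus the term $A_{y',d(y)}(y')-A_{x',r}(y')$, which is $O(\varepsilon\Anorm{x',r}r)$ because both $A_{y',d(y)}(y')$ and $A_{x',r}(y')$ lie within $O(\varepsilon\Anorm{x',r}r)$ of $f(y')$.
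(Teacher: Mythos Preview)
Your proof is correct and follows the same two-case strategy as the paper: for $y\in E$ compare scales via Lemmas~\ref{l:main-est} and~\ref{l:post-est}, and for $y\notin E$ integrate the derivative bound of Lemma~\ref{l:far1} along the segment $[y',y]$. You are in fact more careful than the paper on two points it glosses over: you enlarge the radius when invoking Lemma~\ref{l:far1} so that the segment is genuinely covered (the paper's claim that it lies in $B^n(x,2r)$ can fail), and you explicitly verify the continuity of $F$ at $y'$ needed to pass to the limit $\delta\to0^+$.
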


\begin{proof} Suppose that $x\in\RR^n$ and $r>0$ satisfy $r\geq d(x)/2$. There are two cases.

\setcounter{case}{0}

\begin{case} Suppose that $y\in B(x,2r)\cap E$. Then $|y-x'|\leq |y-x|+|x-x'|\leq 2r+2r\leq 4r$, $F(y)=f(y)$ and \begin{equation*} |F(y)-A_{x',r}(y)| \leq |f(y)-A_{x',4r}(y)| + |A_{x',4r}(y)-A_{x',r}(y)|.\end{equation*} On one hand, $|f(y)-A_{x',4r}(y)|\leq \varepsilon \Anorm{x',4r} 4r$, because $(f,E,\mathcal{A})$ is $\varepsilon$-almost affine and $y\in E\cap B^n(x',4r)$. On the other hand, \begin{equation*}|A_{x',4r}(y)-A_{x',r}(y)| \lesssim \varepsilon \Amin{x',4r}{x',r}4r\end{equation*} by Lemma \ref{l:post-est} (\ref{e:post-b}), because $\varepsilon\leq 1$, $|x'-x'|=0$, $\dist(y,\{x',x'\})\leq 4r$ and $\tau(x',4r,x',r)=4$. Moreover, $\Anorm{x',4r}\sim\Anorm{x',r}$, by Lemma \ref{l:main-est} (\ref{e:pre-d}). All together, $|F(y)-A_{x',r}(y)| \lesssim \varepsilon \Anorm{x',r}r$. \end{case}

\begin{case} Suppose that $y\in B(x,r)\setminus E$. Note that, away from its endpoints, the line segment connecting $y$ and $y'$ lies wholly within $B^n(x,2r)\setminus E$. Thus, by Case (1) and Lemma \ref{l:far1},
\begin{align*}
 |F(y)-A_{x',r}(y)| &\leq |F(y')-A_{x',r}(y')|+\int_{0}^{|y-y'|} \left\|DF\left(y'+t\frac{y-y'}{|y-y'|}\right)-A'_{x',r}\right\|dt\\
 &\lesssim_n \varepsilon\Anorm{x',r}r + \varepsilon\Anorm{x',r}\int_0^{|y-y'|}T_{\varepsilon}\left(C(n)\frac{r}{t}\right)dt.\end{align*} By a change of variables $u=t/(C(n)r)$, we obtain \begin{equation*}\int_0^{|y-y'|}T_{\varepsilon}\left(C(n)\frac{r}{t}\right)dt= C(n)r\int_0^{|y-y'|/(C(n)r)} T_\varepsilon(u^{-1})du
 \leq C(n)r \int_0^{3/C(n)} T_{\varepsilon_0}(u^{-1})du,\end{equation*} since $|y-y'|=\dist(y,E)\leq |y-x|+|x-x'|\leq r+2r=3r$ and $T_\varepsilon\leq T_{\varepsilon_0}$ pointwise. Finally, observe that $T_{\varepsilon_0}(u^{-1})= u^{-2\log_2(1+\varepsilon_0)}\left[2\log(u^{-1})+1\right]$ is integrable at $u=0$, since $\varepsilon_0<\sqrt{2}-1$ (i.e.~ $2\log_2(1+\varepsilon_0)<1$). It follows that $|F(y)-A_{x',r}(y)| \lesssim_{n,\varepsilon_0} \varepsilon \Anorm{x',r}r$.
\end{case}

Therefore, in both cases, $|F(y)-A_{x',r}(y)|\lesssim_{n,\varepsilon_0} \varepsilon \Anorm{x',r} r$ for all $y\in B(x,r)$.\end{proof}

\begin{lemma} \label{l:Omega} Assume Hypothesis \ref{h:setup}. There exists $\dot\varepsilon=\dot\varepsilon(n)$ such that if, in addition, $\varepsilon\leq \dot\varepsilon$, then for all $x\in\RR^n\setminus E$, for all $0<r<d(x)/2$, and for all $y\in B^n(x,r)\subset \RR^n\setminus E$,
\begin{equation} |F(y)-A_{x,r}(y)| \lesssim_n \varepsilon\frac{r}{d(x)} \|A_{x,r}'\|r,
\label{e:Omega<r/d}
\end{equation} and
\begin{equation} \|DF(y)-DF(x)\| \lesssim_n \varepsilon \frac{r}{d(x)}\min\{\|DF(y)\|,\|DF(x)\|\}.
\label{e:DF-lip}
\end{equation}
\end{lemma}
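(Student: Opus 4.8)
The plan is to deduce both \eqref{e:Omega<r/d} and \eqref{e:DF-lip} from a single pointwise second-order estimate on $F$, followed by a short absorption argument. Fix $x\in\RR^n\setminus E$ and $0<r<d(x)/2$ as in the statement. Then $B^n(x,r)\subset\RR^n\setminus E$, the map $F$ is $C^\infty$ on a neighborhood of $B^n(x,r)$, and every $z\in B^n(x,r)$ satisfies $d(z)\geq d(x)-r>d(x)/2$, so $d(z)\sim d(x)$; the same lower bound holds along any segment inside $B^n(x,r)$.

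First I would control $DF$ and $D^2F$ by the local affine pieces. Fix $z\in\RR^n\setminus E$ and pick $R\in\mathcal{W}$ with $z\in 2R$ (possible by Definition \ref{d:extend}.1(a)). Differentiating $F=\sum_Q\phi_QA_Q$ and using $\sum_QD\phi_Q\equiv 0$ on $\RR^n\setminus E$ to subtract the reference map $A_R$, exactly as in the proof of Lemma \ref{l:far1},
\[
DF(z)-A_R'=\sum_{\{Q:\,z\in 2Q\}}\big(A_Q(z)-A_R(z)\big)\otimes D\phi_Q(z)+\phi_Q(z)\big(A_Q'-A_R'\big).
\]
Each of the $\lesssim_n 1$ surviving terms (Definition \ref{d:extend}.1(e)) is estimated by $\diam Q\sim_n d(z)\sim_n\diam R$, $|D\phi_Q(z)|\lesssim_n(\diam Q)^{-1}$ (Definition \ref{d:extend}.2(a)), and $|A_Q(z)-A_R(z)|\lesssim_n\varepsilon\|A_R'\|\diam Q$, $\|A_Q'-A_R'\|\lesssim_n\varepsilon\|A_R'\|$ (Lemma \ref{l:AQest}); this yields $\|DF(z)-A_R'\|\lesssim_n\varepsilon\|A_R'\|$, and hence $\|DF(z)\|\sim_n\|A_R'\|$ once $\varepsilon$ is below an $n$-dependent threshold. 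Differentiating once more and using $\sum_Q\partial^\alpha\phi_Q\equiv 0$ for $|\alpha|\in\{1,2\}$ to subtract the same reference map, $D^2F(z)$ expands into terms of the form $(\partial_k\partial_j\phi_Q)(A_Q(z)-A_R(z))$ and $(\partial_j\phi_Q)(A_Q'-A_R')e_k$ (and its $j\leftrightarrow k$ twin), summed over $\{Q:z\in 2Q\}$; with $|D^2\phi_Q|\lesssim_n(\diam Q)^{-2}$ and the same inputs I obtain the key estimate
\[
\|D^2F(z)\|\lesssim_n\frac{\varepsilon}{d(z)}\,\|A_R'\|\sim_n\frac{\varepsilon}{d(z)}\,\|DF(z)\|\qquad\text{for every }z\in\RR^n\setminus E.
\]

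Next comes the bootstrap. Put $M:=\sup_{B^n(x,r)}\|DF\|<\infty$ ($DF$ is continuous on the compact set $B^n(x,r)\subset\RR^n\setminus E$). For $z\in B^n(x,r)$, the mean value inequality along $[x,z]$ together with the key estimate, $d(\cdot)>d(x)/2$, and $\|DF(\cdot)\|\le M$ on $[x,z]$ gives $\|DF(z)-DF(x)\|\le|z-x|\sup_{[x,z]}\|D^2F\|\lesssim_n\varepsilon\,(r/d(x))\,M$. Taking the supremum over $z$, using $r/d(x)<1/2$, and choosing $\dot\varepsilon=\dot\varepsilon(n)$ small, I absorb to get $M\le 2\|DF(x)\|$; hence $\|DF(z)\|\sim_n\|DF(x)\|$ on $B^n(x,r)$, and the last displayed inequality upgrades to
\[
\|DF(z)-DF(x)\|\lesssim_n\varepsilon\,\frac{r}{d(x)}\,\min\{\|DF(z)\|,\|DF(x)\|\}\qquad(z\in B^n(x,r)),
\]
which is \eqref{e:DF-lip} on taking $z=y$. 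For \eqref{e:Omega<r/d}, recall $A_{x,r}(y)=F(x)+DF(x)(y-x)$ and $\|A_{x,r}'\|=\|DF(x)\|$; Taylor's theorem with integral remainder along $[x,y]$, the key estimate, and $M\le 2\|DF(x)\|$ give
\[
|F(y)-A_{x,r}(y)|\le\tfrac12|y-x|^2\sup_{[x,y]}\|D^2F\|\lesssim_n\varepsilon\,\frac{r}{d(x)}\,\|DF(x)\|\,r=\varepsilon\,\frac{r}{d(x)}\,\|A_{x,r}'\|\,r.
\]
Taking $\dot\varepsilon(n)$ to be the minimum of $\varepsilon_0$ and the finitely many $n$-dependent thresholds used above finishes the argument.

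I expect the only genuine work to be the bookkeeping in the $D^2F$ computation: invoking $\sum_Q\partial^\alpha\phi_Q\equiv 0$ for $|\alpha|=1,2$ to introduce the reference cube $R$, and verifying that the cubes $Q$ with $z\in 2Q$ form a family of bounded cardinality with pairwise intersecting doubles and $\diam Q\sim_n d(z)$, so that Lemma \ref{l:AQest} applies uniformly to each pair $(Q,R)$. After that, \eqref{e:Omega<r/d} and \eqref{e:DF-lip} follow from a routine Taylor/mean-value estimate plus a one-step absorption, in the same spirit as Lemmas \ref{l:far1} and \ref{l:far2}.
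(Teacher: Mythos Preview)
Your proposal is correct and follows essentially the same approach as the paper: compute $D^2F$ on $\RR^n\setminus E$ by differentiating the partition-of-unity formula twice, subtract a reference $A_R$ using $\sum_Q\partial^\alpha\phi_Q\equiv 0$ for $|\alpha|=1,2$, and apply Lemma \ref{l:AQest} to obtain the key bound $\|D^2F(z)\|\lesssim_n\varepsilon\,d(z)^{-1}\|A_R'\|$; then finish with the mean value inequality and Taylor's theorem plus a one-step absorption. The only cosmetic difference is in the absorption: the paper connects $\|A_Q'\|$ directly to $\|DF(x)\|$ via Lemma \ref{l:main-est} and Lemma \ref{l:far1} \eqref{e:far-b} (so that $\|D^2F(y)\|\lesssim_n\varepsilon\,\|DF(x)\|/d(x)$ holds outright), whereas you first get $\|D^2F(z)\|\lesssim_n\varepsilon\,\|DF(z)\|/d(z)$ and then absorb through $M=\sup_{B^n(x,r)}\|DF\|$; both routes are short and equivalent.
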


\begin{proof} First let us introduce some notation to facilitate the proof. Let $\mathcal{L}(U,V)$ denote the space of bounded linear transformations from a normed vector space $U$ to a normed vector space $V$, equipped with the operator norm.  For any $u\in U$  and $v\in\RR^n$, define $u\otimes v\in\mathcal{L}(\RR^n,U)$ by \begin{equation*}(u\otimes v)(w)=\langle v,w\rangle u\quad\text{for all }w\in \RR^n.\end{equation*} Also, for any $u\in U$ and $B\in \mathcal{L}(\RR^n,\RR^n)$, define $u\otimes B\in\mathcal{L}(\RR^n,\mathcal{L}(\RR^n,U))$ by \begin{equation*} (u\otimes B)(v)=u\otimes B(v)\quad\text{for all }v\in\RR^n.\end{equation*} If $G:\RR^n\rightarrow\RR^N$ is smooth near $y$, let $D^2G(y)\in \mathcal{L}(\RR^n,\mathcal{L}(\RR^n,\RR^N))$ denote the total derivative of the map $z\mapsto DG(z)$. We note for use below that \begin{equation}\label{e:D2G}\|D^2G(y)\| = \sup_{|p|=|q|=1} \left|\left(\sum_{j,k=1}^np_j\,q_k\,\frac{\partial^2 G^i}{\partial x_j \partial x_k}(y)\right)_{i=1}^N\right|\geq \max_{1\leq j,k\leq n}\left|\left(\frac{\partial^2 G^i}{\partial x_j \partial x_k}(y)\right)_{i=1}^N\right|,\end{equation} where $G=(G^1,\dots,G^N)$ and the inequality follows by letting $p$ and $q$ range over $\{e_1,\dots,e_n\}$.

Fix $\dot\varepsilon\leq \tilde\varepsilon(n,1)$ (see Lemma \ref{l:far1}) to be specified below and assume that $\varepsilon\leq \dot\varepsilon$. Let $x\in \RR^n\setminus E$, let $0<r<d(x)/2$, and let $y\in B^n(x,r)$.  Then \begin{equation}\label{e:Fz} F(z)=\sum_{\{Q\in\mathcal{W}:z\in 2Q\}}\phi_Q(z)A_Q(z)\quad \text{for all } z\in B^n(y,d(y)/2).\end{equation} First, differentiating (\ref{e:Fz}) at $z$ near $y$, we obtain
\begin{equation} \label{e:Fz1} DF(z)=\sum_{\{Q\in\mathcal{W}:z\in 2Q\}}A_{Q}(z)\otimes D\phi_{Q}(z)+\phi_{Q}(z)A_{Q}'\quad\text{for all }z\in B^n(y,d(y)/4).\end{equation}
Second, differentiating (\ref{e:Fz1}) at $z=y$, we obtain \begin{equation} \label{e:Fz2} D^2F(y)=\sum_{\{Q\in\mathcal{W}:y\in 2Q\}} 2A_{Q}'\otimes D\phi_{Q}(y)+A_{Q}(y)\otimes D^{2}\phi_{Q}(y).\end{equation}
Choose any cube $R\in\mathcal{W}$ such that $y\in R$. Because $\sum_{Q\in\mathcal{W}} D\phi_Q\equiv 0$ and $\sum_{Q\in\mathcal{W}} D^2\phi_Q\equiv 0$, we can rewrite (\ref{e:Fz2}) as
\begin{equation*}
 D^2F(y)=\sum_{\{Q\in\mathcal{W}:y\in 2Q\}}2(A_{Q}'-A_{R}')\otimes D\phi_{Q}(y)+ (A_{Q}(y)-A_{R}(y))\otimes D^{2}\phi_{Q}(y).\end{equation*}
Thus, by Lemma \ref{l:AQest}, Definition \ref{d:extend}.2(a), and Definition \ref{d:extend}.1(e),
\begin{equation}\label{e:D2norm} \|D^2F(y)\| \lesssim_n \sum_{\{Q\in\mathcal{W}:y\in 2Q\}} \varepsilon\frac{\Anorm{Q}}{\diam Q}+\varepsilon\frac{\Anorm{Q}\diam Q}{(\diam Q)^{2}} \lesssim_n \max_{\{Q\in\mathcal{W}:y\in 2Q\}} \varepsilon \frac{\Anorm{Q}}{\diam Q}.\end{equation} Suppose that $Q\in\mathcal{W}$ is such that $y\in 2Q$. On one hand, since $d(x)>0$ and $y\in B^n(x,d(x)/2)$, \begin{equation}\label{e:dxy} d(x)\sim d(y) \sim_n \diam Q.\end{equation} On the other hand, $|x-z_Q| \leq |x-y| + |y-w_Q|+|w_Q-z_Q| \lesssim_n \diam Q$. It follows that \begin{equation*} \tau\left(z_Q,\diam Q, x, \frac{d(x)}{2}\right) = \frac{\max\{\diam Q, d(x)/2,2|z_Q-x|\}}{\min\{\diam Q, d(x)/2\}}\lesssim_n 1.\end{equation*} Hence $\Anorm{Q}\sim_n \Anorm{x,d(x)/2}=\Anorm{x',d(x)/2}$ by Lemma \ref{l:main-est} (\ref{e:pre-d}). Combining this observation with (\ref{e:D2norm}), (\ref{e:dxy}), and Lemma \ref{l:far1} (\ref{e:far-b}), we conclude that \begin{equation}\label{e:D2FDF} \|D^2F(y)\| \lesssim_n \varepsilon \frac{\Anorm{x',d(x)/2}}{d(x)} \lesssim_n \varepsilon\frac{\|DF(x)\|}{d(x)}\quad\text{for all }y\in B^n(x,r).\end{equation}
Therefore, there exists a constant $C(n)>0$ depending on at most $n$ such that
\begin{equation}\label{e:DF-lip0}\|DF(y)-DF(x)\|\leq C(n)\varepsilon\frac{\|DF(x)\|}{d(x)} |y-x|\leq C(n) \varepsilon\frac{r}{d(x)}\|DF(x)\|\end{equation} for all $y\in B^n(x,r)$, where the first inequality holds by the mean value theorem and (\ref{e:D2FDF}). Applying the triangle inequality, (\ref{e:DF-lip0}) and the bound $r\leq d(x)/2$ (twice each), we see that \begin{equation}\begin{split}\label{e:DFnorms} \|DF(x)\|\leq C(n)\frac {\varepsilon}{2}\|DF(x)\|+\|DF(y)\|\quad\text{and}\\  \|DF(y)\|\leq C(n)\frac{\varepsilon}{2}\|DF(x)\| + \|DF(x)\|.\end{split}\end{equation} We now insist that $\dot\varepsilon \leq 1/C(n)$, which ensures that \begin{equation}\label{e:DFm2m} \max\{\|DF(x)\|,\|DF(y)\|\} \leq 2\min\{\|DF(x)\|,\|DF(y)\|\}\end{equation}  by (\ref{e:DFnorms}). Combining (\ref{e:DF-lip0}) and (\ref{e:DFm2m}) yields (\ref{e:DF-lip}). Finally, by Taylor's remainder theorem,
\begin{equation*} \begin{split}
F(y)-A_{x,r}(y) &= F(y)-F(x)-DF(x)(y-x) \\ &=\sum_{j,k=1}^n (y_j-x_j)(y_k-x_k)\int_0^1 (1-t)\left(\frac{\partial^2 F^i}{\partial x_j\partial x_k}(x+t(y-x))\right)_{i=1}^N\,dt. \end{split}\end{equation*} Therefore, by the triangle inequality, our assumption that $|y-x|\leq r$, and  (\ref{e:D2G}), \begin{align*}|F(y)-A_{x,r}(y)| &\leq \sum_{j,k=1}^n |y_j-x_j||y_k-x_k|\int_0^1 \left|\left(\frac{\partial^2 F^i}{\partial x_j\partial x_k}(x+t(y-x))\right)_{i=1}^N\right|\,dt\\
 &\leq r^2\int_0^1 \sum_{j,k=1}^n\left| \left(\frac{\partial^2 F^i}{\partial x_j\partial x_k}(x+t(y-x))\right)_{i=1}^N\right|dt\\
 &\leq r^2 \sup_{z\in[x,y]} \left(n^2\max_{1\leq j,k\leq n}\left| \left(\frac{\partial^2 F^i}{\partial x_j\partial x_k}(z)\right)_{i=1}^N\right|\right)
 \lesssim_n \sup_{z\in[x,y]}\|D^2F(z)\| r^2.\end{align*} Applying (\ref{e:D2FDF}) yields (\ref{e:Omega<r/d}).
\end{proof}

We are ready to prove Theorem \ref{t:extend1}.

\begin{proof}[Proof of Theorem \ref{t:extend1}] Assume Hypothesis \ref{h:setup} with parameter $\varepsilon_0:=\min\{2/5,\tilde\varepsilon(n,1),\dot\varepsilon(n)\}<\sqrt{2}-1$ (see Lemmas \ref{l:far1} and \ref{l:Omega}). We proceed in two steps.

\setcounter{step}{0}

\begin{step}The family $\mathcal{A}^+$ is $C\varepsilon$-compatible over $\RR^n$ for some constant $C=C(n)>1$.\end{step}

Fix $x,y\in\RR^n$ and $r,s>0$ such that $|x-y|\leq \max\{r,s\}$ and $1/2\leq r/s\leq 2$.  We shall estimate $\|A_{x,r}'-A_{y,s}'\|$ in three separate cases:

\setcounter{case}{0}

\begin{case} Assume that $r\geq  d(x)/2$ and $s\geq  d(y)/2$. Then
\begin{equation*}
|x'-y'|\leq |x'-x|+|x-y|+|y-y'| \leq  d(x) + \max\{r,s\} +  d(y)\leq 5\max\{r,s\}.
\end{equation*}
In particular,
\begin{equation*}\tau(x',r,y',s) = \frac{\max\{r,s,2|x'-y'|\}}{\min\{r,s\}} \leq \frac{10\max\{r,s\}}{\min\{r,s\}}\leq 20.\end{equation*}
Therefore,
$\|A_{x,r}'-A_{y,s}'\|= \|A_{x',r}'-A_{y',s}'\| \lesssim \varepsilon \min \{\|A_{x',r}'\|,\|A_{y',s}'\|\} =\varepsilon\min\{\|A_{x,r}'\|,\|A_{y,s}'\|\}$ by Lemma \ref{l:main-est} (\ref{e:pre-c}).
 \end{case}

\begin{case} Assume that $r\geq  d(x)/2$ and $s< d(y)/2$. Since $y\in B^n(x,2r)\setminus E$ and $r\leq 2s<d(y)$, \begin{equation*} \|A'_{x,r}- A'_{y,s}\| = \|A_{x',r}- DF(y)\| \lesssim_n \varepsilon \min\{\Anorm{x',r},\|DF(y)\|\}=\Amin{x,r}{y,s}\end{equation*} by Lemma \ref{l:far1}.
\end{case}

\begin{case} Assume that $r< d(x)/2$ and $s< d(y)/2$. Since $|x-y|\leq r$,  \begin{equation*} \Adiff{x,r}{y,s}=\|DF(x)-DF(y)\| \lesssim_n \varepsilon \min\{\|DF(x)\|,\|DF(y)\|\}=\varepsilon\Amin{x,r}{y,s}\end{equation*} by Lemma \ref{l:Omega}.
\end{case}

Therefore, $\mathcal{A}^+$ is $C\varepsilon$-compatible for some constant $C>1$ depending only on $n$.

\begin{step}$(F,\RR^n,\mathcal{A}^+)$ is $C_0\varepsilon$-almost affine for some constant $C_0=C_0(n)>1$.\end{step}

Let $y\in B^n(x,r)$. On one hand, if  $r\geq  d(x)/2$, then $|F(y)-A_{x,r}(y)|\lesssim_{n} \varepsilon \|A_{x,r}'\| r$ for all $y\in B^n(x,r)$, by Lemma \ref{l:far2}. On the other hand, if $r< d(x)/2$, then $|F(y)-A_{x,r}(y)|\lesssim_n \varepsilon \|A_{x,r}'\| r$ for all $y\in B^n(x,r)$, by Lemma \ref{l:Omega}.
Therefore, $(F,\RR^n,\mathcal{A}^+)$ is $C_0\varepsilon$-almost affine for some constant $C_0>1$ depending only on $n$.\end{proof}

We now derive Theorem \ref{t:extend2} from Theorem \ref{t:extend1}.

\begin{proof}[Proof of Theorem \ref{t:extend2}] Let $H\geq 1$, and let $1<\rho\leq 2$ be given. Fix $\varepsilon_1\in(0,\varepsilon_0]$ to be chosen later and put $C_1=C_0$, where $\varepsilon_0$ and $C_0$ are the constants from Theorem \ref{t:extend1}. Assume Hypothesis \ref{h:setup} with $\varepsilon\leq \varepsilon_1$. In addition, assume that $f$ is nonconstant and $\lambda_n(A'_{x,r})\leq H\lambda_1(A_{x,r}')$ for all $x\in E$ and $r>0$. By (the proof of) Theorem \ref{t:extend1}, $(F,\RR^n,\mathcal{A}^+)$ is $C_1\varepsilon$-almost affine. Thus, to establish Theorem \ref{t:extend2}, all that remains is to show that $F$ is weakly $\rho H$-quasisymmetric. We break the argument into three steps.
\setcounter{step}{0}
\begin{step} If $\varepsilon_1$ is sufficiently small, then $\lambda_n(A_{x,r}')\leq ((\rho+1)/2)H\lambda_1(A_{x,r}')$ for all $A_{x,r}\in\mathcal{A}^+$.\end{step}

Fix $x\in\RR^n\setminus E$. On one hand, if $r\geq  d(x)/2$, then $A_{x,r}=A_{x',r}\in\mathcal{A}$. Hence $\lambda_n(A'_{x,r}) \leq H\lambda_1(A'_{x,r})$ for all $r\geq d(x)/2$. On the other hand, suppose that $0<r< d(x)/2=:\delta$. Then $A_{x,r}$ is the first-order Taylor approximation of $F$ at $x$ and $A'_{x,r}=DF(x)$.
By Lemma \ref{l:far1} (\ref{e:badDFbound}), we have $\Adiff{x,r}{x',\delta}\leq C_2\varepsilon \|A_{x,r}'\|=C_2\varepsilon\lambda_n(A'_{x,r})$ for some $C_2=C_2(n)>0$.  Thus, since $\lambda_n(A'_{x',\delta})\leq H\lambda_1(A'_{x',\delta})$, \begin{equation*}\begin{split}
\lambda_n(A'_{x,r}) =\|A_{x,r}'\| \leq \Adiff{x,r}{x',\delta}+\Anorm{x',\delta} \leq C_2\varepsilon \lambda_n(A'_{x,r}) + H\inf_{|z|=1}|A'_{x',\delta}z|\\
\leq (1+H)C_2\varepsilon\lambda_n(A'_{x,r})+H\inf_{|z|=1}|A'_{x,r}z| \leq 2HC_2\varepsilon\lambda_n(A'_{x,r})+H\lambda_1(A'_{x,r}).
\end{split}\end{equation*} In particular, $\lambda_n(A'_{x,r}) \leq ((\rho+1)/2)H \lambda_1(A'_{x,r})$ if $\varepsilon_1\leq ((\rho-1)/(\rho+1))/2HC_2$. Therefore, $\lambda_n(A_{x,r}')\leq ((\rho+1)/2)H\lambda_1(A_{x,r}')$ for all $A_{x,r}\in\mathcal{A}^+$ if $\varepsilon_1$ is small enough depending only on $n$, $H$ and $\rho$.

\begin{step} If $\varepsilon_1$ is sufficiently small, then $H_F(\RR^n)\leq \rho H$. \end{step}

Fix $x,y,z\in \RR^n$ such that $|y-x|\leq |z-x|=:r$. Assume that $\varepsilon_1$ satisfies the constraints of Step 1. Then, since $F$ is $C_1\varepsilon$-close to $\mathcal{A}^+$ and $A_{x,r}$ is weakly $((\rho+1)/2)H$-quasisymmetric, \begin{align*}|F(y)-F(x)| &\leq 2C_1\varepsilon\|A_{x,r}'\|r + |A_{x,r}(y)-A_{x,r}(x)|\\ &\leq 2C_1\varepsilon\|A_{x,r}'\|r + \left(\frac{\rho+1}{2}\right)H |A_{x,r}(z)-A_{x,r}(x)|\\
&\leq (2+(\rho+1)H)C_1\varepsilon\|A_{x,r}'\|r + \left(\frac{\rho+1}{2}\right)H|F(z)-F(x)|.\end{align*} To continue, observe by similar reasoning that
 \begin{equation*}\begin{split}
 \|A_{x,r}'\|r
 &\leq \frac{\rho+1}{2}H|A_{x,r}(z)-A_{x,r}(x)| \\
 &\leq (\rho+1)HC_1\varepsilon\|A'_{x,r}\|r + \left(\frac{\rho+1}{2}\right)H|F(z)-F(x)|.
 \end{split}\end{equation*}
 Hence, if $\varepsilon_1 \leq 1/2(\rho+1)HC_1$, then $\|A_{x,r}'\|r \leq (\rho+1)H|F(z)-F(x)|$ and \begin{equation*}\begin{split}
 |F(y)-F(x)|
 &\leq (2+(\rho+1)H)C_1\varepsilon (\rho+1)H|F(z)-F(x)| + \frac{\rho+1}{2}H|F(z)-F(x)|\\
 &\leq 15HC_1\varepsilon H|F(z)-F(x)| + \frac{\rho+1}{2} H|F(z)-F(x)|.
 \end{split}\end{equation*}
 Therefore, if $\varepsilon_1\leq (\rho-1)/30HC_1$, then $|F(y)-F(x)|\leq \rho H|F(z)-F(x)|$ for all $x,y,z$ such that $|x-y|\leq |x-z|$. That is, $H_F(\RR^n)\leq \rho H$ if $\varepsilon_1$ is sufficiently small.

\begin{step} If $\varepsilon_1$ is sufficiently small, then $F$ is weakly $\rho H$-quasisymmetric. \end{step}

First, assume that $C_1\varepsilon_1<\hat\varepsilon$, which guarantees that $F$ is (locally H\"older) continuous by Lemma \ref{l:Holder}. Second, note that $F$ is nonconstant, since $f$ is nonconstant and $F$ extends $f$. Third,  assume that $\varepsilon_1$ is small enough so that the conclusion of Step 2 holds. Then, because $F$ is continuous and nonconstant and $H_F(\RR^n)\leq \rho H$, the map $F$ is weakly $\rho H$-quasisymmetric, by Lemma \ref{l:wqs-crit}.

To complete the proof of the theorem, choose $\varepsilon_1$ sufficiently small so that the conclusion of Steps 1 and 3 hold. Reviewing each of the constraints imposed on $\varepsilon_1$ in Steps 1 through 3 above, we see that $\varepsilon_1$ can be chosen to depend only on $n$, $H$ and $\rho$.
\end{proof}


\section{Extensions of almost affine maps II: beta number estimates}
\label{s:extend2}

The goal of this section is to prove Theorem \ref{t:beta}, which for convenience we now restate.

\begin{theorem}\label{t:beta2} Suppose $1\leq n\leq N-1$. For all $\varepsilon>0$, there  exists $\varepsilon_*=\varepsilon_*(\varepsilon,n)>0$ with the following property. If for some $x\in\RR^n$ and $r>0$ a map $f:\RR^N\rightarrow\RR^N$ is $\varepsilon_*$-almost affine over $B^n(x,9r)$, $f|_{B^N(x,3r)}$ is a topological embedding and  $\tH_f(B^N(x,3r))\leq \varepsilon_*$, and there exist a closed set $E\subset B^n(x,r)$ and constants $\gamma_E>0$ and $C_E>0$ such that
\begin{equation}
\label{e:beta-d2} \diam E \geq \gamma_E \diam B^n(x,r)
\end{equation} and
\begin{equation}
\label{e:beta-h2}
\int_{0}^{r} \tH_f(B^N(y,s))^2\, \frac{ds}{s}\leq C_E\quad\text{for all }y\in E,
\end{equation}
then there exists a quasisymmetric map $F:\RR^n\rightarrow\RR^N$ such that $F|_E= f|_E$, $F$ is $\varepsilon$-almost affine over $\RR^n$, $\tH_F(\RR^n)\leq \varepsilon$, $\diam F(B^n(x,r))\sim_{n,N,\gamma_E} \diam f(B^n(x,r))$, and \begin{equation}
\label{e:beta-c2}
\int_0^{\infty} \beta_{F(\RR^n)}(F(y),s)^2\, \frac{ds}{s}\lesssim_{n,N} C_E+\varepsilon^2\quad\text{for all }y\in \RR^n.
\end{equation}
\end{theorem}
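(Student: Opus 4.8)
The plan is to build the extension $F$ by combining the three tools assembled above: first replace the given family of affine maps approximating $f$ by one that is \emph{adapted to $f$ on small scales} and \emph{stable on large scales} (Lemmas \ref{l:adapt} and \ref{l:stable}), then extend $f|_E$ off of $E$ by Theorem \ref{t:extend2}, and finally verify the beta number bound \eqref{e:beta-c2} using the local flatness estimate for quasiplanes (Corollary \ref{c:bflat}) together with the elementary stability of beta numbers under the extension. Concretely, given $\varepsilon>0$, I would choose $\varepsilon_*$ small in terms of $\varepsilon$ and $n$ by running the chain of parameter choices backwards: pick $\rho$ close to $1$; let $\varepsilon_1=\varepsilon_1(n,H,\rho)$ and $C_1=C_1(n)$ come from Theorem \ref{t:extend2} with $H=H_f(B^N(x,3r))\leq 1+\varepsilon_*$ (so $H\leq 2$, say); let $P=P(n)$ come from Lemma \ref{l:adapt}; and finally demand $\varepsilon_*$ small enough that $P\varepsilon_*\leq\varepsilon_1$ and $C_1 P\varepsilon_*\leq\varepsilon$ and $\varepsilon_*\leq\tau_*$ for whatever auxiliary smallness is needed. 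Since $f$ is $\varepsilon_*$-almost affine over $B^n(x,9r)$, Theorem \ref{t:qsaa} is not actually needed here (it is applied upstream in the proof of Theorem \ref{t:main}); we simply take as input an $\varepsilon_*$-compatible family $\mathcal{A}$ witnessing $\varepsilon_*$-almost affinity of $f|_{B^n(x,9r)}$ over $B^n(x,r)$, which after Lemma \ref{l:adapt} (applied with $x_0=x$, $r_0=r$, using $B^n(x,3r)\subset B^n(x,9r)$) we may assume is adapted to $f$ at small scales on $B^n(x,r)$ at the cost of replacing $\varepsilon_*$ by $P\varepsilon_*$.

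The core steps, in order, are as follows. (1) Restrict to $E$: $(f|_E, E, \mathcal{A}|_E)$ is $P\varepsilon_*$-almost affine, and since $E$ is bounded, Lemma \ref{l:stable} lets us assume $\mathcal{A}|_E$ is stable on large scales. The almost-affinity constant together with $\tH_f(B^N(x,3r))\leq\varepsilon_*$ and the inradius estimates of Lemma \ref{l:inradius} give $\lambda_n(A'_{y,s})\leq H\lambda_1(A'_{y,s})$ for all $y\in E$, $s>0$, with $H\leq 2$; in particular $f|_E$ is nonconstant because $\diam E\geq\gamma_E\diam B^n(x,r)>0$ forces $\|A'_{z_*,\diam E}\|>0$ via Lemma \ref{l:inradius}. (2) Apply Theorem \ref{t:extend2} to $f|_E$: we obtain $F:\RR^n\to\RR^N$ that is weakly $\rho H$-quasisymmetric, $C_1 P\varepsilon_*(\leq\varepsilon)$-almost affine over $\RR^n$ via an extended family $\mathcal{A}^+\supset\mathcal{A}|_E$, hence $\tH_F(\RR^n)\leq\rho H-1\leq\varepsilon$ after further shrinking; quasisymmetry of $F$ then follows from Corollary \ref{c:wqs2qs} (or is built into Theorem \ref{t:extend2}). (3) Diameter comparison: since $F|_E=f|_E$, $\mathcal{A}^+$ is adapted to $f$ at small scales, and $\diam E\gtrsim_{\gamma_E}\diam B^n(x,r)$, the inradius estimates (Lemma \ref{l:inradius}, \eqref{e:ird}) applied at scale $\sim\diam E$ give $\|A'_{z_*,\diam E}\|\diam E\sim_n\diam f(E)\sim_{n,N,\gamma_E}\diam f(B^n(x,r))$ (the last comparison using that $f$ is quasisymmetric on $B^N(x,3r)$ and Lemma \ref{l:QS-comp}), and similarly $\diam F(B^n(x,r))\sim_n\|A'_{x,r}\|r$; then $\|A'_{x,r}\|r\sim_n\|A'_{z_*,\diam E}\|\diam E$ by $\varepsilon$-compatibility of $\mathcal{A}^+$, $|x-z_*|\lesssim r$, and stability on large scales, yielding $\diam F(B^n(x,r))\sim_{n,N,\gamma_E}\diam f(B^n(x,r))$.

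The substantive step, and the one I expect to be the main obstacle, is (4): establishing the square-function bound \eqref{e:beta-c2} for \emph{all} $y\in\RR^n$, not merely $y\in E$. For $y\in E$ with $F(y)=f(y)$ we can try to invoke Corollary \ref{c:bflat} to convert the hypothesis \eqref{e:beta-h2}, $\int_0^r\tH_f(B^N(y,s))^2\,ds/s\leq C_E$, into a bound on $\int_0^{\ast}\beta_{f(\RR^n)}(f(y),s)^2\,ds/s$; but note $\beta$ here is for $f(\RR^n)$, whereas we need $\beta_{F(\RR^n)}$, so one must compare $F(\RR^n)$ with $f(\RR^n)$ near $f(E)$ — and this is where the hypothesis that $f|_{B^N(x,3r)}$ is a topological embedding and $\tH_f(B^N(x,3r))\leq\varepsilon_*$ enters (via Lemma \ref{l:bflat} applied to $f$ itself). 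The clean route is instead to bound $\beta_{F(\RR^n)}$ directly: $F$ is $\varepsilon$-almost affine over all of $\RR^n$ with family $\mathcal{A}^+$, so by Lemma \ref{l:inradius} \eqref{e:irt} we get $\theta_{F(B^n(y,t))}(F(y),\cdot)\lesssim\varepsilon$ at the one comparable scale, while for the summed-over-scales estimate we relate $\beta_{F(\RR^n)}(F(y),s)$ at scale $s\sim\|A'_{y,t}\|t$ to the \emph{oscillation} $\|A'_{y,t}-A'_{y,t'}\|/\|A'_{y,t}\|$ of the affine family between scales, which by Lemma \ref{l:main-est} and the $\varepsilon$-compatibility is controlled; summing the squares against $ds/s$ and splitting according to whether the relevant ball meets $E$ gives a contribution of order $\varepsilon^2$ from the "extension" scales and a contribution of order $C_E$ from the scales where $y$ (or its projection $y'$) sits deep inside $E$, where one imports \eqref{e:beta-h2} through Lemma \ref{l:bflat} (so that $\tH_f$ controls $\beta_{f(\RR^n)}=\beta_{F(\RR^n)}$ on those scales, the two sets agreeing there). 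For $y\notin E$ one uses the monotonicity \eqref{e:monob} of beta numbers and the fact that $F$ is smooth away from $E$ with $D^2F$ controlled (Lemma \ref{l:Omega}), which makes $\beta_{F(\RR^n)}(F(y),s)$ for $s\lesssim d(y)$ decay linearly in $s/d(y)$ and hence contribute only $O(\varepsilon^2)$ to the integral; for $s\gtrsim d(y)$ one compares with the estimate already obtained at the nearest point $y'\in E$. Carrying out this splitting carefully — in particular getting the constant in front of $C_E$ to depend only on $n,N$ and not on $\gamma_E$ — will require the full strength of Lemmas \ref{l:main-est}--\ref{l:Omega} and Corollary \ref{c:bflat}, and is the heart of the matter.
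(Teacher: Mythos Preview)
Your overall architecture matches the paper's almost exactly: construct a family over $E$ that is adapted to $f$ at small scales and stable at large scales (the paper packages Lemmas \ref{l:adapt} and \ref{l:stable} together with the eigenvalue bound into Lemma \ref{l:family}), extend via Theorem \ref{t:extend2}, deduce the diameter comparison from Lemma \ref{l:QS-comp}, and then split the beta integral into three regimes (large scales via stability, $y\in E$ via the Dini hypothesis, $y\notin E$ via Lemma \ref{l:Omega} plus reduction to the nearest point $y'\in E$). Two small corrections: Lemma \ref{l:inradius} \emph{assumes} $\lambda_n\leq H\lambda_1$ rather than proving it, so that bound must come directly from $\tH_f(B^n(x,3r))\leq\varepsilon_*$ and almost affinity, as in the proof of Lemma \ref{l:family}; and $F$ need not be smooth far from $E$ in general, but it \emph{is} when built by Definition \ref{d:extend}, which is what the paper uses.

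The real gap is in your step (4) for $y\in E$. Your ``clean route'' via the oscillation $\|A'_{y,t}-A'_{y,t'}\|/\|A'_{y,t}\|$ cannot produce a finite sum: $\varepsilon$-compatibility only gives $O(\varepsilon)$ at every dyadic scale, so summing $\varepsilon^2$ against $ds/s$ diverges. You correctly recognize that the $C_E$ must enter through Lemma \ref{l:bflat}, which controls $\beta_{f(\RR^n)}$ --- but then you write ``$\beta_{f(\RR^n)}=\beta_{F(\RR^n)}$ on those scales, the two sets agreeing there''. This is false: $F(\RR^n)$ and $f(\RR^n)$ coincide only on $f(E)=F(E)$, which may be a null set, and \emph{never} coincide on an open piece of either surface. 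The whole point of the ``adapted at small scales'' hypothesis is to bridge this gap, and you have not said how. The paper's mechanism is this: if $L$ is a plane realizing $\beta_{f(\RR^n)}(f(y),\cdot)$ and $z\in B^n(y,\lambda\mu s)\setminus E$, then $F(z)=\sum_Q\phi_Q(z)A_Q(z)$; adaptedness means each $A_Q$ agrees with $f$ at the $n+1$ points $z_Q+(\diam Q)e_i$, which lie in $f(\RR^n)$ and are therefore close to $L$; then Lemma \ref{l:A-B} (the affine extrapolation lemma) pushes the same closeness to $A_Q(z)$, and averaging over $Q$ gives $\dist(F(z),L)\lesssim_n\beta_{f(\RR^n)}(f(y),\cdot)\|A'_{y,s}\|s$. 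This is the step that converts $\beta_{f(\RR^n)}$ into $\beta_{F(\RR^n)}$, and without it the argument does not close.
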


The extension in Theorem \ref{t:beta} / Theorem \ref{t:beta2} will be constructed by applying Theorem \ref{t:extend2} with a compatible family $\mathcal{A}$ of affine maps over $E$  satisfying two additional properties:
$\mathcal{A}$ is stable at large scales (recall Definition \ref{d:stable}); and, $\mathcal{A}$ is adapted to $f$ at small scales (recall Definition \ref{d:adapt}). More precisely, we use $\mathcal{A}$ given by the following lemma.

\begin{lemma} \label{l:family} For all $n\geq 1$ and $\varepsilon>0$, there exists $\varepsilon'=\varepsilon'(\varepsilon,n)>0$ with the following property. Let $x\in\RR^n$, let $r>0$, and let $E\subset B^n(x,r)$ be a closed set. If $f:\RR^n\rightarrow\RR^N$ is $\varepsilon'$-almost affine over $B^n(x,9r)$ and $\tH_f(B^n(x,3r))\leq \varepsilon'$, then $(f,E,\mathcal{A})$ is $\varepsilon$-almost affine for some $\varepsilon$-compatible family $\mathcal{A}$ of affine maps  over $E$ such that \begin{equation}\label{e:family1} \lambda_n(A_{y,s})\leq (1+\varepsilon)\lambda_1(A_{y,s})\quad\text{for all }A_{y,s}\in\mathcal{A},\end{equation} $\mathcal{A}$ is adapted to $f$ at small scales, and $\mathcal{A}$ is stable at large scales. Moreover, $\mathcal{A}$ can be chosen such that for all $y\in E$ and $0<s\leq\diam E$, \begin{equation}\label{e:family2}
|f(z)-A_{y,s}(z)|\leq \varepsilon\Anorm{y,s}s\quad\text{for all }z\in B^n(y,s).\end{equation}
\end{lemma}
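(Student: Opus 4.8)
The plan is to obtain $\mathcal{A}$ by refining, in two moves, the family that comes with the hypothesis that $f$ is $\varepsilon'$-almost affine over $B^n(x,9r)$, where $\varepsilon'=\varepsilon'(\varepsilon,n)>0$ is a small threshold to be pinned down at the end. Let $\mathcal{B}=\{B_{y,s}\}$ be such a family; restricting its index set, $f$ is also $\varepsilon'$-almost affine over $B^n(x,3r)$ with the same affine maps. First I would invoke Lemma~\ref{l:adapt} with $x_0=x$, $r_0=r$, and this family $\mathcal{B}$ (taking $\varepsilon'$ so small that $P(n)\varepsilon'\le1$): this produces a family $\mathcal{A}_1$ over $B^n(x,r)$, adapted to $f$ at small scales, with $(f,B^n(x,r),\mathcal{A}_1)$ being $P(n)\varepsilon'$-almost affine and with $A^{(1)}_{y,s}=B_{y,s}$ for $s>2r$. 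Restricting the index set to the bounded closed set $E\subset B^n(x,r)$ keeps $(f,E,\mathcal{A}_1|_E)$ $P(n)\varepsilon'$-almost affine and keeps it adapted to $f$ at small scales (legitimate since $\diam E\le2r$). Then I would feed $(f,E,\mathcal{A}_1|_E)$ into Lemma~\ref{l:stable} with an arbitrary basepoint $x_*\in E$: the resulting family $\mathcal{A}$ agrees with $\mathcal{A}_1|_E$ for $0<s\le\diam E$ and equals $A^{(1)}_{x_*,\diam E}$ for $s>\diam E$, so it is stable at large scales, still adapted to $f$ at small scales (the modification touches only scales $s>\diam E$), and still makes $(f,E,\mathcal{A})$ $P(n)\varepsilon'$-almost affine, hence $P(n)\varepsilon'$-compatible. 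Choosing $\varepsilon'\le\varepsilon/P(n)$ already yields all the structural claims: $\mathcal{A}$ is $\varepsilon$-compatible, $(f,E,\mathcal{A})$ is $\varepsilon$-almost affine, $\mathcal{A}$ is adapted to $f$ at small scales, and $\mathcal{A}$ is stable at large scales.

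It then remains to prove \eqref{e:family1} and \eqref{e:family2}, which I would establish first for scales $0<s\le\diam E$ (where $A_{y,s}=A^{(1)}_{y,s}$) and then deduce for $s>\diam E$ from the special case $A_{y,s}=A^{(1)}_{x_*,\diam E}$. Fix $y\in E$ and $0<s\le\diam E$. Because $\mathcal{A}_1$ is adapted to $f$ at small scales, $A^{(1)}_{y,s}$ is the affine interpolant of $f$ at the non-degenerate simplex $V=\{y,y+se_1,\dots,y+se_n\}\subset B^n(y,s)$, which satisfies $s\le\diam V\lesssim_n s$ and $\Psi(V)\lesssim_n1$. Since $y\in B^n(x,r)$ and $s\le2r$ we have $B^n(y,s)\subset B^n(x,3r)\subset B^n(x,9r)$, so $\varepsilon'$-almost affineness over $B^n(x,9r)$ gives $\sup_{z\in B^n(y,s)}|f(z)-B_{y,s}(z)|\le\varepsilon'\|B'_{y,s}\|s$; in particular $|A^{(1)}_{y,s}(v)-B_{y,s}(v)|=|f(v)-B_{y,s}(v)|\le\varepsilon'\|B'_{y,s}\|\,\diam V$ for $v\in V$. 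Now Lemma~\ref{l:A-B}, together with $\dist(z,V)\le|z-y|\le s$ for $z\in B^n(y,s)$, yields $|A^{(1)}_{y,s}(z)-B_{y,s}(z)|\lesssim_n\varepsilon'\|B'_{y,s}\|s$ on $B^n(y,s)$. Comparing operator norms gives $\Anorm{y,s}\sim_n\|B'_{y,s}\|$ once $\varepsilon'$ is small, and hence $\sup_{z\in B^n(y,s)}|f(z)-A_{y,s}(z)|\lesssim_n\varepsilon'\Anorm{y,s}s\le\varepsilon\Anorm{y,s}s$ provided $\varepsilon'$ is small depending on $\varepsilon$ and $n$, which is \eqref{e:family2}.

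For \eqref{e:family1}, write $\delta=\delta(\varepsilon',n)\lesssim_n\varepsilon'$ for the constant just produced in \eqref{e:family2}, and note $\tH_f(B^n(y,s))\le\tH_f(B^n(x,3r))\le\varepsilon'$, which is immediate from the definition of the weak quasisymmetry constant since $B^n(y,s)\subset B^n(x,3r)$. Pick $z_+,z_-\in\partial B^n(y,s)$ maximizing, resp.\ minimizing, $|f(\cdot)-f(y)|$. By \eqref{e:lvar} and \eqref{e:family2} (applied at $z_\pm$ and at $y$), $\lambda_n(A'_{y,s})s\le|f(z_+)-f(y)|+2\delta\lambda_n(A'_{y,s})s$ and $\lambda_1(A'_{y,s})s\ge|f(z_-)-f(y)|-2\delta\lambda_n(A'_{y,s})s$, while the definition of $H_f(B^n(y,s))$ gives $|f(z_+)-f(y)|\le(1+\varepsilon')|f(z_-)-f(y)|$. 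Chaining these three inequalities yields
\[ \lambda_n(A'_{y,s})\le\frac{1+\varepsilon'}{1-2\delta-2\delta(1+\varepsilon')}\,\lambda_1(A'_{y,s}), \]
whose right-hand coefficient tends to $1$ as $\varepsilon'\downarrow0$, hence is $\le1+\varepsilon$ once $\varepsilon'$ is small depending on $\varepsilon$ and $n$; this is \eqref{e:family1} for $s\le\diam E$, and for $s>\diam E$ it follows since $A_{y,s}=A^{(1)}_{x_*,\diam E}$ is covered by the case just proved. Taking $\varepsilon'=\varepsilon'(\varepsilon,n)$ to be the minimum of the finitely many upper bounds imposed above completes the argument. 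I expect the only genuinely non-bookkeeping point to be the upgrade of the estimate in \eqref{e:family2} from one valid merely over $E$ (all that $\varepsilon$-almost affineness and Lemma~\ref{l:adapt} on $B^n(x,r)$ directly deliver) to one valid on the \emph{full} ball $B^n(y,s)$, which may protrude from $B^n(x,r)$; keeping the ambient almost affine structure on the larger ball $B^n(x,9r)$ and invoking Lemma~\ref{l:A-B} is precisely what removes this difficulty, after which \eqref{e:family1} is an easy consequence of \eqref{e:lvar}.
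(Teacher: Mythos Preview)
Your argument is correct and follows essentially the same two-step construction as the paper: first apply Lemma~\ref{l:adapt} to obtain an adapted family, then apply Lemma~\ref{l:stable} to stabilize at large scales, and finally derive \eqref{e:family1} from \eqref{e:family2} and the weak quasisymmetry bound by comparing $\lambda_n(A'_{y,s})s$ and $\lambda_1(A'_{y,s})s$ to $\sup$ and $\inf$ of $|f(z)-f(y)|$ on $\partial B^n(y,s)$.

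The one place where you diverge from the paper is the scale at which you invoke Lemma~\ref{l:adapt}: you take $r_0=r$, the paper takes $r_0=3r$. This is precisely why you encounter the ``non-bookkeeping point'' you flag at the end. With $r_0=3r$, the adapted family $\mathcal{B}$ lives over $B^n(x,3r)$, and since $B^n(y,s)\subset B^n(x,3r)$ for every $y\in E$ and $s\le\diam E\le 2r$, the almost-affine estimate over $B^n(x,3r)$ already gives \eqref{e:family2} on the full ball $B^n(y,s)$ with no further work. Your route---applying Lemma~\ref{l:adapt} at the smaller radius and then patching via Lemma~\ref{l:A-B}---is valid, but it effectively re-derives inequality \eqref{e:adapt1} from the proof of Lemma~\ref{l:adapt}. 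So the difficulty you isolate is real given your choice of $r_0$, but it dissolves entirely with the paper's choice; this is the reason the hypothesis asks for almost affineness over $B^n(x,9r)$ rather than $B^n(x,3r)$.
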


\begin{proof} Let $x\in \RR^n$, let $r>0$, and let $E\subset B^n(x,r)$ be closed. Let $\varepsilon>0$ arbitrary be given, and fix $\varepsilon'>0$ to be specified later. Suppose that $f:\RR^n\rightarrow\RR^N$ is $\varepsilon'$-almost affine over $B^n(x,9r)$, and $\tH_f(B^n(x,3r))\leq \varepsilon'$. We require $P\varepsilon'\leq 1$. Then, by Lemma \ref{l:adapt}, there exists a $P\varepsilon'$-compatible family $\mathcal{B}$ of affine maps over $B^n(x,3r)$ such that $\mathcal{B}$ is adapted to $f$ at small scales and $(f,B^n(x,3r),\mathcal{B})$ is $P\varepsilon'$-almost affine. Let $\mathcal{B}_E=\{B_{x,r}:x\in E,r>0\}$ denote the restriction of $\mathcal{B}$ to affine maps over $E$. If $B_{y,s}\in\mathcal{B}_E$ for some $y\in E$ and $0<s\leq \diam E$, then \begin{equation*} B_{y,s}(y)=f(y)\quad \text{and}\quad B_{y,s}(y+se_i)=f(y+se_i)\quad\text{for all }i=1,\dots,n,\end{equation*} since $s\leq \diam B^n(x,3r)$ and $\mathcal{B}$ is adapted to $f$ at small scales. In other words, $\mathcal{B}_E$ is adapted to $f$ at small scales, as well. Choose any $y_*\in E$. Then, by Lemma \ref{l:stable}, we know that $(f,E,\mathcal{A})$ is $P\varepsilon'$-almost affine, where the family $\mathcal{A}$ of affine maps over $E$ is defined by \begin{equation*} A_{y,s}=\left\{\begin{array}{cl} B_{y,s} &\text{if }s\leq \diam E, \\ B_{y_*,\diam E} &\text{if }s>\diam E.\end{array}\right.\end{equation*} In particular, $\mathcal{A}$ is a $P\varepsilon'$-compatible family of almost affine maps over $E$ that is simultaneously adapted to $f$ at small scales and stable at large scales.

Next we estimate the weak quasisymmetry of affine maps in $\mathcal{A}$. Fix $y\in E$ and $0<s\leq \diam E$. Then $A_{y,s}=B_{y,s}$ and $B^n(y,s)\subset B^n(x,3r)$. It follows that \begin{equation}\label{e:family2a}
|f(z)-A_{y,s}(z)|\leq P\varepsilon'\Anorm{y,s}s\quad\text{for all }z\in B^n(y,s),\end{equation} since $(f,B^n(x,3r),\mathcal{B})$ is $P\varepsilon'$-almost affine. Also $f(y)=A_{y,s}(y)$, because $\mathcal{A}$ is adapted to $f$ at small scales. Hence \begin{equation*} \begin{split}
\Anorm{y,s}s&=\sup_{|z-y|= s}|A_{y,s}(z)-A_{y,s}(y)| \leq P\varepsilon' \Anorm{y,s}s + \sup_{|z-y|= s} |f(z)-f(y)| \\
&\leq P\varepsilon'\Anorm{y,s}{s} + (1+\varepsilon')\inf_{|z-y|= s}|f(z)-f(y)| \\ &\leq P\varepsilon'(2+\varepsilon')\Anorm{y,s}s + (1+\varepsilon')\lambda_1(A'_{y,s})s.
\end{split}\end{equation*} Thus, stipulating $(1+\varepsilon')/(1-P\varepsilon'(2+\varepsilon'))\leq 1+\varepsilon$, \begin{equation}\label{e:family3}\lambda_n(A'_{y,s})=\|A'_{y,s}\| \leq (1+\varepsilon)\lambda_1(A'_{y,s})\end{equation} for all $y\in E$ and $0<s\leq \diam E$.
Recall that if $y\in E$ and $s>\diam E$, then $A_{y,s}=A_{y_*,\diam E}$. Therefore, (\ref{e:family3}) holds for all $y\in E$ and $s>0$.

Examining the constraints put in place at various stages above, the lemma holds provided that $\varepsilon'>0$ is sufficiently small such that $P\varepsilon'\leq \min\{1,\varepsilon\}$ and $(1+\varepsilon')/(1-P\varepsilon'(2+\varepsilon'))\leq 1+\varepsilon$. \end{proof}

At last, we are ready to prove Theorem \ref{t:beta} / Theorem \ref{t:beta2}.

\begin{proof}[Proof of Theorem \ref{t:beta} / Theorem \ref{t:beta2}]
It suffices to prove the theorem when $\varepsilon>0$ is small. Thus, let $\varepsilon\in(0,\sqrt{2}-1)$ small enough such that \begin{equation}\label{e:e-up}\left(1+\frac{3}{4}\varepsilon\right)\left(\frac{1}{2}+2\varepsilon\right)\leq 1\end{equation} be given. Choose $\varepsilon_*\in(0,\varepsilon)$ to be specified below.  Fix $x\in\RR^n$, $r>0$, and a closed set $E\subset B^n(x,r)$ satisfying (\ref{e:beta-d2}) for some $\gamma_E>0$.  Suppose a map $f:\RR^N\rightarrow\RR^N$ is $\varepsilon_*$-almost affine over $B^n(x,9r)$, $f|_{B^N(x,3r)}$ is a topological embedding and $\tH_f(B^N(x,3r))\leq \varepsilon_*$, and there exists $C_E>0$ such that (\ref{e:beta-h2}) holds. Let $C_1=C_1(n)$ and $\varepsilon_1=\varepsilon_1(n,H,\rho)$ be the constants from Theorem \ref{t:extend2} corresponding to \begin{equation*} H=1+\tfrac12\varepsilon\quad\text{and}\quad \rho=\min\left\{2,\frac{1+\varepsilon}{1+\tfrac12\varepsilon}\right\}.\end{equation*} Let $\varepsilon'=\varepsilon'(\min\{\varepsilon/2,\varepsilon/C_1\},n)$ be the constant from Lemma \ref{l:family}. Assume $\varepsilon\leq C_1\varepsilon_1$ and $\varepsilon_*\leq \varepsilon'$.

By Lemma \ref{l:family}, we can find a family $\mathcal{A}$ of affine maps over $E$ such that $\mathcal{A}$ is adapted to $f$ at small scales, $\mathcal{A}$ is stable at large scales, $\lambda_n(A_{y,s}') \leq H \lambda_1(A_{y,s}')$ for all $y\in E$ and $s>0$, and $(f,E,\mathcal{A})$ is $\varepsilon/C_1$-almost affine. Moreover, we can choose $\mathcal{A}$ such that for all $y\in E$ and $0<s\leq \diam E$, \begin{equation}\label{e:family2e} |f(z)-A_{y,s}(z)|\leq \frac{\varepsilon}{2}\Anorm{y,s}s\quad\text{for all }z\in B^n(y,s).\end{equation} Note that the map $f|_E$ is nonconstant, since $f|_{B^N(x,3r)}$ is an embedding and $\diam E>0$. Thus $f|_E$ satisfies the hypotheses of Theorem \ref{t:extend2}.
Using the proof of Theorem \ref{t:extend2}, extend $f|_E$ to a map $F:\RR^n\rightarrow\RR^N$ and extend $\mathcal{A}=\{A_{y,s}:y\in E,s>0\}$ to a family of affine maps $\mathcal{A}^+=\{A_{y,s}:y\in\RR^n,s>0\}$ over $\RR^n$ such that \begin{equation} \label{e:Faa} (F,\RR^n,\mathcal{A}^+) \text{ is $\varepsilon$-almost affine,}\end{equation} \begin{equation}\label{e:Fwqs} \text{$F$ is weakly $(1+\varepsilon)$-quasisymmetric,}\end{equation} and \begin{equation}\label{e:Auqs}\lambda_n(A'_{y,s}) \leq \left(1+\tfrac{3}{4}\varepsilon\right)\lambda_1(A'_{y,s})\quad\text{for all }y\in\RR^n, s>0.\end{equation} Then $F$ is quasisymmetric by (\ref{e:Fwqs}) and Corollary \ref{c:wqs2qs}.  In fact, since $H_F(\RR^n)\leq 1+\varepsilon\leq 2$ and $H_f(B^n(x,3r))\leq 1+\varepsilon_*\leq 2$, Corollary \ref{c:wqs2qs} implies that the maps $F$ and $f|_{B^n(x,3r)}$ are uniformly quasisymmetric with some control function determined by $n$ and $N$. Hence \begin{equation*} \frac{\diam f(E)}{\diam f(B^n(x,r))} \sim_{n,N,\gamma_E} \frac{\diam F(E)}{\diam F(B^n(x,r))}\end{equation*} by (\ref{e:beta-d2}) and Lemma \ref{l:QS-comp}. Because $f(E)=F(E)$, we conclude that \begin{equation*} \diam f(B^n(x,r))\sim_{n,N,\gamma_E} \diam F(B^n(x,r)).\end{equation*} To complete the proof, we must convert the Dini conditions (\ref{e:beta-h2}) on $\tH_{f}(B^n(y,\cdot))^2$ at all $y\in E$ into Dini conditions (\ref{e:beta-c2}) on $\beta_{F(\RR^n)}(F(y),\cdot)^2$ at all $y\in\RR^n$.

Before moving on to the main argument, we stop and record a few estimates for $F$ and $f$. First, by (\ref{e:e-up}), (\ref{e:Faa}), (\ref{e:Auqs}), and Lemma \ref{l:inradius}, we have that for all $y\in\RR^n$ and $s>0$, \begin{equation}\label{e:Fir} \frac{1}{2}\Anorm{y,s}s \leq |F(z)-F(y)|\leq 2\Anorm{y,s}s\quad\text{for all }z\in\partial B^n(y,s).\end{equation} Since $H_F(\RR^n)\leq 1+\varepsilon\leq 2$, it follows that for all $y\in\RR^n$ and $s>0$, \begin{equation*} \inf_{|w-y|\geq s}|F(w)-F(y)| \geq \frac{1}{2}\sup_{|z-y|=s}|F(z)-F(y)| \geq \frac{1}{4} \Anorm{y,s}s.\end{equation*} Thus, for all $y\in \RR^n$ and $s>0$, \begin{equation}\label{e:Firb} F(\RR^n)\cap B^N\left(F(x),\frac{1}{4}\Anorm{y,s}s\right)\subset F(B^n(y,s)).\end{equation}
We can obtain similar estimates for $f$ in place of $F$ by using (\ref{e:family2e}) and repeating the proof of Lemma \ref{l:inradius}. Indeed,  by (\ref{e:e-up}), (\ref{e:family2e}), and by the fact that $\mathcal{A}$ is adapted to $f$ at small scales, we see that for all $y\in E$ and $0<s\leq \diam E$, \begin{equation}\begin{split}\label{e:for} |f(z)-f(y)|&\leq |f(z)-A_{y,s}(z)|+|A_{y,s}(z)-A_{y,s}(y)|
\\ &\leq \left(1+\frac{\varepsilon}{2}\right)\Anorm{y,s}s\leq 2\Anorm{y,s}s\quad\text{for all }z\in\partial B^n(y,s).\end{split}\end{equation} Also, for all $y\in E$ and $0<s\leq \diam E$, \begin{equation}\begin{split}\label{e:fir} |f(z)-f(y)|&\geq |A_{y,s}(z)-A_{y,s}(y)| - |f(z)-A_{y,s}(z)|\\ &\geq \left(H^{-1}-\frac{\varepsilon}{2}\right)\Anorm{y,s}\geq \frac{1}{2}\Anorm{y,s}\quad\text{for all }z\in\partial B^n(y,s).\end{split}\end{equation} Similar considerations give that for all $y\in E$ and $0<s\leq \diam E$, \begin{equation}\label{e:fdiam} \Anorm{y,s}s\leq \diam f(B^n(y,s))\leq 3\Anorm{y,s}s.\end{equation}

Now, because $\mathcal{A}$ is stable at large scales, there exists $y_*\in E$ such that $A_*:=A_{y_*,\diam E}=A_{y,s}$ for all $y\in E$ and $s>\diam E$. Assign $s_*:=\Anorm{*}\diam E$. We note that for all $y\in E$, \begin{equation*}\tau(y,\frac12\diam E,y_*,\diam E)\leq 4.\end{equation*} Hence, $\Anorm{*}\sim \Anorm{y,\frac12\diam E}$ for all $y\in E$ by Lemma \ref{l:main-est} (\ref{e:pre-d}). Therefore, in view of (\ref{e:fdiam}), \begin{equation}\label{e:ssim} s_* \sim \diam f\left(B^n\left(y,\frac{1}{2}\diam E\right)\right)\quad\text{for all }y\in E.\end{equation}
The argument now breaks up into three major steps.

\setcounter{step}{0}
\begin{step} For all $y\in \RR^n$ and for all $\tau>0$, $\int_{\tau s_*}^\infty \beta_{F(\RR^n)}(F(y),s)^2 s^{-1}ds\lesssim (\varepsilon/\tau)^2$. \end{step}

The underlying reason is simple: Since $\mathcal{A}$ is stable at large scales, $F(\RR^n)$ can be approximated by a fixed $n$-dimensional plane at all locations and large scales. We now supply some details. Recall that $d(y)=\dist(y, E)$ for all $y\in\RR^n$. On one hand, if $y\in\RR^n$ and $d(y)> (9/2)\diam E$, then $\diam Q\geq (2/9)d(y)>\diam E$ for all cubes $Q\in\mathcal{W}$ such that $y\in 2Q$ by Definition \ref{d:extend}.1(c). Hence, for all $y\in\RR^n$ such that $d(y)>(9/2)\diam E$, \begin{equation*} F(y)= \sum_{Q\in\mathcal{W}} \phi_Q(y) A_Q(y) = \sum_{Q\in\mathcal{W}} \phi_Q(y) A_*(y)=A_*(y).\end{equation*}
On the other hand, if $y\in \RR^n$ and $d(y)\leq (9/2)\diam E$, then $y\in B^n(y_*,(11/2)\diam E)$. Thus, \begin{equation*} |F(y)-A_{*}(y)| \leq \varepsilon \Anorm{*} \frac{11}{2}\diam E\lesssim\varepsilon s_*\quad\text{whenever }d(y)\leq (9/2)\diam E,\end{equation*} because $(F,\RR^n,\mathcal{A}^+)$ is $\varepsilon$-almost affine. Comparing $F(\RR^n)$ with the plane $A_*(\RR^n)$, we obtain \begin{equation*}\beta_{F(\RR^n)}(F(y),s) \lesssim \frac{1}{s} \left(\varepsilon s_*\right)\quad\text{for all }y\in\RR^n,s>0.\end{equation*} Therefore, for all $\tau>0$, \begin{equation*}\begin{split} \int_{\tau s_*}^\infty \beta_{F(\RR^n)}(F(y),s)^2\frac{ds}{s}
&\lesssim \left(\varepsilon s_{*}\right)^2 \int_{\tau s_*}^\infty \frac{ds}{s^3}\lesssim (\varepsilon/\tau)^2.\end{split}\end{equation*} This completes Step 1.

\begin{step} For all $y\in E$, $\int_{0}^{\infty} \beta_{F(\RR^n)}(F(y),s)^2 s^{-1}ds\lesssim_{n,N} C_E+\varepsilon^2$. \end{step}

To establish this step, we use the assumption that $\mathcal{A}$ is adapted to $f$ on small scales.  Fix $y\in E$ and $0<s \leq \diam E$, set  \begin{equation*}\beta_s:=\beta_{f(\RR^n)}\left(f(y),\frac{1}{4}\Anorm{y,s} s\right),\end{equation*} and choose an $n$-dimensional plane $L$ in $\RR^N$ such that \begin{equation*} \dist(p,L) \leq \frac{1}{2}\beta_s\Anorm{y,s} s\quad\text{for all }p\in f(\RR^n)\cap B^N\left(f(y),\frac{1}{4}\Anorm{y,s}s\right).\end{equation*} (The reason that we work with the scale $\frac{1}{4}\Anorm{y,s}s$ will become apparent below.) Fix $\mu\in(0,1)$ to be chosen momentarily. By (\ref{e:for}) and Lemma \ref{l:main-est} (\ref{e:pre-b}), \begin{equation*} |f(z)-f(y)| \leq 2\Anorm{y,\mu s}\mu s \leq 2(1+T_\varepsilon(1/\mu)\varepsilon)\Anorm{y,s}\mu s\quad\text{for all }z\in \partial B^n(y,\mu s).\end{equation*} Since $\varepsilon<\sqrt{2}-1$, \begin{equation*}2(1+T_\varepsilon(1/\nu)\varepsilon)\nu \lesssim \nu^{1-2\log_2(1+\varepsilon)}\log(1/\nu)\rightarrow 0\quad\text{as $\nu\rightarrow 0$}. \end{equation*} Hence we may choose $\mu$ to be sufficiently small so that $f(B^n(y,\mu s))\subset B^N(f(y),\frac{1}{4}\Anorm{y,s}s)$, which guarantees \begin{equation}\label{e:s2a} \dist(f(z),L) \leq \frac{1}{2}\beta_s \Anorm{y,s} s\quad\text{for all }z\in B^n(y,\mu s).\end{equation} Fix $\lambda\in(0,1)$ to be specified later (look after \eqref{e:lambda}), depending only on $n$. We shall use the $n$-dimensional plane $L$ to estimate $\beta_{F(\RR^n)}(F(y),\frac{1}{4}\Anorm{y,\lambda\mu s}\lambda\mu s)$.

Let $q\in F(\RR^n)\cap B^N(F(y),\frac{1}{4}\Anorm{y,\lambda\mu s}\lambda\mu s)$. By \eqref{e:Firb} applied at scale $\lambda\mu s$, we can write $q=F(z)$ for some $z\in B^n(y,\lambda\mu s)$. If $z\in E\cap B(y,\lambda\mu s)$, then $F(z)=f(z)$, and  by (\ref{e:s2a}), \begin{equation}\label{e:s2b}\dist(F(z),L) \leq \frac{1}{2}\beta_s \Anorm{y,s}s.\end{equation} On the other hand, suppose that $z\in B^n(y,\lambda\mu s)\setminus E$. Then \begin{equation}\label{e:Fdef} F(z) = \sum_{Q}\phi_Q(z)A_Q(z)=\sum_{Q} \phi_Q(z)A_{z_Q,\diam Q}(z),\end{equation} where the sum is over all cubes $Q\in\mathcal{W}$ such that $z\in 2Q$. For any such cube $Q$, we have \begin{equation}\label{e:zzq} |z-z_Q|\leq |z-w_Q|+|w_Q-z_Q|\leq \diam 2Q+4\diam Q= 6\diam Q\leq 12\sqrt{n}\lambda\mu s,\end{equation} where the last inequality holds by Definition 7.3.1(c) and the bound $d(z)\leq |z-y|\leq \lambda\mu s$. Hence \begin{equation}\label{e:lambda}|z_Q-y| \leq |z_Q-z|+ |z-y| \leq 12\sqrt{n}\lambda\mu s + \lambda\mu s \leq 13\sqrt{n} \lambda\mu s.\end{equation} Setting $\lambda:=1/26\sqrt{n}$ ensures that $z_Q\in E\cap B^n(y,\frac12\mu s)$ and $\diam Q\leq 2\sqrt{n}\lambda\mu s<\frac{1}{2}\lambda\mu s$. Since $\mathcal{A}$ is adapted to $f$ at small scales and $\diam Q<\frac{1}{2}\lambda\mu s<\diam E$, we have \begin{equation*}A_{Q}(z_Q+(\diam Q)e_i) = f(z_Q+(\diam Q)e_i)\quad\text{for all }i=0,\dots n,\end{equation*} where $e_0=0$ and $e_1,\dots,e_n$ is a standard basis for $\RR^n$. Thus, by (\ref{e:s2a}), \begin{equation}\label{e:s2c} \dist (A_{Q}(z_Q+(\diam Q)e_i),L) \leq \frac{1}{2}\beta_s\Anorm{y,s} s\quad\text{for all }i=0,\dots n.\end{equation} We now introduce an auxiliary affine map $B_Q:\RR^n\rightarrow\RR^N$, with an aim of invoking Lemma \ref{l:A-B}, as follows. For each $i=0,\dots, n$, choose $u_i\in L$ such that \begin{equation}\label{e:s2d} |A_{Q}(z_Q+(\diam Q)e_i)-u_i|=\dist (A_{Q}(z_Q+(\diam Q)e_i),L).\end{equation} Then let $B_Q:\RR^n\rightarrow\RR^N$ be the unique affine map such that $B_Q(z_Q+(\diam Q)e_i)=u_i$ for all $i=0,\dots,n$. Note that $B_Q(\RR^n)\subset L$. Set $V=\{v_0,\dots,v_n\}$, where each $v_i:=z_Q+(\diam Q)e_i$. Then $\diam V=\sqrt{2}\diam Q$ and $\Psi(V)=2^{n/2}n!$ (see \ref{e:Psi}). Thus, combining (\ref{e:s2c}) and (\ref{e:s2d}), we observe that \begin{equation*} |A_Q(v)-B_Q(v)|\leq \left(\frac{\frac{1}{2}\beta_s\Anorm{y,s}s}{\diam V}\right)\diam V\quad\text{for all }v\in V.\end{equation*} Therefore, by Lemma \ref{l:A-B}, \begin{equation*} |A_Q(z)-B_Q(z)|\leq \left(\frac{\frac12\beta_s\Anorm{y,s}s}{\diam V}\right)\left(\diam V+2^{3/2}(2n)^{(n+1)/2}\dist(z,V)\right).\end{equation*} Since $\dist (z,V)\leq |z-z_Q| \leq 6\diam Q=(6/\sqrt{2})\diam V$ by (\ref{e:zzq}), it follows that \begin{equation}\label{e:s2e}\dist(A_Q(z),L)\leq |A_Q(z)-B_Q(z)| \lesssim_n \beta_s\Anorm{y,s} s\end{equation} for all $Q\in\mathcal{W}$ such that $z\in 2Q$. Together (\ref{e:Fdef}) and (\ref{e:s2e}) yield \begin{equation}\label{e:s2f} \dist(F(z),L) \lesssim_n \beta_s\Anorm{y,s}s\quad\text{for all }z\in B^n(y,\lambda\mu s)\setminus E.\end{equation} Combining (\ref{e:s2b}) and (\ref{e:s2f}), we conclude that \begin{equation*} \dist(q,L)\lesssim_n \beta_s\|A'_{y,s}\|s\quad\text{for all }q\in F(\RR^n)\cap B^N\left(F(y),\frac{1}{4}\Anorm{y,\lambda\mu s}\lambda\mu s\right).\end{equation*} Using the fact that $\Anorm{y,\lambda \mu s}\sim_n\Anorm{y,s}$ (by Lemma \ref{l:main-est} (\ref{e:pre-d})), it follows that \begin{equation} \label{e:s2g} \beta_{F(\RR^n)}\left(F(y),\frac{1}{4}\Anorm{y,\lambda \mu s}\lambda \mu s\right) \lesssim_n \frac{\beta_s\Anorm{y,s} s}{\frac{1}{4}\Anorm{y,\lambda \mu s}\lambda \mu s}\lesssim_n \beta_{f(\RR^n)}\left(f(y),\frac{1}{4}\Anorm{y,s}s\right)
\end{equation} for all $y\in E$ and $0<s\leq \diam E$ where $\lambda\mu\in(0,1)$ depends only on $n$.

We now adjust the scales in (\ref{e:s2g}) so that they are compatible with Lemma \ref{l:bflat}. First, by (\ref{e:for}) and (\ref{e:fir}), \begin{equation*} \frac{1}{8}|f(y+se_1)-f(y)| \leq \frac{1}{4}\Anorm{y,s}s \leq \frac{1}{2}|f(y+se_1)-f(y)|.\end{equation*} Hence, by (\ref{e:mono}), \begin{equation} \label{e:s2h} \beta_{f(\RR^n)}\left(f(y),\frac{1}{4}\Anorm{y,s}s\right)\lesssim \beta_{f(\RR^n)}\left(f(y),\frac{1}{2}|f(y+se_1)-f(y)|\right). \end{equation} Second, note that $\Anorm{y,\lambda\mu s}\lambda \mu s \sim_n \Anorm{y,s} s$. By \eqref{e:for} and \eqref{e:fir}, it follows that \begin{equation*} \frac{1}{4}\Anorm{y,\lambda\mu s}\lambda\mu s \sim_n \Anorm{y,s}s\sim |f(y+se_1)-f(y)|.
\end{equation*} Hence, there exists a constant $\omega>0$ depending only on $n$ such that \begin{equation} \label{e:s2i} \beta_{F(\RR^n)}(F(y),\omega |f(y+se_1)-f(y)|) \lesssim_n \beta_{F(\RR^n)}\left(F(y),\frac{1}{4}\Anorm{y,\lambda \mu s}\lambda \mu s\right)\end{equation} by (\ref{e:mono}). Combining (\ref{e:s2g}), (\ref{e:s2h}), and (\ref{e:s2i}), we obtain \begin{equation*} \beta_{F(\RR^n)}\left(F(y),\omega|f(y+se_1)-f(y)|\right) \lesssim_n \beta_{f(\RR^n)}\left(f(y),\frac{1}{2}|f(y+se_1)-f(y)|\right).\end{equation*} Therefore, by Lemma \ref{l:bflat}, \begin{equation}\label{e:s2j} \beta_{F(\RR^n)}\left(F(y),\omega|f(y+se_1)-f(y)|\right) \lesssim_{n,N} H_f(B^N(y,2s))\end{equation} for all $y\in E$ and $0<s\leq \diam E$.

Observe that $f$ is a quasiconformal map on the open ball $\oB^N(x,3r)$ with maximal dilatation $K_f(\oB^N(x,3r))\leq H_f(B^N(x,3r))^{N-1}\leq \varepsilon_*^{N-1}\lesssim_N 1$. By replicating the proof of Corollary \ref{c:bflat} with \eqref{e:s2j} instead of \eqref{l:bflat}, one can show that there exists a constant $C=C(n,N)>1$ so that \begin{equation} \label{e:s2k} \int_0^{\diam f(B^N(y,\frac12\diam E))/C} \beta_{F(\RR^n)}(F(y),s)^2\frac{ds}{s}\leq C \int_0^{\frac12\diam E} \tH_f(B^N(y,s))^2\frac{ds}{s}.\end{equation} Note that $\diam f(B^n(y,\frac12\diam E)) \leq \diam f(B^N(y,\frac12\diam E))$ and $\frac12\diam E\leq r$. Hence \begin{equation} \label{e:s2l} \int_0^{\diam f(B^n(y,\frac12\diam E))/C} \beta_{F(\RR^n)}(F(y),s)^2\frac{ds}{s}\leq C \int_0^{r} \tH_f(B^N(y,s))^2\frac{ds}{s}\leq CC_E.\end{equation} by (\ref{e:beta-h2}). Therefore, in view of (\ref{e:ssim}), there is $\tau\sim 1/C>0$ depending only on $n$ and $N$ so that \begin{equation*}\int_0^{\tau s_*} \beta_{F(\RR^n)}(F(y),s)^2\frac{ds}{s} \lesssim_{n,N} C_E.\end{equation*} Incorporating the estimate from Step 1, we conclude that for all $y\in E$, \begin{equation*}\begin{split} \int_0^\infty \beta_{F(\RR^n)}(F(y),s)^2\frac{ds}{s} &\leq  \int_0^{\tau s_*} \beta_{F(\RR^n)}(F(y),s)^2\frac{ds}{s}+\int_{\tau s_*}^\infty \beta_{F(\RR^n)}(F(y),s)^2\frac{ds}{s}\\
 &\lesssim_{n,N} C_E + \left(\frac{\varepsilon}{\tau}\right)^2 \lesssim_{n,N} C_E+\varepsilon^2,\end{split}\end{equation*} as desired. This completes Step 2.

\begin{step} For all $y\in\RR^n\setminus E$, $\int_{0}^{\infty} \beta_{F(\RR^n)}(F(y),s)^2 s^{-1}ds \lesssim_{n,N} C_E+\varepsilon^2$. \end{step}

We exploit the fact that $F$ is smooth far away from $E$. Let $y\in \RR^n\setminus E$ and let $0<s<d(y)/2$. Then $A'_{y,s}=DF(y)=A'_{y,t}$ for all $t<d(y)/2$. Let $a(y):=\|DF(y)\|$. By Lemma \ref{l:Omega}, \begin{equation*}\sup_{z\in B(y,s)} |F(z)-A_{y,s}(z)| \lesssim_n \varepsilon \frac{s}{d(y)}\Anorm{y,s}s= \varepsilon \frac{s}{d(y)}a(y)s.\end{equation*} Hence, since $F(\RR^n)\cap B^N(F(y),\frac14 a(y)s)\subset F(B^n(y,s))$ by (\ref{e:Firb}), we obtain \begin{equation*} \beta_{F(\RR^n)}\left(F(y),\frac{1}{4}a(y)s\right) \lesssim_n \frac{\varepsilon}{d(y)}s\quad\text{for all }s<d(y)/2.\end{equation*}
Thus, \begin{equation}\begin{split}\label{e:s3a}
\int_0^{\frac{1}{8} a(y)d(y)}\beta_{F(\RR^n)}(F(y),s)^2\frac{ds}{s} &=\int_{0}^{\frac12d(y)} \beta_{F(\RR^n)}\left(F(y),\frac{1}{4}a(y)s\right)^2\frac{ds}{s}\\
&\lesssim_n \left(\frac{\varepsilon}{d(y)}\right)^2\int_0^{\frac{1}{2}d(y)} s\,ds\lesssim \varepsilon^2.\end{split}\end{equation}
On the other hand, writing $\delta=|F(y)-F(y')|$ where $y'\in E$ is a point satisfying $|y-y'|=d(y)$, we have $B^N(F(y),t)\subset B^N(F(y'),t+\delta)$ for all $t>0$. Fix $\sigma>0$ to be chosen later. Then \begin{equation*}B^N(F(y),t)\subset B^N(F(y'),t+\delta)\subset B^N\left(F(y'), \left(1+\frac{1}{\sigma}\right)t\right)\quad\text{for all }t\geq \sigma\delta.\end{equation*} Hence, by (\ref{e:monob}), \begin{equation*} \beta_{F(\RR^n)}(F(y),t)\leq \left(1+\frac{1}{\sigma}\right)\beta_{F(\RR^n)}\left(F(y'),\left(1+\frac{1}{\sigma}\right)t\right)\quad\text{for all }t\geq \sigma\delta.\end{equation*} Therefore,  \begin{equation}\begin{split}\label{e:s3b} \int_{\sigma\delta}^\infty \beta_{F(\RR^n)}(F(y),t)^2\frac{dt}{t} &\leq \left(1+\frac{1}{\sigma}\right)^2 \int_{\sigma\delta}^\infty \beta_{F(\RR^n)}\left(F(y'),\left(1+\frac{1}{\sigma}\right)t\right)^2\frac{dt}{t} \\ &\lesssim_{n,N} \left(1+\frac{1}{\sigma}\right)^2 \left(C_E+\varepsilon^2\right),\end{split}\end{equation} where the last inequality holds by Step 2. Next, note that $\delta\sim \Anorm{y,d(y)}d(y) \sim \frac{1}{8}a(y)d(y)$ where the first comparison holds by (\ref{e:Fir}) and the second comparison holds by Lemma \ref{l:main-est} (\ref{e:pre-d}) since $\mathcal{A}$ is $\varepsilon$-compatible for some $\varepsilon<1$. Choose $\sigma>0$ sufficiently small so that $\sigma\delta \leq \frac{1}{8}d(y)a(y)$. Then, combining (\ref{e:s3a}) and (\ref{e:s3b}), we obtain \begin{equation*}\begin{split} \int_0^\infty \beta_{F(\RR^n)}(F(y),s)^2\frac{ds}{s} &\leq  \int_0^{\frac{1}{8}a(y)d(y)} \beta_{F(\RR^n)}(F(y),s)^2\frac{ds}{s}+\int_{\sigma\delta }^\infty \beta_{F(\RR^n)}(F(y),s)^2\frac{ds}{s}\\
 &\lesssim_{n,N} \varepsilon^2 + \left(1+\frac{1}{\sigma}\right)^2 \left(C_E+\varepsilon^2\right) \lesssim_{n,N} C_E+\varepsilon^2\end{split}\end{equation*}
for all $y\in \RR^n\setminus E$. This completes Step 3 and the proof of Theorem \ref{t:beta} / Theorem \ref{t:beta2}. \end{proof}


\bibliography{2014-abt-ref}{}
\bibliographystyle{amsalpha}

\end{document}